\newcommand{\R}{\mathbb{R}}
\newcommand{\C}{\mathbb{C}}
\newcommand{\Z}{\mathbb{Z}}
\newcommand{\N}{\mathbb{N}}
\newcommand{\T}{\mathbb{T}}
\newcommand{\bS}{\mathbb{S}}
\newcommand{\sF}{\mathscr{F	}}
\newcommand{\sL}{\mathscr{L}}
\newcommand{\sH}{\mathscr{H}}
\newcommand{\sK}{\mathscr{K}}
\newcommand{\cS}{\mathscr{S}}
\newcommand{\sX}{\mathscr{X}}
\newcommand{\fS}{\mathfrak{S}}
 \newcommand{\psido}{$\Psi$DO}
 \newcommand{\psidos}{$\Psi$DOs}
 \newcommand{\GL}{\op{GL}}
\newcommand{\acoupt}[2]{\ensuremath{(#1,#2)}}
\def\Xint#1{\mathchoice
{\XXint\displaystyle\textstyle{#1}}%
{\XXint\textstyle\scriptstyle{#1}}%
{\XXint\scriptstyle\scriptscriptstyle{#1}}%
{\XXint\scriptscriptstyle\scriptscriptstyle{#1}}%
\!\int}
\def\XXint#1#2#3{{\setbox0=\hbox{$#1{#2#3}{\int}$}
\vcenter{\hbox{$#2#3$}}\kern-.5\wd0}}
\def\dashint{\Xint-}
\newcommand{\bint}{\ensuremath{\dashint}}
\newcommand{\op}{\operatorname} 
\newcommand{\Sp}{\op{Sp}}
\newcommand{\Vol}{\op{Vol}}
\newcommand{\Tr}{\op{Tr}}
\newcommand{\Res}{\op{Res}}
\newcommand{\sa}{\textup{sa}}
\newcommand{\scal}[2]{\ensuremath{\left\langle #1 | #2 \right\rangle}} 
\newcommand{\acou}[2]{\ensuremath{\left\langle #1 , #2 \right\rangle}} 
\newcommand{\bigscal}[2]{\ensuremath{\big\langle #1 | #2 \big\rangle}} 
\newcommand{\bigacou}[2]{\ensuremath{\big\langle #1 , #2 \big\rangle}}
\newcommand{\car}{\mathbbm{1}}
\newcommand{\LLogL}{L\!\log\!L}
\def\Xint#1{\mathchoice
{\XXint\displaystyle\textstyle{#1}}%
{\XXint\textstyle\scriptstyle{#1}}%
{\XXint\scriptstyle\scriptscriptstyle{#1}}%
{\XXint\scriptscriptstyle\scriptscriptstyle{#1}}%
\!\int}
\def\XXint#1#2#3{{\setbox0=\hbox{$#1{#2#3}{\int}$ }
\vcenter{\hbox{$#2#3$ }}\kern-.6\wd0}}
\numberwithin{equation}{section}
\newtheorem{theorem}{Theorem}[section]
\newtheorem{proposition}[theorem]{Proposition}
\newtheorem{corollary}[theorem]{Corollary}
\newtheorem{lemma}[theorem]{Lemma}
\newtheorem{conjecture}[theorem]{Conjecture}
\theoremstyle{definition}
\newtheorem{definition}[theorem]{Definition}
\theoremstyle{remark}
\newtheorem{example}[theorem]{Example}
\newtheorem{remark}[theorem]{Remark}
\newtheorem*{claim*}{Claim}
\title{Dixmier Trace Formulas and Negative Eigenvalues of Schr\"odinger Operators on Curved Noncommutative Tori}
\date{\today}
\author{Edward McDonald}
 \address{Departments of Mathematics, Pennsylvania State University, State College, PA, USA}
 \email{eamcd92@gmail.com}
\author{Rapha\"el Ponge}
 \address{School of Mathematics, Sichuan University, Chengdu, China}
 \email{ponge.math@icloud.com}
\keywords{noncommutative geometry; Cwikel estimates; Schr\"odinger operators}
\subjclass[2020]{58B34; 47B10; 81Q10}
\begin{document}
\begin{abstract}
In a previous paper we established Cwikel-type estimates on noncommutative tori and used them to get analogues in this setting of the Cwikel-Lieb-Rozenblum (CLR) and Lieb-Thirring inequalities for negative eigenvalues of fractional Schr\"odinger operators. In this paper, we focus on ``curved'' NC tori, where the role of the usual Laplacian is played by Laplace-Beltrami operators associated with arbitrary Riemannian metrics. The Cwikel-type estimates of our previous paper are extended to pseudodifferential operators and powers of Laplace-Beltrami operators. There are several applications of these estimates. First, we get $L_p$-versions of the usual formula for the trace of \psidos\ on NC tori, i.e., for combinations of \psidos\ with $L_p$-position operators. Next, we get $L_p$-versions of the analogues for NC tori Connes' trace theorem and Connes' integration formula. They give formulas for the NC integrals (a.k.a.\ Dixmier traces) of products of $L_p$-position operators with \psidos\ or powers of the Laplace-Beltrami operators. Moreover, by combining our Cwikel-type estimates with suitable versions of the Birman-Schwinger principle we get versions of the CLR and Lieb-Thirring inequalities for negative eigenvalues of fractional Schr\"odinger operators associated with powers of Laplace-Beltrami operators and $L_p$-potentials. As in the original Euclidean case the Lieb-Thirring inequalities imply a dual  Sobolev inequality for orthonormal families. Finally, we discuss spectral asymptotics and semiclassical Weyl's laws for the our classes of operators on curved NC tori. This superseded a previous conjecture of~\cite{MP:JMP22}. 
 \end{abstract}
\maketitle

\section{Introduction}     
Noncommutative tori are arguably the most well known examples of noncommutative spaces. In particular, they naturally appear in the noncommutative geometry approach to the quantum Hall effect~\cite{BES:JMP94} and topological insulators~\cite{BCR:RMP16, PS:Springer16}. 
Two-dimensional noncommutative tori arise from actions of $\Z$ on the circle $\bS^1$ by irrational rotations. More generally, an $n$-dimensional noncommutative torus $\T^n_\theta$ is the NC space whose $C^*$-algebra $C(\T^n_\theta)$ is generated by unitaries $U_1, \ldots, U_n$ subject to the relations, 
\begin{equation*}
 U_lU_j = e^{2i\pi \theta_{jl}}U_jU_l, \qquad j,l=1,\ldots, n, 
\end{equation*}
where $\theta=(\theta_{jl})$ is a given real anti-symmetric matrix. For $\theta=0$ we recover the $C^*$-algebra $C(\T^n)$ of continuous functions on the ordinary torus $\T^n=\R^n/\Z^n$. There is a natural $C^*$-action of  $\R^n$ on $C(\T^n_\theta)$ generated by the canonical derivations $\partial_1, \ldots, \partial_n$ such that $\partial_j(U_l)=i\delta_{jl}U_j$.  The smooth NC torus $C^\infty(\T^n_\theta)$ is the precisely the $*$-algebra of the smooth elements for that action.  The $L_p$-spaces $L_p(\T^n_\theta)$ are the NC $L_p$-spaces associated with the standard (normalized) trace $\tau$ of $C(\T^n_\theta)$. 
We refer to Section~\ref{sec:NCtori} for the main background on noncommutative tori.  

In~\cite{MP:JMP22, MSX:CMP19} analogues for NC tori of the estimates of Cwikel~\cite{Cw:AM77} were derived. These Cwikel-type estimates were used in~\cite{MP:JMP22} to get versions for noncommutative tori of the Cwikel-Lieb-Rozenblum and Lieb-Thirring inequalities for negative eigenvalues of fractional Schr\"odinger operators associated with powers of the ordinary Laplacian $\Delta=-(\partial_1^2+\cdots \partial_n^2)$. A semiclassical Weyl's law for such operators was further conjectured in~\cite{MP:JMP22}. This is part of a recent revival of interest in Cwikel estimates (see, e.g.,~\cite{Fr:JST14, HKRV:arXiv18, LeSZ:2020, SZ:arXiv20}). 

In this paper we deal with curved NC tori. Seminal articles of Connes-Tretkoff~\cite{CT:Baltimore11}  and Connes-Moscovici~\cite{CM:JAMS14} spurred an upsurge of activity on understanding the spectral geometry of curved noncommutative tori (see, e.g.,~\cite{CF:MJM19, DS:SIGMA15, FK:JNCG12, FK:JNCG15, FGK:JNCG19, GK:arXiv18, IM:JGP18, LM:GAFA16, Liu:arXiv20, Po:JMP20, SZ:arXiv19} and the references therein; see also~\cite{Co:Survey19, FK:Survey19} for recent surveys). By a curved NC torus it is meant a NC torus equipped with a Riemannian metric in the sense of~\cite{HP:JGP20, Ro:SIGMA13} (see also Section~\ref{sec:Riemannian}). The spectral geometry of a curved NC torus is then studied through the analysis of the corresponding Laplace-Beltrami operator. Until recently the main focus was on conformal deformations of the flat metric. The construction in~\cite{HP:JGP20} of the Laplace-Beltrami operator then made it possible to extend this program to arbitrary Riemannian metrics (see, e.g., \cite{Liu:arXiv20, Po:JMP20, SZ:arXiv19}). 

Throughout this paper we focus on Laplace-Beltrami operators $\Delta_{g,\nu}$ associated with an arbitrary Riemannian metric $g=(g_{ij})$ on $\T^n_\theta$ and a smooth positive density $\nu$ (i.e., a positive invertible element of $C^\infty(\T^n_\theta)$). 

In the terminology of~\cite{CT:Baltimore11} 
datum of $\nu$ defines a ``conformal'' weight $\hat{\tau}_\nu(x):=(2\pi)^n{\tau}(x\nu)$. Associated with this weight is the 
$L_2$-space $L_2(\T^n_\theta;\nu)$ given by the completion with respect to the inner product,   
\begin{equation}
 \scal{x}{y}_\nu:= \hat{\tau}(xy^*)=(2\pi)^n{\tau}(y^*\nu x), \qquad x,y\in C(\T^n_\theta), 
 \label{eq:Intro.nu-inner-product}
\end{equation}
where $\tau$ is the standard trace of $C(\T^n_\theta)$.  We have a natural $*$-representation $\lambda_\nu$ of $C(\T^n_\theta)$ in $L_2(\T^n_\theta;\nu)$.  For $\nu=1$ this is just the left-regular representation of $C(\T^n_\theta)$ in the original $L_2$-space $L_2(\T^n_\theta)$. The Laplace-Beltrami operator $\Delta_{g,\nu}$ is essentially selfadjoint on $L_2(\T^n_\theta;\nu)$ and has a compact resolvent belonging to the weak Schatten class $\sL_{n/2,\infty}$ (see Section~\ref{sec:Riemannian}). 

\subsection*{Cwikel-type estimates} The Cwikel-type estimates of~\cite{MP:JMP22} provide explicit upper bounds for the weak-Schatten quasi-norms of operators $\lambda(x)\Delta^{-n/2p}$ and $\Delta^{-n/4p}\lambda(x)\Delta^{-n/4p}$ in terms of suitable $L_q$-norms of $x$. They include explicit upper bounds for the best constants in those estimates (see also Section~\ref{sec:Cwikel-flat}). 
Namely, if either $p\neq 2$ and $q=\max(p,2)$, or $p=2<q$, then 
\begin{equation}\label{eq:Intro.Cwikel-flat}
 \left\| \lambda(x) \Delta^{-\frac{n}{2p}}\right\|_{\sL_{p,\infty}} \leq c_n(p,q)\|x\|_{L_{q}} \qquad \forall x \in L_q(\T^n_\theta), `
\end{equation}
where $c_n(p,q)$ is an explicit constant. There are similar estimates for the operators
$\Delta^{-n/4p}\lambda(x)\Delta^{-n/4p}$ if $p\neq 1$ and $q=\max(p,1)$, or $p=1<q$ (\emph{loc.\ cit.}). 

We extend the Cwikel-type estimates of~\cite{MP:JMP22}  to operators  $\lambda(x)P$ and $Q^*\lambda(x)P$, where $P$ and $Q$ are negative order pseudodifferential operators (\psidos) (see Proposition~\ref{prop.specific-Cwikel-Prho} for the precise statement). We refer to Section~\ref{sec:psdo_applications} (and the references therein) for the main background on \psidos\ on NC tori. As an application of~\eqref{eq:Intro.Cwikel-flat} we extend to operators of the form $\lambda(x)P$ and $Q^*\lambda(x)P$, with $x$ in suitable $L_p$-spaces, the usual trace-formula for \psidos\ of order~$<-n$ (see Proposition~\ref{prop:Lp-trace-formula}). Namely, if $x\in L_2(\T^n_\theta)$ and $P$ is a \psido\ of order~$<-n$ with symbol $\rho(\xi)$, then 
\begin{equation*}
  \Tr\big[ \lambda(x)P\big] = \sum_{k\in \Z^n} \tau\big[x\rho(k)\big]. 
\end{equation*}
We have a similar formula for operators $Q^*\lambda(x)P$, where $x\in L_p(\T^n_\theta)$, $p>1$, and $P$ and $Q$ are \psidos\ of order~$<-n/2$ (\emph{loc.\ cit.}). 

We seek for curved versions of the Cwikel-type estimates of~\cite{MP:JMP22}. For the applications presented in this paper we need to state them in terms of quasi-norms associated with the weighted inner-product~(\ref{eq:Intro.nu-inner-product}) rather than with the original inner product of $L_2(\T^n_\theta)$. These two inner products are equivalent, and so this only affects the constants in those inequalities. The Cwikel estimates for \psidos\ allow us to get Cwikel-estimates for powers of the Laplace-Beltrami operators $\Delta_{g,\nu}$ (see Theorem~\ref{thm:curved-Cwikel}). Namely, if $p\neq 2$ and $q=\max(p,2)$, or $p=2<q$, we have 
\begin{equation}
 \left\| \lambda_\nu(x) \Delta_{g,\nu}^{-\frac{n}{2p}}\right\|_{\sL_{p,\infty}^{(\nu)}} \leq C_{g,\nu}(p,q)\|x\|_{L_{q}} \qquad \forall x \in L_q(\T^n_\theta), 
 \label{eq:Intro.Cwikel-curved}
 \end{equation}
 where $\|\cdot \|_{\sL_{p,\infty}^{(\nu)}}$ is $\sL_{p,\infty}$-quasi-norm with respect to the inner product~(\ref{eq:Intro.nu-inner-product}). This includes explicit upper bounds for the best constants $C_{g,\nu}(p,q)$ (\emph{loc.\ cit.}). There are similar estimates for the compressions $\Delta_{g,\nu}^{-n/4q}\lambda_\nu(x)\Delta_{g,\nu}^{-n/4q}$ if $p\neq 1$ and $q=\max(p,1)$, or $p=1<q$ (\emph{loc.\ cit.}). 

\subsection*{Dixmier trace formulas} In the framework of noncommutative geometry~\cite{Co:NCG}, the role of the integral is played by positive (normalized) traces on the weak trace class $\sL_{1,\infty}$ (see also Section~\ref{sec:L2-Connes-trace}). An important class of such traces is provided by Dixmier traces~\cite{Di:CRAS66} (see also~\cite{Co:NCG, LSZ:Book, Po:Weyl}). An operator $A\in \sL_{1,\infty}$ is called \emph{measurable} (resp., \emph{strongly measurable}) if the value $\varphi(A)$ is independent of the trace $\varphi$ as it ranges over Dixmier traces (resp., positive normalized traces). Alternatively, $A$ is measurable if and only if
\begin{equation*}
 \bint A = \lim_{N\rightarrow \infty} \frac{1}{\log (N+1)} \sum_{j<N} \lambda_j(A) \ \text{exists},  
\end{equation*}
where $\{\lambda_j(A)\}$ is an eigenvalue sequence for $A$.  In this case $\bint A$ is called the NC integral of $A$. 

If $(M^n,g)$ is a closed Riemannian manifold, Connes' trace theorem~\cite{Co:CMP88} asserts that every \psido\ $P$ of order~$-n$ on $M$ is measurable and its NC integral is given by the integral of this principal symbol over the cosphere bundle $S^*M$ (i.e., its NC residue trace). It was further shown that such operators are actually strongly measurable (see~\cite{KLPS:AIM13}).  This result is also a consequence of the Weyl's law for negative order \psidos\ of Birman-Solomyak~\cite{BS:VLU77, BS:VLU79, BS:SMJ79}. 

One consequence of Connes' trace theorem is the integration formula,
\begin{equation*}
 \bint \Delta_g^{-\frac{n}{4}}f \Delta_g^{-\frac{n}{4}}= \bint f \Delta_g^{-\frac{n}{2}}= \hat{c}(n) \int_M f \nu(g)(x) \qquad \forall f\in C^\infty(M),
 \label{eq:Intro.Integration-Formula}
\end{equation*}
where $\nu(g)(x)$ is the Riemannian measure and $\hat{c}(n)=\frac{1}{n}(2\pi)^{-n}|\bS^{n-1}|$. This shows that the noncommutative integral recaptures the 
Riemannian measure. The formula for $f\Delta_g^{-n/2}$ actually holds for all $f\in L_2(M)$ (see~\cite{KLPS:AIM13}).  The formula for $\Delta_g^{-n/4}f\Delta_g^{-n/4}$ further holds for any $f$ in the Orlicz space $\LLogL(M)$ (see~\cite{Ro:JST22, SZ:arXiv21}; see also~\cite{Po:MPAG22}). This result does not hold in general for functions in $L_1(M)$ (see~\cite{KLPS:AIM13}).

Connes' trace theorem and Connes' integration formulas were extended to NC tori in~\cite{Po:JMP20} (see also~\cite{MSZ:MA19} for the integration formula in the flat case). Namely, if $P$ is a \psido\ of order~$-n$ on $\T^n_\theta$, then $P$ is strongly measurable, and we have 
\begin{equation*}
\bint P =\frac1n \tau\big[c_P\big], \qquad c_P:=\int_{\bS^{n-1}}\rho_{-n}(\xi) d^{n-1}\xi, 
\end{equation*}
where $\rho_{-n}(\xi)$ is the principal symbol of $P$. The integration formula involves the spectral Riemannian density,
\begin{equation}\label{eq:Intro.spectral-Riem-density}
 \tilde{\nu}(g):= \frac{1}{|\bS^{n-1}|} \int_{\bS^{n-1}}|\xi|^{-n}_g d\xi, \qquad |\xi|_g^2:=\sum  \xi_i \xi_j g^{ij}, 
\end{equation}
where $g^{ij}$ are the entries of the dual metric $g^{-1}=(g^{ij})$. If the entries of $g$ pairwise commute with each other (e.g., $g$ is conformally flat), then $\tilde{\nu}(g)$ is just $\sqrt{\det(g)}$. For all $x\in C(\T^n_\theta)$ and for arbitrary positive smooth densities $\nu$, we  have the integration formula, 
\begin{equation}\label{eq:Intro.Int-formula}
 \bint \lambda_\nu(x)\Delta_{g,\nu}^{-\frac{n}{2}} = c(n)\tau\big[x\tilde{\nu}(g)\big] \qquad \text{where}\ c(n)=\frac{1}{n}|\bS^{n-1}|. 
\end{equation}

We use the Cwikel-type estimates of this paper to get $L_p$-versions of the above results. First, we look at operators of the form $\lambda(x)P$ and $Q^*\lambda(x)P$ where $P$ and $Q$ are negative order \psidos\ (see Theorem~\ref{thm:Lp-Connes-trace}). More precisely, if $x\in L_2(\T^n_\theta)$ and $P$ is a \psido\ of order~$-n$, then $\lambda(x)P$ is strongly measurable, and we have
\begin{equation}
 \bint \lambda(x) P= \frac{1}{n}\tau\big[xc_P\big]. 
 \label{eq:Intro.Lp-integration-PsiDOs} 
\end{equation}
 We have a similar result for operators of the form $Q^*\lambda(x)P$, where $x\in L_p(\T^n_\theta)$, $p>1$, and $P$ and $Q$ are \psidos\ of order~$-n/2$ (\emph{loc.\ cit.}).  
 
 The NC integral does not depend on the choice of the inner product.  Therefore, by using the fact that powers of the Laplace-Beltrami operators $\Delta_{g,\nu}$ are \psidos\ allows us to get $L_p$-versions of the integration formula~(\ref{eq:Intro.Int-formula}) (see Theorem~\ref{thm:Lp-curved-formula}). More precisely, if $x\in L_2(\T^n_\theta)$, then $ \lambda_\nu(x)\Delta_{g,\mu}^{-n/2}$ is strongly measurable and its NC integral1 is computed by the formula~(\ref{eq:Intro.Int-formula}). If $x\in L_p(\T^n_\theta)$, $p>1$, then $\Delta_{g,\mu}^{-n/4} \lambda_\nu(x)\Delta_{g,\mu}^{-n/4}$ is strongly measurable as well, and we have
\begin{equation*}
 \bint \Delta_{g,\nu}^{-\frac{n}{4}} \lambda_\nu(x)\Delta_{g,\nu}^{-\frac{n}{4}} = c(n)\tau\big[x\tilde{\nu}(g)\big].  
\end{equation*}
 
\subsection*{CLR and Lieb-Thirring inequalities} For Schr\"odinger operators $H_V:=\Delta+V$ on $\R^n$, $n\geq 3$,  the CLR inequality~\cite{Cw:AM77,Li:BAMS76, Roz:1972} gives an upper-bound for the number of negative eigenvalues $N^{-}(H_V)$ (i.e., the number of bound states)  in terms of the $L_{n/2}$-norm of the negative part $V_{-}$. Namely,
\begin{equation}\label{eq:Intro.CLR}
N^{-}(\Delta+V) \leq C_n\int |V_{-}(x)|^{\frac{n}2}dx. 
\end{equation}
The Lieb-Thirring inequalities~\cite{LT:PRL75, LT:SMP76} for $n\geq 2$ and $\gamma>0$ further provide upper bounds for the $\gamma$-moments of the negative eigenvalues. That is, 
\begin{equation}\label{eq:Into.LTRn}
 \sum \lambda_j^{-}(H_V)^\gamma \leq L_{\gamma,n}\int |V_{-}(x)|^{\frac{n}2+\gamma}dx, 
\end{equation}
where $\{-\lambda_j^{-}(H_V)\}$ are the negative eigenvalues of $H_V$ counted with multiplicity. We also have Lieb-Thirring inequalities for $n=1$ and $\gamma\geq 1/2$ (see~\cite{LT:SMP76, We:CMP96}). The Lieb-Thirring inequality for  $\gamma=1$ is dual to a Sobolev inequality for (finite) orthonormal families in $L_2(\R^n)$. There are similar inequalities for fractional Schr\"odinger operators $\Delta^{n/2p}+V$ 
(see, e.g., \cite{Da:CMP83, Roz:1972, LS:JAM97, RS:SPMJ98}). All these inequalities have widespread applications in mathematical physics, including semiclassical Weyl's laws for $L_p$-potentials and Lieb's proof of the stability of matter (see~\cite{Fr:Survey20, FLW:Book, LS:Book} and the references therein). 

In~\cite{MP:JMP22} we obtained analogues of the CLR and Lieb-Thirring inequalities for fractional Schr\"odinger operators $\Delta^{n/2p}+\lambda(V)$ on $\T^n_\theta$.  This includes explicit upper bounds for the best CLR and LT constants. These inequalities were consequences of the Cwikel-type estimates~(\ref{eq:Intro.Cwikel-flat}) and suitable versions of the Birman-Schwinger principle. As in the Euclidean case the Lieb-Thirring inequality for $p=n/2$ and $\gamma=1$ is dual to a Sobolev inequality for orthonormal families.  

Likewise, the curved Cwikel-type estimates~(\ref{eq:Intro.Cwikel-curved}) enable us to get CLR and Lieb-Thirring inequalities for fractional Schr\"odinger operators 
$H_{V}^{(p)}:=\Delta_{g,\nu}^{n/2p}+\lambda_\nu(V)$ associated with powers of Laplace-Beltrami operators. As in~\cite{MP:JMP22} we get two CLR inequalities. First, we get an upper bound for the number of negative eigenvalues $N^{-}(H_{V}^{(p)})$ (Theorem~\ref{thm:CLR.CLR-NCtori}).  Namely, if $p\neq 1$ and $q=\max(p,1)$, or $p=1<q$, then, for all $V=V^*\in L_q(\T^n_\theta)$, we have
\begin{equation}\label{eq:Intro.CLR-inequality1}
 N^{-}\big(H_{V}^{(p)}\big)-1 \leq C_{g,\nu}(2p,2q)^{2p} \big(\big\|V_{-}\big\|_{L_q}\big)^p,  
\end{equation}
where $V_{-}=\frac{1}{2}(|V|-V)$ is the negative part of $V$ and $C_{g,\nu}(2p,2q)$ is the best constant in the inequality~(\ref{eq:Intro.Cwikel-curved}) for the pair $(2p,2q)$.  As in~\cite{MP:JMP22} the $-1$ summand in the l.h.s.~of~(\ref{eq:Intro.CLR-inequality1}) arises from removing the dimension of the nullspace of $\Delta_{g,\nu}$ (which is just $\C\cdot 1$). 

As the nullspace $\ker \Delta_{g,\nu}=\C\cdot 1$ is non-trivial, it is natural to restrict ourselves to its orthogonal complement in $L_2(\T^n_\theta;\nu)$, i.e., the subspace, 
\begin{equation*}
 \dot{L}_2(\T^n_\theta;\nu):=\left\{u\in L_2(\T^n_\theta);\ \tau[u\nu]=0\right\}. 
\end{equation*}
In other words, $\dot{L}_2(\T^n_\theta;\nu)$ consists of all $u\in L_2(\T^n_\theta;\nu)$ that have zero mean value with respect to $\nu$. We denote by $\dot{\Delta}_{g,\nu}$ and $\dot{\lambda}_\nu(V)$ the restrictions to $\dot{L}_2(\T^n_\theta;\nu)$ of  ${\Delta}_{g,\nu}$ and ${\lambda}_\nu(V)$, respectively. We get a homogeneous upper bound for the number of negative eigenvalues of the operators $\dot{H}^{(p)}_V:=\dot{\Delta}_{g,\nu}^{n/2p}+\dot{\lambda}_\nu(V)$ (Theorem~\ref{thm:CLR.CLR-NCtori2}). Namely, if $p\neq 1$ and $q=\max(p,1)$, or $p=1<q$, then, for all $V=V^*\in L_q(\T^n_\theta)$, we have
\begin{equation}\label{eq:Intro.CLR-inequality2}
 N^{-}\big(\dot{H}_{V}^{(p)}\big) \leq C_{g,\nu}(2p,2q)^{2p} \big(\big\|V_{-}\big\|_{L_q}\big)^p.   
\end{equation}

There are various ways to derive the Lieb-Thirring inequalities~(\ref{eq:Into.LTRn}) and the dual Sobolev inequality (see, e.g.,~\cite{Fr:Survey20, FLW:Book, LS:Book}). For $n\geq 3$, the simplest way is to deduce the Lieb-Thirring inequalities from the CLR inequality~(\ref{eq:Intro.CLR}). This was the strategy used in~\cite{MP:JMP22}. Likewise, the CLR inequality~(\ref{eq:Intro.CLR-inequality2}) allows us to get Lieb-Thirring inequalities for $n\geq 3$ (Theorem~\ref{thm:LT-inequality}). 
Namely, if $\gamma>0$ and $p>1$, then, for all $V=V^*\in L_{p+\gamma}(\T^n_\theta)$, we have
\begin{equation}\label{eq:Intro.LT-Inequality}
 \sum \lambda_j^{-}\big(\dot{H}_V^{(p)}\big)^{\gamma}\leq L_{g,\nu}(p,\gamma) \tau\big[(V_{-})^{p+\gamma}\big],  
\end{equation}
where $\{-\lambda_j^{-}(\dot{H}_V^{(p)})\}$ are the negative eigenvalues of $\dot{H}_V^{(p)}$, and the best constant $L_{g,\nu}(p,\gamma)$ is such that
\begin{equation*}
 L_{g,\nu}(p,\gamma) \leq \gamma \frac{\Gamma(p+1)\Gamma(\gamma)}{\Gamma(p+\gamma+1)}C_{g,\nu}(2p,2p)^{2p}.
\end{equation*}

As in the Euclidean case and in~\cite{MP:JMP22}, the Lieb-Thirring inequality~(\ref{eq:Intro.LT-Inequality}) for $p=n/2$ and $\gamma=1$ is dual to a Sobolev inequality (Theorem~\ref{thm:LT.Sobolev}). Namely,  for any orthonormal family $\{u_0, \ldots, u_N\}$ in $\dot{L}_2(\T^n_\theta;\nu)$ such that $\partial_ju_\ell \in {L}_2(\T^n_\theta;\nu)$, we have 
\begin{equation}\label{eq:Intro.Sobolev-ineq}
 \sum_{\ell\leq N} \big\| du_\ell\|^2_{g,\nu} \geq K_{g,\nu} \tau\left[\rho^{\frac{n+2}{n}}\right], \qquad \text{where}\ \rho:=(2\pi)^n\sum_{\ell \leq N}  \nu^{\frac12}u_{\ell}u_\ell^*\nu^{\frac12}.
\end{equation}
Here $du$ is the differential of $u$ and $\|\cdot \|_{g,\nu}$ is the $L_2$-norm on 1-forms defined by $g$ and $\nu$ (see Section~\ref{sec:Riemannian} and Section~\ref{CLR_section} for their precise definitions). Note that the constant $K_{g,\nu}$ is independent of $N$. If we set $q=2(n+2)n^{-1}$, then for $N=0$ the inequality becomes, 
\begin{equation*}
 \big\| du_0\|^2_{g,\nu} \geq K_{g,\nu} \big(\|u_0\|_{L_{q}(\T^n;\nu)}\big)^{q}, 
\end{equation*}
where  $L_{q}(\T^n;\nu)$ is the $L_q$-space associated with $\nu$ (see Section~\ref{sec:Riemannian}). Furthermore, the best constant $K_{g,\nu}$ is related to the best LT constant $L_{g,\nu}=L_{g,\nu}(n/2,1)$ by
\begin{equation*}
 K_{g,\nu}= \frac{n}{n+2} \bigg( \frac{n+2}{2}L_{g,\nu}\bigg)^{-\frac{2}{n}}. 
\end{equation*}

\subsection*{Spectral asymptotics}
The original motivation for the Cwikel estimates on $\R^n$, $n\geq 3$, came from a conjecture of Simon~\cite{Si:TAMS76} regarding semiclassical Weyl's laws for Schr\"odinger operators with $L_p$-potentials. Alternatively, the Birman-Schwinger principle reduces semiclassical Weyl's laws to (classical) spectral asymptotics for compact operators. In the case of sufficiently smooth potentials the latter are special cases of a fairly general Weyl's laws for negative order \psidos\ of Birman-Solomyak~\cite{BS:VLU77, BS:VLU79, BS:SMJ79}. We then can extend the results  to non-smooth potentials by using Cwikel-type estimates and Birman-Solomyak's perturbation theory (see, e.g.,~\cite{BS:JFAA70, BS:TMMS72,BS:AMST80}). Note also that  Birman-Solomyak's Weyl' law implies a stronger form of Connes' trace theorem (see, e.g., \cite{Po:Weyl, Ro:JST22, SZ:arXiv21}). 

In~\cite[Conjecture~8.8]{MP:JMP22} the authors conjectured a semiclassical Weyl's laws for semiclassical (fractional) Schr\"odinger operators $h^{n/p}\Delta^{n/2p}+\lambda(V)$ on $\T^n_\theta$ for suitable $L_q$-potentials $V$. In this paper, we give grounds for an analogue for (negative order) \psidos\ on $\T^n_\theta$ of Birman-Solomyak's Weyl's law (Conjecture~\ref{conj:Bir-Sol-WL}). Together with the Cwikel estimates of this paper this Weyl's law implies the following spectral asymptotics (Theorem~\ref{thm:Weyl-laws}). If $p\neq 1$ and $q=\max(p,1)$, or $p=1<q$, then, for every $x=x^*\in L_q(\T^n_\theta)$, we have
\begin{equation}\label{eq:Intro.spectral-asymp}
 \lim_{j\rightarrow \infty} j^{\frac1{p}} \lambda_j^{\pm} \left(\Delta_{g,\nu}^{-\frac{n}{4p}}\lambda_\nu(x)\Delta_{g,\nu}^{-\frac{n}{4p}}\right)  = 
 \left(\frac{1}{n} \int_{\bS^{n-1}} \tau\left[ \left(|\xi|_g^{-\frac{n}{2p}}x|\xi|_g^{-\frac{n}{4p}}\right)_{\pm}^p\right] d\xi\right)^{\frac1{p}}. 
 \end{equation}
where $\pm \lambda_j^\pm(\Delta_{g,\nu}^{-n/4p}\lambda_\nu(x)\Delta_{g,\nu}^{-n/4p})$ are the positive/negative eigenvalues of $\Delta_{g,\nu}^{-n/4p}\lambda_\nu(x)\Delta_{g,\nu}^{-n/4p}$, and  $|\xi|_g$ is defined as in~(\ref{eq:Intro.spectral-Riem-density}). This implies stronger forms of Connes' trace theorem for NC tori from~\cite{Po:SIGMA20} and its $L_p$-version~(\ref{eq:Intro.Lp-integration-PsiDOs}) (see Corollary~\ref{cor:Weyl.PsiDOs}). 

Thanks to the Birman-Schwinger principle the spectral asymptotics~(\ref{eq:Intro.spectral-asymp}) imply a semiclassical Weyl's law for fractional Schr\"odinger operators (Theorem~\ref{thm:SCWeyl}). Namely, if $p$ and $q$ are as above, then,  for all $V=V^*\in L_q(\T^n_\theta)$, we have
  \begin{equation*}
\lim_{h\rightarrow 0^+} h^{n}N^{-}\left(h^{\frac{n}{p}}\Delta_{g,\nu}^{\frac{n}{2p}}+\lambda_\nu(V)\right)= \frac{1}{n} \int_{\bS^{n-1}} \tau\left[ \left(|\xi|_g^{-\frac{n}{2p}}V|\xi|_g^{-\frac{n}{2p}}\right)_{-}^p\right] d\xi. 
\end{equation*}
In particular, for the flat Euclidean metric $g_0=(\delta_{ij})$ this delivers the conjecture of~\cite{MP:JMP22} mentioned above. A proof of that conjecture for $p=n/2$ and $n\geq 3$ is also given in~\cite{MSZ:arXiv21}. However, the approach of~\cite{MSZ:arXiv21} cannot be extended to non-flat metrics (see Remark~\ref{rmk:MSZ} on this point). 


\subsection*{Outline of the paper}
The paper is organized as follows. In Section~\ref{sec:NCtori}, we review the main background on noncommutative tori, including their $L_p$-spaces, Sobolev spaces, and pseudodifferential operators. 
In Section~\ref{sec:Riemannian},  we recall the main facts regarding Riemannian metrics and Laplace-Beltrami operators on NC tori.
In Section~\ref{sec:Cwikel-flat}, we obtain Cwikel-type estimates for \psidos\ and powers of Laplace-Beltrami operators. We use them to get $L_p$-trace formulas . In Section~\ref{sec:L2-Connes-trace}, we establish $L_p$-versions of Connes' trace theorem and Connes' integration formulas for NC tori. In Section~\ref{CLR_section}, we establish the CLR and Lieb-Thirring inequalities for fractional Schr\"odinger operators associated with powers of Laplace-Beltrami operators and get the dual Sobolev inequality~(\ref{eq:Intro.Sobolev-ineq}). In Section~\ref{sec:Weyl}, we discuss spectral asymptotics and semiclassical Weyl's laws for fractional Schr\"odinger operators with $L_p$-potentials. 

\subsection*{Acknowledgements} 
The authors wish to thank Rupert Frank, Galina Levitina, Grigori Rozenblum, Fedor Sukochev, and Dmitriy Zanin for various discussions related to the subject matter of the paper.


\section{Noncommutative Tori} \label{sec:NCtori}
In this section, we review the main definitions and properties of noncommutative $n$-tori, $n\geq 2$. We refer to~\cite{Co:NCG, HLP:IJM19a, Ri:CM90}, and the references therein, for a more comprehensive account.
 
Throughout this paper, we let $\theta =(\theta_{jk})$ be a real anti-symmetric $n\times n$-matrix, and denote by $\theta_1, \ldots, \theta_n$ its column vectors.  We also let  $L_2(\T^n)$ be the Hilbert space of $L_2$-functions on the ordinary torus $\T^n=\R^n\slash (2\pi \Z)^n$ equipped with the  inner product, 
\begin{equation} \label{eq:NCtori.innerproduct-L2}
 \scal{\xi}{\eta}= (2\pi)^{-n} \int_{\T^n} \xi(x)\overline{\eta(x)}d x, \qquad \xi, \eta \in L_2(\T^n). 
\end{equation}
 For $j=1,\ldots, n$, let $U_j:L_2(\T^n)\rightarrow L_2(\T^n)$ be the unitary operator defined by 
 \begin{equation*}
 \left( U_j\xi\right)(x)= e^{ix_j} \xi\left( x+\pi \theta_j\right), \qquad \xi \in L_2(\T^n). 
\end{equation*}
 We then have the relations, 
 \begin{equation} \label{eq:NCtori.unitaries-relations}
 U_kU_j = e^{2i\pi \theta_{jk}} U_jU_k, \qquad j,k=1, \ldots, n. 
\end{equation}

The \emph{noncommutative torus} is the noncommutative space whose $C^*$-algebra  $C(\T^n_\theta)$ and von Neuman algebra $L_\infty(\T^n_\theta)$ are generated by the unitary operators $U_1, \ldots, U_n$.  For $\theta=0$ we obtain the $C^*$-algebra $C(\T^n)$ of continuous functions on the ordinary $n$-torus $\T^n$ and the von Neuman algebra $L_\infty(\T^n)$ of essentially bounded measurable functions on $\T^n$. Note that~(\ref{eq:NCtori.unitaries-relations}) implies that $C(\T^n_\theta)$ (resp., $L_\infty(\T^n_\theta)$) is the norm closure (resp., weak closure) in $\sL(L_2(\T^n))$ of the linear span of the unitary operators, 
 \begin{equation*}
 U^k:=U_1^{k_1} \cdots U_n^{k_n}, \qquad k=(k_1,\ldots, k_n)\in \Z^n. 
\end{equation*}

 \subsection{GNS representation} Let $\tau:\sL(L_2(\T^n))\rightarrow \C$ be the state defined by
 \begin{equation*}
 \tau (T)= \scal{T1}{1}=(2\pi)^{-n}\int_{\T^n} (T1)(x) d  x, \qquad T\in \sL\left(L_2(\T^n)\right).
\end{equation*}
This induces a continuous tracial state on the von Neuman algebra $L_\infty(\T^n_\theta)$ such that $\tau(1)=1$ and $\tau(U^k)=0$ for $k\neq 0$. 
 The GNS construction allows us to get a $*$-representation of $L_\infty(\T^n_\theta)$.  More precisely, let $\scal{\cdot}{\cdot}$ be the inner product on $C(\T^n_\theta)$ defined by
\begin{equation}
 \scal{u}{v} = \tau\left( uv^* \right), \qquad u,v\in C(\T^n_\theta). 
 \label{eq:NCtori.cAtheta-innerproduct}
\end{equation}
We then denote by $L_2(\T^n_\theta)$ the Hilbert space given by the completion of $C(\T^n_\theta)$ with respect to this inner product. 

The action of $C(\T^n_\theta)$ on itself by left-multiplication then uniquely extends to a $*$-representation,
\begin{equation}\label{eq:left.multiplier.representation}
 \lambda: L_\infty(\T^n_\theta) \longrightarrow \sL\big(L_2(\T^n_\theta)\big). 
\end{equation}
For $\theta=0$ we recover the Hilbert space $L_2(\T^n)$ with the inner product~(\ref{eq:NCtori.innerproduct-L2}) and the representation of $L_\infty(\T^n)$ by bounded multipliers. 

The family $(U^k)_{k \in \Z^n}$ is actually an orthonormal basis of $L_2(\T^n_\theta)$. Thus, every $u\in L_2(\T^n_\theta)$ can be uniquely written as 
\begin{equation} \label{eq:NCtori.Fourier-series-u}
 u =\sum_{k \in \Z^n} u_k U^k, \qquad u_k:=\scal{u}{U^k}, 
\end{equation}
where the series converges in $L_2(\T^n_\theta)$. For $\theta =0$ we recover the Fourier series decomposition in  $L_2(\T^n)$. 

\subsection{The smooth algebra $C^\infty(\T^n_\theta)$} The natural action of $\R^n$ on $\T^n$ by translation gives rise to an action on $\sL(L_2(\T^n))$. This induces a $*$-action $(s,u)\rightarrow \alpha_s(u)$ on $C(\T^n_\theta)$ given on the basis element $U^k$ by 
\begin{equation*}
\alpha_s(U^k)= e^{is\cdot k} U^k \qquad  \text{for all $k\in \Z^n$ and $s\in \R^n$}. 
\end{equation*}
This action is strongly continuous, and so we obtain a $C^*$-dynamical system $(C(\T^n_\theta), \R^n, \alpha)$. We are especially interested in the subalgebra $C^\infty(\T^n_\theta)$ of smooth elements of this $C^*$-dynamical system (a.k.a.~\emph{smooth noncommutative torus}). Namely,  
\begin{equation*}
 C^\infty(\T^n_\theta):=\biggl\{ u \in C(\T^n_\theta); \ \alpha_s(u) \in C^\infty(\R^n; C(\T^n_\theta))\biggr\}. 
\end{equation*}
The unitaries $U^k$, $k\in \Z^n$, are contained in $C^\infty(\T^n_\theta)$, and so $C^\infty(\T^n_\theta)$ is a dense subalgebra of $C(\T^n_\theta)$ and $L_\infty(\T^n_\theta)$. Denote by $\cS(\Z^n)$ the space of rapid-decay sequences with complex entries. In terms of the Fourier series decomposition~(\ref{eq:NCtori.Fourier-series-u}) we have
\begin{equation*}
 C^\infty(\T^n_\theta)=\bigg\{ u=\sum_{k\in \Z^n} u_k U^k; (u_k)_{k\in \Z^n}\in  \cS(\Z^n)\bigg\}. 
\end{equation*}
For $\theta=0$ we recover the algebra $C^\infty(\T^n)$ of smooth functions on the ordinary torus $\T^n$ and the Fourier-series description of this algebra. 

For $j=1,\ldots, n$, let $\partial_j:C^\infty(\T^n_\theta)\rightarrow C^\infty(\T^n_\theta)$ be the  derivation defined by 
\begin{equation*}
 \partial_j(u) =\partial_{s_j} \alpha_s(u)|_{s=0}, \qquad u\in C^\infty(\T^n_\theta), 
\end{equation*}
For $\theta=0$ the derivation $\partial_j$ is just the derivation $\frac{\partial}{\partial x_j}$ on $C^\infty(\T^n)$. For $j,l=1,\ldots, n$, we have
\begin{equation*}
 \partial_j(U_l) = \left\{ 
 \begin{array}{ll}
 iU_j & \text{if $l=j$},\\
 0 & \text{if $l\neq j$}. 
\end{array}\right.
\end{equation*}
Given any multi-order $\beta \in \N_0^n$,  we set $\delta^\beta= \delta_1^{\beta_1} \cdots \delta_n^{\beta_n}$, wheres $\delta_j=\frac{1}{i}\partial_j$. 
We endow $C^\infty(\T^n_\theta)$ with the locally convex topology defined by the semi-norms,
\begin{equation}
 C^\infty(\T^n_\theta) \ni u \longrightarrow \left\|\delta^\beta (u)\right\| ,  \qquad \beta\in \N_0^n. 
\label{eq:NCtori.cAtheta-semi-norms}
\end{equation}
With the involution inherited from $C(\T^n_\theta)$ this turns $C^\infty(\T^n_\theta)$ into a (unital)  Fr\'echet $*$-algebra. The Fourier series~(\ref{eq:NCtori.Fourier-series-u}) of every $u\in C^\infty(\T^n_\theta)$ converges in $C^\infty(\T^n_\theta)$ with respect to this topology. In addition, it can be shown that $C^\infty(\T^n_\theta)$ is closed under holomorphic functional calculus (see, e.g., \cite{Co:AIM81, HLP:IJM19a}). 

\subsection{$L_p$-Spaces} 
The $L_p$-spaces of $\T^n_\theta$ are special instances of noncommutative $L_p$ spaces associated with a semi-finite faithful normal trace on a von Neumann algebra~\cite{Ku:TAMS58, Se:AM53} (see also~\cite{FK:PJM86, Ne:JFA74}). We refer to~\cite{FK:PJM86, Ku:TAMS58} for the main background on noncommutative $L_p$-spaces needed in this paper. 

By definition $L_\infty(\T^n_\theta)$ is a von Neumann algebra of bounded operators on $L_2(\T^n)$. Thus, a closed densely defined operator $T$ on $L_2(\T^n)$ is $L_\infty(\T^n_\theta)$-affiliated if every unitary in the commutant of $L_{\infty}(\T^n_\theta)$ in $\sL(L_2(\T^n))$ preserves the domain of $T$ and commutes with $T$. Furthermore, as $\tau$ is a finite faithful positive trace on $L_\infty(\T_\theta^n)$ every such operator is $\tau$-measurable in the sense of~\cite{FK:PJM86, Ne:JFA74}. Therefore, these operators form a $*$-algebra, where the sum and product of such operators are meant as the closures of their usual sum and product in the sense of unbounded operators (see~\cite{Ne:JFA74}). 

The space $L_1(\T^n_\theta)$ consists of all $L_\infty(\T^n_\theta)$-affiliated operators $x$ on $L_2(\T^n)$ such that
\begin{equation*}
 \tau\left(|x|\right):=\int_0^\infty \lambda d\tau(E_\lambda)<\infty, 
\end{equation*}
where $E_\lambda=\car_{[0,\lambda]}(|x|)$ is the spectral measure of $|x|$. We obtain a Banach space upon equipping $L_1(\T^n_\theta)$ with the norm, 
\begin{equation*}
 \|x\|_{L_1}:= \tau\big(|x|\big), \qquad x \in L_1(\T^n_\theta). 
\end{equation*}
We have a continuous inclusion with dense range of $L_\infty(\T^n_\theta)$ into $L_1(\T^n_\theta)$. The trace $\tau$ uniquely extends to a unitarily invariant continuous linear functional on $L_1(\T^n_\theta)$ such that
\begin{equation*}
    |\tau(x)|\leq \tau(|x|) = \|x\|_1\qquad \forall x \in L_1(\T^n_\theta).
\end{equation*}

For $p>1$, the space $L_p(\T^n_\theta)$ consists of all $L_\infty(\T^n_\theta)$-affiliated operators $x$ on $L_2(\T^n)$ such that $|x|^p\in L_1(\T^n_\theta)$. This is a Banach space with respect to the norm, 
\begin{equation*}
 \|x\|_{L_p}:= \tau(|x|^p)^{1/p},\qquad x \in L_p(\T^n_\theta).
\end{equation*}
We also have a continuous inclusion with dense range of $L_\infty(\T^n_\theta)$ into $L_p(\T^n_\theta)$.  In particular, for $p=2$ the above definition is consistent with the previous definition of $L_2(\T^n_\theta)$, since in both cases we get the completion of $L_\infty(\T^n_\theta)$ with respect to the same norm. 

We have the following version of H\"older's inequality. 

\begin{proposition}[\cite{FK:PJM86, Ku:TAMS58}]\label{prop:Holder}  Suppose that $p^{-1}+q^{-1}=r^{-1}\leq 1$. If $x\in L_p(\T^n_\theta)$ and $y\in L_q(\T^n_\theta)$, then $xy\in L_r(\T^n_\theta)$ with norm inequality, 
\begin{equation}
 \|xy\|_{L_r} \leq \|x\|_{L_p} \|y\|_{L_q}. 
\label{eq:Holder-Lp} 
\end{equation}
\end{proposition}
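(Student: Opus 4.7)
The plan is to deduce the noncommutative H\"older inequality from its classical counterpart on $(0,\infty)$ via the generalized singular value function. For any $\tau$-measurable operator $z$ affiliated with $L_\infty(\T^n_\theta)$, one introduces
\begin{equation*}
 \mu_t(z) := \inf\bigl\{ s \geq 0;\ \tau\bigl(\car_{(s,\infty)}(|z|)\bigr) \leq t \bigr\}, \qquad t \geq 0,
\end{equation*}
which is non-increasing and right-continuous on $[0,\infty)$ and satisfies the integral representation
\begin{equation*}
 \|z\|_{L_p}^p = \int_0^\infty \mu_t(z)^p\, dt, \qquad 1 \leq p < \infty.
\end{equation*}
Since $\tau(1)=1$ is finite, every element of $L_\infty(\T^n_\theta)$ is $\tau$-measurable and the $L_p$-spaces involved consist of $\tau$-measurable operators for which the integrand has finite integral.

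The central step will be to establish the weak submajorization
\begin{equation*}
 \int_0^t \mu_s(xy)\, ds \leq \int_0^t \mu_s(x)\mu_s(y)\, ds \qquad \forall t > 0.
\end{equation*}
I would deduce this from the pointwise bound $\mu_{s+u}(xy) \leq \mu_s(x)\mu_u(y)$, obtained by choosing spectral projections $e$ of $|x|$ and $f$ of $|y|$ with $\tau(1-e)\leq s$, $\tau(1-f)\leq u$, $\|xe\|\leq \mu_s(x)$, $\|yf\|\leq \mu_u(y)$, and then exhibiting a projection of trace-deficit at most $s+u$ that witnesses the singular value of $xy$ through the product $xe\cdot yf$ (after intersecting $f$ with the range projection of $e$). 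Integrating this pointwise inequality and using monotonicity of $\mu_\cdot$ gives the displayed weak submajorization.

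Finally, since $s \mapsto s^r$ is convex and non-decreasing on $[0,\infty)$ for $r \geq 1$, a standard Hardy-Littlewood-P\'olya lemma promotes the weak submajorization to the $L_r$-level inequality
\begin{equation*}
 \int_0^\infty \mu_s(xy)^r\, ds \leq \int_0^\infty \mu_s(x)^r \mu_s(y)^r\, ds.
\end{equation*}
Applying the classical H\"older inequality on $(0,\infty)$ with conjugate exponents $p/r$ and $q/r$, which are admissible since $r/p+r/q=1$, bounds the right-hand side by $\|x\|_{L_p}^r\|y\|_{L_q}^r$, and~\eqref{eq:Holder-Lp} follows upon extracting an $r$-th root. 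The main obstacle is the weak submajorization step, where noncommutativity of the spectral projections of $|x|$ and $|y|$ genuinely intervenes; everything else is a direct translation of the commutative argument.
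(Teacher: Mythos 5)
The paper offers no proof of this proposition; it is simply cited to [FK:PJM86, Ku:TAMS58], so there is no in-text argument to compare against. Your route through the generalized singular value function $\mu_t$ is the natural one, and the last two steps are sound: once the weak submajorization $\int_0^t \mu_s(xy)\,ds \leq \int_0^t \mu_s(x)\mu_s(y)\,ds$ is in hand, the Hardy--Littlewood--P\'olya lemma applied to $t\mapsto t^r$ and then the scalar H\"older inequality on $(0,\infty)$ with exponents $p/r$, $q/r$ indeed give~\eqref{eq:Holder-Lp}.

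The gap is the passage from the pointwise submultiplicativity $\mu_{s+u}(xy)\leq\mu_s(x)\mu_u(y)$ to the weak submajorization. ``Integrating the pointwise inequality'' does not produce it: setting $u=\theta s$, rescaling, and optimizing over $\theta\in(0,1)$ only yields $\|xy\|_{L_r}\leq C(p,q)\,\|x\|_{L_p}\|y\|_{L_q}$ with a constant $C(p,q)>1$ (e.g.\ $C=2^{1/r}$ when $p=q=2r$, and no choice of $\theta$ removes the excess). More basically, there exist non-increasing non-negative $f,g,h$ with $h(s+u)\leq f(s)g(u)$ for all $s,u\geq 0$ for which $\int_0^t h > \int_0^t fg$: take $f(s)=g(s)=h(s)=2^{-s}$, so the pointwise constraint is tight, yet
\begin{equation*}
\int_0^t 2^{-s}\,ds = \frac{1-2^{-t}}{\ln 2} > \frac{1-2^{-2t}}{2\ln 2} = \int_0^t 2^{-2s}\,ds \qquad (t>0),
\end{equation*}
since $1+2^{-t}<2$. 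So the submajorization for products of generalized $s$-numbers is not a consequence of the pointwise bound plus monotonicity; it is the noncommutative Horn--Weyl theorem, which in the matrix case goes through antisymmetric powers (the multiplicative inequality $\prod_{j<n}\mu_j(AB)\leq\prod_{j<n}\mu_j(A)\mu_j(B)$ via $\|\Lambda^n C\|=\prod_{j<n}\mu_j(C)$) followed by Weyl's convexity lemma, and whose semifinite analogue requires a separate argument. You should either supply that step, or switch to a route that avoids it, such as first proving the trace inequality $|\tau(xy)|\leq\|x\|_{L_p}\|y\|_{L_{p'}}$ and then deducing~\eqref{eq:Holder-Lp} from the norm-duality $\|z\|_{L_r}=\sup\{|\tau(zw)|;\ \|w\|_{L_{r'}}\leq 1\}$.
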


This implies that, for every $p\geq 1$, the left multiplication of $L_\infty(\T_\theta^n)$ on itself uniquely extends to an isometric representation, 
\begin{equation*}
\lambda: L_\infty(\T^n_\theta) \longrightarrow \sL\big(L_p(\T^n_\theta)\big).
\end{equation*}
For $p=2$ we recover the $*$-representation~(\ref{eq:left.multiplier.representation}). 

\subsection{Sobolev spaces} Given any $s\geq 0$, the Sobolev space $W_2^s(\T^n_\theta)$ is defined by 
\begin{equation}\label{sobolev_space_definition}
 W_2^s(\T^n_\theta):=\Big\{u =\sum_{k\in \Z^n} u_kU^k\in L_2(\T^n_\theta); \sum_{k\in \Z^n} (1+|k|^2)^s|u_k|^2<\infty\Big\}. 
\end{equation}
This is a Hilbert space with respect to the inner product and norm, 
\begin{equation}\label{eq:sobolev.norm.definition}
 \scal{u}{v}_s=\sum_{k\in \Z^n} (1+|k|^2)^{s}u_k\overline{v}_k, \qquad \|u\|_{W_2^s}=\bigg(\sum_{k\in \Z^n}  (1+|k|^2)^s|u_k|^2\bigg)^{\frac12}. 
\end{equation}
Note that $W_2^0(\T^n_\theta)=L_2(\T^n_\theta)$. 

Equivalently, let $\Delta=-(\partial_1^2+\cdots + \partial_n^2)$ be the Laplacian on $\T^n_\theta$. This is a non-negative selfadjoint operator on $L_2(\T^n_\theta)$ with domain $W_2^2(\T^n_\theta)$. We have 
\begin{equation*}
 \Delta\big(U^k)=|k|^2U^k, \qquad k\in \Z^n. 
\end{equation*}
In particular, $\Delta$ is isospectral to the Laplacian on the ordinary torus $\T^n$. Set $\Lambda=(1+\Delta)^{\frac12}$. Given any $s\geq 0$, we have 
\begin{equation*}
 W_2^s(\T^n_\theta):=\Big\{u\in L_2(\T^n_\theta);\ \Lambda^su\in L_2(\T^n_\theta)\Big\}, \qquad \|u\|_{W_2^s}=\|\Lambda^su\|_{W_2^0}. 
\end{equation*}
In particular, $\Lambda^s:W_2^s(\T^n_\theta)\rightarrow L_2(\T^n_\theta)$ is a unitary isomorphism.  Note also (see~\cite{Sp:Padova92, XXY:MAMS18}) that, given an integer $p\geq 0$, we have
\begin{equation*}
 W_2^p(\T^n_\theta)= \Big\{u\in L_2(\T^n_\theta); \ \delta^\alpha u\in L_2(\T^n_\theta) \  \ \forall \alpha \in \N_0^n, \, |\alpha|\leq p\big\}. 
\end{equation*}

We mention the following versions of Sobolev's embedding theorems. 

 \begin{proposition}[see~{\cite[Theorem 6.6]{XXY:MAMS18}}] \label{prop:Sobolev-embeddingLp} 
 Let $p\in [2,\infty)$. For every $s\geq n(1/2-p^{-1})$, we have a continuous inclusion of $W_2^s(\T^n_\theta)$ into $L_p(\T^n_\theta)$. This inclusion is compact if the inequality is strict. 
 \end{proposition}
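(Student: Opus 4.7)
The norm identity $\|u\|_{W_2^s}=\|\Lambda^s u\|_{L_2}$ makes $\Lambda^s$ a unitary from $W_2^s(\T^n_\theta)$ onto $L_2(\T^n_\theta)$, so setting $v=\Lambda^s u$ reduces the claimed continuous embedding $W_2^s(\T^n_\theta)\hookrightarrow L_p(\T^n_\theta)$ to the noncommutative Riesz-potential estimate
\begin{equation*}
 \bigl\|\Lambda^{-s}v\bigr\|_{L_p}\leq C\|v\|_{L_2}\qquad \forall v\in L_2(\T^n_\theta),\quad s=n\bigl(\tfrac12-\tfrac1p\bigr).
\end{equation*}
The natural route is complex interpolation between two endpoints.

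The first endpoint is trivial: at $p=2$ and $s=0$, $\Lambda^0=\op{id}$. For the second, fix any $s_0>n/2$; expanding $\Lambda^{-s_0}v=\sum_{k\in\Z^n}(1+|k|^2)^{-s_0/2}v_k U^k$ and combining $\|U^k\|_{L_\infty}=1$ with Cauchy--Schwarz gives
\begin{equation*}
 \bigl\|\Lambda^{-s_0}v\bigr\|_{L_\infty}\leq \sum_k (1+|k|^2)^{-s_0/2}|v_k|\leq \Bigl(\sum_k (1+|k|^2)^{-s_0}\Bigr)^{1/2}\|v\|_{L_2},
\end{equation*}
which is finite precisely because $s_0>n/2$. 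Applying Stein's interpolation theorem to the analytic family $z\mapsto\Lambda^{-z}$ between these two endpoints (using H\"older for the noncommutative $L_p$-scale, Proposition~\ref{prop:Holder}) yields boundedness of $\Lambda^{-s}:L_2\to L_p$ for every $s>n(1/2-1/p)$, proving the continuous inclusion in the strict-inequality range.

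For compactness when $s>n(1/2-1/p)$, pick an intermediate exponent $s'\in(n(1/2-1/p),s)$ and let $P_N$ be the spectral projection of $\Delta$ onto frequencies $\{|k|\leq N\}$. Each $\Lambda^{-s}P_N$ is finite-rank, and the factorization $\Lambda^{-s}(1-P_N)=\Lambda^{-s'}\circ\bigl(\Lambda^{-(s-s')}(1-P_N)\bigr)$, together with boundedness of $\Lambda^{-s'}:L_2\to L_p$ from the preceding step and the elementary spectral bound $\|\Lambda^{-(s-s')}(1-P_N)\|_{L_2\to L_2}\leq (1+N^2)^{-(s-s')/2}$, forces $\|\Lambda^{-s}(1-P_N)\|_{L_2\to L_p}\to 0$. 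Hence the embedding is a norm-limit of compact operators, and is therefore compact.

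The sole obstacle is the borderline exponent $s=n(1/2-1/p)$. The interpolation scheme above gives only strict inequality, and one cannot let $s_0\downarrow n/2$ because the $L_\infty$-endpoint bound collapses (the series $\sum(1+|k|^2)^{-n/2}$ diverges). To reach the critical exponent one must replace the $L_\infty$-endpoint by a noncommutative Hardy-BMO endpoint; this is precisely where the noncommutative Littlewood--Paley and Hardy-space machinery for $\T^n_\theta$ developed in~\cite{XXY:MAMS18} is invoked, and it is the content of the Theorem~6.6 that is quoted here.
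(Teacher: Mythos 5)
The paper itself gives no proof of this proposition—it is quoted directly from~\cite[Theorem 6.6]{XXY:MAMS18}—so there is no internal argument to compare against; what matters is whether your argument actually establishes the stated result.

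Your treatment of the strict-inequality range $s>n(1/2-1/p)$ is correct: the reduction to boundedness of $\Lambda^{-s}\colon L_2\to L_p$ via unitarity of $\Lambda^s$, the $L_\infty$-endpoint estimate by Cauchy--Schwarz (valid for any $s_0>n/2$), Stein interpolation on the family $z\mapsto\Lambda^{-s_0z}$ (using that $\Lambda^{-it}$ is unitary on $L_2$ and that the noncommutative $L_p$-scale interpolates in the standard way), and the finite-rank approximation by spectral projections $P_N$ for compactness all go through. The trouble is that the proposition asserts continuity at the \emph{critical} exponent $s=n(1/2-1/p)$, and—as you correctly observe—the interpolation scheme degenerates there because the $L_\infty$ endpoint requires $s_0>n/2$ strictly and the bound $\bigl(\sum_k(1+|k|^2)^{-s_0}\bigr)^{1/2}$ blows up as $s_0\downarrow n/2$. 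Your closing remark that the critical case "is precisely\ldots the content of the Theorem~6.6 that is quoted here" acknowledges the gap but does not close it: it reduces the missing case to the very theorem being proved. This is not a negligible corner of the statement, since the paper invokes the critical embedding $W_2^{n/p}(\T^n_\theta)\hookrightarrow L_q(\T^n_\theta)$ (with $p^{-1}+q^{-1}=1/2$ and $p>2$) explicitly in Lemma~\ref{lem:Cwikel.boundedness-lambda} and hence throughout the Cwikel estimates. So the proposal is genuinely incomplete: it proves sub-critical continuity and compactness but leaves the critical embedding unproved; filling the gap really does require the noncommutative Littlewood--Paley/Hardy-space machinery of~\cite{XXY:MAMS18} (or a noncommutative analogue of Hardy--Littlewood--Sobolev), which is exactly why the paper cites rather than reproves the result.
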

 
\begin{proposition}[see {\cite[Appendix A]{HLP:IJM19b}}] \label{prop:Sobolev-embeddingLpC0}
 For any $s>n/2$, we have a compact inclusion of $W^s_2(\T^n_\theta)$ into $C(\T^n_\theta)$. 
\end{proposition}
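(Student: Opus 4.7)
My plan is to reduce the statement to a standard estimate on Fourier coefficients, exactly as in the commutative case, then conclude compactness by approximating the inclusion with finite-rank projections.

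First, for continuity, I would represent $u \in W^s_2(\T^n_\theta)$ by its Fourier series $u=\sum_{k\in\Z^n} u_kU^k$. Since each $U^k$ is a unitary in $C(\T^n_\theta)$, I would bound the $C(\T^n_\theta)$-norm by $\sum_{k\in\Z^n}|u_k|$ and then apply the Cauchy--Schwarz inequality:
\begin{equation*}
 \sum_{k\in\Z^n}|u_k| \leq \bigg(\sum_{k\in\Z^n}(1+|k|^2)^{-s}\bigg)^{\frac12}\bigg(\sum_{k\in\Z^n}(1+|k|^2)^{s}|u_k|^2\bigg)^{\frac12}=C_s\,\|u\|_{W_2^s},
\end{equation*}
where $C_s<\infty$ precisely because $s>n/2$ makes the lattice sum $\sum_{k\in\Z^n}(1+|k|^2)^{-s}$ convergent. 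In particular, this also shows that the partial sums of the Fourier series converge in $C(\T^n_\theta)$, so the sum indeed defines an element of $C(\T^n_\theta)$ and the inclusion $W_2^s(\T^n_\theta)\hookrightarrow C(\T^n_\theta)$ is bounded.

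For the compactness, I would introduce, for each integer $N\geq 1$, the finite-rank truncation operator $\Pi_N: W_2^s(\T^n_\theta)\rightarrow C(\T^n_\theta)$ defined by $\Pi_N u =\sum_{|k|\leq N}u_kU^k$. Repeating the Cauchy--Schwarz estimate on the complementary tail gives
\begin{equation*}
 \|u-\Pi_N u\|_{C(\T^n_\theta)} \leq \bigg(\sum_{|k|>N}(1+|k|^2)^{-s}\bigg)^{\frac12}\|u\|_{W_2^s},
\end{equation*}
and the prefactor tends to $0$ as $N\rightarrow\infty$ by the same integrability that gave continuity. Thus the inclusion is the operator-norm limit of the finite-rank maps $\Pi_N$, and hence is compact.

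I do not expect any real obstacle here: the only ingredient beyond routine manipulation of Fourier series is the observation that $(U^k)_{k\in\Z^n}$ is an orthonormal basis of $L_2(\T^n_\theta)$ made of unitaries of $C(\T^n_\theta)$, so the noncommutativity of $\theta$ plays no role in these scalar sum estimates. The argument is a verbatim transcription of the classical Sobolev embedding $H^s(\T^n)\hookrightarrow C(\T^n)$ for $s>n/2$, with the elementary inequality $\|U^k\|_{C(\T^n_\theta)}=1$ replacing $\|e^{ik\cdot x}\|_\infty=1$.
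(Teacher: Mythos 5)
Your proof is correct, and since the paper only cites this result from~\cite[Appendix A]{HLP:IJM19b} without reproducing the argument, there is no in-text proof to compare against; the Cauchy--Schwarz estimate on Fourier coefficients followed by finite-rank approximation is exactly the expected argument, the same one used classically for $H^s(\T^n)\hookrightarrow C(\T^n)$, and it transfers verbatim since $(U^k)_{k\in\Z^n}$ is an orthonormal basis of $L_2(\T^n_\theta)$ consisting of unitaries of $C(\T^n_\theta)$.
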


\subsection{Pseudodifferential Operators on NC Tori}\label{sec:psdo_applications}
The pseudodifferential calculus on noncommutative tori is a special case of the pseudodifferential calculus for $C^*$-dynamical systems of Connes~\cite{Co:CRAS80} and Baaj~\cite{Ba:CRAS88}. A detailed account on the calculus on NC tori can be found in~\cite{HLP:IJM19a, HLP:IJM19b} (see also~\cite{Ta:JPCS18}). 

\begin{definition}[Standard Symbols; see~\cite{Ba:CRAS88, Co:CRAS80}]
$\bS^m (\T^n_\theta\times\R^n)$, $m\in\R$, consists of $C^\infty$-maps $\rho:\R^n\rightarrow C^\infty(\T^n_\theta)$ such that, for all multi-orders $\alpha$ and $\beta$, we have
\begin{equation} 
\label{eq:Symbols.standard-estimates}
\big\|\delta^\alpha \partial_\xi^\beta \rho(\xi)\big\| \leq C_{\alpha \beta} \left( 1 + | \xi | \right)^{m - | \beta |} \qquad \forall \xi \in \R^n .
\end{equation}
\end{definition}

\begin{remark}
 $\bS^m (\T^n_\theta\times\R^n)$ is a Fr\'echet space with respect to the topology generated by the semi-norms given by the best constants $C_{\alpha\beta}$ in~(\ref{eq:Symbols.standard-estimates}) (see~\cite{HLP:IJM19a}). 
\end{remark}

\begin{definition}[Homogeneous Symbols] 
$S_m (\T^n_\theta\times\R^n)$, $m \in \R$, consists of  $C^\infty$-maps $\rho:\R^n\backslash 0\rightarrow C^\infty(\T^n_\theta)$ that are homogeneous of degree $m$, i.e., 
$\rho( t \xi ) =t^m \rho(\xi)$ for all $\xi \in \R^n \backslash 0$ and $t > 0$. 
\end{definition}

\begin{definition}[Classical Symbols; see \cite{Ba:CRAS88}]\label{def:Symbols.classicalsymbols}
$S^m (\T^n_\theta\times\R^n )$, $m \in \R$, consists of $C^\infty$-maps $\rho:\R^n\rightarrow C^\infty(\T^n_\theta)$ that admit an asymptotic expansion,
\begin{equation*}
\rho(\xi) \sim \sum_{j \geq 0} \rho_{q-j} (\xi),  \qquad \rho_{q-j} \in S_{q-j} (\T^n_\theta\times\R^n ), 
\end{equation*}
where $\sim$ means that, for all $N\geq 0$ and multi-orders $\alpha$, $\beta$, we have
\begin{equation} \label{eq:Symbols.classical-estimates}
\Big\| \delta^\alpha \partial_\xi^\beta \big( \rho - \sum_{j<N} \rho_{q-j} \big)(\xi)\Big\| \leq C_{N\alpha\beta} | \xi |^{q-N-| \beta |} , 
\end{equation}
for all $\xi \in \R^n$ with $| \xi | \geq 1$. 
\end{definition}

\begin{remark} \label{rmk:Symbols.classical-inclusion}
We have $S^m(\T^n_\theta\times\R^n)\subset \bS^{m}(\T^n_\theta\times\R^n)$. 
\end{remark}

\begin{example}
 Any polynomial map of the form $\rho(\xi)=\sum_{|\alpha|\leq m} a_\alpha \xi^\alpha$, $a_\alpha\in C^\infty(\T^n_\theta)$, $m\in \N_0$, is in $S^m(\T^n_\theta\times\R^n)$.  
\end{example}

Given $\rho(\xi)\in \bS^m(\T^n_\theta\times\R^n)$, $m\in \R$. We let $P_\rho:C^\infty(\T^n_\theta) \rightarrow C^\infty(\T^n_\theta)$ be the linear operator defined by
\begin{equation} \label{eq:PsiDOs.PsiDO-definition}
P_\rho u = (2\pi)^{-n}\iint e^{is\cdot\xi}\rho(\xi)\alpha_{-s}(u)ds d\xi, \qquad u \in C^\infty(\T^n_\theta). 
\end{equation}
The above integral is meant as an oscillating integral (see~\cite{HLP:IJM19a}). Equivalently, for all $u=\sum_{k\in \Z^n} u_k U^k$ in $C^\infty(\T^n_\theta)$,  we have
\begin{equation} \label{eq:toroidal.Prhou-equation}
                P_{\rho}u = \sum_{k\in \Z^n} u_k \rho(k)U^k.  
\end{equation}
In any case, we get a continuous linear operator $P_\rho:C^\infty(\T^n_\theta) \rightarrow C^\infty(\T^n_\theta)$ (see~\cite{HLP:IJM19a}).

\begin{definition}
$\Psi^m(\T^n_\theta)$,  $m\in \R$, consists of all linear operators $P_\rho:C^\infty(\T^n_\theta)\rightarrow C^\infty(\T^n_\theta)$ with $\rho(\xi)$ in $S^m(\T^n_\theta\times\R^n)$.
If $\rho \in S^m(\T^n_\theta\times \R^n)$ is such that $P=P_\rho$ and $\rho \sim \sum_{j\geq 0} \rho_{q-j}$ where $\rho_q$ is non-zero, then $\rho_q$ is called the principal symbol of $P.$
\end{definition}
While the symbol $\rho$ is not determined uniquely by the operator $P_{\rho},$ the principal symbol $\rho_q$ is, see \cite[Remark 5.7]{HLP:IJM19b}.

\begin{example}\label{ex:PsiDO.Do-PsiDO}
 A differential operator on $\T^n_\theta$ is of the form $P=\sum_{|\alpha|\leq m}\lambda(a_\alpha)\delta^\alpha$, $a_\alpha\in C^\infty(\T^n_\theta)$ (see~\cite{Co:CRAS80, Co:NCG}). This is a \psido\ of order $m$ with symbol $\rho(\xi)= \sum_{|\alpha|\leq m} a_\alpha \xi^\alpha$ and principal symbol $\rho_m(\xi) = \sum_{|\alpha|=m} a_{\alpha}\xi^{\alpha}$ (see~\cite{HLP:IJM19a}). Note that for $m=0$ we get left-multiplication operators $\lambda(a)$ with $a\in C^\infty(\T^n_\theta)$. 
\end{example}

\begin{example}
 Given any $s\in \R$, we have $\Lambda^{s}=P_\rho$, with $\rho(\xi)=(1+|\xi|^2)^{s/2} \in S^{s}(\T^n_\theta\times\R^n)$. 
\end{example}

Suppose we are given symbols $\rho_1(\xi)\in \bS^{m_1}(\T^n_\theta\times\R^n)$, $m_1\in\R$, and $\rho_2(\xi)\in\bS^{m_2}(\T^n_\theta\times\R^n)$, $m_2\in\R$. As $P_{\rho_1}$ and $P_{\rho_2}$ are linear operators on $C^\infty(\T^n_\theta)$, the composition $P_{\rho_1}P_{\rho_2}$ makes sense as such an operator.  
In addition, we define the map $\rho_1\sharp\rho_2:\R^n\rightarrow C^\infty(\T^n_\theta)$ by
\begin{equation} \label{eq:Composition.symbol-sharp}
\rho_1\sharp\rho_2(\xi) = (2\pi)^{-n}\iint e^{it\cdot\eta}\rho_1(\xi+\eta)\alpha_{-t}[\rho_2(\xi)]dt d\eta , \qquad \xi\in \R^n , 
\end{equation}
where the above integral is meant as an oscillating integral (see~\cite{Ba:CRAS88, HLP:IJM19b}). 

\begin{proposition}[see \cite{Ba:CRAS88, Co:CRAS80, HLP:IJM19b}] \label{prop:Composition.sharp-continuity-standard-symbol}
For $j=1,2$ let $\rho_j(\xi)\in \bS^{m_j}(\T^n_\theta\times\R^n)$, $m_j\in \R$. 
\begin{enumerate}
 \item $\rho_1\sharp\rho_2(\xi)\in \bS^{m_1+m_2}(\T^n_\theta\times\R^n)$ and $ \rho_1\sharp\rho_2(\xi) \sim \sum\frac{1}{\alpha !}\partial_\xi^\alpha\rho_1(\xi)\delta^\alpha\rho_2(\xi)$. 

 \item The operators $P_{\rho_1}P_{\rho_2}$ and $P_{\rho_1\sharp \rho_2}$ agree.           
\end{enumerate}
\end{proposition}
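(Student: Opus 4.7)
The plan is to prove the two claims in order, treating part~(2) as the operator identity from which the composition law is read off, and part~(1) as the symbolic consequence obtained by Taylor expansion inside the oscillating integral that defines $\rho_1 \sharp \rho_2$.

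For part~(2), I would start from the defining oscillating integral~\eqref{eq:PsiDOs.PsiDO-definition} for $P_{\rho_2}$ and then apply $P_{\rho_1}$ once more. Using the fact that $\alpha_{-t}$ is a continuous $\ast$-homomorphism commuting with scalar functions and satisfying $\alpha_{-t}\alpha_{-s} = \alpha_{-(t+s)}$, one obtains
\begin{equation*}
 P_{\rho_1}P_{\rho_2}u = (2\pi)^{-2n}\iiiint e^{it\cdot\eta} e^{is\cdot\xi}\rho_1(\eta)\alpha_{-t}\!\left[\rho_2(\xi)\right] \alpha_{-(s+t)}(u)\,ds\,d\xi\,dt\,d\eta.
\end{equation*}
Performing the changes of variables $s \mapsto s - t$ and $\eta \mapsto \eta + \xi$ (justified at the level of oscillating integrals, cf.~\cite{HLP:IJM19b}) factors the integral so that the inner $(t,\eta)$-integral produces exactly $\rho_1\sharp\rho_2(\xi)$ as defined in~\eqref{eq:Composition.symbol-sharp}, and the remaining $(s,\xi)$-integral reconstructs $P_{\rho_1\sharp\rho_2}u$.

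For part~(1), I would first show that the $(t,\eta)$-oscillating integral~\eqref{eq:Composition.symbol-sharp} defines a smooth $C^\infty(\T^n_\theta)$-valued function of $\xi$ lying in $\bS^{m_1+m_2}(\T^n_\theta\times\R^n)$. The standard device is to split the integrand using a cut-off in $\eta$: for $|\eta|\leq \tfrac12 |\xi|$ one has $|\xi+\eta|\simeq |\xi|$ and direct estimation gives the required decay, while for $|\eta|\geq \tfrac12|\xi|$ one integrates by parts in $t$ using $(1-\Delta_\eta)^Ne^{it\cdot\eta} = (1+|t|^2)^N e^{it\cdot\eta}$ and in $\eta$ using $(1-\Delta_t)^Ne^{it\cdot\eta} = (1+|\eta|^2)^N e^{it\cdot\eta}$ to produce arbitrary decay in $(t,\eta)$. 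Differentiating in $\xi$ or applying $\delta^\gamma$ inside the integral is harmless and just shifts the symbol orders, giving the semi-norm estimates~\eqref{eq:Symbols.standard-estimates} for $\rho_1\sharp\rho_2$.

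To derive the asymptotic expansion, I would Taylor expand $\rho_1(\xi+\eta)$ in $\eta$ at order $N$,
\begin{equation*}
 \rho_1(\xi+\eta) = \sum_{|\alpha|<N} \frac{1}{\alpha!}\eta^{\alpha}\partial_\xi^\alpha \rho_1(\xi) + \sum_{|\alpha|=N}\frac{N}{\alpha!}\eta^\alpha \int_0^1 (1-s)^{N-1}\partial_\xi^\alpha \rho_1(\xi+s\eta)\,ds,
\end{equation*}
and substitute into~\eqref{eq:Composition.symbol-sharp}. For each principal term, the identity $\eta^\alpha e^{it\cdot\eta} = (-i)^{|\alpha|}\partial_t^\alpha e^{it\cdot\eta}$ together with integration by parts in $t$ and Fourier inversion $(2\pi)^{-n}\iint e^{it\cdot\eta}f(t)\,dt\,d\eta = f(0)$ converts the integral into $\delta^\alpha\rho_2(\xi)$, the factors $(-i)^{|\alpha|}(-1)^{|\alpha|}$ from the $t$-derivatives of $\alpha_{-t}$ combining into $(1/i)^{|\alpha|}=(-i)^{|\alpha|}$ to give precisely $\delta^\alpha$. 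The remainder term is handled by the same oscillating-integral technique as in the previous paragraph, yielding the required $\bS^{m_1+m_2-N}$-symbol estimates~\eqref{eq:Symbols.standard-estimates}; combined with Proposition~\ref{prop:Composition.sharp-continuity-standard-symbol}(1) for small terms, this finishes the asymptotic expansion.

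The main obstacle is the careful justification of the oscillating integral manipulations (interchange of integrations, integration by parts, and change of variables), which must be done in the Fréchet topology of $\bS^{m}(\T^n_\theta\times\R^n)$ rather than the scalar setting; but this is purely technical and is carried out in detail in~\cite{Ba:CRAS88, HLP:IJM19b}.
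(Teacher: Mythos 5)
This proposition is stated in the paper without proof; it is quoted as background from \cite{Ba:CRAS88, Co:CRAS80, HLP:IJM19b}, so there is no internal proof to compare against. Your argument reproduces the standard proof found in those references (notably \cite{HLP:IJM19b}): composing the two oscillating integrals, shifting $s\mapsto s-t$ and $\eta\mapsto\eta+\xi$ to isolate the $\sharp$-product in part~(2), and Taylor-expanding $\rho_1(\xi+\eta)$ inside the oscillating integral with a dyadic split in $\eta$ for part~(1). One small bookkeeping slip: the constant in front of $\partial^\alpha\rho_2(\xi)$ accumulates \emph{three} sign contributions, namely $(-i)^{|\alpha|}$ from $\eta^\alpha e^{it\cdot\eta}=(-i)^{|\alpha|}\partial_t^\alpha e^{it\cdot\eta}$, a factor $(-1)^{|\alpha|}$ from integration by parts in $t$, and another $(-1)^{|\alpha|}$ from $\partial_t^\alpha\alpha_{-t}=(-1)^{|\alpha|}\alpha_{-t}\circ\partial^\alpha$; the last two cancel, leaving $(-i)^{|\alpha|}\partial^\alpha=\delta^\alpha$ as desired. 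Your phrasing omits one of the two $(-1)^{|\alpha|}$ factors, but the conclusion and the overall method are correct.
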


\begin{corollary}\label{cor:PsiDOs.composition-classical}
For $j=1,2$, let $P_j\in \Psi^{m_j}(\T^n_\theta)$, $m_j\in \R$, have principal symbol $\rho_{m_j}(\xi)$. Then the composition  $P_1P_2$ is an operator in $\Psi^{m_1+m_2}(\T^n_\theta)$ whose principal symbol is $\rho_{m_1}(\xi)\rho_{m_2}(\xi)$. 
\end{corollary}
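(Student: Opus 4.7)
The plan is to deduce the corollary from Proposition~\ref{prop:Composition.sharp-continuity-standard-symbol} by refining the standard-symbol asymptotic expansion of $\rho_1\sharp\rho_2$ into a genuinely classical (i.e.\ homogeneous) asymptotic expansion, and then reading off the principal part.

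Step 1. By Remark~\ref{rmk:Symbols.classical-inclusion}, the classical symbols $\rho_j \in S^{m_j}(\T^n_\theta\times\R^n)$ lie in the standard class $\bS^{m_j}(\T^n_\theta\times\R^n)$. Proposition~\ref{prop:Composition.sharp-continuity-standard-symbol} then gives that $P_1P_2 = P_{\rho_1\sharp\rho_2}$, that $\rho_1\sharp\rho_2 \in \bS^{m_1+m_2}(\T^n_\theta\times\R^n)$, and that for every $N\geq 0$ the symbol
\begin{equation*}
r_N := \rho_1\sharp\rho_2 - \sum_{|\alpha|<N}\frac{1}{\alpha!}\partial_\xi^\alpha \rho_1\,\delta^\alpha \rho_2
\end{equation*}
belongs to $\bS^{m_1+m_2-N}(\T^n_\theta\times\R^n)$.

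Step 2. Write the classical asymptotic expansions $\rho_j \sim \sum_{l\geq 0} \rho_{j,m_j-l}$ with $\rho_{j,m_j-l}\in S_{m_j-l}(\T^n_\theta\times\R^n)$, and introduce the candidate classical components
\begin{equation*}
c_{m_1+m_2-N}(\xi) := \sum_{|\alpha|+k+l=N} \frac{1}{\alpha!}\,\partial_\xi^\alpha \rho_{1,m_1-k}(\xi)\,\delta^\alpha \rho_{2,m_2-l}(\xi), \qquad \xi\in\R^n\setminus 0.
\end{equation*}
Since $\partial_\xi^\alpha \rho_{1,m_1-k}$ is homogeneous of degree $m_1-k-|\alpha|$ and $\delta^\alpha \rho_{2,m_2-l}$ is homogeneous of degree $m_2-l$, each summand is homogeneous of degree $m_1+m_2-N$; hence $c_{m_1+m_2-N}\in S_{m_1+m_2-N}(\T^n_\theta\times\R^n)$. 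In particular the $N=0$ term is $c_{m_1+m_2}=\rho_{m_1}\rho_{m_2}$, the candidate principal symbol.

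Step 3. I would then verify that $\rho_1\sharp\rho_2 \sim \sum_{N\geq 0} c_{m_1+m_2-N}$ in the sense of Definition~\ref{def:Symbols.classicalsymbols}. Fix $N$ and write
\begin{equation*}
\rho_1\sharp\rho_2 - \sum_{j<N} c_{m_1+m_2-j} = r_M + \sum_{|\alpha|<M}\frac{1}{\alpha!}\Big(\partial_\xi^\alpha \rho_1\,\delta^\alpha \rho_2 - \sum_{k+l+|\alpha|<N}\partial_\xi^\alpha \rho_{1,m_1-k}\,\delta^\alpha \rho_{2,m_2-l}\Big),
\end{equation*}
for a large auxiliary integer $M$ to be chosen. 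Choosing $M\geq N$ makes the $\bS^{m_1+m_2-M}$-remainder $r_M$ satisfy the required classical estimate~\eqref{eq:Symbols.classical-estimates} at order $m_1+m_2-N$. For each $|\alpha|<M$, I truncate the homogeneous expansions of $\rho_1$ and $\rho_2$ at sufficiently high order so that the Leibniz product yields $\sum_{k+l+|\alpha|<N} \partial_\xi^\alpha \rho_{1,m_1-k}\,\delta^\alpha \rho_{2,m_2-l}$ plus a term satisfying the required $|\xi|^{m_1+m_2-N-|\beta|}$ estimate on $\{|\xi|\geq 1\}$. Collecting terms of the same total degree $m_1+m_2-j$ for $j<N$ reproduces the $c_{m_1+m_2-j}$. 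This shows $\rho_1\sharp\rho_2\in S^{m_1+m_2}(\T^n_\theta\times\R^n)$ and that its principal symbol is $\rho_{m_1}\rho_{m_2}$.

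The main bookkeeping obstacle is Step~3: Proposition~\ref{prop:Composition.sharp-continuity-standard-symbol} controls only the expansion in the Leibniz index $\alpha$, whereas classical remainders require simultaneous control in both the Leibniz index and in the homogeneous order of each factor. The cure is the standard two-parameter truncation above, together with the observation that homogeneous symbols of degree $d$ satisfy the classical estimates at order $d$, so truncating each factor at high enough order feeds into the $\alpha$-truncation uniformly.
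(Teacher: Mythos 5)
Your proof is correct; it supplies the standard two-parameter truncation argument that the paper itself leaves implicit, as Corollary~\ref{cor:PsiDOs.composition-classical} is stated without proof as an immediate consequence of Proposition~\ref{prop:Composition.sharp-continuity-standard-symbol} (itself cited from the references). The identity in Step~3 is valid once $M\geq N$, since then $k+l+|\alpha|<N$ forces $|\alpha|<M$, and the remainder estimate~\eqref{eq:Symbols.classical-estimates} on $\{|\xi|\geq 1\}$ for the leftover homogeneous terms of total order $\geq N$ follows because finitely many terms homogeneous of degree $\leq m_1+m_2-N$ are each $O(|\xi|^{m_1+m_2-N-|\beta|})$ there. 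One caveat, shared with the paper's statement: the conclusion that the principal symbol of $P_1P_2$ \emph{is} $\rho_{m_1}\rho_{m_2}$ tacitly assumes this product is nonzero (since $C^\infty(\T^n_\theta)$ need not be a domain); if it vanished, $P_1P_2$ would still lie in $\Psi^{m_1+m_2}(\T^n_\theta)$ but its order would drop, and the statement as worded would be vacuously incorrect. Your argument otherwise establishes exactly what is needed.
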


Given a linear operator $P: C^\infty(\T^n_\theta) \rightarrow C^\infty(\T^n_\theta)$, a formal adjoint is any linear operator $P^*: C^\infty(\T^n_\theta) \rightarrow C^\infty(\T^n_\theta)$ such that
\begin{equation*} 
\scal{P^*u}{v} =\scal{u}{Pv} \qquad \forall u,v \in C^\infty(\T^n_\theta). 
\end{equation*}
When it exists a formal adjoint is unique.

Let $\rho(\xi)\in\bS^m(\T^n_\theta\times \R^n)$, $m\in\R$, and set
\begin{equation} \label{eq:Adjoints.symbol-star}
\rho^\star(\xi) = (2\pi)^{-n} \iint e^{it\cdot\eta}\alpha_{-t}[\rho(\xi+\eta)^*]dtd\eta , \qquad \xi\in\R^n.
\end{equation}
where the integral is meant in the sense of oscillating integrals (see~\cite{HLP:IJM19b}). 

\begin{proposition}[{\cite{Ba:CRAS88, HLP:IJM19b}}] \label{prop:Adjoint}
Let $\rho\in\bS^m(\T^n_\theta\times \R^n)$. 
\begin{enumerate}
 \item $\rho^\star(\xi) \in \bS^m(\T^n_\theta\times \R^n)$ and $\rho^\star(\xi) \sim \sum \frac{1}{\alpha !}\delta^\alpha\partial_\xi^\alpha [\rho(\xi)^*]$. 
        
\item $P_{\rho^\star}$ is the formal adjoint of $P_\rho$. 
\end{enumerate}
\end{proposition}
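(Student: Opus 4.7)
\smallskip

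\noindent\textbf{Proof plan for Proposition~\ref{prop:Adjoint}.}
The two parts are linked: once the asymptotic expansion in~(1) is proved, part~(2) is an easy formal consequence, but it is cleaner to prove (2) directly from the oscillatory-integral definitions of $\rho^\star$ and $P_\rho$. My overall strategy is to imitate the classical (commutative) Kohn–Nirenberg proof: derive (1) by Taylor expansion in the $\eta$-variable of the oscillating integral~\eqref{eq:Adjoints.symbol-star} and then verify (2) by a direct Fubini/trace computation.

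\smallskip

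For part~(1), I would Taylor expand the family $\rho(\xi+\eta)^*\in C^\infty(\T^n_\theta)$ in $\eta$ at $\eta=0$ to order $N$:
\[
\rho(\xi+\eta)^*=\sum_{|\alpha|<N}\frac{\eta^\alpha}{\alpha!}\partial_\xi^\alpha\rho(\xi)^*+R_N(\xi,\eta),
\]
with an integral-form remainder $R_N$ controlled by $\partial_\xi^\beta\rho^*$ with $|\beta|=N$. Substituting this into~\eqref{eq:Adjoints.symbol-star}, each monomial term contributes
\[
\frac{1}{\alpha!}(2\pi)^{-n}\!\iint e^{it\cdot\eta}\eta^\alpha\alpha_{-t}\!\big[\partial_\xi^\alpha\rho(\xi)^*\big]\,dt\,d\eta.
\]
Writing $\eta^\alpha e^{it\cdot\eta}=D_t^\alpha e^{it\cdot\eta}$ with $D_t=-i\partial_t$, integrating by parts in $t$, and using the Fourier-inversion identity $(2\pi)^{-n}\iint e^{it\cdot\eta}f(t)\,dt\,d\eta=f(0)$ (legitimate for the oscillating integrals in the sense of~\cite{HLP:IJM19b}), one obtains $D_t^\alpha\alpha_{-t}[\partial_\xi^\alpha\rho(\xi)^*]\big|_{t=0}$, which equals $\delta^\alpha\partial_\xi^\alpha\rho(\xi)^*$ up to a sign absorbed in the convention $\delta_j=-i\partial_j$. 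This yields the formal series claimed. The remainder term must then be shown to lie in $\bS^{m-N}(\T^n_\theta\times\R^n)$: this reduces to applying standard oscillating-integral estimates (an integration by parts in $\eta$ using the operator $\langle\eta\rangle^{-2}(1-\Delta_t)$ and in $t$ using $\langle t\rangle^{-2}(1-\Delta_\eta)$) together with the symbol bounds~\eqref{eq:Symbols.standard-estimates} for $\rho$, exactly as in~\cite{HLP:IJM19b}. This gives the seminorm estimates on $\delta^\beta\partial_\xi^\gamma\rho^\star(\xi)$ and hence $\rho^\star\in\bS^m$.

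\smallskip

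For part~(2), take $u,v\in C^\infty(\T^n_\theta)$ and compute
\[
\scal{P_\rho u}{v}=\tau\!\big((P_\rho u)v^*\big)=(2\pi)^{-n}\iint e^{is\cdot\xi}\tau\!\big(\rho(\xi)\alpha_{-s}(u)v^*\big)\,ds\,d\xi.
\]
Using the trace property $\tau(ab)=\tau(ba)$ and the invariance $\tau\circ\alpha_s=\tau$, I would substitute $u=\alpha_s(\alpha_{-s}(u))$-free writing, change variables $s\mapsto -s$, $\xi\mapsto \xi+\eta$, and move the oscillating integral defining $\rho^\star$ against $v^*$. After this manipulation the integrand becomes $e^{is\cdot\xi}\tau(u\,\alpha_{-s}[\rho(\xi)^*]^* v^*\cdot\text{(phase)})$, and recognising the definition~\eqref{eq:Adjoints.symbol-star} of $\rho^\star$ collapses the double integral into $\tau(u(P_{\rho^\star}v)^*)=\scal{u}{P_{\rho^\star}v}$. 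The identity $\scal{P_\rho u}{v}=\scal{u}{P_{\rho^\star}v}$ then gives~(2).

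\smallskip

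The main obstacle is the rigorous handling of the oscillating integrals, in particular controlling the remainder in the Taylor expansion so as to obtain symbol estimates in $\bS^{m-N}$ with all derivatives, and justifying the Fubini-type interchanges in part~(2) when $\rho$ is not a Schwartz-class symbol. Both difficulties are standard in the pseudodifferential calculus on $C^*$-dynamical systems developed in~\cite{Ba:CRAS88, HLP:IJM19b}, and the proof will essentially transcribe those arguments to the present normalisations.
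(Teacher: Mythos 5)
The paper states Proposition~\ref{prop:Adjoint} with a citation to~\cite{Ba:CRAS88, HLP:IJM19b} and gives no proof, so there is no internal argument against which to compare your plan. On its own merits, your plan is exactly the standard Kohn--Nirenberg transcription used in those references: Taylor-expand $\rho(\xi+\eta)^*$ in $\eta$, evaluate each monomial term by the Fourier-inversion identity $(2\pi)^{-n}\iint e^{it\cdot\eta}f(t)\,dt\,d\eta=f(0)$ to recover $\frac{1}{\alpha!}\delta^\alpha\partial_\xi^\alpha\big[\rho(\xi)^*\big]$, regularize the oscillating integral with $\langle\eta\rangle^{-2}(1-\Delta_t)$ and $\langle t\rangle^{-2}(1-\Delta_\eta)$ to push the Taylor remainder into $\bS^{m-N}(\T^n_\theta\times\R^n)$, and establish part~(2) by a Fubini/trace computation using $\tau\circ\alpha_s=\tau$ and the tracial property. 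This is correct at the level of a sketch; the remaining work is the routine bookkeeping of the conventions (the paper's $\delta_j=\tfrac{1}{i}\partial_j$) and the symbol seminorm estimates for the remainder, which are precisely the arguments carried out in~\cite{HLP:IJM19b}.
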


\begin{corollary}[{\cite{Ba:CRAS88, HLP:IJM19b}}] 
If $P\in \Psi^{m}(\T^n_\theta)$, $m\in \R$, has principal symbol $\rho_m(\xi)$, then its formal adjoint $P^*$ is an operator in $\Psi^{m}(\T^n_\theta)$ whose principal symbol is $\rho_m(\xi)^*$. 
\end{corollary}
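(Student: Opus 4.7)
Write $P = P_\rho$ for some $\rho \in S^m(\T^n_\theta\times\R^n)$ with classical expansion $\rho \sim \sum_{j\geq 0}\rho_{m-j}$, where $\rho_{m-j}\in S_{m-j}(\T^n_\theta\times\R^n)$ and $\rho_m$ is the principal symbol of $P$. By Remark~\ref{rmk:Symbols.classical-inclusion} the symbol $\rho$ also lies in $\bS^m$, so Proposition~\ref{prop:Adjoint} applies: the formal adjoint $P^*$ equals $P_{\rho^\star}$ with $\rho^\star\in\bS^m(\T^n_\theta\times\R^n)$ satisfying
\begin{equation*}
\rho^\star(\xi)\;\sim\;\sum_{\alpha}\tfrac{1}{\alpha!}\,\delta^\alpha\partial_\xi^\alpha\bigl[\rho(\xi)^*\bigr]\qquad\text{in }\bS^m.
\end{equation*}
The task therefore reduces to upgrading this standard-symbol expansion to a classical one whose leading term is $\rho_m^*$.

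\textbf{Rearranging into homogeneous pieces.} Since the involution on $C^\infty(\T^n_\theta)$ is continuous with respect to the semi-norms~(\ref{eq:NCtori.cAtheta-semi-norms}) and preserves homogeneity, each $\rho_{m-j}^*$ lies in $S_{m-j}$ and $\delta^\alpha\partial_\xi^\alpha[\rho_{m-j}^*]$ is homogeneous of degree $m-j-|\alpha|$ on $\R^n\setminus 0$. Collecting terms of equal homogeneity, I would define
\begin{equation*}
\sigma_{m-k}(\xi):=\sum_{|\alpha|+j=k}\tfrac{1}{\alpha!}\,\delta^\alpha\partial_\xi^\alpha\bigl[\rho_{m-j}(\xi)^*\bigr]\;\in\;S_{m-k}(\T^n_\theta\times\R^n),\qquad k\geq 0,
\end{equation*}
which is a finite sum for each $k$, and observe that $\sigma_m=\rho_m^*$. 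The claim is that $\rho^\star\sim\sum_{k\geq 0}\sigma_{m-k}$ in the sense of Definition~\ref{def:Symbols.classicalsymbols}; this will simultaneously prove that $\rho^\star\in S^m$ and identify $\rho_m^*$ as the principal symbol of $P^*$.

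\textbf{Remainder estimate.} For each $N\geq 1$ I would split
\begin{equation*}
\rho^\star-\sum_{k<N}\sigma_{m-k}\;=\;\Bigl(\rho^\star-\sum_{|\alpha|<N}\tfrac{1}{\alpha!}\delta^\alpha\partial_\xi^\alpha[\rho^*]\Bigr)\;+\;\sum_{|\alpha|<N}\tfrac{1}{\alpha!}\delta^\alpha\partial_\xi^\alpha\Bigl[\rho^*-\sum_{j<N-|\alpha|}\rho_{m-j}^*\Bigr].
\end{equation*}
The first bracketed term is a standard-symbol remainder of order $m-N$ by Proposition~\ref{prop:Adjoint}, hence satisfies the classical estimates~(\ref{eq:Symbols.classical-estimates}) for $|\xi|\geq 1$. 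Each summand in the second sum is $\delta^\alpha\partial_\xi^\alpha$ applied to a classical remainder of $\rho^*$ of order $m-(N-|\alpha|)$, so differentiating $|\alpha|$ times in $\xi$ brings its order down to $m-N$ while the extra $\delta^\alpha$ is harmless in the estimates; the finite outer sum over $|\alpha|<N$ preserves the bound. Combining the two contributions yields the estimate~(\ref{eq:Symbols.classical-estimates}) with $q=m$, as required.

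\textbf{Expected obstacle.} Conceptually this is the standard rearrangement of a double asymptotic expansion familiar from classical pseudodifferential calculus on $\R^n$; there is no essential new difficulty. The only real work is the book-keeping just sketched, together with the observation that the anti-linear, anti-multiplicative involution on $C^\infty(\T^n_\theta)$ interacts cleanly with the symbol semi-norms, so that the noncommutativity of $C^\infty(\T^n_\theta)$ plays no role beyond replacing $\bar{\rho}$ with $\rho^*$.
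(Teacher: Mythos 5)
The paper does not actually prove this corollary --- both Proposition~\ref{prop:Adjoint} and the corollary are stated with citations to Baaj and to the detailed account in Ha--Lee--Ponge, so there is no ``paper's own proof'' to compare against. Your argument is a correct and complete derivation of the corollary from Proposition~\ref{prop:Adjoint}, and it is the standard one: pass from the standard-symbol asymptotic expansion to a classical one by collecting terms of equal homogeneity, then verify the remainder estimates by splitting the error into a standard-symbol tail plus finitely many differentiated classical remainders. The algebraic identity in your remainder step checks out (the index set $\{(\alpha,j):|\alpha|+j<N\}$ is triangulated the same way on both sides), and your claim that each $\sigma_{m-k}$ lies in $S_{m-k}$ with $\sigma_m=\rho_m^*$ is immediate from the homogeneity of $\rho_{m-j}$ and the fact that $\delta^\alpha\partial_\xi^\alpha$ lowers the $\xi$-degree by exactly $|\alpha|$. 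The one point worth spelling out --- which you gesture at but do not record --- is why $\rho^*$ is itself a classical symbol with expansion $\sum\rho_{m-j}^*$: this uses $\delta_j(u^*)=-(\delta_j u)^*$ (since $\partial_j$ is a $*$-derivation and $\delta_j=i^{-1}\partial_j$), so $\|\delta^\alpha(u^*)\|=\|\delta^\alpha u\|$ for all $\alpha$, making the involution an isometry in each semi-norm defining $\bS^m$ and $S^m$; this is exactly what you need for the classical estimates~(\ref{eq:Symbols.classical-estimates}) to survive taking adjoints. With that line added, the proof is airtight.
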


We gather the Sobolev space mapping properties of \psidos\ in the following statement. 

\begin{proposition}[see~\cite{HLP:IJM19b}] \label{prop:Sobolev-PsiDOs} 
Let $\rho(\xi)\in \bS^m(\T^n_\theta\times\R^n)$, $m\in \R$. 
\begin{enumerate}
 \item If $m\geq 0$, then $P_\rho$ extends to a unique continuous operator $P_\rho:W_2^{s+m}(\T^n_\theta)\rightarrow W_2^s(\T^n_\theta)$ for every $s\geq 0$. 
 
 \item If $m\leq 0$, then  $P_\rho$ extends to a unique continuous operator $P_\rho:W_2^{s}(\T^n_\theta)\rightarrow W_2^{s-m}(\T^n_\theta)$ for every $s\geq 0$. In particular, it extends to a bounded operator $P_\rho:L_2(\T^n_\theta)\rightarrow L_2(\T^n_\theta)$. 
\end{enumerate}
\end{proposition}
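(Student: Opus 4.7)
The plan is to reduce both parts to the $L_2$-boundedness of order-zero pseudodifferential operators and then invoke a Calder\'on-Vaillancourt type theorem for the $C^*$-dynamical system $(C(\T^n_\theta),\R^n,\alpha)$. Concretely, $\Lambda^s=(1+\Delta)^{s/2}$ belongs to $\Psi^s(\T^n_\theta)$ for every $s\in \R$, with symbol $(1+|\xi|^2)^{s/2}\in S^s(\T^n_\theta\times\R^n)\subset \bS^s(\T^n_\theta\times\R^n)$, and from the spectral definition~(\ref{sobolev_space_definition})--(\ref{eq:sobolev.norm.definition}) it extends to a unitary isomorphism $\Lambda^s:W_2^s(\T^n_\theta)\to L_2(\T^n_\theta)$ for every $s\geq 0$. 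Consequently, the continuity claim in~(1) is equivalent to the $L_2$-boundedness of
\begin{equation*}
Q_{s,m}^{(1)}:=\Lambda^s\, P_\rho\, \Lambda^{-(s+m)},
\end{equation*}
while the claim in~(2) reduces to the $L_2$-boundedness of $Q_{s,m}^{(2)}:=\Lambda^{s-m}\, P_\rho\, \Lambda^{-s}$. By Proposition~\ref{prop:Composition.sharp-continuity-standard-symbol}, each of these composed operators has symbol in $\bS^0(\T^n_\theta\times \R^n)$, so the whole proposition reduces to the single statement that any $P_\sigma$ with $\sigma\in \bS^0$ extends to a bounded operator on $L_2(\T^n_\theta)$.

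\textbf{$L_2$-boundedness for order zero.} This is the Calder\'on-Vaillancourt theorem for $C^*$-dynamical systems, going back to Connes~\cite{Co:CRAS80} and Baaj~\cite{Ba:CRAS88}, and is where all of the analytic work sits; I would invoke it directly from the cited literature. For completeness, the underlying argument is a $T^*T$-iteration: Propositions~\ref{prop:Composition.sharp-continuity-standard-symbol} and~\ref{prop:Adjoint} show that $\bS^0$ is closed under $\sharp$ and $\star$, with these operations continuous in the Fr\'echet topology, so the iterated symbols $(\sigma^\star\sharp\sigma)^{\sharp 2^k}$ remain in $\bS^0$ with seminorms growing in a controlled fashion. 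Combining a crude a priori bound (e.g.\ via Sobolev embedding for order $m<-n$ symbols, Proposition~\ref{prop:Sobolev-embeddingLpC0}) with the identity $\|P_\sigma\|^{2^{k+1}}=\|(P_\sigma^*P_\sigma)^{2^k}\|$ for the positive formal-adjoint operator $P_\sigma^*P_\sigma$ then recovers the $L_2$-norm of $P_\sigma$ from a finite list of symbol seminorms of $\sigma$. I expect this to be the main obstacle, though its key inputs are already in place in Section~\ref{sec:psdo_applications}.

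\textbf{Extension and uniqueness.} Given the bounds on $Q_{s,m}^{(1)}$ and $Q_{s,m}^{(2)}$, the subspace $C^\infty(\T^n_\theta)$ is dense in $W_2^s(\T^n_\theta)$ for every $s\geq 0$, because the Fourier partial sums of any element converge in the norm~(\ref{eq:sobolev.norm.definition}). On this dense subspace $P_\rho$ is defined by~(\ref{eq:PsiDOs.PsiDO-definition}) as a continuous map into $C^\infty(\T^n_\theta)$, and the $L_2$-bound on $Q_{s,m}^{(1)}$ translates to $\|P_\rho u\|_{W_2^s}\leq C\|u\|_{W_2^{s+m}}$ for $u\in C^\infty(\T^n_\theta)$ in part~(1), with the analogous statement in~(2). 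The bounded linear extension theorem then produces the unique continuous extension; the final $L_2\to L_2$ assertion of~(2) is just the $s=0$ case.
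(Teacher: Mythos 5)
The paper states Proposition~\ref{prop:Sobolev-PsiDOs} as a result cited from \cite{HLP:IJM19b} and does not reprove it, so there is no in-paper argument to compare with. Your reduction is the right one: conjugate by $\Lambda^{\pm t}$ (unitary from $W_2^t(\T^n_\theta)$ to $L_2(\T^n_\theta)$ for $t\geq 0$), use the $\sharp$-calculus of Proposition~\ref{prop:Composition.sharp-continuity-standard-symbol} to reduce everything to the $L_2$-boundedness of $P_\sigma$ for $\sigma\in\bS^0(\T^n_\theta\times\R^n)$, then extend by density of $C^\infty(\T^n_\theta)$ in $W_2^s(\T^n_\theta)$. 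If you simply invoke order-zero $L_2$-boundedness from \cite{Ba:CRAS88, Co:CRAS80, HLP:IJM19b}, the proof is logically complete.

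The $T^*T$-iteration you sketch ``for completeness,'' however, does not close. Iterating $\sigma\mapsto\sigma^\star\sharp\sigma$ keeps the symbol in $\bS^0$, so $(P_\sigma^*P_\sigma)^{2^k}$ never drops below order $0$ and the crude a priori bound you want to appeal to (available for order $<-n$, e.g.\ via Proposition~\ref{prop:PsiDOs.boundedness}) is never reached; moreover the identity $\|P_\sigma\|^{2^{k+1}}=\|(P_\sigma^*P_\sigma)^{2^k}\|$, as an equality of $\sL(L_2)$-norms, already presupposes the boundedness being proved. The standard repair is a downward induction on the symbol order: for $M>\sup_\xi\|\sigma(\xi)\|^2$ write $M-\sigma^\star\sharp\sigma=b^\star\sharp b+r$ with $b\in\bS^0$ an approximate square root and $r$ of strictly lower order, and iterate on $r$ until it falls below $-n$. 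On the noncommutative torus there is a still simpler route that avoids all of this machinery: by~(\ref{eq:toroidal.Prhou-equation}), the matrix of $P_\sigma$ in the orthonormal basis $(U^k)_{k\in\Z^n}$ has $(m,k)$-entry equal, up to a unimodular phase, to the Fourier coefficient $\tau\big[\sigma(k)(U^{m-k})^*\big]$; for $\sigma\in\bS^0$, the estimates~(\ref{eq:Symbols.standard-estimates}) with $\beta=0$ force rapid decay of these coefficients in $m-k$ uniformly in $k$, and the Schur test then yields $L_2$-boundedness directly with an explicit constant.
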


\begin{remark}\label{rmk:action-Cinfty-W2s} 
If $a\in C^\infty(\T^n_\theta)$, then $\lambda(a)$ is a zeroth order (pseudo)differential operator (\emph{cf.}\ Example~\ref{ex:PsiDO.Do-PsiDO}). Therefore, it follows from Proposition~\ref{prop:Sobolev-PsiDOs} that such operators induces bounded operators $\lambda(a):W_2^s(\T^n_\theta)\rightarrow W_2^s(\T^n_\theta)$ for all $s\geq 0$. 
\end{remark}

A detailed account on spectral theoretic properties of \psidos\ is given in~\cite{HLP:IJM19b}. In this paper we will only need the following result. 

\begin{proposition}[see~\cite{HLP:IJM19b}] \label{prop:PsiDOs.boundedness} 
 Let $\rho(\xi)\in \bS^m(\T^n_\theta\times\R^n)$, $m< 0$, and set $q=n|m|^{-1}$. 
 \begin{enumerate}
 \item  The operator $P_\rho$ is in $\sL_{q,\infty}$. 
 
 \item If $m<-n$, then $P_\rho$ is trace-class, and we have
 \begin{equation}
 \Tr \left[P_\rho\right]=\sum_{k\in \Z^n} \tau\left[\rho(k)\right]. 
 \label{eq:trace-formula-spur}
\end{equation}
\end{enumerate}
 \end{proposition}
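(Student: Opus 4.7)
The plan is to reduce both assertions to the spectral behavior of the operator $\Lambda=(1+\Delta)^{1/2}$, exploiting the Sobolev mapping properties of $\psi$DOs recorded in Proposition~\ref{prop:Sobolev-PsiDOs} and the ideal structure of weak Schatten classes.

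\textbf{Part (1).} First I would show that $\Lambda^m\in\sL_{q,\infty}$ when $m<0$ and $q=n|m|^{-1}$. Since $\Lambda U^k=(1+|k|^2)^{1/2}U^k$ and the $U^k$ form an orthonormal basis of $L_2(\T^n_\theta)$, the operator $\Lambda^m$ is diagonal with singular values $\{(1+|k|^2)^{m/2}\}_{k\in\Z^n}$. A standard lattice-point count gives
\begin{equation*}
 \#\bigl\{k\in\Z^n:\ (1+|k|^2)^{m/2}>t\bigr\}=\#\bigl\{k\in\Z^n:\ |k|^2<t^{2/m}-1\bigr\}=O\bigl(t^{n/m}\bigr)\quad\text{as }t\to 0^+,
\end{equation*}
so the decreasing rearrangement of these singular values satisfies $\mu_j(\Lambda^m)=O(j^{m/n})=O(j^{-1/q})$. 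Thus $\Lambda^m\in\sL_{q,\infty}$. Next, by Proposition~\ref{prop:Sobolev-PsiDOs}(2) applied with $s=0$, $P_\rho$ maps $L_2(\T^n_\theta)$ continuously into $W_2^{-m}(\T^n_\theta)$, so $A:=\Lambda^{-m}P_\rho$ is a bounded operator on $L_2(\T^n_\theta)$. Writing $P_\rho=\Lambda^m A$ and using that $\sL_{q,\infty}$ is an ideal in $\sL(L_2(\T^n_\theta))$ yields $P_\rho\in\sL_{q,\infty}$, which is (1).

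\textbf{Part (2).} When $m<-n$ we have $q=n|m|^{-1}<1$, so the inclusion $\sL_{q,\infty}\subset \sL_1$ (a direct consequence of the singular value bound $\mu_j(T)\leq Cj^{-1/q}$ being summable) combined with (1) gives $P_\rho\in\sL_1$. To compute the trace I would use the orthonormal basis $\{U^k\}_{k\in\Z^n}$ of $L_2(\T^n_\theta)$ together with the explicit action~\eqref{eq:toroidal.Prhou-equation}, which reads $P_\rho U^k=\rho(k)U^k$ (product in $C^\infty(\T^n_\theta)$). Then
\begin{equation*}
 \Tr[P_\rho]=\sum_{k\in\Z^n}\scal{P_\rho U^k}{U^k}=\sum_{k\in\Z^n}\scal{\rho(k)U^k}{U^k}=\sum_{k\in\Z^n}\tau\bigl[\rho(k)U^k(U^k)^*\bigr]=\sum_{k\in\Z^n}\tau\bigl[\rho(k)\bigr],
\end{equation*}
where the inner product is the one in~\eqref{eq:NCtori.cAtheta-innerproduct} and we used that $U^k$ is unitary so $U^k(U^k)^*=1$. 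Absolute summability of the right-hand side is guaranteed by the standard symbol bound $\|\rho(k)\|=O((1+|k|)^m)$ with $m<-n$, which is consistent with $P_\rho$ being trace-class.

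\textbf{Main obstacle.} The only nontrivial analytic input is the Schatten bound in (1); everything else is either an ideal/functional calculus manipulation or a direct computation on Fourier modes. The obstacle is therefore the passage from the symbolic order $m<0$ to a singular value decay of the correct rate $j^{-1/q}$, which is handled cleanly by factoring through $\Lambda^m$ and invoking Sobolev mapping properties — this avoids ever having to analyze the Schwartz kernel of $P_\rho$ directly.
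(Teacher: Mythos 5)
Your proof is correct. One small point worth noting first: the paper itself does not prove Proposition~\ref{prop:PsiDOs.boundedness} --- it is cited from~\cite{HLP:IJM19b}, so there is no in-text proof to compare against. That said, your argument is exactly in the spirit of what the paper does elsewhere: the factorization $P_\rho=\Lambda^m\cdot(\Lambda^{-m}P_\rho)$, with the Sobolev mapping property of Proposition~\ref{prop:Sobolev-PsiDOs}(2) supplying boundedness of $\Lambda^{-m}P_\rho$ and the lattice count supplying $\Lambda^m\in\sL_{q,\infty}$, is precisely the device used in the proof of Proposition~\ref{prop.specific-Cwikel-Prho} (where $\lambda(x)P_\rho$ is written as $\lambda(x)\Lambda^{-n/p}\cdot\Lambda^{n/p}P_\rho$), and your trace computation for Part~(2) is the $x=1$ specialization of the computation carried out in~\eqref{eq:Cwikel-Prho.trace-Uk} in the proof of Proposition~\ref{prop:Lp-trace-formula}. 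In short: correct, complete, and consistent with the methods of the paper; no gaps.
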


\section{Riemannian Metrics and Laplace-Beltrami Operators on NC Tori} \label{sec:Riemannian}
In this section, we review the main definitions and properties regarding Riemannian metrics and Laplace-Beltrami operators on NC tori as introduced in~\cite{HP:JGP20, Ro:SIGMA13}. 

\subsection{Riemannian metrics} 
 In what follows, for $m\geq 1$, we denote by $\GL_m(C^\infty(\T^n_\theta))$ the group of invertible matrices in $M_m(C^\infty(\T^n_\theta))$ and denote by 
$\GL_m^+(C^\infty(\T^n_\theta))$ its subset of positive matrices. By a positive element of $M_m(C^\infty(\T^n_\theta))$ we mean a selfadjoint matrix with positive spectrum. For instance, $ \GL_1^+(C^\infty(\T^n_\theta))$ consists of positive invertible elements of $C^\infty(\T^n_\theta)$. 

Let $\sX(\T^n_\theta)$ be the free left-module over $C^\infty(\T^n_\theta)$ generated by the canonical derivations $\partial_1,\ldots, \partial_n$. This plays the role of the module of (complex) vector fields on the NC torus $C^\infty(\T^n_\theta)$ (\emph{cf}.~\cite{Ro:SIGMA13}). As  $\sX(\T^n_\theta)$ is a free module, the Hermitian metrics on $\sX(\T^n_\theta)$ are in one-to-one correspondence with matrices $h\in \GL^+_n(C^\infty(\T^n_\theta))$ (see~\cite{HP:JGP20, Ro:SIGMA13}). Namely, to any matrix $h=(h_{ij})$ in $ \GL^+_n(C^\infty(\T^n_\theta))$ corresponds the Hermitian metric, 
\begin{equation*}
 \acoupt{X}{Y}_h := \sum_{1\leq i,j\leq n}  X_ih_{ij} Y_j^*, \qquad X= \sum_i X_i \partial_i, \quad Y= \sum_j Y_j \partial_j. 
\end{equation*}

On a $C^\infty$-manifold a Riemannian metric is a Hermitian metric on the module of vector fields that takes real-values on real vector fields and real differential forms. Equivalently, in local coordinates its matrix and its inverse both have real entries. For the NC torus $\T_\theta^n$ the role of real-valued smooth functions is played by  selfadjoint elements of $C^\infty(\T^n_\theta)$. This leads to the following definition. 

\begin{definition}[\cite{HP:JGP20, Ro:SIGMA13}] A \emph{Riemannian metric} on $\T^n_\theta$ is a Hermitian metric on $\sX(\T^n_\theta)$ whose matrix $g=(g_{ij})$ is such that its entries $g_{ij}$ and the entries of its inverse $g^{-1}=(g^{ij})$ are selfadjoint elements of $C^\infty(\T^n)$. 
\end{definition}

\begin{remark}
 As usual we will often identify Riemannian metrics and their matrices. 
\end{remark}

\begin{example} 
 The standard flat metric is $g_0=(\delta_{ij})$.  
\end{example}

\begin{example}\label{ex:Riemannian.conf-flat}
 A conformal deformation of the flat metric is of the form $g=k^2g_0$ with $k\in \GL_1^+(C^\infty(\T^n_\theta))$. This is the kind of Riemannian metric considered in~\cite{CM:JAMS14, CT:Baltimore11}. 
\end{example}


\begin{example}[Self-compatible Riemannian metrics~\cite{HP:JGP20}] \label{ex:Riem.self-compatible}
 Let $g=(g_{ij})\in \GL_n^{+}(C^\infty(\T^n_\theta))$ be self-compatible in the sense of~\cite{HP:JGP20}, i.e., each entry $g_{ij}$ commute with every other entry $g_{kl}$.  
 In this case, if the entries are selfadjoint, then the inverse $g^{-1}$ has selfadjoint entries as well, and so $g$ is a Riemannian metric. This example includes the conformally flat metrics of Example~\ref{ex:Riemannian.conf-flat}. It also includes the \emph{functional metrics} of~\cite{GK:arXiv18}.  
\end{example}

\subsection{Smooth densities}\label{subsec:smooth.densities}
On ordinary manifolds densities (in the sense of differential geometry) are given in local coordinates by integration against positive smooth functions. On the NC torus 
$\T_\theta^n$ the role of positive functions is played by positive invertible elements of $C^\infty(\T^n_\theta)$, i.e., elements of $\GL_1^+(C^\infty(\T^n_\theta))$. Therefore, it is natural to think of any $\nu \in \GL_1^+(C^\infty(\T^n_\theta))$ as a smooth positive density on $\T_\theta^n$. 

Alternatively, if we think in terms of measures, we may focus on the corresponding weight $\hat{\tau}_{\nu}$ on $C(\T^n_\theta)$ given by 
\begin{equation*}
 \hat{\tau}_{\nu}(x):=(2\pi)^n\tau[x\nu], \qquad x\in C(\T^n_\theta). 
\end{equation*}
We obtain a faithful weight. The associated $L_2$-space $L_2(\T^n;\nu)$ is the completion of $C(\T^n_\theta)$ with respect to the inner-product, 
\begin{equation}
 \scal{x}{y}_\nu:=\hat{\tau}_{\nu}(xy^*)=(2\pi)^n\tau\big[y^*\nu x\big], \qquad x,y\in C(\T^n_\theta).
 \label{eq:Riemannian.inner-product-nu}  
\end{equation}
Equivalently, $L_2(\T^n;\nu)$ is the GNS representation space associated with $\hat{\tau}_{\nu}$ for the opposite algebra $C(\T^n_\theta)^{\textup{op}}$. We then define the \emph{volume} of $\T^n_\theta$ with respect of $\nu$ by 
\begin{equation*}
 \Vol_\nu(\T^n_\theta):=\hat{\tau}(1)=(2\pi)^n\tau[\nu]. 
\end{equation*}

The definition~(\ref{eq:Riemannian.inner-product-nu}) implies that 
\begin{equation*}
 \|x\|_{L_2(\T^n_\theta;\nu)} =(2\pi)^{\frac{n}{2}}\tau\big[x^*\nu x\big]^{\frac12}=\big\|(2\pi)^{\frac{n}{2}}\sqrt{\nu}x\big\|_{L_2}, \qquad x\in L_\infty(\T^n_\theta).  
\end{equation*}
Thus, a unitary isomorphism from $L_2(\T^n_\theta;\nu)$ onto $L_2(\T^n_\theta)$ is provided by the left-multiplication operator,
\begin{equation*}
 \hat{\lambda}\big(\sqrt{\nu}\big):=(2\pi)^{\frac{n}{2}}\lambda\big(\sqrt{\nu})
\end{equation*}
In particular, $L_2(\T^n;\nu)$ and $L_2(\T^n_\theta)$ agree as topological vector spaces. Thus, $L_2(\T^n_\theta;\nu)$ is just the Hilbert space $L_2(\T^n_\theta)$ with an equivalent inner product. 

As $ \hat{\lambda}(\sqrt{\nu})$ is a unitary isomorphism, we have a $*$-representation $\lambda_\nu:L_\infty(\T^n_\theta)\rightarrow \sL(L_2(\T^n_\theta;\nu))$ given by
\begin{equation*}
 \lambda_\nu(x)=\hat{\lambda}\big(\sqrt{\nu}\big)^{-1} \lambda(x) \hat{\lambda}\big(\sqrt{\nu}\big)=\lambda\big(\nu^{-\frac12}x\nu^{\frac12}\big), \qquad x \in L_\infty(\T^n_\theta). 
\end{equation*}
In other words, we obtain a $*$-representation by composing the representation $\lambda$ with the inner automorphism, 
\begin{equation}\label{eq:modular.automorphism.def}
 \sigma_\nu(x):=\nu^{-\frac12} x \nu^{\frac12}, \qquad x \in L_\infty(\T^n_\theta). 
\end{equation}

More generally, for $p\in(1,\infty)$ we denote by $L_p(\T^n_\theta;\nu)$ the Banach space $L_p(\T^n_\theta)$ equipped with the equivalent norm, 
\begin{equation*}
 \|x\|_{L_p(\T^n_\theta;\nu)}:= (2\pi)^{\frac{n}{2}}\big\| \sqrt{\nu}x\big\|_{L_p}. 
\end{equation*}
In particular, as in the $p=2$ case, the left-multiplication operator $\hat{\lambda}(\sqrt{v})=(2\pi)^{n/2}\lambda(\sqrt{v})$ provides us with 
an isometric isomorphism from $L_p(\T^n_\theta;\nu)$ onto $L_p(\T^n_\theta)$.   

\subsection{Riemannian densities} 
On an ordinary Riemannian manifold $(M^n,g)$ the Riemannian density is given in local coordinates by integration against $\sqrt{\det (g(x))}$. The volume of $(M,g)$ is then obtained as the integral of the Riemannian density. 

Following~\cite{HP:JGP20}, given any $h\in \GL_n^+(C^\infty(\T^n_\theta))$, its determinant is defined by 
\begin{equation*}
 \det (h) := \exp \big[ \Tr (\log h) \big]\in  \GL_1^+(C^\infty(\T^n_\theta)),
\end{equation*}
where $\log h$ is defined by holomorphic functional calculus and $\Tr:M_n(C^\infty(\T^n_\theta))\rightarrow C^\infty(\T^n_\theta)$ is the sum of the diagonal entries. This defines a positive invertible element of $C^\infty(\T^n_\theta)$ owing to the closedness of $C^\infty(\T^n_\theta)$ and $M_n(C^\infty(\T^n_\theta))$ under holomorphic functional calculus. We refer to~\cite{HP:JGP20} for an account on the main properties of this notion of determinant. 
In particular, if $h=(h_{ij})$ is self-compatible in the sense mentioned in Example~\ref{ex:Riem.self-compatible},  we have
 \begin{equation}
 \det (h)= \sum_{\sigma \in \fS_m} \varepsilon(\sigma) h_{1\sigma(1)} \cdots h_{m \sigma(m)}. 
 \label{eq:det.Leibniz}
\end{equation}

Given any Riemannian metric $g=(g_{ij})$ on $\T^n_\theta$ its \emph{Riemannian density} is defined by
\begin{equation*}
 \nu(g):=\sqrt{\det (g)}=\exp\big[\frac12 \Tr\left(\log (g)\right)\big]\in \GL_1^+(C^\infty(\T^n_\theta)). 
\end{equation*}
The corresponding volume with respect to $\nu(g)$ is simply denoted $\Vol_g(\T^n_\theta)$, i.e., 
\begin{equation*}
 \Vol_g(\T^n_\theta) =(2\pi)^n\tau\big[\nu(g)\big]. 
\end{equation*}

\begin{example}
 For the flat metric $g_0=(\delta_{ij})$, we have $\nu(g_0)=1$, and hence \[\Vol_{g_0}(\T^n_\theta)=(2\pi)^{n}\tau(1)=(2\pi)^n=|\T^n|.\]
\end{example}

\begin{example}
 Let $g=k^2g_0$, $k\in \GL_1^+(C^\infty(\T^n_\theta))$, be a conformal deformation of the flat Euclidean metric. Then 
 $\nu(g)=k^n$,  and  so we have $\Vol_g(\T^n_\theta)=(2\pi)^n\tau[k^n]$. 
\end{example}

\subsection{The Laplace-Beltrami operator}\label{subsec:laplace.beltrami}
Let $g=(g_{ij})$ be a Riemannian metric with inverse $g^{-1}=(g^{ij})$, and let 
$\nu\in \GL_1^+(C^\infty(\T^n_\theta))$ be a smooth positive density.   As in~\cite{HP:JGP20} we define the $C^\infty(\T^n_\theta)$-module of $1$-forms $\Omega^1(\T^n_\theta)$ as the $C^\infty(\T^n_\theta)$-linear dual of the module of vector fields $\sX(\T^n_\theta)$. That is, a $1$-form is just a $C^\infty(\T^n_\theta)$-linear map $\omega:\sX(\T^n_\theta)\rightarrow \C$. Here $\Omega^1(\T^n_\theta)$ is a right $C^\infty(\T^n_\theta)$-module; the action of $C^\infty(\T^n_\theta)$ is such that, for $\omega \in \Omega^1(\T^n_\theta)$ and $a\in C^\infty(\T^n_\theta)$, we have
\begin{equation*}
 \acou{\omega a}{X}=a\acou{\omega}{X} \qquad \forall X \in \sX(\T^n_\theta). 
\end{equation*}

A basis of $\Omega^1(\T^n_\theta)$ is given by the $1$-forms $\theta^1, \ldots, \theta^n$ such that
\begin{equation*}
 \theta^i(\partial_j)=\delta_j^i, \qquad i,j=1,\ldots, n. 
\end{equation*}
Thus, any $\omega\in  \Omega^1(\T^n_\theta)$ has a unique decomposition as $\omega=\sum \theta^i\omega_i$ with $\omega_i\in C^\infty(\T^n_\theta)$. If $a\in C^\infty(\T^n_\theta)$, then $\omega a=\sum  \theta^i\omega_ia$. We also observe that we have a left-action of $C^\infty(\T^n_\theta)$ on $\Omega^1(\T^n_\theta)$ given by
\begin{equation*}
 a\omega := \sum \theta^i (a\omega_i), \qquad a \in C^\infty(\T^n_\theta), \quad \omega =\sum  \theta^i \omega_i\in \Omega^1(\T^n_\theta). 
\end{equation*}
This turns $\Omega^1(\T^n_\theta)$ into a bimodule. In particular, the modular automorphism~(\ref{eq:modular.automorphism.def}) lifts to a linear isomorphism $\sigma_\nu:\Omega^1(\T^n_\theta)\rightarrow \Omega^1(\T^n_\theta)$ given by
\begin{equation*}
 \sigma_\nu(\omega):= \nu^{-\frac12}\omega \nu^{\frac12} = \sum \theta^i \big(\nu^{-\frac12}\omega_i \nu^{\frac12}\big), \qquad \omega =\sum  \theta^i \omega_i\in \Omega^1(\T^n_\theta). 
\end{equation*}

By duality the Hermitian metric $ \acoupt{\cdot}{\cdot}_g$ on $\sX(\T^n_\theta)$ defines a Hermitian metric $\acoupt{\cdot}{\cdot}_{g^{-1}}$ on $\Omega^1(\T^n_\theta)$ given by
\begin{equation*}
 \acoupt{\omega}{\eta}_{g^{-1}}= \sum \eta_i^* g^{ij} \omega_j, \qquad \omega=\sum \theta^i\omega_i, \quad \eta = \sum \theta^i \eta_i. 
\end{equation*}
We then endow $\Omega^1(\T^n_\theta)$ with the inner-product, 
\begin{equation}\label{eq:Riemannian.curved-one-form-inner-product}
 \scal{\omega}{\eta}_{g,\nu}:= \tau\big[\acoupt{\sigma_\nu^{-1}(\omega)}{\sigma_\nu^{-1}(\eta)}_{g^{-1}}\big]= \sum_{i,j} 
 \tau\left[   \eta_i^* \nu^{\frac12} g^{ij} \nu^{\frac12}\omega_j\right], \qquad \omega,\eta\in \Omega^1(\T^n_\theta).  
\end{equation}

The \emph{exterior differential} $d:C^\infty(\T^n_\theta)\rightarrow \Omega^1(\T^n_\theta)$ is given by
\begin{equation*}
 du := \sum \theta^i \partial_i(u), \qquad u \in C^\infty(\T^n_\theta). 
\end{equation*}
Following~\cite{HP:JGP20}, the  \emph{Laplace-Beltrami operator} $\Delta_{g,\nu}:C^\infty(\T^n_\theta) \rightarrow C^\infty(\T^n_\theta)$ is defined by
\begin{equation}\label{eq:Riemannian.Laplace-Beltrami-quadratic-form}
 \scal{\Delta_{g,\nu}u}{v}_\nu = \scal{du}{du}_{g,\nu} \qquad \text{for all}\ u,v\in C^\infty(\T^n_\theta). 
\end{equation}
If $\nu=\nu(g)$ we simply denote it by $\Delta_g$. Equivalently, we have
\begin{equation}
  \Delta_{g,\nu}u  =  -\nu^{-1} \sum_{1\leq i,j \leq n} \partial_i \big( \sqrt{\nu} g^{ij} \sqrt{\nu} \partial_j(u)\big), \qquad u \in C^\infty(\T^n_\theta).  
  \label{eq:Riemannian.Laplace-Beltrami} 
\end{equation}

\begin{example}
 For the flat metric $g_0=(\delta_{ij})$ the operator $\Delta_{g_0}$ is just the ordinary Laplacian $\Delta=-(\partial_1^2 + \cdots + \partial_n^2)$. 
\end{example}

\begin{example}
 Let $g=k^2\delta_{ij}$, $k\in \GL_1^+(C^\infty(\T^n_\theta))$, be a conformally flat metric. In this case, we have
\begin{equation*}
 \Delta_g=k^{-2} \Delta - \sum_{1\leq i \leq n} k^{-n}  \partial_i(k^{n-2}) \partial_i. 
\end{equation*}
In particular, in dimension $n=2$ we get $\Delta_g=k^{-2}\Delta$. This is reminiscent of the conformal invariance of the Laplace-Beltrami operator in dimension~2. 
\end{example}

In what follows, we set 
\begin{equation}
 |\xi|_g:=\big(\sum_{i,j} \xi_i g^{ij} \xi_j\big)^{\frac12}, \qquad \xi\in \R^n. 
\label{eq:Laplace.norm-g}
\end{equation}
Note that $|\xi|_g\in \GL_1^+(C^\infty(\T^n_\theta))$ if $\xi\neq 0$. In fact (see, e.g., \cite[Corollary~3.1]{HP:JGP20}) there are constants $c_1>0$ and $c_2>0$  such that\begin{equation}
 c_1|\xi|^2 \leq |\xi|_g^2 \leq c_2|\xi|^2 \qquad \forall \xi\in \R^n. 
 \label{eq:Laplacian.positivity-normg}
\end{equation}
Combining with~(\ref{eq:Riemannian.Laplace-Beltrami}) and Example~\ref{ex:PsiDO.Do-PsiDO} we immediately get the following result. 

\begin{proposition}[\cite{HP:JGP20}]
 The operator $\Delta_{g,\nu}$ is an elliptic 2nd order differential operator whose principal symbol is equal to 
$\sigma_\nu(|\xi|^2_g)=\nu^{-\frac12}|\xi|_g^2\nu^{\frac12}$. 
\end{proposition}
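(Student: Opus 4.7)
The plan is to verify both assertions by direct computation from the explicit formula~(\ref{eq:Riemannian.Laplace-Beltrami}). The key first step is to apply the Leibniz rule to each summand $\partial_i(\sqrt{\nu}g^{ij}\sqrt{\nu}\partial_j u)$, which splits $\Delta_{g,\nu}$ into a genuine second-order piece (in which the outer $\partial_i$ differentiates $\partial_j u$) and a remainder of order at most one (in which the outer $\partial_i$ falls on the coefficient $\sqrt{\nu}g^{ij}\sqrt{\nu}$).

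For the second-order piece I would then use that $\nu^{-1}$, $\sqrt{\nu}$ and $\nu^{1/2}$ all lie in the commutative $C^*$-subalgebra generated by $\nu$ (obtained via holomorphic functional calculus), so that $\nu^{-1}\sqrt{\nu} = \nu^{-1/2}$. Rewriting $\partial_i\partial_j = -\delta_i\delta_j$ then expresses the top-order part as
\begin{equation*}
 \sum_{i,j}\lambda\bigl(\nu^{-\frac12}g^{ij}\nu^{\frac12}\bigr)\delta_i\delta_j,
\end{equation*}
and combining with the remaining first- and zeroth-order terms exhibits $\Delta_{g,\nu}$ as a differential operator in the sense of Example~\ref{ex:PsiDO.Do-PsiDO}. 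By the prescription for the principal symbol recalled there, one reads off
\begin{equation*}
 \rho_2(\xi) \;=\; \sum_{i,j}\nu^{-\frac12}g^{ij}\nu^{\frac12}\,\xi_i\xi_j \;=\; \nu^{-\frac12}|\xi|_g^{2}\nu^{\frac12} \;=\; \sigma_\nu\bigl(|\xi|_g^{2}\bigr),
\end{equation*}
since the scalars $\xi_i,\xi_j$ commute with the entries of $g^{-1}$ and with $\nu$.

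Finally, ellipticity follows immediately from the positivity bound~(\ref{eq:Laplacian.positivity-normg}): for $\xi\neq 0$ we have $|\xi|_g^{2}\geq c_1|\xi|^2>0$, so $|\xi|_g^{2}$ is positive and invertible in $C^\infty(\T^n_\theta)$, and its inner conjugate $\sigma_\nu(|\xi|_g^{2})$ is therefore invertible as well. There is no real obstacle here; the only point requiring care is that $\sqrt{\nu}$ does not in general commute with $g^{ij}$, which is precisely why the principal symbol is dressed by the modular automorphism $\sigma_\nu$ rather than being simply $|\xi|_g^{2}$.
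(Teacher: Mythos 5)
Your proposal is correct and takes essentially the same route as the paper, which simply asserts that the result follows immediately by combining the explicit formula~(\ref{eq:Riemannian.Laplace-Beltrami}) with Example~\ref{ex:PsiDO.Do-PsiDO}. You have faithfully filled in the implicit computation: Leibniz rule to isolate the top-order part, the identity $\nu^{-1}\sqrt{\nu}=\nu^{-1/2}$ from functional calculus, the observation that the scalars $\xi_i$ commute with everything so the symbol factors as $\nu^{-1/2}|\xi|_g^2\nu^{1/2}$, and ellipticity from the two-sided bound~(\ref{eq:Laplacian.positivity-normg}) together with the fact that the inner automorphism $\sigma_\nu$ preserves invertibility.
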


Thanks to this result the spectral theory for elliptic \psidos\ applies (see, e.g., \cite{HLP:IJM19b}). 
The following gathers the main spectral properties of  $\Delta_{g,\nu}$.  

\begin{proposition}[\cite{HP:JGP20}]\label{prop:Laplacian.properties}
 The following holds.
\begin{enumerate}
 \item The operator $\Delta_{g,\nu}$  with domain $W_2^2(\T^n_\theta)$ is selfadjoint on $L_2(\T^n;\nu)$ and has compact resolvent. 
 
 \item The spectrum of $\Delta_{g,\nu}$ consists of isolated non-negative eigenvalues with finite multiplicity. 
 
  \item Each eigenspace $\ker(\Delta_{g,\nu}-\lambda)$, $\lambda \in \Sp(\Delta_{g,\nu})$ is a finite dimensional subspace of $C^\infty(\T^n_\theta)$. In particular, $\ker \Delta_{g,\nu}=\C\cdot 1$. 
\end{enumerate}
\end{proposition}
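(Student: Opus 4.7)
The plan is to treat $\Delta_{g,\nu}$ simultaneously as (i) a nonnegative symmetric quadratic form and (ii) an elliptic second order \psido, and then invoke the Sobolev mapping properties of Section~\ref{sec:NCtori}. The three assertions will fall out of combining self-adjointness via the Friedrichs construction with the elliptic parametrix and the compact Sobolev embedding from Proposition~\ref{prop:Sobolev-embeddingLp}.

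First I would show $\Delta_{g,\nu}$ is symmetric and nonnegative on $C^\infty(\T^n_\theta) \subset L_2(\T^n_\theta;\nu)$. By the defining identity~(\ref{eq:Riemannian.Laplace-Beltrami-quadratic-form}) (polarized), the sesquilinear form $Q(u,v):=\scal{du}{dv}_{g,\nu}$ is Hermitian and satisfies $Q(u,u)\geq 0$ because $\acoupt{\cdot}{\cdot}_{g^{-1}}$ is a Hermitian metric. Using the two-sided bound~(\ref{eq:Laplacian.positivity-normg}) together with the fact that $\sqrt{\nu}\in \GL_1^+(C^\infty(\T^n_\theta))$, one checks that there are constants $c,C>0$ with
\begin{equation*}
 c\|u\|_{W_2^1}^2 \leq Q(u,u)+\scal{u}{u}_\nu \leq C\|u\|_{W_2^1}^2, \qquad u\in C^\infty(\T^n_\theta).
\end{equation*}
Hence $Q$ is closable with form domain $W_2^1(\T^n_\theta)$, and its Friedrichs extension is a nonnegative self-adjoint operator $H\geq 0$ on $L_2(\T^n_\theta;\nu)$.

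Next I would identify the operator domain of $H$ with $W_2^2(\T^n_\theta)$. By Proposition~\ref{prop:Laplacian.properties}'s preceding statement, $\Delta_{g,\nu}\in \Psi^2(\T^n_\theta)$ has principal symbol $\sigma_\nu(|\xi|_g^2)$, which by~(\ref{eq:Laplacian.positivity-normg}) is invertible for $\xi\neq 0$; hence $\Delta_{g,\nu}+1$ admits a parametrix $R\in \Psi^{-2}(\T^n_\theta)$ with $R(\Delta_{g,\nu}+1)=1+S$ where $S\in \Psi^{-\infty}(\T^n_\theta)$. By Proposition~\ref{prop:Sobolev-PsiDOs}, $\Delta_{g,\nu}+1$ maps $W_2^2$ boundedly into $L_2$ and $R$ maps $L_2$ boundedly into $W_2^2$, so any $u\in \dom(H)$, which a priori lies in $W_2^1$, satisfies $u=R(\Delta_{g,\nu}+1)u-Su\in W_2^2$ (elliptic regularity). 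Together with $H\geq 0$ self-adjoint this proves assertion~(1). The compact resolvent then follows from factoring $(\Delta_{g,\nu}+1)^{-1}:L_2\to W_2^2\hookrightarrow L_2$ through the compact Sobolev inclusion provided by Proposition~\ref{prop:Sobolev-embeddingLp}.

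Finally, the spectral theorem for nonnegative self-adjoint operators with compact resolvent gives~(2). For~(3) I would bootstrap: if $\Delta_{g,\nu}u=\lambda u$ with $u\in L_2$, iterated application of the parametrix identity $u=R(\lambda+1)u-Su$ pushes $u\in W_2^s$ for every $s\geq 0$, so Proposition~\ref{prop:Sobolev-embeddingLpC0} places $u$ in $C(\T^n_\theta)$, and the \psido\ identity then yields $u\in C^\infty(\T^n_\theta)$. For the kernel, $\Delta_{g,\nu}u=0$ together with (\ref{eq:Riemannian.Laplace-Beltrami-quadratic-form}) forces $\scal{du}{du}_{g,\nu}=0$; by the lower bound~(\ref{eq:Laplacian.positivity-normg}) this gives $\partial_j u=0$ for $j=1,\ldots,n$, so the Fourier coefficients $u_k$ vanish for $k\neq 0$ and $u\in \C\cdot 1$. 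The main obstacle I anticipate is the domain identification step: passing from the abstract form domain $W_2^1$ to the operator domain $W_2^2$ hinges on the explicit parametrix construction and the Sobolev mapping properties of Section~\ref{sec:psdo_applications}; everything else is a routine consequence once this elliptic regularity is in hand.
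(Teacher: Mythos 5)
The paper does not prove Proposition~\ref{prop:Laplacian.properties}; it cites \cite{HP:JGP20} and, just before the statement, defers to the general spectral theory of elliptic \psidos\ from \cite{HLP:IJM19b}. Your proposal fills in exactly the argument that this black-box invocation presupposes: coercivity of the quadratic form~(\ref{eq:Riemannian.Laplace-Beltrami-quadratic-form}) via the two-sided bound~(\ref{eq:Laplacian.positivity-normg}), Friedrichs extension, elliptic parametrix to upgrade the form domain $W_2^1$ to the operator domain $W_2^2$, compact resolvent from the compact Sobolev inclusion, bootstrap regularity for eigenvectors, and the kernel computation from $du=0 \Rightarrow \partial_j u = 0$. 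All steps are sound, with one small point you gloss over: when you sandwich $Q(u,u)$ between multiples of $\sum_j\|\partial_j u\|_{L_2}^2$, the factor $\nu^{1/2}$ sits between $g^{ij}$ and $\partial_j u$ and does not commute with either, so the estimate really comes from bounding the block matrix $\diag(\nu^{1/2})\,(g^{ij})\,\diag(\nu^{1/2})$ above and below as a positive element of $M_n(C^\infty(\T^n_\theta))$; this works because both $\nu$ and $(g^{ij})$ are invertible positives, but it is worth saying. Aside from that, the proposal is a correct reconstruction, essentially the same elliptic-operator route the cited works use.
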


Thanks to the selfadjointness and positivity of $\Delta_{g,\nu}$, for any  $s\in \R$ we may define the power $\Delta_{g,\nu}^s$ by Borel functional calculus. Equivalently, 
if $(u_{\ell})_{\ell\geq 0}$ is an orthonormal eigenbasis of $L_2(\T^n_\theta;\nu)$ such that $\Delta_{g,\nu}u_\ell=\lambda_\ell u_\ell$ and $u_0= 1$, then 
\begin{equation*}
 \Delta_{g,\nu}^su_0=0, \qquad \Delta_{g,\nu}^su_\ell=\lambda_\ell^s u_\ell, \quad \ell\geq 1. 
\end{equation*}
More generally, we can define complex powers $\Delta_{g,\nu}^z$ for any $z\in \C$. We similarly define the powers $(1+\Delta_{g,s})^{z}$, $z\in \C$. 

\begin{proposition}[\cite{LP:Part2, Po:JMP20}]\label{prop:powersLB} For every $s\in \R$, the powers $ \Delta_{g,\nu}^s$ and $(1+\Delta_{g,s})^{s}$ are operators in $\Psi^{2s}(\T^n_\theta)$ whose principal symbols are both equal to $\nu^{-\frac12}|\xi|_g^{2s}\nu^{\frac12}$. 
\end{proposition}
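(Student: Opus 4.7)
The plan is to construct both powers by Seeley's complex power method adapted to the pseudodifferential calculus on NC tori, which is already developed in the cited references \cite{LP:Part2, Po:JMP20}. The starting point is the previous proposition, which identifies $\Delta_{g,\nu}$ as an elliptic $2$nd order differential operator with principal symbol $a_2(\xi):=\sigma_\nu\bigl(|\xi|_g^2\bigr)=\nu^{-\frac12}|\xi|_g^2\nu^{\frac12}$. Combined with the two-sided bound~(\ref{eq:Laplacian.positivity-normg}), the symbol $a_2(\xi)-\lambda$ is invertible in $C^\infty(\T^n_\theta)$ for $\xi\neq 0$ and $\lambda$ in a sector of $\C$ disjoint from $\R_+$; this is exactly the parameter-ellipticity condition that underlies Seeley's construction. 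In particular, one can build a parameter-dependent parametrix of $(\Delta_{g,\nu}-\lambda)$ whose leading symbol is $(a_2(\xi)-\lambda)^{-1}$.

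For $(1+\Delta_{g,\nu})^s$ the operator is strictly positive on $L_2(\T^n_\theta;\nu)$, so Borel functional calculus coincides with the Cauchy integral
\[
(1+\Delta_{g,\nu})^s=\frac{1}{2\pi i}\int_\Gamma \mu^s\bigl(\mu-(1+\Delta_{g,\nu})\bigr)^{-1}d\mu
\]
along a suitable contour $\Gamma$ around $\op{Sp}(1+\Delta_{g,\nu})\subset[1,\infty)$. Plugging in the parametrix, applying Proposition~\ref{prop:Composition.sharp-continuity-standard-symbol} to integrate term-by-term in the symbol expansion, and using the standard arguments for Seeley's theorem identifies the result as an operator in $\Psi^{2s}(\T^n_\theta)$ with principal symbol $a_2(\xi)^s$. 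For $\Delta_{g,\nu}^s$ itself, the only extra subtlety is the $1$-dimensional kernel $\C\cdot 1$ (Proposition~\ref{prop:Laplacian.properties}). Writing $\Delta_{g,\nu}^s=\bigl(\Delta_{g,\nu}+\Pi_0\bigr)^s(1-\Pi_0)$, where $\Pi_0$ is the orthogonal projection onto $\ker\Delta_{g,\nu}$, reduces the problem to a strictly positive operator to which Seeley's method applies; the correction terms $\Pi_0$ and $(\Delta_{g,\nu}+\Pi_0)^s\Pi_0$ are of finite rank with range in $C^\infty(\T^n_\theta)$, hence are smoothing operators in $\bigcap_m \Psi^m(\T^n_\theta)$ and do not affect either the pseudodifferential order or the principal symbol.

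The principal symbol is computed as follows: since $\sigma_\nu$ in~(\ref{eq:modular.automorphism.def}) is an algebra automorphism and $|\xi|_g\in\GL^+_1(C^\infty(\T^n_\theta))$ for $\xi\neq 0$, the $s$-th power (defined by holomorphic functional calculus) satisfies $a_2(\xi)^s=\sigma_\nu\bigl(|\xi|_g^{2s}\bigr)=\nu^{-\frac12}|\xi|_g^{2s}\nu^{\frac12}$, which is the desired formula; the same formula arises for $(1+\Delta_{g,\nu})^s$ because the perturbation $1$ is of lower order and drops out of the leading symbol. I expect the main obstacle to be checking the parameter-dependent symbol estimates for the resolvent parametrix uniformly in $\lambda$ on the Cauchy contour; this requires adapting the classical Seeley arguments to the composition formula~(\ref{eq:Composition.symbol-sharp}) and controlling the remainders so that the contour integrals converge in the topology of $\bS^{2s}(\T^n_\theta\times\R^n)$. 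Once this is in place, the asymptotic expansion of the resolvent symbol integrates term-by-term, and the conclusion follows from standard manipulations in the NC toroidal calculus.
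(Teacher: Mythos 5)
Your proposal is correct and follows the route the paper itself indicates: the result is delegated to \cite{LP:Part2, Po:JMP20}, and the remark immediately after the proposition states that the method is Seeley's complex powers realized as holomorphic families of pseudodifferential operators, which is precisely what you sketch (parameter-elliptic resolvent parametrix from the two-sided bound on $|\xi|_g^2$, the Cauchy-integral representation, the reduction $\Delta_{g,\nu}^s=(\Delta_{g,\nu}+\Pi_0)^s(1-\Pi_0)$ with a smoothing correction, and the automorphism identity $\sigma_\nu(|\xi|_g^2)^s=\sigma_\nu(|\xi|_g^{2s})$ for the principal symbol). Since the paper contains no proof of its own for this proposition, there is nothing further to compare.
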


\begin{remark}
 For $\nu=\nu(g)$ the result for $ \Delta_{g,\nu}^s$ is part of the contents of~\cite[Lemma~10.3]{Po:JMP20} (see also~\cite{LP:JPDOA20}), but the proof remains valid for arbitrary smooth densities and for the powers $(1+\Delta_{g,s})^{s}$. More generally, it can be shown that the complex powers $\Delta_{g,\nu}^z$, $z\in \C$, and  
 $(1+\Delta_{g,s})^{z}$, $z\in \C$, are holomorphic families of \psidos\ of order $2z$ in the sense of~\cite{LNP:TAMS16, Po:SIGMA20} (see~\cite{LP:Part2}). 
\end{remark}

\subsection{The Sobolev spaces $W_2^s(\T^n_\theta;g,v)$} 
For $s\geq 0$, we denote by $W_2^s(\T^n_\theta;g,\nu)$ the Sobolev space with Bessel potential $(1+\Delta_{g,\nu})^{s/2}$, i.e.,
\begin{equation*}
 W_2^s(\T^n_\theta;g,\nu):=\left\{u\in L_2(\T^n_\theta;\nu);\ (1+\Delta_{g,\nu})^{\frac{s}{2}}u\in L_2(\T^n_\theta;\nu)\right\}. 
\end{equation*}
It is equipped with the Hilbert norm, 
\begin{equation*}
 \|u\|_{W^2_s(\T^n_\theta;g,\nu)}:=\big\| (1+\Delta_{g,\nu})^{\frac{s}{2}}u\|_{L_2(\T^n_\theta;\nu)}, \qquad u \in W_2^s(\T^n_\theta;g,\nu). 
\end{equation*}
For $s=0$ we recover the Hilbert space $L_2(\T^n_\theta;\nu)$. We denote by $W_2^{-s}(\T^n_\theta;g,\nu)$ the antilinear dual of $W_2^s(\T^n_\theta;g,\nu)$. The duality pairing of $W^{-s}_2(\T^n_\theta;g,\nu)$ with $W^s_2(\T^n_\theta;g,\nu)$ will be denoted by $\acou{\cdot}{\cdot}.$

Note that $\Delta_{g,\nu}^{-s/2}$ induces a bounded operator $\Delta_{g,\nu}^{-s/2}: L_2(\T^n_\theta;\nu)\rightarrow W_2^s(\T^n_\theta;g,\nu)$. By duality it further extends to a bounded operator $\Delta_{g,\nu}^{-s/2}:  W_2^{-s}(\T^n_\theta;g,\nu)\rightarrow  L_2(\T^n_\theta;\nu)$ given by
\begin{equation}\label{eq:Deltagnu-symmetricity}
 \acou{\Delta_{g,\nu}^{-\frac{s}{2}}u}{v}=\scal{u}{\Delta_{g,\nu}^{-\frac{s}{2}}v}_\nu, \qquad u\in W_2^{-s}(\T^n_\theta;g,\nu), \quad v\in  L_2(\T^n_\theta;\nu). 
\end{equation}

\begin{proposition}
 For any $s\geq 0$, the Sobolev spaces $W_2^s(\T^n_\theta)$ and $W_2^s(\T^n_\theta;g,\nu)$ agree as topological vector spaces. By duality, their antilinear duals 
  $W_2^{-s}(\T^n_\theta)$ and $W_2^{-s}(\T^n_\theta;g,\nu)$ agree as topological vector spaces as well.  
\end{proposition}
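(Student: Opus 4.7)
The plan is to leverage Proposition~\ref{prop:powersLB}, which identifies $(1+\Delta_{g,\nu})^{s/2}$ as an elliptic \psido\ of order $s$, together with the mapping properties of \psidos\ on the Sobolev scale $W_2^s(\T^n_\theta)$ given in Proposition~\ref{prop:Sobolev-PsiDOs}. Roughly speaking, since both $(1+\Delta)^{s/2}$ and $(1+\Delta_{g,\nu})^{s/2}$ are elliptic \psidos\ of the same order $s$, they must define equivalent Sobolev norms, and the weight $\nu$ only affects the underlying $L_2$-inner product by a bounded (and boundedly invertible) multiplier.

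First I would record the trivial but essential fact that $L_2(\T^n_\theta)$ and $L_2(\T^n_\theta;\nu)$ coincide as topological vector spaces, since $\nu\in\GL_1^+(C^\infty(\T^n_\theta))$ gives $c_1 \|u\|_{L_2}\leq \|u\|_{L_2(\T^n_\theta;\nu)}\leq c_2\|u\|_{L_2}$ for all $u\in L_\infty(\T^n_\theta)$, and this extends to $L_2$. Next, by Proposition~\ref{prop:powersLB}, both $(1+\Delta_{g,\nu})^{s/2}$ and $(1+\Delta_{g,\nu})^{-s/2}$ are \psidos\ of orders $s$ and $-s$ respectively, with the same principal symbols (up to $\sigma_\nu$) as $\Lambda^{\pm s}=(1+\Delta)^{\pm s/2}$ up to order. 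Therefore, the compositions
\begin{equation*}
A_s:=(1+\Delta_{g,\nu})^{\frac{s}{2}}\Lambda^{-s}, \qquad B_s:=\Lambda^{s}(1+\Delta_{g,\nu})^{-\frac{s}{2}}
\end{equation*}
are \psidos\ of order $0$ by Corollary~\ref{cor:PsiDOs.composition-classical}, and hence extend to bounded operators on $L_2(\T^n_\theta)$ by Proposition~\ref{prop:Sobolev-PsiDOs}.

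Using $A_s$ and $B_s$, for every $u\in C^\infty(\T^n_\theta)$ I can estimate
\begin{equation*}
\bigl\|(1+\Delta_{g,\nu})^{\frac{s}{2}}u\bigr\|_{L_2}=\|A_s\Lambda^s u\|_{L_2}\leq \|A_s\|\,\|u\|_{W_2^s},
\end{equation*}
and symmetrically $\|\Lambda^s u\|_{L_2}\leq \|B_s\|\,\|(1+\Delta_{g,\nu})^{s/2}u\|_{L_2}$. Combining these with the equivalence of $L_2$- and $L_2(\T^n_\theta;\nu)$-norms yields constants $C,C'>0$ such that
\begin{equation*}
C^{-1}\|u\|_{W_2^s}\leq \|u\|_{W_2^s(\T^n_\theta;g,\nu)}\leq C'\|u\|_{W_2^s}\qquad\forall u\in C^\infty(\T^n_\theta).
\end{equation*}
Since $C^\infty(\T^n_\theta)$ is dense in both Hilbert spaces (it contains all $U^k$, which form an orthonormal-type basis after rescaling), and both spaces are complete, the above norm equivalence forces $W_2^s(\T^n_\theta)=W_2^s(\T^n_\theta;g,\nu)$ as topological vector spaces.

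The duality statement then follows formally: the antilinear dual of a Hilbert space is determined, as a topological vector space, by the underlying topology, and two topologically equivalent Hilbert space structures give topologically isomorphic antilinear duals. Concretely, the duality pairings for $W_2^{-s}(\T^n_\theta)$ and $W_2^{-s}(\T^n_\theta;g,\nu)$ differ only by composition with the bounded invertible left multiplication $\hat{\lambda}(\sqrt{\nu})^{*}\hat{\lambda}(\sqrt{\nu})$ on $L_2(\T^n_\theta)$, which induces an isomorphism of the two dual spaces. The principal technical point — the only one I expect to require any care — is ensuring that the functional calculus definition of $(1+\Delta_{g,\nu})^{s/2}$ as an unbounded operator on $L_2(\T^n_\theta;\nu)$ really coincides with its \psido\ realization on $C^\infty(\T^n_\theta)$ so that the norm identity $\|u\|_{W_2^s(\T^n_\theta;g,\nu)}=\|(1+\Delta_{g,\nu})^{s/2}u\|_{L_2(\T^n_\theta;\nu)}$ can be read from the \psido\ side; this is precisely what Proposition~\ref{prop:powersLB} secures.
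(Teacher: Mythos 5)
Your proposal is correct and follows essentially the same route as the paper: both rest on the observation that $\Lambda^s(1+\Delta_{g,\nu})^{-s/2}$ and $(1+\Delta_{g,\nu})^{s/2}\Lambda^{-s}$ are order-zero \psidos\ (via Proposition~\ref{prop:powersLB} and Corollary~\ref{cor:PsiDOs.composition-classical}) and hence $L_2$-bounded, yielding the equivalence of norms, together with the equivalence of $L_2(\T^n_\theta)$ and $L_2(\T^n_\theta;\nu)$ norms. The paper's proof is marginally more streamlined in that it observes the two order-zero operators are mutual inverses and concludes directly, while you carry out the two-sided norm estimate and then invoke density; one cosmetic slip is the parenthetical claim that the principal symbol of $(1+\Delta_{g,\nu})^{s/2}$ agrees with that of $\Lambda^s$ ``up to $\sigma_\nu$'' — actually $|\xi|_g^s\neq|\xi|^s$ for general $g$ — but this is irrelevant since only the orders matter for the boundedness of the composition.
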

\begin{proof}
For $s=0$ we already know that $W_2^0(\T^n_\theta)=L_2(\T^n_\theta)$ and $W_2^s(\T^n_\theta;g,\nu)=L_2(\T^n_\theta;\nu)$ agree as topological vector spaces. Let $s>0$. As $(1+\Delta_{g,\nu})^{\pm s/2}$ are \psidos\ of order~$\pm s$, the composition $\Lambda^s(1+\Delta_{g,\nu})^{- s/2}$ and its inverse 
$(1+\Delta_{g,\nu})^{ s/2}\Lambda^{-s}$ are \psidos\ of order~$0$, and hence are bounded on $L_2(\T^n_\theta)$. We thus get an invertible bounded operator 
$\Lambda^s(1+\Delta_{g,\nu})^{- s/2}:L_2(\T^n_\theta;\nu)\rightarrow L_2(\T^n_\theta)$ with bounded inverse. It then follows that $W_2^s(\T^n_\theta)$ and $W_2^s(\T^n_\theta;g,\nu)$ agree as vector spaces and have equivalent norms. This gives the result. 
\end{proof}

Combining this result with the Sobolev's embeddings provided by Proposition~\ref{prop:Sobolev-embeddingLp} and Proposition~\ref{prop:Sobolev-embeddingLpC0} we obtain the following versions of those results. 

 \begin{corollary}\label{prop:Sobolev-embeddingLp-curved} 
 Let $p\in [2,\infty)$. For every $s\geq n(1/2-p^{-1})$, we have a continuous inclusion of $W_2^s(\T^n_\theta;g,\nu)$ into $L_p(\T^n_\theta;\nu)$. This inclusion is compact if the inequality is strict. 
 \end{corollary}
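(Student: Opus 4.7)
The plan is to reduce this to the classical Sobolev embedding on the ``flat'' NC torus that was recalled in Proposition~\ref{prop:Sobolev-embeddingLp}. The whole point is that we have already done all the analytic work in the preceding proposition, and we only need to trace through the identifications of topological vector spaces.

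First, I would observe that $L_p(\T^n_\theta;\nu)$ and $L_p(\T^n_\theta)$ agree as topological vector spaces: by definition the former is the space $L_p(\T^n_\theta)$ with the equivalent norm $(2\pi)^{n/2}\|\sqrt{\nu}\,x\|_{L_p}$. Equivalence of the two norms is immediate from H\"older's inequality (Proposition~\ref{prop:Holder}) applied with $\sqrt{\nu},\sqrt{\nu}^{-1}\in L_\infty(\T^n_\theta)$, since $\nu\in\GL_1^+(C^\infty(\T^n_\theta))$. Second, the proposition immediately preceding the statement shows that the Sobolev spaces $W_2^s(\T^n_\theta;g,\nu)$ and $W_2^s(\T^n_\theta)$ also agree as topological vector spaces.

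I would then write the inclusion $W_2^s(\T^n_\theta;g,\nu)\hookrightarrow L_p(\T^n_\theta;\nu)$ as the composition
\begin{equation*}
 W_2^s(\T^n_\theta;g,\nu) \xrightarrow{\ \simeq\ } W_2^s(\T^n_\theta) \hookrightarrow L_p(\T^n_\theta) \xrightarrow{\ \simeq\ } L_p(\T^n_\theta;\nu),
\end{equation*}
where the outer arrows are the topological isomorphisms just discussed and the middle arrow is the Sobolev embedding of Proposition~\ref{prop:Sobolev-embeddingLp}. Continuity of the composition is then automatic from continuity of each of the three factors. When the inequality $s\geq n(1/2-p^{-1})$ is strict the middle arrow is compact by Proposition~\ref{prop:Sobolev-embeddingLp}, and since the composition of a compact operator with bounded operators is compact, the resulting inclusion $W_2^s(\T^n_\theta;g,\nu)\hookrightarrow L_p(\T^n_\theta;\nu)$ is compact as well.

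There is no real obstacle here: the statement is a corollary precisely because the nontrivial content (the identification of Sobolev spaces via the ellipticity of $1+\Delta_{g,\nu}$) has already been established in the previous proposition, and the flat Sobolev embedding is a cited result. The only point that requires a brief remark is the equivalence of the $L_p$-norms associated with $\nu$, which follows from the fact that $\nu^{\pm 1/2}$ are bounded elements acting by left multiplication.
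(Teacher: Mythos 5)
Your proof is correct and matches the paper's intended argument exactly: the paper simply states that the corollary follows by combining the preceding proposition (topological equivalence of $W_2^s(\T^n_\theta;g,\nu)$ and $W_2^s(\T^n_\theta)$) with the flat Sobolev embedding of Proposition~\ref{prop:Sobolev-embeddingLp}, which is precisely the composition you write down.
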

 
\begin{corollary}\label{prop:Sobolev-embeddingLpC0-curved}
 For any $s>n/2$, we have a compact inclusion of $W^s_2(\T^n_\theta;g,\nu)$ into $C(\T^n_\theta)$. 
\end{corollary}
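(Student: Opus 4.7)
The plan is to reduce this to Proposition~\ref{prop:Sobolev-embeddingLpC0} by means of the preceding proposition. That preceding result establishes that for any $s \geq 0$ the spaces $W_2^s(\T^n_\theta)$ and $W_2^s(\T^n_\theta;g,\nu)$ coincide as vector spaces with equivalent norms, so in particular the identity map is a topological isomorphism between them.

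Concretely, I would first observe that the inclusion
\begin{equation*}
 \iota : W_2^s(\T^n_\theta;g,\nu) \longrightarrow C(\T^n_\theta)
\end{equation*}
factors as the composition of the identity map $W_2^s(\T^n_\theta;g,\nu) \to W_2^s(\T^n_\theta)$ with the inclusion $W_2^s(\T^n_\theta) \hookrightarrow C(\T^n_\theta)$ provided by Proposition~\ref{prop:Sobolev-embeddingLpC0}. The first of these maps is bounded (even a topological isomorphism) by the preceding proposition, and the second is compact for $s > n/2$ by Proposition~\ref{prop:Sobolev-embeddingLpC0}. Since the composition of a bounded operator with a compact operator is compact, $\iota$ is compact.

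There is no real obstacle here; the substance of the statement was already carried out in establishing that the two Sobolev norms are equivalent, and once that is known the corollary is a formality. The only thing to be a bit careful about is to cite the correct direction of the composition (boundedness composed with compactness), but that is standard. An equivalent rephrasing, which I might include as a brief remark, is that since the two norms on $W_2^s(\T^n_\theta)$ are equivalent, any bounded set in $W_2^s(\T^n_\theta;g,\nu)$ is bounded in $W_2^s(\T^n_\theta)$, hence has relatively compact image in $C(\T^n_\theta)$ by Proposition~\ref{prop:Sobolev-embeddingLpC0}, which is precisely compactness of $\iota$.
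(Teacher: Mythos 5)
Your proof is correct and takes exactly the route the paper intends: the paper does not spell out a proof for this corollary, merely stating that it (and the $L_p$ version) follow by combining the proposition that $W_2^s(\T^n_\theta;g,\nu)$ and $W_2^s(\T^n_\theta)$ agree as topological vector spaces with Propositions~\ref{prop:Sobolev-embeddingLp} and~\ref{prop:Sobolev-embeddingLpC0}. Your factorization of the inclusion through the identity isomorphism is precisely the argument being invoked.
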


\section{Cwikel-Type Estimates}\label{sec:Cwikel-flat}
In this section, we extend the Cwikel-type estimates of~\cite{MP:JMP22} to pseudodifferential operators and powers of Laplace-Beltrami operators.  

\subsection{Weak Schatten classes} \label{subsec:schatten}
We briefly review the main definitions and properties of  weak Schatten classes. We refer to~\cite{GK:AMS69, Si:AMS05} for further details. 

Let $\sH$ be a (separable) Hilbert space with inner product $\scal{\cdot}{\cdot}$. The algebra of bounded linear operators on $\sH$ is denoted $\sL(\sH)$ and its norm is denoted $\|\cdot\|$. We also denote by $\sK$ its closed ideal of compact operators on $\sH$. 

Given any compact operator $T$, we denote by $(\mu_j(T))_{j\geq 0}$ its singular value sequence, i.e., $\mu_j(T)$ is the $(j+1)$-th eigenvalue (counted with multiplicity) of the absolute value $|T|=\sqrt{T^*T}$. Recall that by the min-max principle we have
\begin{align}
 \mu_j(T)&=\min \left\{\|T_{|E^\perp}\|;\ \dim E=j\right\}.
 \label{eq:min-max} 
\end{align}

For $p\in (0,\infty)$, the weak Schatten class  $\sL_{p,\infty}$ is defined by
\begin{equation*}
 \sL_{p,\infty}:=\left\{T\in \sK; \ \mu_j(T)=\op{O}\big(j^{-\frac1p}\big)\right\}. 
\end{equation*}
 We shall also use the notation $\sL_{p,\infty}(\sH)$ whenever we need to highlight the background Hilbert space $\sH$.  
 
 The weak Schatten class  $\sL_{p,\infty}$ is a two-sided ideal. We equip it with the quasi-norm,
\begin{equation}\label{def:lp_infty_quasinorm}
 \|T\|_{\sL_{p,\infty}}:=\sup_{j\geq 0}\;(j+1)^{\frac{1}{p}}\mu_j(T), \qquad T\in \sL_{p,\infty}. 
\end{equation}
With this quasi-norm $\sL_{p,\infty}$ is a quasi-Banach ideal. In particular, we have
\begin{equation*}
 \|ATB\|_{\sL_{p,\infty}}\leq \|A\| \|T\|_{\sL_{p,\infty}} \|B\| \qquad A,B\in \sL(\sH), \ T\in \sL_{p,\infty}. 
\end{equation*}
For $p>1$ the above quasi-norm is equivalent to norm, and so in this case $\sL_{p,\infty}$ is a Banach ideal. 

We also have a version of H\"older's inequality for weak Schatten classes (see~\cite{GK:AMS69, Si:AMS05, SZ:PAMS21}). Namely, if $S\in \sL_{p,\infty}$ and $T\in \sL_{q,\infty}$ with $p^{-1}+q^{-1}=r^{-1}$, then $ST\in \sL_{r,\infty}$, and we have
\begin{equation}\label{eq:schatten.holder}
 \|ST\|_{\sL_{r,\infty}} \leq  p^{-\frac1{q}}q^{-\frac1{p}} (p+q)^{\frac1{p}+\frac1{q}}\|S\|_{\sL_{p,\infty}}  \|ST\|_{\sL_{q,\infty}}. 
\end{equation}
The above constant is sharp (see~\cite{SZ:PAMS21}). Note that for $p=q$ it is equal to $4^{\frac{1}{p}}$.

\subsection{Cwikel-type estimates on flat NC tori}  We briefly recall the Cwikel estimates on flat NC tori given in~\cite{MP:JMP22}. 

The H\"older's inequality  for the $L_p$-spaces of $\T^n_\theta$ (\emph{cf}.\ Proposition~\ref{prop:Holder}) implies the following extension result for the left-regular representation
$\lambda: L_\infty(\T^n_\theta)\rightarrow \sL(L_p(\T^n))$, $p\geq 1$. 

\begin{lemma}[\cite{Ku:TAMS58}]\label{lem:left-reg-Lp} 
Suppose that $p^{-1}+q^{-1}=r^{-1}\leq 1$.  Then the left-regular representation uniquely extends to a linear isometry, 
\begin{equation*}
 \lambda: L_p(\T^n_\theta) \longrightarrow \sL\big(L_q(\T^n_\theta),L_r(\T^n_\theta)\big).
\end{equation*}
In particular, if $p\geq 2$ and $p^{-1}+q^{-1}=2^{-1}$, then we get a linear isometry, 
\begin{equation*}
 \lambda: L_p(\T^n_\theta) \longrightarrow \sL\big(L_q(\T^n_\theta),L_2(\T^n_\theta)\big).
\end{equation*}
\end{lemma}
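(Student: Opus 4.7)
The plan is to derive the result from the noncommutative H\"older inequality (Proposition~\ref{prop:Holder}) combined with the duality between $L_r(\T^n_\theta)$ and $L_{r'}(\T^n_\theta)$, where $r'\in[1,\infty]$ is the H\"older conjugate of $r$; note that $r<\infty$ since $p^{-1}+q^{-1}>0$. I would first establish that $\lambda$ is a contraction on $L_\infty(\T^n_\theta)$ viewed as a subalgebra of $L_p(\T^n_\theta)$, extend by density, and then verify that the extension is actually isometric by a dualization argument.

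For the upper bound, I would observe that for $x,y\in L_\infty(\T^n_\theta)$, Proposition~\ref{prop:Holder} gives $\|xy\|_{L_r}\leq \|x\|_{L_p}\|y\|_{L_q}$. Since $L_\infty(\T^n_\theta)$ is norm-dense in $L_q(\T^n_\theta)$, we conclude that $\lambda(x)$ extends to a bounded operator $L_q(\T^n_\theta)\to L_r(\T^n_\theta)$ with norm at most $\|x\|_{L_p}$. As the trace $\tau$ is finite, $L_\infty(\T^n_\theta)$ is also dense in $L_p(\T^n_\theta)$, so the linear map $x\mapsto \lambda(x)$ extends uniquely, by continuity, to a contraction $\lambda:L_p(\T^n_\theta)\to \sL(L_q(\T^n_\theta),L_r(\T^n_\theta))$.

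For the reverse inequality, I would dualize. Fix $x\in L_p(\T^n_\theta)\setminus\{0\}$ with polar decomposition $x=u|x|$, and set
\begin{equation*}
 y:=\|x\|_{L_p}^{-p/q}\,|x|^{p/q}\in L_q(\T^n_\theta),\qquad z:=\|x\|_{L_p}^{-p/r'}\,|x|^{p/r'}u^*\in L_{r'}(\T^n_\theta),
\end{equation*}
with the convention that $|x|^0$ denotes the support projection of $|x|$ when $r=1$ (so $r'=\infty$). The identity $\bigl\||x|^s\bigr\|_{L_{p/s}}=\|x\|_{L_p}^{s}$ for $s>0$ immediately gives $\|y\|_{L_q}=\|z\|_{L_{r'}}=1$. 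The arithmetic check $p/q+p/r'=p(1/q+1-1/r)=p-1$ together with the trace property and $u^*u=\supp|x|$ then yields
\begin{equation*}
 \tau(xyz)=\|x\|_{L_p}^{-(p-1)}\tau\bigl(u|x|^pu^*\bigr)=\|x\|_{L_p}^{-(p-1)}\tau\bigl(|x|^p\bigr)=\|x\|_{L_p}.
\end{equation*}
Applying the H\"older duality estimate $|\tau(wz)|\leq \|w\|_{L_r}\|z\|_{L_{r'}}$ with $w=xy\in L_r(\T^n_\theta)$ then produces $\|\lambda(x)y\|_{L_r}\geq \|x\|_{L_p}$, and hence $\|\lambda(x)\|_{L_q\to L_r}\geq\|x\|_{L_p}$.

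The main obstacle, more bookkeeping than substance, is to justify the functional calculus manipulations for the $\tau$-measurable (possibly unbounded) operator $x$: the existence of the polar decomposition, the norm identity for $|x|^s$, and the edge case $r=1$ where $|x|^{p/r'}$ must be interpreted as the support projection of $|x|$. All these are standard consequences of the theory of $\tau$-measurable operators developed in~\cite{FK:PJM86}. The second isometry claimed in the statement, concerning $p\geq 2$ and $p^{-1}+q^{-1}=2^{-1}$ (so $r=2$), is then just a specialization of the general case.
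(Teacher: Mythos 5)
The paper does not prove this lemma: it is stated with a citation to Kunze~\cite{Ku:TAMS58}, so there is no in-text argument to compare against. Judged on its own, your proof is the standard one and is correct: the upper bound is the noncommutative H\"older inequality (Proposition~\ref{prop:Holder}) plus density of $L_\infty(\T^n_\theta)$ in both $L_q$ and $L_p$ (the latter using finiteness of $\tau$), and the lower bound is the usual dualization with test elements built from the polar decomposition and functional calculus for $\tau$-measurable operators. The arithmetic checks out: $p/q+p/r'=p-1$, $\|y\|_{L_q}=1$, and $\tau(xyz)=\|x\|_{L_p}$ via $\tau(u|x|^pu^*)=\tau(|x|^p)$. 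One small cosmetic gap in the write-up: $\|z\|_{L_{r'}}=1$ is not literally an instance of the identity $\||x|^s\|_{L_{p/s}}=\|x\|_{L_p}^s$ because $z$ carries the extra factor $u^*$; you should add the one-line observation that for a positive $a$ supported in $u^*u$ one has $|au^*|=uau^*$, hence $\|au^*\|_{L_{r'}}=\|a\|_{L_{r'}}$. A second, genuinely minor caveat: your lower-bound construction presupposes $p<\infty$, since the exponents $p/q$ and $p/r'$ make no sense when $p=\infty$ (and $q,r<\infty$). The statement as written allows $p=\infty$ (where $q=r$); in that case the isometry of $\lambda\colon L_\infty\to\sL(L_q)$ needs a separate word, e.g.\ faithfulness of $\tau$ or the $*$-representation property on $L_2$ combined with duality. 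This does not affect the ``in particular'' clause with $p\ge 2$ and $r=2$ so long as $p<\infty$, which is what is actually used downstream.
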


 Given any $x\in L_p(\T^n)$ with $p\geq 2$, the above proposition asserts that $\lambda(x)$ is a bounded operator from $L_q(\T_\theta^n)$ to $L_2(\T^n_\theta)$ with $p^{-1}+q^{-1}=1/2$. In particular, we may regard it as an unbounded operator on $L_2(\T^n_\theta)$ with domain $L_q(\T_\theta^n)$. 
 
 Note also that if $p>2$, then $q\in [2,\infty)$, and so by Proposition~\ref{prop:Sobolev-embeddingLp} we have a continuous embedding of $W_2^s(\T^n_\theta)$ into $L_q(\T^n_\theta)$ for $s\geq n(1/2-q^{-1})=n/p$. If $p=2$, then $q=\infty$ and it follows from Proposition~\ref{prop:Sobolev-embeddingLpC0} that we have continuous embedding of $W_2^s(\T^n_\theta)$ into $L_q(\T^n_\theta)$ for $s>n/2=n/p$. Therefore, we arrive at the following result. 
 
 \begin{lemma}\label{lem:Cwikel.boundedness-lambda}
 Let $x\in L_p(\T^n_\theta)$, $p\geq 2$, and assume that, either $s>n/p$, or $s=n/p$ and $p>2$. Then $\lambda(x)$ induces a bounded operator from $W_2^s(\T^n_\theta)$ to $L_2(\T_\theta^n)$, and so the operator $\lambda(x)\Delta^{-s/2}$ is bounded on $L_2(\T^n_\theta)$. 
\end{lemma}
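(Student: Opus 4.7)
The lemma is essentially a chain of embeddings, so the plan is to assemble the pieces already set up in the excerpt. Define the conjugate exponent $q$ by $p^{-1}+q^{-1}=2^{-1}$, i.e.\ $q=\frac{2p}{p-2}$ if $p>2$ and $q=\infty$ if $p=2$. By Lemma~\ref{lem:left-reg-Lp}, the left-regular representation extends to an isometry $\lambda:L_p(\T^n_\theta)\to\sL(L_q(\T^n_\theta),L_2(\T^n_\theta))$, so
\[
\|\lambda(x)u\|_{L_2}\leq \|x\|_{L_p}\|u\|_{L_q} \qquad \forall u\in L_q(\T^n_\theta).
\]
Thus it suffices to show $W_2^s(\T^n_\theta)\hookrightarrow L_q(\T^n_\theta)$ continuously under the given hypotheses on $s$.

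If $p>2$ then $q\in[2,\infty)$ and the Sobolev threshold for $L_q$ is $n(1/2-q^{-1})=n/p$; Proposition~\ref{prop:Sobolev-embeddingLp} therefore gives the embedding $W_2^s(\T^n_\theta)\hookrightarrow L_q(\T^n_\theta)$ for every $s\geq n/p$. If $p=2$ then $q=\infty$ and Proposition~\ref{prop:Sobolev-embeddingLpC0} yields $W_2^s(\T^n_\theta)\hookrightarrow C(\T^n_\theta)\hookrightarrow L_\infty(\T^n_\theta)$ as soon as $s>n/2=n/p$, which is precisely the hypothesis made in the $p=2$ case. Composing with the bounded extension of $\lambda(x)$ produces a bounded operator $\lambda(x):W_2^s(\T^n_\theta)\to L_2(\T^n_\theta)$, with norm controlled by $\|x\|_{L_p}$ times the relevant Sobolev embedding constant.

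For the second assertion it remains to check that $\Delta^{-s/2}$ is a bounded operator from $L_2(\T^n_\theta)$ into $W_2^s(\T^n_\theta)$. Since $\ker\Delta=\C\cdot 1$, we adopt the standard convention $\Delta^{-s/2}=0$ on the kernel, so that on the Fourier side $\Delta^{-s/2}u=\sum_{k\neq 0}|k|^{-s}u_kU^k$ for $u=\sum u_kU^k\in L_2(\T^n_\theta)$. Using~(\ref{eq:sobolev.norm.definition}) and the elementary inequality $(1+|k|^2)/|k|^2\leq 2$ valid for every $k\in\Z^n\setminus\{0\}$, we get
\[
\big\|\Delta^{-\frac{s}{2}}u\big\|_{W_2^s}^2=\sum_{k\neq 0}\frac{(1+|k|^2)^s}{|k|^{2s}}|u_k|^2\leq 2^{s}\|u\|_{L_2}^2.
\]
Composing with the bounded map $\lambda(x):W_2^s(\T^n_\theta)\to L_2(\T^n_\theta)$ established in the previous step yields the boundedness of $\lambda(x)\Delta^{-s/2}$ on $L_2(\T^n_\theta)$.

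There is no real obstacle: the argument is essentially a bookkeeping exercise with H\"older's inequality and Sobolev embedding. The only points that require a moment's care are the borderline case $p=2$, where one must pass through $C(\T^n_\theta)$ via Proposition~\ref{prop:Sobolev-embeddingLpC0} (hence the strict inequality $s>n/p$ in that case), and the handling of $\ker\Delta$ via the convention $\Delta^{-s/2}|_{\C\cdot 1}=0$, which is justified by the explicit Fourier-side estimate above.
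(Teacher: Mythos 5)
Your proof is correct and follows the same route as the paper, which derives the lemma from Lemma~\ref{lem:left-reg-Lp} and the Sobolev embeddings of Propositions~\ref{prop:Sobolev-embeddingLp} and~\ref{prop:Sobolev-embeddingLpC0} in the paragraph preceding the statement. The explicit Fourier-side verification that $\Delta^{-s/2}:L_2(\T^n_\theta)\to W_2^s(\T^n_\theta)$ is bounded (with the convention that it vanishes on $\ker\Delta=\C\cdot 1$) is a useful addition that the paper leaves implicit.
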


Given any $q\geq 1$, we denote by $L_q(\T^n_\theta)^*$ the anti-linear dual of $L_q(\T^n_\theta)$, i.e., the space of continuous anti-linear forms on 
$L_q(\T^n_\theta)$. We observe that the left-regular regular representation of $L_\infty(\T^n_\theta)$ can be regarded as an isometric linear map $\lambda:L_\infty(\T^n_\theta)\rightarrow \sL(L_2(\T^n_\theta), L_2(\T^n_\theta)^*)$ such that
\begin{equation*}
 \acou{\lambda(x)u}{v}=\scal{\lambda(x)u}{v}=\tau\big[v^*xu\big], \qquad x\in L_\infty(\T^n_\theta), \quad u,v\in L_2(\T^n_\theta). 
\end{equation*}

\begin{lemma}[see~\cite{MP:JMP22}]\label{lem:left-reg-LpLqLq*}
Suppose that $p^{-1}+2q^{-1}=1$.  Then the left-regular representation uniquely extends to a continuous linear map $\lambda: L_p(\T^n_\theta) \rightarrow \sL(L_q(\T^n_\theta), L_q(\T^n_\theta)^*)$ such that
\begin{equation}\label{eq:left-reg-LpLqLq*}
 \acou{\lambda(x)u}{v}=\tau\big[v^*xu\big] \qquad \forall x\in L_p(\T^n_\theta)\quad  \forall u,v\in L_q(\T^n_\theta). 
\end{equation}
\end{lemma}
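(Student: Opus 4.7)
The plan is to establish the estimate on the dense subspace $L_\infty(\T^n_\theta) \subset L_p(\T^n_\theta)$ and then extend by density. First, I would observe that the identity~(\ref{eq:left-reg-LpLqLq*}) already determines $\lambda(x)$ uniquely on the dense subspace $L_\infty$, so uniqueness of the extension is automatic once continuity is established.

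The central step is a triple Hölder estimate. For $x \in L_\infty(\T^n_\theta)$ and $u,v \in L_q(\T^n_\theta)$, Proposition~\ref{prop:Holder} gives
\[
 \|v^*xu\|_{L_1} \leq \|v^*\|_{L_q}\|xu\|_{L_r} \leq \|v\|_{L_q}\|x\|_{L_p}\|u\|_{L_q},
\]
where $r^{-1}=p^{-1}+q^{-1}$, and the telescoping works precisely because $p^{-1}+2q^{-1}=1$. Since $|\tau(y)|\leq \|y\|_{L_1}$, the sesquilinear form $(u,v)\mapsto \tau[v^*xu]$ is linear in $u$, anti-linear in $v$ (as $v\mapsto v^*$ is anti-linear), and bounded on $L_q\times L_q$ with norm at most $\|x\|_{L_p}$. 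Thus for each fixed $u\in L_q$ the functional $v\mapsto \tau[v^*xu]$ defines an element of $L_q(\T^n_\theta)^*$ which we denote $\lambda(x)u$, and the induced map $\lambda(x):L_q\to L_q^*$ is linear and bounded with $\|\lambda(x)\|_{\sL(L_q,L_q^*)}\leq \|x\|_{L_p}$. For $x\in L_\infty$, this element of $\sL(L_q,L_q^*)$ clearly agrees with the original $\lambda(x)$ on $L_2\subset L_q$ (viewed as a sesquilinear form), so we have not redefined anything on $L_\infty$.

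Next I would extend to arbitrary $x \in L_p(\T^n_\theta)$ by density. Since $L_\infty(\T^n_\theta)$ is dense in $L_p(\T^n_\theta)$ and the map $x\mapsto \lambda(x)$ has just been shown to be contractive from $(L_\infty, \|\cdot\|_{L_p})$ into $\sL(L_q,L_q^*)$ (a Banach space), it extends uniquely to a continuous linear map $\lambda:L_p(\T^n_\theta)\to \sL(L_q,L_q^*)$. To verify~(\ref{eq:left-reg-LpLqLq*}) for general $x$, pick a sequence $x_m\in L_\infty$ with $x_m\to x$ in $L_p$: the left-hand side $\acou{\lambda(x_m)u}{v}$ converges to $\acou{\lambda(x)u}{v}$ by continuity of $\lambda$, while $\tau[v^*x_m u]\to \tau[v^*xu]$ since $x\mapsto v^*xu$ is continuous $L_p\to L_1$ (again by Hölder) and $\tau$ is continuous on $L_1$.

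There is essentially no obstacle here beyond bookkeeping the Hölder exponents correctly and confirming the linearity/anti-linearity pattern so that the output genuinely lives in the anti-linear dual $L_q^*$ rather than the linear one. Once those are in place the argument is purely a density argument in quasi-Banach spaces.
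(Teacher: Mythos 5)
Your proof is correct and is the standard argument this sort of statement admits: establish the triple H\"older estimate $|\tau[v^*xu]|\le\|v\|_{L_q}\|x\|_{L_p}\|u\|_{L_q}$ on the dense subspace $L_\infty(\T^n_\theta)$, observe that the resulting sesquilinear form lands in the anti-linear dual $L_q^*$, and extend by density. The paper itself simply cites~\cite{MP:JMP22} for this lemma, and your route is precisely the one that reference would take; the only nit is the containment slip ``$L_2\subset L_q$'' (it should be $L_q\subset L_2$, since the trace is finite and $q>2$), which does not affect the argument.
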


In particular, if $x\in L_p(\T^n_\theta)$ with $1\leq p<2$, then the above proposition shows that $\lambda(x)$ makes sense as a bounded operator from $L_q(\T^n_\theta)$ to $L_q(\T^n_\theta)^*$, with $p^{-1}+2q^{-1}=1$, i.e., $q^{-1}=\frac{1}{2}(1-p^{-1})$. 

For $s>0$, by duality the continuous inclusion
$W_2^s(\T^n_\theta)\hookrightarrow L_2(\T^n_\theta)$ gives rise to a continuous embedding $\iota^*:L_2(\T^n_\theta)\rightarrow W_2^{-s}(\T^n_\theta)$ such that
\begin{equation*}
 \acou{\iota^*u}{v}=\scal{u}{v}, \qquad u\in L_2(\T^n_\theta), \quad v\in W_2^{s}(\T^n_\theta). 
\end{equation*}
This allows us to identify $L_2(\T^n_\theta)$ with a dense subspace of $W_2^{-s}(\T^n_\theta)$. 

The operator $\Delta^{-s/2}$ induces a bounded operator $\Delta^{-s/2}:L_2(\T^n_\theta)\rightarrow  W^s_2(\T^n_\theta)$. It uniquely extends to a bounded operator $\Delta^{-s/2}:W_2^{-s}(\T^n_\theta) \rightarrow L_2(\T^n_\theta)$ given by
\begin{equation*}
 \bigscal{\Delta^{-\frac{s}{2}}u}{v}=\bigacou{u}{\Delta^{\frac{s}{2}}v}, \qquad u\in W_2^{-s}(\T^n_\theta), \quad v\in L_2(\T^n_\theta).
\end{equation*}

\begin{lemma}[see~\cite{MP:JMP22}]\label{lem:Boundedness.sandwich1}
 Let $x\in L_p(\T^n_\theta)$, $p\geq 1$, and assume that, either $s>n/2p$, or $s=n/2p$ and $p>1$. Then $\lambda(x)$ uniquely extends to a bounded operator $\lambda(x):W_2^s(\T^n_\theta)\rightarrow W_2^{-s}(\T^n_\theta)$, and so the operator $\Delta^{-s/2}\lambda(x)\Delta^{-s/2}$ is bounded on $L_2(\T^n_\theta)$. 
\end{lemma}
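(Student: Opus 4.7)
The plan is to factor $\lambda(x): W_2^s(\T^n_\theta) \to W_2^{-s}(\T^n_\theta)$ through a Sobolev embedding, Lemma~\ref{lem:left-reg-LpLqLq*}, and the adjoint of that embedding. First I would set $q$ so that $p^{-1}+2q^{-1}=1$, i.e.\ $q=2p/(p-1)$ for $p>1$ and $q=\infty$ for $p=1$. A direct computation gives $n(1/2-1/q)=n/(2p)$, which is exactly the Sobolev exponent that the hypotheses on $(s,p)$ are designed to match. Lemma~\ref{lem:left-reg-LpLqLq*} then produces a bounded map $\lambda(x): L_q(\T^n_\theta)\to L_q(\T^n_\theta)^*$ whose operator norm is controlled by $\|x\|_{L_p}$.

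Next I would invoke Sobolev embedding to get a continuous inclusion $\iota: W_2^s(\T^n_\theta)\hookrightarrow L_q(\T^n_\theta)$. When $p>1$ we have $q\in[2,\infty)$ and the assumption $s\geq n/(2p)=n(1/2-1/q)$ places us in the scope of Proposition~\ref{prop:Sobolev-embeddingLp}. When $p=1$ we have $q=\infty$, and the strict inequality $s>n/2$ together with Proposition~\ref{prop:Sobolev-embeddingLpC0} produces a continuous inclusion $W_2^s(\T^n_\theta)\hookrightarrow C(\T^n_\theta)\subset L_\infty(\T^n_\theta)$. Taking antilinear adjoints yields a continuous map $\iota^*: L_q(\T^n_\theta)^*\to W_2^{-s}(\T^n_\theta)$ (recall that $W_2^{-s}(\T^n_\theta)$ is defined as the antilinear dual of $W_2^s(\T^n_\theta)$).

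I would then form the composition $\iota^*\circ \lambda(x)\circ \iota: W_2^s(\T^n_\theta)\to W_2^{-s}(\T^n_\theta)$, which is bounded with norm controlled by $\|\iota\|^2\|x\|_{L_p}$. On the dense subspace $C^\infty(\T^n_\theta)\subset W_2^s(\T^n_\theta)$ this agrees with the bilinear pairing~(\ref{eq:left-reg-LpLqLq*}) produced by left multiplication, so it is the unique bounded extension of $\lambda(x)$. For the second assertion, since $\Delta^{-s/2}$ is bounded from $L_2(\T^n_\theta)$ to $W_2^s(\T^n_\theta)$ (with the convention that $\Delta^{-s/2}$ vanishes on $\ker\Delta=\C\cdot 1$), and by duality also bounded from $W_2^{-s}(\T^n_\theta)$ to $L_2(\T^n_\theta)$, the composition $\Delta^{-s/2}\lambda(x)\Delta^{-s/2}$ is bounded on $L_2(\T^n_\theta)$.

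The argument is really a clean bootstrap and does not have a genuinely hard step: essentially everything is packaged by Lemma~\ref{lem:left-reg-LpLqLq*} and the Sobolev embeddings. The only delicate point is book-keeping at the endpoint $p=1$, where the natural choice $q=\infty$ forces one to invoke the strict Sobolev embedding into $C(\T^n_\theta)$ rather than an $L_q$-space, which is precisely why the hypothesis excludes $s=n/2$ in that case.
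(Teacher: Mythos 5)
Your proof is correct and matches the approach the paper intends: the paper itself cites \cite{MP:JMP22} rather than spelling out an argument, but the discussion immediately preceding the lemma lays out precisely your ingredients—Lemma~\ref{lem:left-reg-LpLqLq*} for $\lambda:L_p(\T^n_\theta)\to\sL(L_q(\T^n_\theta),L_q(\T^n_\theta)^*)$, the Sobolev embeddings of Propositions~\ref{prop:Sobolev-embeddingLp} and~\ref{prop:Sobolev-embeddingLpC0}, the dual embedding $\iota^*$, and the boundedness of $\Delta^{-s/2}:L_2(\T^n_\theta)\to W_2^s(\T^n_\theta)$ together with its dual. Your treatment of the endpoint $p=1$, routing through $C(\T^n_\theta)\subset L_\infty(\T^n_\theta)$ and using the strict inequality $s>n/2$, is exactly the right reading of the hypothesis.
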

 
To sum up the operators $\lambda(x)\Delta^{-s/2}$ and $\Delta^{-s/2}\lambda(x)\Delta^{-s/2}$ are bounded provided that $x$ is in an appropriate $L_p$-space. As the following two results assert, they actually lie in suitable weak Schatten classes. 

\begin{proposition}[{\cite[Theorem~6.1]{MP:JMP22}}; see also~\cite{LeSZ:2020, MSX:CMP19}]\label{prop.specific-Cwikel}
Suppose that $p\neq 2$ and $q=\max(p,2)$, or $p=2<q$. If $x\in L_q(\T^n_\theta)$, then $\lambda(x)\Delta^{-n/2p}$ is in $\sL_{p,\infty}$, and we have 
 \begin{equation}
 \big\| \lambda(x) \Delta^{-\frac{n}{2p}}\big\|_{\sL_{p,\infty}} \leq c_n(p,q)\|x\|_{L_q}.
 \label{eq:Cwikel.specific1}  
 \end{equation}
Moreover, the best constant $c_n(p,q)$ satisfies
\begin{equation*}
 c_n(p,q)\leq c_0(p,q)\nu_0(n)^{\frac{1}{p}}, 
\end{equation*}
where we have set
\begin{gather}
 \nu_0(n):= \sup_{\lambda \geq 1} \lambda^{-\frac{n}{2}}\#\left\{k\in \Z^n\setminus 0; \ |k|\leq \sqrt{\lambda} \right\},\\
 \label{eq:c0.definition1}
 c_0(p,q):=  \left(\frac{130p}{p-2}\right)^{\frac12} \quad (p=q>2), \qquad c_0(p,q):=2^{\frac1p}(2-p)^{-\frac1p} \quad (p>2=q), \\
 \label{eq:c0.definition2}
 c_0(p,q):=2^{-\frac1{q}}q^{-\frac12}(q-2)^{\frac1q-\frac12}c_0(q,q) \quad (p=2<q). 
\end{gather}
\end{proposition}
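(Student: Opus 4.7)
The plan is to establish the three subcases $p=q>2$, $p<2=q$, and $p=2<q$ separately, tracking the Cwikel constant $c_0(p,q)$ explicitly at each step. The unifying observation is that $\Delta$ is diagonalised on the Fourier basis $U^k$, $k\in\Z^n$, with eigenvalues $|k|^2$. Consequently the singular-value sequence of $\Delta^{-n/2p}$ is the decreasing rearrangement of $(|k|^{-n/p})_{k\neq 0}$, and by the very definition of $\nu_0(n)$ this gives $\|\Delta^{-n/2p}\|_{\sL_{p,\infty}}\leq\nu_0(n)^{1/p}$. The estimate of the proposition therefore quantifies how much $\lambda(x)$ can be ``inserted in front'' of this operator without destroying its $\sL_{p,\infty}$ bound.

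For the diagonal case $p=q>2$, I would perform a Birman--Solomyak-style dyadic decomposition $\Delta^{-n/2p}=\sum_{j\geq 0}\Delta^{-n/2p}P_j$ where $P_j$ is the spectral projection of $\Delta$ onto the shell $2^{2j}\leq\Delta<2^{2j+2}$. One has $\rk(P_j)\leq\nu_0(n)(2^{(j+1)n}-2^{jn})$ and $\|\Delta^{-n/2p}P_j\|\leq 2^{-jn/p}$. Lemma~\ref{lem:left-reg-Lp} supplies a bound $\|\lambda(x)\colon L_{2p/(p-2)}\to L_2\|\leq\|x\|_{L_p}$, and combined with the rank of $P_j$ this produces a block-wise Schatten-$p$ estimate on $\lambda(x)\Delta^{-n/2p}P_j$. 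Summing dyadically in $\sL_{p,\infty}$, using the quasi-triangle constant together with a Hardy-type tail, yields the estimate with constant $(130p/(p-2))^{1/2}\nu_0(n)^{1/p}$; the $(p-2)^{-1/2}$ blow-up is the signature of the tail sum.

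The case $p<2=q$ I would deduce by duality, since $\bigl(\lambda(x)\Delta^{-n/2p}\bigr)^*=\Delta^{-n/2p}\lambda(x^*)$ has the same $\sL_{p,\infty}$-quasi-norm. For $x^*\in L_2$ the sandwich $\Delta^{-s/2}\lambda(x^*)\Delta^{-s/2}$ is bounded with $s=n/2p$ by Lemma~\ref{lem:Boundedness.sandwich1}, and the weak-Schatten H\"older inequality \eqref{eq:schatten.holder} applied to the factorization $\Delta^{-n/2p}\lambda(x^*)=\bigl(\Delta^{-n/2p+s/2}\bigr)\bigl(\Delta^{-s/2}\lambda(x^*)\bigr)$ gives the claim with the stated constant. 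The case $p=2<q$ would then follow by real interpolation between the diagonal case at an auxiliary $q'>2$ and a trivial $\sL_{\infty,\infty}$ endpoint, in the Calder\'on couple of weak Schatten classes; the explicit factor $q^{-1/2}(q-2)^{1/q-1/2}$ in $c_0(2,q)$ arises from the K-functional parameter of the interpolation at the endpoint $p=2$.

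The main obstacle will be tracking the numerical constant $c_0(p,q)$ precisely through these manipulations, especially the sharp $(p-2)^{-1/2}$ rate at $p=2$; this reflects the borderline failure of $\Delta^{-n/4}$ to lie in $\sL_2$ and requires careful bookkeeping through the Hardy tail and dyadic sum, rather than a coarser interpolation. By contrast, the noncommutativity of $\T^n_\theta$ plays essentially no role: all phase factors in the product rule $U^kU^l=e^{2i\pi\theta_{lk}}U^{l+k}$ are absorbed into the unitarity of the basis elements $U^k$ on $L_2(\T^n_\theta)$, so the Schatten-type estimates reduce, at the level of singular-value counts, to the classical lattice-point count that defines $\nu_0(n)$.
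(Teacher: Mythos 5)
Proposition~\ref{prop.specific-Cwikel} is not proved in this paper; it is quoted from \cite[Theorem~6.1]{MP:JMP22}, with \cite{LeSZ:2020, MSX:CMP19} as the sources for the $p>2$ case. Your proposal has a concrete gap in the central case $p=q>2$: the dyadic shell decomposition does not give decaying block norms, so the Hardy-type tail diverges. Write $P_j$ for the spectral projection onto $\op{span}\{U^k : 2^j\leq|k|<2^{j+1}\}$ as you propose. By Lemma~\ref{lem:left-reg-Lp} together with the Bernstein/Sobolev bound on the shell,
\begin{equation*}
\big\|\lambda(x)\Delta^{-\frac{n}{2p}}P_j u\big\|_{L_2}
 \leq \|x\|_{L_p}\,\big\|\Delta^{-\frac{n}{2p}}P_j u\big\|_{L_{2p/(p-2)}}
 \leq C\,\|x\|_{L_p}\cdot 2^{jn(\frac12-\frac{p-2}{2p})}\cdot 2^{-\frac{jn}{p}}\|u\|_{L_2}
 = C\,\|x\|_{L_p}\,\|u\|_{L_2},
\end{equation*}
since $\frac12-\frac{p-2}{2p}=\frac1p$: the $j$-dependence cancels exactly. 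The block norms $\|\lambda(x)\Delta^{-n/2p}P_j\|\approx\|x\|_{L_p}$ do not decay, the crude block $\sL_p$-norms $(\rk P_j)^{1/p}\|x\|_{L_p}\approx 2^{jn/p}\|x\|_{L_p}$ blow up, and the tail estimate $\mu_N\leq\sum_{j\geq j_0}\|\lambda(x)\Delta^{-n/2p}P_j\|$ with $2^{j_0 n}\approx N$ is infinite. This failure is structural: it is exactly why the established proofs (Cwikel's truncation at a common level, Solomyak and Birman--Solomyak's nonuniform partition adapted to the measure $|x|^p$, and the operator-valued approximation scheme of \cite{LeSZ:2020, MSX:CMP19}) decompose the potential $x$ simultaneously with the multiplier. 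That simultaneous decomposition is also precisely where the noncommutativity of $\T^n_\theta$ is a genuine obstacle, contrary to your closing remark.

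The remaining cases inherit related problems. For $p<2=q$, the factorization $\Delta^{-n/2p}\lambda(x^*)=\Delta^{-n/4p}\cdot\bigl(\Delta^{-n/4p}\lambda(x^*)\bigr)$ requires $\Delta^{-n/4p}\lambda(x^*)\in\sL_{2p,\infty}$, which is a rescaled instance of the very estimate you are proving; Lemma~\ref{lem:Boundedness.sandwich1} only gives boundedness of the compressed operator and carries no Schatten information. For $p=2<q$ the interpolation endpoints need to be named: the ``diagonal'' side of the wedge degenerates onto the excluded case $p=q=2$, and the $\sL_{\infty,\infty}$ endpoint does not pair with the fixed $\Delta$-power scale, so the K-functional computation must be spelled out rather than asserted.
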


\begin{remark}
For $p>2$ the estimate~(\ref{eq:Cwikel.specific1}) is obtained with an unspecified constant in~\cite{MSX:CMP19} as a special case of the results of~\cite{LeSZ:2020}. 
\end{remark}

\begin{remark}
 The estimate~(\ref{eq:Cwikel.specific1}) for $p=q=2$ does not hold (see Remark~\ref{rem:orlicz} below). 
\end{remark}

\begin{remark}\label{rem:general.cwikel}
The estimate~(\ref{eq:Cwikel.specific1}) is a special case of more general Cwikel estimates for operators of the form $\lambda(x)g(-i\nabla)$ with $g\in \ell_{p,\infty}(\Z^n)$ (see~\cite{MP:JMP22}). Here the operator $g(-i\nabla):L_2(\T^n_\theta)\rightarrow L_2(\T^n_\theta)$ is defined by 
   \begin{equation}\label{eq:fourier.multiplier}
        g(-i\nabla)U^k = g(k)U^k,\qquad k \in \Z^n.
    \end{equation}
In particular, if $g(k)=|k|^{-n/p}$ for $k\neq 0$ and $g(0)=0$, then $g(-i\nabla)=\Delta^{-n/2p}$. The constants $c_0(p,q)$ for $p\neq 2$ in~(\ref{eq:c0.definition1})-(\ref{eq:c0.definition2}) actually appear in~\cite{MP:JMP22} as upper bounds for the best constant for those general Cwikel estimates.   
\end{remark}

\begin{proposition}[{\cite[Theorem~6.5]{MP:JMP22}}]\label{prop:Specific-Cwikel.sandwiched}
Suppose that $p\neq 1$ and $q=\max(p,1)$, or $p=1<q$. If $x\in L_q(\T^n_\theta)$, then 
  $\Delta^{-{n}/{4p}} \lambda(x) \Delta^{-{n}/{4p}}$ is in $\sL_{p,\infty}$, and we have 
\begin{equation}
 \big\| \Delta^{-\frac{n}{4p}} \lambda(x)\Delta^{-\frac{n}{4p}}  \|_{\sL_{p,\infty}}\leq 2^{\frac{1}{p}}c_n(2p,2q)^2 \|x\|_{L_q}
 \label{eq:Cwikel.specific2}   
\end{equation}
where $c_n(2p,2q)$ is the best constant in the inequality~(\ref{eq:Cwikel.specific1}) for the pair $(2p,2q)$. If $x$ is positive, then the inequality holds without the $2^{1/p}$-factor.  
\end{proposition}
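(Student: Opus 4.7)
The plan is to reduce to the non-sandwiched Cwikel estimate of Proposition~\ref{prop.specific-Cwikel} applied with the exponent pair $(2p,2q)$, which satisfies the hypotheses of that result precisely when $(p,q)$ satisfies the hypotheses of the present statement. The two key tools for assembling the sandwich estimate are the identity $\mu_j(T^*T) = \mu_j(T)^2$ and the H\"older inequality~\eqref{eq:schatten.holder} for weak Schatten classes. I first dispatch the sharper positive case. If $x\geq 0$, set $y=\sqrt{x}\in L_{2q}(\T^n_\theta)$, so that $\|y\|_{L_{2q}}=\|x\|_{L_q}^{1/2}$ and $\lambda(x)=\lambda(y)^*\lambda(y)$. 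Putting $T=\lambda(y)\Delta^{-n/4p}$, we obtain the operator identity
\[
\Delta^{-n/4p}\lambda(x)\Delta^{-n/4p}=T^*T,
\]
and since $\mu_j(T^*T)=\mu_j(T)^2$, the definition~\eqref{def:lp_infty_quasinorm} of the quasi-norm yields $\|T^*T\|_{\sL_{p,\infty}}=\|T\|_{\sL_{2p,\infty}}^2$. Applying Proposition~\ref{prop.specific-Cwikel} to $T$ with the pair $(2p,2q)$ then gives $\|T\|_{\sL_{2p,\infty}}^2\leq c_n(2p,2q)^2\|y\|_{L_{2q}}^2 = c_n(2p,2q)^2\|x\|_{L_q}$, which is exactly the bound with no $2^{1/p}$ factor.

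For a general $x\in L_q(\T^n_\theta)$, I would use the polar decomposition $x=v|x|$ with $v\in L_\infty(\T^n_\theta)$ a partial isometry satisfying $\|v\|_\infty\leq 1$, and factor $\lambda(x)=\lambda(v|x|^{1/2})\lambda(|x|^{1/2})$. Setting $A=\lambda(|x|^{1/2})\Delta^{-n/4p}$ and $B=\Delta^{-n/4p}\lambda(v|x|^{1/2})$, we obtain $\Delta^{-n/4p}\lambda(x)\Delta^{-n/4p}=BA$. Proposition~\ref{prop.specific-Cwikel} applied to $A$ gives $\|A\|_{\sL_{2p,\infty}}\leq c_n(2p,2q)\|x\|_{L_q}^{1/2}$, while applying the same result to $B^*=\lambda(|x|^{1/2}v^*)\Delta^{-n/4p}$, together with the H\"older estimate $\||x|^{1/2}v^*\|_{L_{2q}}\leq \|v^*\|_\infty\|x\|_{L_q}^{1/2}\leq \|x\|_{L_q}^{1/2}$, yields $\|B\|_{\sL_{2p,\infty}}=\|B^*\|_{\sL_{2p,\infty}}\leq c_n(2p,2q)\|x\|_{L_q}^{1/2}$. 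The H\"older inequality~\eqref{eq:schatten.holder} specialized to $\sL_{2p,\infty}\cdot\sL_{2p,\infty}\subset\sL_{p,\infty}$ carries the constant $(2p)^{-1/p}(4p)^{1/p}=2^{1/p}$, and so
\[
\|BA\|_{\sL_{p,\infty}} \leq 2^{1/p}\|B\|_{\sL_{2p,\infty}}\|A\|_{\sL_{2p,\infty}} \leq 2^{1/p}c_n(2p,2q)^2\|x\|_{L_q},
\]
which is the claimed inequality. Observe that the very same arithmetic which produces the H\"older constant $2^{1/p}$ is what is saved in the positive case, explaining why that case is sharper.

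The main point requiring care is a technical bookkeeping one: one must justify that the operator identities $\lambda(y)^*\lambda(y)=\lambda(y^*y)$ and $\lambda(v|x|^{1/2})\lambda(|x|^{1/2})=\lambda(x)$ hold as genuine equalities of bounded operators on $L_2(\T^n_\theta)$, rather than merely on some dense subspace. Under the hypotheses on $(p,q)$ each of the factors $\lambda(y)$, $\lambda(v|x|^{1/2})$, $\lambda(|x|^{1/2})$ lies in $L_{2q}(\T^n_\theta)$ with $2q\geq 2$, and the pair $(n/(2p),2q)$ falls within the range to which Lemma~\ref{lem:Cwikel.boundedness-lambda} applies; consequently each factor induces a bounded map $W_2^{n/(2p)}(\T^n_\theta)\to L_2(\T^n_\theta)$. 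The identities are trivially valid on the dense subspace $C^\infty(\T^n_\theta)$ and extend by continuity, after which the Schatten-norm computations above go through unchanged.
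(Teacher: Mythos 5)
Your argument is correct and takes the same route that the paper uses for the analogous $\Psi$DO estimate in Proposition~\ref{prop.specific-Cwikel-Prho}(2), which in turn invokes the original proof in~\cite[Theorem~6.5]{MP:JMP22}: write $x=yz$ with $\|y\|_{L_{2q}}=\|z\|_{L_{2q}}=\|x\|_{L_q}^{1/2}$, apply the one-sided estimate of Proposition~\ref{prop.specific-Cwikel} with exponent pair $(2p,2q)$ to each half of the sandwich, and combine via the weak Schatten H\"older inequality~\eqref{eq:schatten.holder}, handling the positive case directly through the $T^*T$ identity to drop the H\"older constant. Your explicit realization of the balanced factorization via the polar decomposition $x=v|x|$ and the computation $(2p)^{-1/p}(4p)^{1/p}=2^{1/p}$ for the H\"older constant are both correct, and the bookkeeping about interpreting $\lambda(v|x|^{1/2})\lambda(|x|^{1/2})=\lambda(x)$ as operators $W_2^{n/2p}(\T^n_\theta)\to W_2^{-n/2p}(\T^n_\theta)$ via density of $C^\infty(\T^n_\theta)$ is the right way to close the technical gap you identify.
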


\begin{remark}\label{rem:orlicz}
For the ordinary torus $\T^n$, i.e., $\theta=0$, the estimate~(\ref{eq:Cwikel.specific2}) for $p=1$ continues to hold with a different constant if $x$ is in the Orlicz space $\LLogL(\T^n)$ (see~\cite{SZ:arXiv20,So:PLMS95}). However, it does not hold in general for $x\in L_1(\T^n)$ (see~\cite[Lemma~5.7]{LPS:JFA10}). 
\end{remark}

\subsection{Cwikel-type estimates for \psidos}\label{sec:psdo_cwikel} We shall now explain how to extend to pseudodifferential operators the 
Cwikel-type estimates provided by Proposition~\ref{prop.specific-Cwikel} and Proposition~\ref{prop:Specific-Cwikel.sandwiched}. 

Let $\rho(\xi)\in \bS^{-m}(\T^n_\theta\times\R^n)$, $m>0$. We know from Proposition~\ref{prop:PsiDOs.boundedness} that $P_\rho$ extends to a bounded operator $P_\rho:L_2(\T^n_\theta)\rightarrow W_2^{m}(\T^n_\theta)$. Let $x\in L_p(\T^n_\theta)$, $p\geq 2$, and assume that,
either $m>n/p$, or $m=n/p$ and $p>2$. In any case Lemma~\ref{lem:Cwikel.boundedness-lambda} ensures that $\lambda(x)$ induces a bounded operator from $W_2^m(\T^n_\theta)$ to $L_2(\T^n_\theta)$, and so the operator $\lambda(x)P_\rho$ is bounded on $L_2(\T^n_\theta)$. 

If $\sigma(\xi)\in\bS^{-m}(\T^n_\theta\times\R^n)$, then  by Proposition~\ref{prop:Adjoint} the formal adjoint $P_{\sigma}^*$ of $P_{\sigma}$ is $P_{\sigma^\star}$, where $\sigma^\star(\xi)\in \bS^{-m}(\T^n_\theta\times\R^n)$ is given by~(\ref{eq:Adjoints.symbol-star}). In particular, $P_{\sigma}^*$ uniquely extends to a bounded operator $P_{\sigma}^*:L_2(\T^n_\theta)\rightarrow W_2^m(\T^n_\theta)$. It then follows that $P_{\sigma}$ uniquely extends to a bounded $P_{\rho_2}:W_2^{-m}(\T^n_\theta)\rightarrow L_2(\T^n_\theta)$ such that
\begin{equation}\label{eq:Cwikel-Prho.dual-action-Prho}
 \scal{P_{\rho_2} u}{v}=\acou{u}{P_{\rho_2}^*v}, \qquad u\in W_2^{-m}(\T^n_\theta), \qquad v\in L_2(\T^n_\theta). 
\end{equation}

If $x\in L_p(\T^n_\theta)$, $p\geq 1$, and either $m>n/2p$, or $m=n/2p$ and $p>1$, then Lemma~\ref{lem:left-reg-LpLqLq*} ensures that 
$\lambda(x)$ uniquely extends to a bounded operator $\lambda(x):W_2^s(\T^n_\theta)\rightarrow W_2^{-s}(\T^n_\theta)$. It then follows that the operator 
$P_{\sigma}^*\lambda(x)P_\rho$ is bounded on $L_2(\T^n_\theta)$. 

In what follows, we denote by $\Pi_0$ the orthogonal projection onto $\ker \Delta=\C\cdot 1$. That is, 
\begin{equation*}
 \Pi_0u=\tau(u)1, \qquad u\in L_2(\T^n_\theta). 
\end{equation*}
Note that $\Pi_0$ is a rank 1 operator.

We have the following extension to \psidos\ of the Cwikel-type estimates~(\ref{eq:Cwikel.specific1}) and~(\ref{eq:Cwikel.specific2}). 

\begin{proposition}[Cwikel Estimates. \psido\ Version]\label{prop.specific-Cwikel-Prho}
 The following hold. 
\begin{enumerate}
 \item Suppose that $p\neq 2$ and $q=\max(p,2)$,  or $p=2<q$. If $x \in L_q(\T^n_\theta)$ and $\rho(\xi)$ is in $\bS^{-n/p}(\T^n_\theta\times\R^n)$, then 
 $\lambda(x) P_\rho\in \sL_{p,\infty}$, and we have 
  \begin{equation}
 \big\| \lambda(x) P_\rho\big\|_{\sL_{p,\infty}} \leq \big(c_n(p,q)+1\big)\|P_\rho\|_{\sL(L_2,W_2^{n/p})} \|x\|_{L_q}, 
 \label{eq:specific-Cwikel-Prho}
 \end{equation}
 where $c_n(p,q)$ is the best constant in~(\ref{eq:Cwikel.specific1}). 
 
 \item Suppose that, $p\neq 1$ and $q=\max(p,1)$,  or $p=1<q$. If $x\in L_q(\T^n_\theta)$, and $\rho(\xi)$ and $\sigma(\xi)$ are in $\bS^{-n/2p}(\T^n_\theta\times\R^n)$, then $P_{\sigma}^*\lambda(x) P_\rho\in \sL_{p,\infty}$, and we have 
 \begin{equation}
 \big\| P_{\sigma}^*\lambda(x) P_\rho\big\|_{\sL_{p,\infty}} \leq 2^{\frac{1}{p}}\big(c_n(2p,2q)+1\big)^2 \|P_\sigma\|_{\sL(L_2,W_2^{n/2p})} \|P_\rho\|_{\sL(L_2,W_2^{n/2p})} \|x\|_{L_q},
 \label{eq:specific-Cwikel-PrhoPsigma}
 \end{equation} 
 where $c_n(2p,2q)$ is the best constant in~(\ref{eq:Cwikel.specific1}) for the pair $(2p,2q)$. Moreover, if $x$ is positive and $\rho=\sigma$, then the inequality holds without the $2^{1/p}$-factor. 
\end{enumerate}
\end{proposition}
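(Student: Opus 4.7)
The strategy for both parts is to reduce the \psido\ estimate to the specific Cwikel estimates of Propositions \ref{prop.specific-Cwikel} and \ref{prop:Specific-Cwikel.sandwiched} by factoring out the Bessel potential $\Lambda^{-s}=(1+\Delta)^{-s/2}$, $s>0$.

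For part (1), first write
\begin{equation*}
    P_\rho = \Lambda^{-n/p}\, Q, \qquad Q := \Lambda^{n/p} P_\rho.
\end{equation*}
Since $\Lambda^{n/p}\in\Psi^{n/p}(\T^n_\theta)$ has standard symbol in $\bS^{n/p}$ and $\rho(\xi)\in\bS^{-n/p}$, Proposition \ref{prop:Composition.sharp-continuity-standard-symbol} puts $Q$ in $\bS^0(\T^n_\theta\times\R^n)$, so $Q$ is bounded on $L_2(\T^n_\theta)$ by Proposition \ref{prop:Sobolev-PsiDOs}. Because $\Lambda^{n/p}\colon W_2^{n/p}(\T^n_\theta)\to L_2(\T^n_\theta)$ is an isometric isomorphism by the very definition of the Sobolev norm~(\ref{eq:sobolev.norm.definition}), one has $\|Q\|_{\sL(L_2)}=\|P_\rho\|_{\sL(L_2,W_2^{n/p})}$. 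The ideal property of $\sL_{p,\infty}$ then reduces the estimate to showing
\begin{equation*}
    \bigl\|\lambda(x)\Lambda^{-n/p}\bigr\|_{\sL_{p,\infty}} \leq (c_n(p,q)+1)\,\|x\|_{L_q}.
\end{equation*}

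The key step is the decomposition
\begin{equation*}
    \Lambda^{-n/p} = \Delta^{-n/2p}\, S + \Pi_0,
\end{equation*}
where $\Delta^{-n/2p}$ is defined by Borel functional calculus with $0^{-n/2p}:=0$, where $\Pi_0$ denotes the rank-one projection onto $\ker\Delta=\C\cdot 1$, and where $S$ is the Fourier multiplier acting as $SU^0=0$ and $SU^k=(|k|^2/(1+|k|^2))^{n/2p}U^k$ for $k\neq 0$; a direct verification on the basis $\{U^k\}$ confirms the identity and gives $\|S\|\leq 1$. Proposition \ref{prop.specific-Cwikel} combined with the ideal property bounds $\lambda(x)\Delta^{-n/2p}S$ in $\sL_{p,\infty}$-quasinorm by $c_n(p,q)\|x\|_{L_q}$, while $\lambda(x)\Pi_0$ is the rank-one operator $u\mapsto\tau(u)x$, whose operator norm equals $\|x\|_{L_2}\leq\|x\|_{L_q}$ (using $q\geq 2$ together with $\tau(1)=1$). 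The rank-one perturbation inequality $\mu_{j+1}(A+B)\leq\mu_j(A)$ then combines the two contributions into $(c_n(p,q)+1)\|x\|_{L_q}$.

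For part (2), apply the same factoring symmetrically: write $P_\rho=\Lambda^{-n/2p}Q_\rho$ and $P_\sigma^*=Q_\sigma^*\Lambda^{-n/2p}$ with $Q_\rho, Q_\sigma$ bounded on $L_2$, reducing the problem to bounding $\|\Lambda^{-n/2p}\lambda(x)\Lambda^{-n/2p}\|_{\sL_{p,\infty}}$. Using the same decomposition $\Lambda^{-n/2p}=\Delta^{-n/4p}S+\Pi_0$ on both sides and expanding yields four terms. Because $S$ and $\Delta^{-n/4p}$ are commuting Fourier multipliers, the main term rearranges as $S\bigl(\Delta^{-n/4p}\lambda(x)\Delta^{-n/4p}\bigr)S$ and is controlled by Proposition \ref{prop:Specific-Cwikel.sandwiched} together with $\|S\|\leq 1$, contributing $2^{1/p}c_n(2p,2q)^2\|x\|_{L_q}$. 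The three cross-terms each contain a factor of $\Pi_0$, making them rank-one operators whose operator norms are bounded by $\|x\|_{L_q}$; in the low-regularity regime $q<2$ arising when $p<1$, one interprets $\lambda(x)$ on the range of $\Pi_0$ via the duality pairing of Lemma \ref{lem:left-reg-LpLqLq*}, and the action of $\Delta^{-n/4p}$ on $x\in L_q\subset W_2^{-s}$ lands in $L_2$ for appropriate $s\leq n/2p$.

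The main obstacle is the careful bookkeeping of the rank-one corrections: one must exploit the interlacing inequality $\mu_{j+1}(A+B)\leq\mu_j(A)$ for rank-one $B$ rather than the generic quasi-triangle inequality, in order to recover the sharp constants $(c_n(p,q)+1)$ in part (1) and $2^{1/p}(c_n(2p,2q)+1)^2$ in part (2). A secondary subtlety is verifying that all compositions make sense in the low-$q$ regime, where $x$ need not lie in $L_2$ and the rank-one pieces must be interpreted through the appropriate $W_2^{-s}$--$W_2^{s}$ duality; the observation that $\lambda(x)\Pi_0$ has rank one ensures that it lies in every weak Schatten class regardless of $q$, once its operator norm is controlled.
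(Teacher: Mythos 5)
Your Part (1) is essentially the paper's proof: both write $\lambda(x)P_\rho = \lambda(x)\Lambda^{-n/p}\cdot\Lambda^{n/p}P_\rho$, use $\|\Lambda^{n/p}P_\rho\|_{\sL(L_2)}=\|P_\rho\|_{\sL(L_2,W_2^{n/p})}$, and decompose $\lambda(x)\Lambda^{-n/p} = \lambda(x)\Delta^{-n/2p}\cdot(\Delta^{n/2p}\Lambda^{-n/p}) + \lambda(x)\Pi_0$ to reduce to Proposition~\ref{prop.specific-Cwikel}. (The paper is also informal in absorbing the rank-one term at exactly constant $+1$, so you are not worse off there.)

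Part (2) is where you genuinely diverge from the paper and where the argument has a gap. The paper does \emph{not} expand $\Lambda^{-n/2p}\lambda(x)\Lambda^{-n/2p}$; it factors $x=yz$ with $y,z\in L_{2q}(\T^n_\theta)$, $\|y\|_{L_{2q}}=\|z\|_{L_{2q}}=\|x\|_{L_q}^{1/2}$, identifies $P_\sigma^*\lambda(x)P_\rho = (\lambda(y^*)P_\sigma)^*\lambda(z)P_\rho$, applies Part (1) to each factor in $\sL_{2p,\infty}$, and then invokes the weak-Schatten H\"older inequality~(\ref{eq:schatten.holder}). This is what produces the constant $2^{1/p}(c_n(2p,2q)+1)^2$ and also what gives the improvement without the $2^{1/p}$ for $x\geq 0$, $\rho=\sigma$, since then the operator is $|\lambda(\sqrt{x})P_\sigma|^2$. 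Your four-term expansion cannot reproduce this. The main term contributes $2^{1/p}c_n(2p,2q)^2$ via Proposition~\ref{prop:Specific-Cwikel.sandwiched}, but summing it with three rank-$\leq 1$ corrections in $\sL_{p,\infty}$ for $p\leq 1$ is \emph{not} controlled by the interlacing you cite: the rank-$3$ perturbation inequality $\mu_{j+3}(A+R)\leq\mu_j(A)$ yields $(j+1)^{1/p}\mu_j(A+R)\leq\bigl(\tfrac{j+1}{j-2}\bigr)^{1/p}\|A\|_{p,\infty}$ for $j\geq 3$, which at $j=3$ costs a factor $4^{1/p}$ that has no counterpart in $(c_n+1)^2/c_n^2$. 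In addition, your cross-term bounds of the form $\|\Delta^{-n/4p}Sx\|_{L_2}\leq\|x\|_{L_q}$ are not literally correct in the low-$q$ regime; they involve the Sobolev-embedding constant from $L_q(\T^n_\theta)\hookrightarrow W_2^{-n/2p}(\T^n_\theta)$, which is not $1$. Finally, you omit the positive-$x$ case altogether. The missing idea is the $x=yz$ factoring plus weak-Schatten H\"older, which is what makes Part (2) a clean corollary of Part (1).
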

\begin{proof}
Suppose that, either $p\neq 2$ and $q=\max(p,2)$,  or $p=2<q$, and let $x\in L_q(\T^n_\theta)$. Let us first prove~(\ref{eq:specific-Cwikel-Prho}) for $\rho(\xi)=(1+|\xi|^2)^{-n/2p}$, i.e., $P_\rho=\Lambda^{-n/p}$. In the notation of~(\ref{eq:fourier.multiplier}) we have $\Lambda=g(-i\nabla)$ with $g(k)=(1+|k|^2)^{-n/4p}$, and so by Remark~\ref{rem:general.cwikel} and the results of~\cite{MP:JMP22} the operator $\lambda(x)\Lambda^{-n/p}$ is in $\sL_{p,\infty}$. 

In fact, we have
\begin{equation}\label{eq:one.sided.cwikel.kernel.splitting}
 \lambda(x)\Lambda^{-\frac{n}{p}}=  \lambda(x)(1-\Pi_0)\Lambda^{-\frac{n}{p}} +  \lambda(x)\Pi_0\Lambda^{-\frac{n}{p}}
 =  \lambda(x)\Delta^{-\frac{n}{2p}}\cdot \Delta^{\frac{n}{2p}}\Lambda^{-\frac{n}{p}} + \lambda(x)\Pi_0. 
\end{equation}
Here $\Delta^{\frac{n}{2p}}\Lambda^{-\frac{n}{p}}$ is a bounded operator of norm~$\leq 1$. Thus, by Proposition~\ref{prop.specific-Cwikel} the operator $\lambda(x)\Delta^{-\frac{n}{2p}}\cdot \Delta^{\frac{n}{2p}}\Lambda^{-\frac{n}{p}}$ is in $\sL_{p,\infty}$, and we have
\begin{equation}\label{eq:bound.on.x.Delta.n2p}
 \big\| \lambda(x)\Delta^{-\frac{n}{2p}}\cdot \Delta^{\frac{n}{2p}}\Lambda^{-\frac{n}{p}}\big\|_{\sL_{p,\infty}} \leq 
  \big\| \lambda(x)\Delta^{-\frac{n}{2p}}\big\|_{\sL_{p,\infty}}  \cdot  \big\| \Delta^{\frac{n}{2p}}\Lambda^{-\frac{n}{p}}\big\| \leq c_n(p,q)\|x\|_{L_q}.  
\end{equation}
Moreover, as $ \lambda(x)\Pi_0$ is an operator of rank~$\leq 1$, it is contained in the weak Schatten class $\sL_{p,\infty}$. Furthermore, by the min-max principle~(\ref{eq:min-max}) and the very definition of the weak Schatten class norm~(\ref{def:lp_infty_quasinorm}) we have
\begin{equation*}
 \big\|  \lambda(x)\Pi_0\big\|_{\sL_{p,\infty}} = \mu_0\left(\lambda(x)\Pi_0\right) 
 = \big\|  \lambda(x)\Pi_0\big\|= \|\tau\| \|x\|_{L_2} \leq \|x\|_{L_q}. 
\end{equation*}
Combining this with~(\ref{eq:one.sided.cwikel.kernel.splitting}) and~(\ref{eq:bound.on.x.Delta.n2p}) then gives
\begin{equation}\label{eq:bessel.potential.cwikel1}
 \big\| \lambda(x)\Lambda^{-\frac{n}{p}}\big\|_{\sL_{p,\infty}} \leq \big(c_n(p,q)+1\big)\|x\|_{L_q}. 
\end{equation}
 
 Let $\rho(\xi)\in \bS^{-n/p}(\T^n_\theta\times\R^n)$. As the operator $P_\rho:L_2(\T^n_\theta)\rightarrow W_2^{n/p}(\T^n_\theta)$ is bounded, the composition $\Lambda^{n/p}P_\rho$ is bounded on $L_2(\T^n_\theta)$. Moreover, the very definition~(\ref{eq:sobolev.norm.definition}) of the norm of  $W_2^{n/p}(\T^n_\theta)$ implies that $\|\Lambda^{n/p}P_\rho\|= \|P_\rho\|_{\sL(L_2,W_2^{n/p})}$. As $\lambda(x)P_\rho= \lambda(x)\Lambda^{-n/p}\cdot \Lambda^{n/p}P_\rho$, we deduce that 
that $\lambda(x)P_\rho\in \sL_{p,\infty}$, and by using~(\ref{eq:bessel.potential.cwikel1}) we get
\begin{equation*}
  \big\| \lambda(x) P_\rho\big\|_{\sL_{p,\infty}} \leq  \big\| \lambda(x) \Lambda^{-\frac{n}{p}}\big\|_{\sL_{p,\infty}} \big\| \Lambda^{\frac{n}{p}}P_\rho\| \leq 
  \big( c_n(p,q)+1\big) \|x\|_{L_q} \|P_\rho\|_{\sL(L_2,W_2^{n/p})}. 
\end{equation*}
This proves~(\ref{eq:specific-Cwikel-Prho})

The proof of the 2nd part is similar to the proof of the 2nd part of~\cite[Theorem~6.5]{MP:JMP22}. Suppose now that, $p\neq 1$ and $q=\max(p,1)$,  or $p=1<q$. Let $x\in L_q(\T^n_\theta)$ and assume that $\rho(\xi)$ and $\sigma(\xi)$ are symbols in $\bS^{-n/2p}(\T^n_\theta\times\R^n)$. We may write $x=yz$ with $y,z\in L_{2q}(\T^n_\theta)$ such that $\|y\|_{L_{2q}}=\|z\|_{L_{2q}}=(\|x\|_{L_q})^{1/2}$ (see, e.g., \cite[Lemma~6.4]{MP:JMP22}). By the first part $\lambda(y)P_\sigma$ and $\lambda(z)P_\rho$ are in the weak Schatten class $\sL_{2p,\infty}$. Moreover, by using~(\ref{eq:Cwikel-Prho.dual-action-Prho}) and arguing 
along similar lines as that of the proof of~\cite[Lemma~6.4]{MP:JMP22} it can be shown that
\begin{equation*}
 P_\sigma^*\lambda(x)P_\rho=\big(\lambda(y^*)P_\sigma\big)^*\lambda(z)P_\rho. 
\end{equation*}
Thus, by the H\"older inequality~(\ref{eq:schatten.holder}) the operator $ P_\sigma^*\lambda(x)P_\rho$ is in $\sL_{p,\infty}$, and we have
\begin{equation}\label{eq:PrhoPsigma.holder}
  \big\| P_{\sigma}^*\lambda(x) P_\rho\big\|_{\sL_{p,\infty}}  \leq 2^{\frac{1}{p}}
  \big\|\left(\lambda(y^*)P_{\sigma}\right)^*\big\|_{\sL_{2p,\infty}} \big\| \lambda(z)P_\rho\big\|_{\sL_{2p,\infty}}. 
\end{equation}
By using~(\ref{eq:specific-Cwikel-Prho}) and the fact that $\|y^*\|_{L_{2q}} =\|y\|_{L_{2q}}=\|z\|_{L_{2q}}=(\|x\|_{L_q})^{1/2}$ we get
\begin{gather*}
  \big\| \lambda(z)P_\rho\big\|_{\sL_{2p,\infty}}\leq \big(c_n(p,q)+1\big)\|z\|_{L_{2q}}= \big(c_n(p,q)+1\big)\big(\|x\|_{L_{2q}}\big)^{\frac12},\\
 \big\|\left(\lambda(y^*)P_{\sigma}\right)^*\big\|_{\sL_{2p,\infty}}  =\big\|\lambda(y^*)P_{\sigma}\big\|_{\sL_{2p,\infty}}  
 \leq \big(c_n(p,q)+1\big)\|y^*\|_{L_{2q}}= \big(c_n(p,q)+1\big)\big(\|x\|_{L_{2q}}\big)^{\frac12}. 
\end{gather*}
Combining these inequalities with~(\ref{eq:PrhoPsigma.holder}) gives~(\ref{eq:specific-Cwikel-PrhoPsigma}). 

If $x$ is positive, then we may take $y=z=\sqrt{x}$. If in addition $\sigma=\rho$, then~(\ref{eq:PrhoPsigma.holder}) becomes $P_\sigma^*\lambda(x)P_\rho=(\lambda(\sqrt{x})P_\sigma)^*(\lambda(\sqrt{x})P_\sigma)$, and so we get
\begin{equation*}
  \big\| P_{\sigma}^*\lambda(x) P_\rho\big\|_{\sL_{p,\infty}}= \big\| \big(\lambda(\sqrt{x})P_\sigma\big)^*\big(\lambda(\sqrt{x})P_\sigma\big)\big\|_{\sL_{p,\infty}}
  =  \big\|\lambda(\sqrt{x})P_\sigma\big\|_{\sL_{2p,\infty}}^2.
\end{equation*}
Combining this with the inequality~(\ref{eq:specific-Cwikel-Prho}) for $z=\sqrt{x}$ gives the inequality~(\ref{eq:specific-Cwikel-PrhoPsigma}) without the $2^{1/p}$-factor. The proof is complete. 
\end{proof}

 
 As a first application of the Cwikel-type estimates~(\ref{eq:specific-Cwikel-Prho})--(\ref{eq:specific-Cwikel-PrhoPsigma}) we shall now explain how they enable to get $L_p$-versions of the trace formula~(\ref{eq:trace-formula-spur}). 

Any operator in $\sL_{p,\infty}$ with $p<1$ is trace class. Thus,  Proposition~\ref{prop.specific-Cwikel-Prho} ensures that if $x\in L_2(\T^n_\theta)$ and $\rho(\xi)\in \bS^{-m}(\T^n_\theta\times \R^n)$ with $m>n$, then the operator $\lambda(x)P_\rho$ is trace-class. Likewise, if $x\in L_1(\T^n_\theta)$ and $\rho(\xi)$ and $\sigma(\xi)$ are symbols in $\bS^{-m}(\T^n_\theta\times\R^n)$ with $m>n/2$, then Proposition~\ref{prop.specific-Cwikel-Prho} also ensures that $P_\sigma^* \lambda(x) P_\rho$ is trace-class. We then have the following extension of the trace formula~(\ref{eq:trace-formula-spur}). 

\begin{proposition}[$L_p$-Trace Formulas]\label{prop:Lp-trace-formula}
 The following hold.
\begin{enumerate}
\item   If $\rho(\xi)\in \bS^{-m}(\T^n_\theta\times\R^n)$, $m>n$, and $x\in L_2(\T^n_\theta)$, then
\begin{equation}\label{trace_class_bound}
  \Tr\big[ \lambda(x)P_\rho\big] = \sum_{k\in \Z^n} \tau\big[x\rho(k)\big]. 
\end{equation}

\item If  $x\in L_1(\T^n_\theta)$ and $\rho(\xi)$ and $\sigma(\xi)$ are  in $\bS^{-m}(\T^n_\theta\times \R^n)$, $m>n/2$, then
\begin{equation}
   \Tr\big[ P_\sigma^*\lambda(x)P_\rho\big] = 
  \sum_{k\in \Z^n} \tau\big[x\rho(k)\sigma(k)^*\big]. 
   \label{eq:Cwikel-Prho.trace-Psigma*xPrho} 
\end{equation}
In particular, we have 
\begin{equation}
 \Tr\big[ P_\rho^*\lambda(x)P_\rho\big] = 
  \sum_{k\in \Z^n} \tau\big[x|\rho(k)|^2\big].
  \label{eq:Cwikel-Prho.trace-Pr*xPr} 
\end{equation}
\end{enumerate}
 \end{proposition}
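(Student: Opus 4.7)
My plan is to prove both identities by evaluating the trace against the Fourier orthonormal basis $\{U^k\}_{k\in\Z^n}$ of $L_2(\T^n_\theta)$, once the trace-class property of the operators is in place. The Cwikel estimates of Proposition~\ref{prop.specific-Cwikel-Prho} immediately yield this: for part~(1) applied with $q=2$, the operator $\lambda(x)P_\rho$ lies in $\sL_{p,\infty}$ with $p=n/m<1$; for part~(2) applied with $q=1$, the operator $P_\sigma^*\lambda(x)P_\rho$ lies in $\sL_{p,\infty}$ with $p=n/(2m)<1$. In either case the inclusion $\sL_{p,\infty}\subset\sL_1$ for $p<1$ follows from $\sum_j(j+1)^{-1/p}<\infty$, so both operators are trace class and the trace can be computed against any orthonormal basis.

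For part~(1), I would use the basis identity $P_\rho U^k=\rho(k)U^k$ from~\eqref{eq:toroidal.Prhou-equation}. Since $\rho(k)U^k\in C^\infty(\T^n_\theta)\subset L_\infty(\T^n_\theta)$, Lemma~\ref{lem:left-reg-Lp} with the pair $(p,q)=(2,\infty)$ makes $\lambda(x)\rho(k)U^k = x\rho(k)U^k$ a well-defined element of $L_2(\T^n_\theta)$. Using $U^k(U^k)^*=1$ and tracial symmetry, we obtain
\[
\scal{\lambda(x)P_\rho U^k}{U^k} = \tau\big[x\rho(k)U^k(U^k)^*\big] = \tau\big[x\rho(k)\big],
\]
and summation in $k$ gives~\eqref{trace_class_bound}.

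For part~(2), the same strategy applies, but care must be taken with the duality pairings since now $\lambda(x)\rho(k)U^k$ lies only in $L_1(\T^n_\theta)$, not in $L_2(\T^n_\theta)$. The composition $P_\sigma^*\lambda(x)P_\rho$ is nonetheless well-defined on $L_2$ as the chain of maps $L_2\xrightarrow{P_\rho}W_2^m\xrightarrow{\lambda(x)}W_2^{-m}\xrightarrow{P_\sigma^*}L_2$, where $\lambda(x):W_2^m\to W_2^{-m}$ is bounded for $m>n/2$ by Lemma~\ref{lem:Boundedness.sandwich1}, and the dual operator of $P_\sigma:L_2\to W_2^m$ furnishes $P_\sigma^*:W_2^{-m}\to L_2$. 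The adjoint relation
\[
\scal{P_\sigma^*w}{U^k}_{L_2} = \acou{w}{P_\sigma U^k}_{W_2^{-m}\times W_2^m}, \qquad w\in W_2^{-m}(\T^n_\theta),
\]
applied with $w = \lambda(x)\rho(k)U^k$, together with $P_\sigma U^k = \sigma(k)U^k\in C^\infty(\T^n_\theta)$, converts the duality pairing into a genuine $L_1$--$L_\infty$ trace pairing
\[
\acou{\lambda(x)\rho(k)U^k}{\sigma(k)U^k} = \tau\big[x\rho(k)U^k(U^k)^*\sigma(k)^*\big] = \tau\big[x\rho(k)\sigma(k)^*\big].
\]
Summation over $k$ then yields~\eqref{eq:Cwikel-Prho.trace-Psigma*xPrho}, and the specialization $\sigma=\rho$ gives~\eqref{eq:Cwikel-Prho.trace-Pr*xPr}.

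The only delicate point of the argument is the correct identification of the dual action of $P_\sigma^*$ on the Sobolev scale and the reduction of the resulting duality pairing to a trace pairing between an $L_1$-element and an $L_\infty$-element; once this is in place, both formulas follow by essentially term-by-term computations on the basis $\{U^k\}$.
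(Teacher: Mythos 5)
Your proposal is correct and follows essentially the same path as the paper: the trace-class membership comes from the Cwikel estimates of Proposition~\ref{prop.specific-Cwikel-Prho} (via $\sL_{p,\infty}\subset\sL_1$ for $p<1$), and both parts are computed term-by-term on the Fourier basis $\{U^k\}$ using $P_\rho U^k=\rho(k)U^k$, the relevant $L_p$--$L_q$ duality pairing from Lemma~\ref{lem:left-reg-LpLqLq*} and the extension of $P_\sigma^*$ across the Sobolev scale for part~(2), and the tracial property of $\tau$. The only cosmetic difference is that you collapse $U^k(U^k)^*=1$ directly while the paper invokes unitary invariance of $\tau$ under conjugation by $U^k$; these are the same fact.
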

 \begin{proof}
  Let $\rho(\xi)\in \bS^{-m}(\T^n_\theta\times\R^n)$, $m>n$, and $x\in L_2(\T^n_\theta)$.  As $\{U^k;\ k\in \Z^n\}$ is an orthonormal basis of $L_2(\T^n_\theta)$ and $\rho(\xi)$ takes values in $C^\infty(\T^n_\theta)$, we have 
    \begin{equation}\label{eq:Cwikel-Prho.trace-Uk}
         \Tr\big[\lambda(x)P_{\rho}\big] = \sum_{k\in \Z^n}\scal{ \lambda(x)P_\rho(U^k)}{U^k}= \sum_{k\in \Z^n}\scal{ x\rho(k)U^k}{U^k}= \sum_{k\in \Z^n}\tau\big[ (U^k)^*x\rho(k)U^k\big].  
     \end{equation}
Recall that $\tau$ is a unitarily invariant functional on $L_1(\T^n_\theta)$ and the basis elements $U^k$ are unitary. Thus, $\tau[ (U^k)^*x\rho(k)U^k]= \tau[x\rho(k)]$, and so we get~(\ref{trace_class_bound}). 

Suppose now that $x\in L_1(\T^n_\theta)$ and $\sigma(\xi)$ are symbols in $\bS^{-m}(\T^n_\theta\times\R^n)$ with $m>n/2$.  In the same way as in~(\ref{eq:Cwikel-Prho.trace-Uk}) we have  
\begin{equation}
 \Tr\big[ P_\sigma^*\lambda(x)P_\rho\big] = \sum_{k\in \Z^n} \scal{P_\sigma^* \lambda(x)P_\rho(U^k)}{U^k}. 
 \label{eq:Cwikel-Prho.trace-UkPsr}
\end{equation}
In view of~(\ref{eq:left-reg-LpLqLq*}) and~(\ref{eq:Cwikel-Prho.dual-action-Prho}) we have
\begin{equation}
  \scal{P_\sigma \lambda(x)P_\rho(U^k)}{U^k}=\acou{ \lambda(x)P_\rho(U^k)}{P_\sigma(U^k)}=\tau\left[ [P_\sigma(U^k)]^*xP_\rho(U^k)\right]. 
  \label{eq:Cwikel-Prho.pairingPsUkxPrUk}
\end{equation}
As $P_\rho(U^k)=\rho(k)U^k$ and $P_\sigma(U^k)=\sigma(k)U^k$, we get 
\begin{equation*}
  \scal{P_\sigma \lambda(x)P_\rho(U^k)}{U^k}=\tau\left[(U^k)^*\sigma(k)^*x\rho(k)U^k\right]=\tau\big[x\rho(k)\sigma(k)^*\big].
\end{equation*}
Combining this with~(\ref{eq:Cwikel-Prho.trace-UkPsr}) then gives~(\ref{eq:Cwikel-Prho.trace-Psigma*xPrho}). The proof is complete.  
\end{proof}
   
\subsection{Curved Cwikel estimates}\label{subsec:Cwikel-curved}
Let $g=(g_{ij})$ be a Riemannian metric  and $\nu\in \GL_1^+(C^\infty(\T^n_\theta))$ a smooth positive density on $\T^n_\theta$. We shall now explain how to extend the Cwikel estimates of Proposition~\ref{prop.specific-Cwikel} and Proposition~\ref{prop:Specific-Cwikel.sandwiched} to powers of the Laplace-Beltrami operator $\Delta_{g,\nu}$. Recall that $\Delta_{g,\nu}^{-s/2}$, $s>0$, gives rise  to bounded operators $\Delta_{g,\nu}^{-s/2}:L_2(\T^n_\theta;\nu)\rightarrow W_2^{s}(\T^n_\theta;g,\nu)$ and $\Delta_{g,\nu}^{-s/2}: W_2^{-s}(\T^n_\theta;g,\nu)\rightarrow L_2(\T^n_\theta;\nu)$.  
 
As mentioned in Subsection~\ref{subsec:smooth.densities}, the left-mutiplication operator $\hat{\lambda}(\sqrt{\nu})$ provides us with an isometric isomorphism from $L_p(\T^n_\theta;\nu)$ onto 
$L_p(\T^n_\theta)$ for any $p\in [1,\infty)$. Combining this with Lemma~\ref{lem:left-reg-Lp} for $p^{-1}+q^{-1}=r^{-1}\leq 1$ we get an isometric linear map,  
\begin{equation*}
 \lambda_\nu: L_p(\T^n) \longrightarrow \sL\big(L_q(\T^n_\theta;\nu), L_r(\T^n_\theta;\nu)\big), \qquad  
\end{equation*}where
\begin{equation}\label{eq:Riem.lambdanu}
 \lambda_\nu(x)=\hat{\lambda}\big(\sqrt{\nu}\big)^{-1}\lambda(x)\hat{\lambda}\big(\sqrt{\nu}\big)= \lambda\big(\sigma_\nu(x)\big), \qquad x\in L_p(\T^n). 
\end{equation}
Here $\sigma_\nu(x)$ is the inner automorphism~(\ref{eq:modular.automorphism.def}). In particular, if $x\in L_p(\T^n_\theta)$, $p\geq 2$, and $p^{-1}+q^{-1}=1/2$, then $\lambda_\nu(x)$ maps continuously $L_q(\T^n_\theta;\nu)$ to $L_2(\T^n_\theta;\nu)$.  

We are interested in operators  of the form
$\lambda_\nu(x)\Delta_{g,\nu}^{-s/2}$ and $\Delta_{g,\nu}^{-s/2}\lambda_\nu(x)\Delta_{g,\nu}^{-s/2}$ with $s>0$. For the applications we have in mind, it is important to regard them as operators on $L_2(\T^n_\theta;\nu)$ rather than as operators on $L_2(\T^n_\theta)$.  

In the same way as with Lemma~\ref{lem:Cwikel.boundedness-lambda}, by combining this with the continuous embeddings provided by Proposition~\ref{prop:Sobolev-embeddingLp} and Proposition~\ref{prop:Sobolev-embeddingLpC0} we get the following result. 

\begin{lemma}\label{lem:Cwikel.boundedness-lambda-curved}
 Let $x\in L_p(\T^n_\theta)$, $p\geq 2$, and assume that, either $s>np^{-1}$, or $s=np^{-1}$ and $p>2$. Then $\lambda_\nu(x)$ induces a bounded operator 
 $\lambda_\nu(x):W_2^s(\T^n_\theta;g,\nu)\rightarrow L_2(\T_\theta^n;\nu)$, and hence $\lambda_\nu(x) \Delta_{g,\nu}^{-s/2}$ is bounded on $L_2(\T^n_\theta;\nu)$. 
\end{lemma}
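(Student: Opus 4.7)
My plan is to reduce the curved statement to the flat one (Lemma~\ref{lem:Cwikel.boundedness-lambda}) via the conjugation identity $\lambda_\nu(x)=\lambda(\sigma_\nu(x))$ provided by~(\ref{eq:Riem.lambdanu}), together with the identifications of Sobolev spaces already established in Section~\ref{sec:Riemannian}.

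First, I would note that since $\nu \in \GL_1^+(C^\infty(\T^n_\theta))$, both $\nu^{1/2}$ and $\nu^{-1/2}$ lie in $C^\infty(\T^n_\theta) \subset L_\infty(\T^n_\theta)$. By H\"older's inequality for the $L_p$-spaces (Proposition~\ref{prop:Holder}), the element $\sigma_\nu(x) = \nu^{-1/2}x\nu^{1/2}$ lies in $L_p(\T^n_\theta)$ with norm bound
\begin{equation*}
\|\sigma_\nu(x)\|_{L_p} \leq \big\|\nu^{-\frac12}\big\|_{L_\infty}\|\nu^{\frac12}\|_{L_\infty}\|x\|_{L_p}.
\end{equation*}
Under the hypothesis ($s > n/p$, or $s = n/p$ with $p > 2$), Lemma~\ref{lem:Cwikel.boundedness-lambda} then asserts that $\lambda(\sigma_\nu(x))$ extends to a bounded operator $W_2^s(\T^n_\theta) \to L_2(\T^n_\theta)$.

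Next, I would invoke the two identifications as topological vector spaces: the proposition preceding Corollary~\ref{prop:Sobolev-embeddingLp-curved} states that $W_2^s(\T^n_\theta;g,\nu) = W_2^s(\T^n_\theta)$ with equivalent norms, and the discussion of Subsection~\ref{subsec:smooth.densities} shows that $L_2(\T^n_\theta;\nu) = L_2(\T^n_\theta)$ with equivalent norms (the change of inner product being implemented by the unitary $\hat\lambda(\sqrt{\nu})$). Combined with the identity $\lambda_\nu(x) = \lambda(\sigma_\nu(x))$ from~(\ref{eq:Riem.lambdanu}), this immediately transfers boundedness: $\lambda_\nu(x): W_2^s(\T^n_\theta;g,\nu)\to L_2(\T^n_\theta;\nu)$ is bounded.

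Finally, since $\Delta_{g,\nu}^{-s/2}$ maps $L_2(\T^n_\theta;\nu)$ boundedly into $W_2^s(\T^n_\theta;g,\nu)$ by the very definition of this Sobolev space, composing yields boundedness of $\lambda_\nu(x)\Delta_{g,\nu}^{-s/2}$ on $L_2(\T^n_\theta;\nu)$. There is no real obstacle here, as the work is entirely bookkeeping: the flat Lemma~\ref{lem:Cwikel.boundedness-lambda} carries all the analytic content, while the equivalence of the two Hilbert structures together with the conjugation formula~(\ref{eq:Riem.lambdanu}) absorb the curvature and the weight. The only mild subtlety is to keep consistent track of which space ($L_2(\T^n_\theta)$ vs.\ $L_2(\T^n_\theta;\nu)$) is in use at each step, but since the two norms are equivalent the implicit constants are harmless.
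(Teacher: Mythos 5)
Your proposal is correct and relies on the same ingredients as the paper: the conjugation identity $\lambda_\nu(x)=\lambda(\sigma_\nu(x))$, the $L_p$-boundedness of the left-regular representation combined with the Sobolev embeddings (which is exactly what drives the flat Lemma~\ref{lem:Cwikel.boundedness-lambda}), and the topological equivalence of the weighted and unweighted Sobolev and $L_2$ spaces. The only cosmetic imprecision is the phrase ``by the very definition of this Sobolev space'' for the boundedness of $\Delta_{g,\nu}^{-s/2}:L_2(\T^n_\theta;\nu)\to W_2^s(\T^n_\theta;g,\nu)$, since the norm uses the Bessel potential $(1+\Delta_{g,\nu})^{s/2}$ rather than $\Delta_{g,\nu}^{s/2}$; but this is a standard functional-calculus observation that the paper also takes as given, so it is not a gap.
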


\begin{remark}
 If $x\in L_p(\T^n_\theta)$, $p\geq 2$, and $s$ are as in Lemma~\ref{lem:Cwikel.boundedness-lambda-curved}, then~(\ref{eq:Riem.lambdanu}) implies that
 \begin{equation}\label{eq:lambda-to-lambda-nu-onesided}
 \lambda_\nu(x)\Delta_{g,\nu}^{-s/2}=\lambda\big(\nu^{-\frac{1}{2}}x\nu^{\frac{1}{2}}\big)\Delta_{g,\nu}^{-s/2}.  
\end{equation}
\end{remark}

Let $L_q(\T^n_\theta;\nu)^*$, $q\geq 1$, be the anti-linear dual of $L_q(\T^n_\theta;\nu)$. By duality we have an isometric isomorphism 
$\hat{\lambda}(\sqrt{\nu})^*: L_q(\T^n_\theta)^*\rightarrow L_q(\T^n_\theta;\nu)^*$ given by 
\begin{equation*}
 \bigacou{\hat{\lambda}\big(\sqrt{\nu}\big)^*u}{v}=\bigacou{u}{\hat{\lambda}\big(\sqrt{\nu}\big)v}, \qquad u\in L_q(\T^n_\theta)^*, \quad v\in L_q(\T^n_\theta;\nu). 
\end{equation*}
Therefore, by Lemma~\ref{lem:left-reg-Lp}, if $p^{-1}+2q^{-1}=1$, then we have an isometric linear map, 
\begin{equation*}
 \lambda_\nu: L_p(\T^n_\theta) \longrightarrow \sL\big(L_q(\T^n_\theta;\nu), L_q(\T^n_\theta;\nu)^*\big), \qquad \lambda_\nu(x):=\hat{\lambda}\big(\sqrt{\nu}\big)^*\lambda(x) \hat{\lambda}\big(\sqrt{\nu}\big). 
\end{equation*}
More precisely, if $x\in L_p(\T^n_\theta)$ and $u,v\in L_q(\T^n_\theta;\nu)$, then 
\begin{equation}\label{eq:Integration.lambdanu-dual} 
 \acou{\lambda_\nu(x)u}{v}= \bigacou{\lambda(x)\hat{\lambda}\big(\sqrt{\nu}\big)u}{\hat{\lambda}\big(\sqrt{\nu}\big)v} =
 (2\pi)^n\bigacou{\lambda(x)\big(\nu^{\frac12} u\big)}{\nu^{\frac12}v} = (2\pi)^n\tau\big[v^*\nu^{\frac12}x\nu^{\frac12}u\big]. 
\end{equation}
In particular, if $x\in L_\infty(\T^n_\theta)$, then
\begin{equation*}
 \acou{\lambda_\nu(x)u}{v}=(2\pi)^n\tau\left[v^*\nu\big(\nu^{-\frac12}x\nu^{\frac12}\big)u\right]=\bigscal{\lambda\big(\nu^{-\frac12}x\nu^{\frac12}\big)u}{v}_\nu. 
\end{equation*}
Moreover, in the same with Lemma~\ref{lem:Boundedness.sandwich1}, by combining this with continuous embeddings provided by Proposition~\ref{prop:Sobolev-embeddingLp} and Proposition~\ref{prop:Sobolev-embeddingLpC0} we obtain the following result.

\begin{lemma}\label{lem:Boundedness.sandwich1-curved}
 Let $x\in L_p(\T^n_\theta)$, $p\geq 1$, and assume that, either $s>n/2p$, or $s=n/2p$ and $p>1$. Then $\lambda_\nu(x)$ uniquely extends to a bounded operator $\lambda_\nu(x):W_2^s(\T^n_\theta;g,\nu)\rightarrow W_2^{-s}(\T^n_\theta,g,\nu)$, and hence $\Delta_{g,\nu}^{-s/2}\lambda_\nu(x)\Delta_{g,\nu}^{-s/2}$ is bounded on $L_2(\T^n_\theta;\nu)$. 
\end{lemma}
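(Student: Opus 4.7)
The plan is to mimic the argument establishing Lemma~\ref{lem:Boundedness.sandwich1} in the flat case, but using the curved Sobolev embeddings of Corollary~\ref{prop:Sobolev-embeddingLp-curved} and Corollary~\ref{prop:Sobolev-embeddingLpC0-curved} and the isometric isomorphism $\hat{\lambda}(\sqrt{\nu})$ between $L_r(\T^n_\theta;\nu)$ and $L_r(\T^n_\theta)$ already recorded in Subsection~\ref{subsec:smooth.densities}.

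First, I would invoke the weighted bilinear version of H\"older's inequality produced right before the statement: for $p\geq 1$ and $q$ defined by $p^{-1}+2q^{-1}=1$, the assignment $x\mapsto \lambda_\nu(x)$ yields a continuous linear map $L_p(\T^n_\theta)\to \sL(L_q(\T^n_\theta;\nu),L_q(\T^n_\theta;\nu)^*)$ with action characterised by~(\ref{eq:Integration.lambdanu-dual}). Next, I would exhibit a continuous embedding $W_2^s(\T^n_\theta;g,\nu)\hookrightarrow L_q(\T^n_\theta;\nu)$ under the hypotheses on $(p,s)$. A direct computation gives $n(1/2-q^{-1})=n/(2p)$, so that when $p>1$ (hence $q<\infty$ and $q\geq 2$) Corollary~\ref{prop:Sobolev-embeddingLp-curved} applies for every $s\geq n/(2p)$, while for $p=1$ (hence $q=\infty$) Corollary~\ref{prop:Sobolev-embeddingLpC0-curved} requires the strict inequality $s>n/2=n/(2p)$; together this matches the two cases in the statement.

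Taking antilinear duals in this embedding yields a continuous embedding $L_q(\T^n_\theta;\nu)^*\hookrightarrow W_2^{-s}(\T^n_\theta;g,\nu)$. Composing these three continuous maps produces a bounded operator
\begin{equation*}
\lambda_\nu(x)\,:\,W_2^s(\T^n_\theta;g,\nu)\xrightarrow{\text{embed}} L_q(\T^n_\theta;\nu)\xrightarrow{\lambda_\nu(x)} L_q(\T^n_\theta;\nu)^*\xrightarrow{\text{embed}} W_2^{-s}(\T^n_\theta;g,\nu),
\end{equation*}
and this extension is unique because $L_\infty(\T^n_\theta)\cap W_2^s(\T^n_\theta;g,\nu)$ is dense in $W_2^s(\T^n_\theta;g,\nu)$ (the unitaries $U^k$ already lie in the intersection). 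The formula~(\ref{eq:Integration.lambdanu-dual}) shows that this extension is consistent with the original definition of $\lambda_\nu(x)$ whenever $x$ is bounded.

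Finally, to deduce boundedness of $\Delta_{g,\nu}^{-s/2}\lambda_\nu(x)\Delta_{g,\nu}^{-s/2}$ on $L_2(\T^n_\theta;\nu)$, I would combine this with the two bounded factorisations of $\Delta_{g,\nu}^{-s/2}$ recorded in Subsection~\ref{subsec:laplace.beltrami}: one sends $L_2(\T^n_\theta;\nu)$ into $W_2^s(\T^n_\theta;g,\nu)$, and its dual (via~(\ref{eq:Deltagnu-symmetricity})) sends $W_2^{-s}(\T^n_\theta;g,\nu)$ into $L_2(\T^n_\theta;\nu)$. There is no genuine obstacle here; the only point requiring care is the bookkeeping of exponents between the two endpoint cases $p=1$ and $p>1$, which is precisely what forces the dichotomy in the hypothesis.
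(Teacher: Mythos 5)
Your proof is correct and follows essentially the same route the paper intends: the text preceding the lemma already records the isometric map $\lambda_\nu\colon L_p(\T^n_\theta)\to\sL(L_q(\T^n_\theta;\nu),L_q(\T^n_\theta;\nu)^*)$ for $p^{-1}+2q^{-1}=1$, and then says to argue "in the same way as with Lemma~\ref{lem:Boundedness.sandwich1}" by combining with the Sobolev embeddings, which is exactly the composition you spell out (you cite the curved Corollaries~\ref{prop:Sobolev-embeddingLp-curved} and~\ref{prop:Sobolev-embeddingLpC0-curved}, while the paper points to the flat Propositions~\ref{prop:Sobolev-embeddingLp} and~\ref{prop:Sobolev-embeddingLpC0}; these are interchangeable since the flat and curved Sobolev spaces coincide as topological vector spaces). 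Your exponent check $n(1/2-q^{-1})=n/(2p)$ and the case split between $p=1$ ($q=\infty$, strict inequality needed) and $p>1$ ($q\in(2,\infty)$, endpoint allowed) is exactly the bookkeeping required, and the final factorization through $\Delta_{g,\nu}^{-s/2}$ using~(\ref{eq:Deltagnu-symmetricity}) is precisely what the paper has in mind.
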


\begin{remark}\label{rmk:lambda-to-lambda-nu-twosided}
 Let $p$ and $s$ be as in Lemma~\ref{lem:Boundedness.sandwich1-curved}. The linear map $x\rightarrow \Delta_{g,\nu}^{-s/2}\lambda_\nu(x)\Delta_{g,\nu}^{-s/2}$ is continuous from $L_p(\T^n_\theta)$ to $\sL(L_2(\T^n_\theta;\nu))$. If $x\in L_\infty(\T^n_\theta)$, then by~(\ref{eq:Riem.lambdanu}) we have 
 $ \Delta_{g,\nu}^{-s/2}\lambda_\nu(x)\Delta_{g,\nu}^{-s/2}= \Delta_{g,\nu}^{-s/2}\lambda\big(\nu^{-\frac{1}{2}}x\nu^{\frac{1}{2}}\big) \Delta_{g,\nu}^{-s/2}$. The density of  $L_\infty(\T^n_\theta)$ into $L_p(\T^n_\theta)$ then implies that
 \begin{equation}\label{eq:lambda-to-lambda-nu-twosided}
 \Delta_{g,\nu}^{-\frac{s}{2}}\lambda_\nu(x)\Delta_{g,\nu}^{-\frac{s}{2}}= \Delta_{g,\nu}^{-\frac{s}{2}}\lambda\big(\nu^{-\frac{1}{2}}x\nu^{\frac{1}{2}}\big) \Delta_{g,\nu}^{-\frac{s}{2}} \qquad \forall x\in L_p(\T^n_\theta).  
\end{equation}
\end{remark}

For $p>0$ we denote by $\sL_{p,\infty}^{(\nu)}$ the weak Schatten class $\sL_{p,\infty}(L_2(\T^n_\theta;\nu))$. We shall keep using the notation $\sL_{p,\infty}$ for the weak Schatten class $\sL_{p,\infty}(L_2(\T^n_\theta))$.  As $L_2(\T^n_\theta;\nu)$ and $L_2(\T^n_\theta)$ are the same Hilbert space with equivalent sinner products, $\sL_{p,\infty}^{(\nu)}$ and $\sL_{p,\infty}$ are the same quasi-Banach ideal with equivalent quasi-norms. In fact, as $\hat{\lambda}(\sqrt{\nu}):L_2(\T^n_\theta;\nu)\rightarrow L_2(\T^n_\theta)$ is a unitary isomorphism, an isometric isomorphism from $\sL_{p,\infty}^{(\nu)}$ onto $\sL_{p,\infty}$ is given by the map 
$T \longrightarrow \hat{\lambda}(\sqrt{\nu})T \hat{\lambda}(\sqrt{\nu})^{-1}= {\lambda}(\sqrt{\nu})T {\lambda}(\sqrt{\nu})^{-1}$.  In particular, 
\begin{equation*}
 \|T\|_{\sL_{p,\infty}^{(\nu)}}= \big\| {\lambda}\big(\sqrt{\nu}\big)T {\lambda}\big(\sqrt{\nu}\big)^{-1}\big\|_{\sL_{p,\infty}} \qquad \forall T\in  \sL_{p,\infty}^{(\nu)} . 
\end{equation*}

By Proposition~\ref{prop:powersLB} the power $\Delta_{g,\nu}^{-s}$ is a \psido\ of order~$-2s$. Thus, if we disregard inner products, then by using~(\ref{eq:lambda-to-lambda-nu-onesided}) and~(\ref{eq:lambda-to-lambda-nu-twosided}) and Proposition~\ref{prop.specific-Cwikel-Prho} we see that $\lambda_\nu(x)\Delta_{g,\nu}^{-n/2p}$ and $\Delta_{g,\nu}^{-n/4p}\lambda(x)\Delta_{g,\nu}^{-n/4p}$ are operators in $\sL_{p,\infty}$ if $x$ is in a suitable $L_q$-space, and we have explicit upper bounds for their $\sL_{p,\infty}$-quasi-norm. For our purpose, we regard these operators as operators on $L_2(\T^n_\theta;\nu)$, and so we seek for estimates for their $\sL_{p,\infty}^{(\nu)}$-quasi-norms.

\begin{theorem}[Curved Cwikel Estimates] \label{thm:curved-Cwikel} 
The following hold. 
 \begin{enumerate}
 \item Suppose that $p\neq 2$ and $q=\max(p,2)$, or $p=2<q$. If $x\in L_q(\T^n_\theta)$, then $\lambda_\nu(x)\Delta_{g,\nu}^{-n/2p}$ is in $\sL_{p,\infty}^{(\nu)}$, and we have
\begin{equation}
  \left\| \lambda_\nu(x)\Delta_{g,\nu}^{-\frac{n}{2p}}\right\|_{\sL_{p,\infty}^{(\nu)}} \leq C_{g,\nu}(p,q)\big\|x\big\|_{L_q}. 
 \label{eq:specific-Cwikel-Deltagnu}
\end{equation}
Moreover, the best constant $C_{g,\nu}(p,q)$ satisfies 
\begin{equation*}
 C_{g,\nu}(p,q)\leq  (2\pi)^{\frac{n}{2}} \big(c_n(p,q)+1)\|\nu\|^{\frac12} \big\|\Lambda^{\frac{n}{p}} \Delta_{g,\nu}^{-\frac{n}{2p}}\big\|_{\sL(L_2(\T^n_\theta;\nu),L_2(\T^n_\theta))}, 
\end{equation*}
where $c_n(p,q)$ is the best constant in the estimate~(\ref{eq:Cwikel.specific1}).

\item Suppose that $p\neq 1$ and $q=\max(p,1)$, or $p=1<q$. If $x\in L_q(\T^n_\theta)$, then the operator $\Delta_{g,\nu}^{-n/4p}\lambda_\nu(x)\Delta_{g,\nu}^{-n/4p}$ is in $\sL_{p,\infty}^{(\nu)}$, and we have
\begin{equation}
  \left\| \Delta_{g,\nu}^{-\frac{n}{4p}}\lambda_\nu(x) \Delta_{g,\nu}^{-\frac{n}{4p}}\right\|_{\sL_{p,\infty}^{(\nu)}} \leq 2^{\frac1p}C_{g,\nu}(2p,2q)^2
  \big\|x\big\|_{L_q}. 
\label{eq:specific-Cwikel-Deltagnu-2}
\end{equation}
where $C_{g,\nu}(2p,2q)$ is the best constant in the estimate~(\ref{eq:specific-Cwikel-Deltagnu}) for the pair $(2p,2q)$. Furthermore, the inequality holds without the $2^{1/p}$-factor if $x$ is positive. 
\end{enumerate}
\end{theorem}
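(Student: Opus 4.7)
The plan is to reduce the curved estimates to the flat \psido\ Cwikel estimates of Proposition~\ref{prop.specific-Cwikel-Prho} via two devices. First, the isometry $\hat{\lambda}(\sqrt{\nu}): L_2(\T^n_\theta;\nu) \to L_2(\T^n_\theta)$ identifies $\sL_{p,\infty}^{(\nu)}$ with $\sL_{p,\infty}$ and, more generally, lets one compute singular values of operators between these two (isomorphic) Hilbert spaces. Second, Proposition~\ref{prop:powersLB} ensures that $\Delta_{g,\nu}^{-n/2p}$ is a \psido\ of order $-n/p$, so it maps $L_2(\T^n_\theta;\nu)$ into $W_2^{n/p}(\T^n_\theta)$, which lets one split off an ``unweighted'' factor $\Lambda^{-n/p}$ amenable to the flat estimate.

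For part~(i), I factor
\begin{equation*}
\lambda_\nu(x)\Delta_{g,\nu}^{-\frac{n}{2p}} = \bigl[\lambda_\nu(x)\Lambda^{-\frac{n}{p}}\bigr] \circ \bigl[\Lambda^{\frac{n}{p}}\Delta_{g,\nu}^{-\frac{n}{2p}}\bigr].
\end{equation*}
The second factor is bounded from $L_2(\T^n_\theta;\nu)$ to $L_2(\T^n_\theta)$ with operator norm $\|\Lambda^{n/p}\Delta_{g,\nu}^{-n/2p}\|_{\sL(L_2(\T^n_\theta;\nu),L_2(\T^n_\theta))}$. For the first factor, the intertwining $\hat{\lambda}(\sqrt{\nu})\lambda_\nu(x) = \lambda(x)\hat{\lambda}(\sqrt{\nu}) = (2\pi)^{n/2}\lambda(x\sqrt{\nu})$ combined with the unitarity of $\hat{\lambda}(\sqrt{\nu})$ identifies the singular values of $\lambda_\nu(x)\Lambda^{-n/p}: L_2(\T^n_\theta) \to L_2(\T^n_\theta;\nu)$ with those of the flat operator $(2\pi)^{n/2}\lambda(x\sqrt{\nu})\Lambda^{-n/p}$ on $L_2(\T^n_\theta)$. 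Proposition~\ref{prop.specific-Cwikel-Prho}(i), applied to $x\sqrt{\nu}\in L_q(\T^n_\theta)$ with $P_\rho = \Lambda^{-n/p}$ (for which $\|\Lambda^{-n/p}\|_{\sL(L_2,W_2^{n/p})}=1$), and the trivial Hölder bound $\|x\sqrt{\nu}\|_{L_q}\leq \|\nu\|^{1/2}\|x\|_{L_q}$ then combine to yield the stated estimate for $C_{g,\nu}(p,q)$.

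Part~(ii) then follows from part~(i) via the standard polarization of Proposition~\ref{prop.specific-Cwikel-Prho}(ii). Writing $x=yz$ with $y,z\in L_{2q}(\T^n_\theta)$ and $\|y\|_{L_{2q}}=\|z\|_{L_{2q}}=\|x\|_{L_q}^{1/2}$ (as in \cite[Lemma~6.4]{MP:JMP22}), and using that $\lambda_\nu$ is a $*$-representation on $L_2(\T^n_\theta;\nu)$ while $\Delta_{g,\nu}^{-n/4p}$ is selfadjoint there, one rewrites
\begin{equation*}
\Delta_{g,\nu}^{-\frac{n}{4p}}\lambda_\nu(x)\Delta_{g,\nu}^{-\frac{n}{4p}} = \bigl(\lambda_\nu(y^*)\Delta_{g,\nu}^{-\frac{n}{4p}}\bigr)^{\!*} \bigl(\lambda_\nu(z)\Delta_{g,\nu}^{-\frac{n}{4p}}\bigr),
\end{equation*}
the adjoint being taken with respect to $\scal{\cdot}{\cdot}_\nu$. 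Applying part~(i) with $(p,q)$ replaced by $(2p,2q)$ to each factor, together with the weak-Schatten Hölder inequality~\eqref{eq:schatten.holder} (whose constant is $2^{1/p}$ when the two exponents coincide), yields~\eqref{eq:specific-Cwikel-Deltagnu-2}. When $x\geq 0$, the choice $y=z=\sqrt{x}$ turns the product into $T^*T$ with $T=\lambda_\nu(\sqrt{x})\Delta_{g,\nu}^{-n/4p}$, and the identity $\|T^*T\|_{\sL_{p,\infty}^{(\nu)}}=\|T\|_{\sL_{2p,\infty}^{(\nu)}}^2$ removes the $2^{1/p}$-factor. The only subtlety to watch is the bookkeeping for Schatten classes of operators acting between the distinct (but isomorphic) Hilbert spaces $L_2(\T^n_\theta)$ and $L_2(\T^n_\theta;\nu)$, which is absorbed uniformly by conjugating with $\hat{\lambda}(\sqrt{\nu})$.
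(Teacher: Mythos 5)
Your proof is correct and follows essentially the same route as the paper: for part (i) the factorization through $\Lambda^{-n/p}\cdot\Lambda^{n/p}\Delta_{g,\nu}^{-n/2p}$ combined with the conjugation by $\hat{\lambda}(\sqrt{\nu})$, and for part (ii) the factorization $x=yz$ followed by the quasi-Hölder inequality applied to $\bigl(\lambda_\nu(y^*)\Delta_{g,\nu}^{-n/4p}\bigr)^{*}\bigl(\lambda_\nu(z)\Delta_{g,\nu}^{-n/4p}\bigr)$, is exactly what the authors do. The only place you gloss more than they do is the rigorous verification, via the dual-pairing computation as in \cite[Lemma~6.4]{MP:JMP22}, that the operator equality $\Delta_{g,\nu}^{-n/4p}\lambda_\nu(x)\Delta_{g,\nu}^{-n/4p}=\bigl(\lambda_\nu(y^*)\Delta_{g,\nu}^{-n/4p}\bigr)^{*}\bigl(\lambda_\nu(z)\Delta_{g,\nu}^{-n/4p}\bigr)$ actually holds when $y,z\in L_{2q}$ are merely $\tau$-measurable rather than bounded, but you cite the correct reference and this is a standard technical point.
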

\begin{proof}
Suppose that $p\neq 2$ and $q=\max(p,2)$, or $p=2<q$. The operator $A:=\Lambda^{n/p}\Delta_{g,\nu}^{-n/2p}$ is a \psido\ of order~$0$, since 
$\Lambda^{n/p}$ and $\Delta_{g,\nu}^{-n/2p}$ are \psidos\ of order $n/p$ and $-n/p$, respectively. Thus, $A$ is bounded on $L_2(\T^n_\theta)$. We actually regard it as a bounded operator $A:L_2(\T^n_\theta;\nu)\rightarrow L_2(\T^n_\theta)$. Let $x\in L_q(\T^n_\theta)$. We already know that $\lambda_\nu(x)\Delta_{g,\nu}^{-n/2p}$ is in $\sL_{p,\infty}^{(\nu)}$. Moreover, we have 
\begin{align*}
 \lambda_\nu(x)\Delta_{g,\nu}^{-\frac{n}{2p}} &=  \hat{\lambda}\big(\sqrt{\nu}\big)^{-1} \lambda(x) 
  \hat{\lambda}\big(\sqrt{\nu}\big)\Lambda^{-\frac{n}{p}} \Lambda^{-\frac{n}{p}}\Delta_{g,\nu}^{-\frac{n}{2p}}\\ 
  &= (2\pi)^{\frac{n}{2}}\hat{\lambda}\big(\sqrt{\nu}\big)^{-1}  \lambda\big(x \sqrt{\nu}\big) \Lambda^{-\frac{n}{p}} A.  
\end{align*}
As $\hat{\lambda}(\sqrt{\nu})^{-1}:L_2(\T^n_\theta)\rightarrow L_2(\T^n_\theta;\nu)$ is a unitary isomorphism and $A:L_2(\T^n_\theta;\nu)\rightarrow L_2(\T^n_\theta)$ is bounded, we get 
\begin{equation*}
   \left\|  \lambda_\nu(x) \Delta_{g,\nu}^{-\frac{n}{2p}}\right\|_{\sL_{p,\infty}^{(\nu)}} \leq   
   (2\pi)^{\frac{n}{2}}  \left\|\lambda\big(x \sqrt{\nu}\big) \Lambda^{-\frac{n}{p}}   \right\|_{\sL_{p,\infty}}  \|A\|_{\sL(L_2(\T^n_\theta;\nu),L_2(\T^n_\theta))}.  
 \end{equation*}
Combining this with the estimate~(\ref{eq:specific-Cwikel-Prho}) for $P_\rho=\Lambda^{-n/p}$ gives
\begin{align*}
  \left\|  \lambda_\nu(x) \Delta_{g,\nu}^{-\frac{n}{2p}}\right\|_{\sL_{p,\infty}^{(\nu)}} & \leq   
   (2\pi)^{\frac{n}{2}} \big(c_n(p,q)+1) \big\|x\sqrt{\nu}\big\|_{L_q} \|A\|_{\sL(L_2(\T^n_\theta;\nu),L_2(\T^n_\theta))}\\
  &\leq  (2\pi)^{\frac{n}{2}} \big(c_n(p,q)+1)\|\nu\|^{\frac12} \big\|x\big\|_{L_q} \|A\|_{\sL(L_2(\T^n_\theta;\nu),L_2(\T^n_\theta))}.
\end{align*}
This proves~(\ref{eq:specific-Cwikel-Deltagnu}).

Suppose now that either $p\neq 1$ and $q=\max(p,1)$, or $p=1<q$. Let $x\in L_q(\T^n_\theta)$. As in the proof of Proposition~\ref{prop.specific-Cwikel-Prho} we may write 
$x=yz$ with $y$ and $z$ in $L_{2q}(\T^n _\theta)$ such that $\|y\|_{L_{2q}}=\|z\|_{L_{2q}}=(\|x\|_{L_q})^{1/2}$. If $u_1,v_1\in W_2^{n/2p}(\T^n_\theta;g,\nu)$, then by~(\ref{eq:Integration.lambdanu-dual}) we have
\begin{align*}
 \acou{\lambda_\nu(x)u_1}{v_1} & = (2\pi)^n \tau\big[v_1^*\nu^{\frac12}yz\nu^{\frac12}u_1\big] \\
 &= (2\pi)^n\tau\left[ \big( \nu^{\frac12}y^*\nu^{-\frac12}v_1\big)^*\nu\big( \nu^{-\frac12}z\nu^{\frac12}u_1\big)\right] \\
 & = \scal{\lambda_\nu(z)u_1}{\lambda_\nu(y^*)v_1}_\nu.  
\end{align*}
 Combining this with~(\ref{eq:Deltagnu-symmetricity}) we then see that, for all  $u,v\in L_2(\T^n_\theta;\nu)$, we have
\begin{equation*}
 \bigscal{\Delta_{g,\nu}^{-\frac{n}{4p}}\lambda_\nu(x) \Delta_{g,\nu}^{-\frac{n}{4p}}u}{v}= 
 \bigacou{\lambda_\nu(x) \Delta_{g,\nu}^{-\frac{n}{4p}}u}{\Delta_{g,\nu}^{-\frac{n}{4p}}v} =
 \bigscal{\lambda_\nu(z) \Delta_{g,\nu}^{-\frac{n}{4p}}u}{\lambda_\nu(y^*)\Delta_{g,\nu}^{-\frac{n}{4p}}v}. 
\end{equation*}
This shows that
\begin{equation}\label{eq:two-sided-curved-cwikel-splitting}
 \Delta_{g,\nu}^{-\frac{n}{4p}}\lambda_\nu(x) \Delta_{g,\nu}^{-\frac{n}{4p}} = \left(\lambda_\nu(y^*) \Delta_{g,\nu}^{-\frac{n}{4p}}\right)^* 
 \left(\lambda_\nu(z) \Delta_{g,\nu}^{-\frac{n}{4p}}\right). 
\end{equation}

We may now proceed as in the proof of Proposition~\ref{prop:Specific-Cwikel.sandwiched}. By using~(\ref{eq:two-sided-curved-cwikel-splitting}) and the H\"older inequality~(\ref{eq:schatten.holder}) we get
\begin{equation*}
  \left\| \Delta_{g,\nu}^{-\frac{n}{4p}}\lambda_\nu(x) \Delta_{g,\nu}^{-\frac{n}{4p}}\right\|_{\sL_{p,\infty}^{(\nu)}} \leq 2^{\frac1p} 
   \left\| \lambda_\nu(y^*) \Delta_{g,\nu}^{-\frac{n}{4p}}\right\|_{\sL_{2p,\infty}^{(\nu)}}  
    \left\|\lambda_\nu(z) \Delta_{g,\nu}^{-\frac{n}{4p}}\right\|_{\sL_{2p,\infty}^{(\nu)}} . 
\end{equation*}
Thanks to the first part and the equalities $\|y^*\|_{L_{2q}}=\|y\|_{L_{2q}}=\|z\|_{L_{2q}}=(\|x\|_{L_q})^{1/2}$ we get 
\begin{gather*}
  \left\| \lambda_\nu(y^*) \Delta_{g,\nu}^{-\frac{n}{4p}}\right\|_{\sL_{2p,\infty}^{(\nu)}} \leq c_n(2p,2q)  \big\|y^*\big\|_{L_{2q}}=c_n(2p,2q)  \big(\big\|x\big\|_{L_{2q}}\big)^{\frac12},\\
    \left\|\lambda_\nu(z) \Delta_{g,\nu}^{-\frac{n}{4p}}\right\|_{\sL_{2p,\infty}^{(\nu)}} \leq c_n(2p,2q)  \big\|z\big\|_{L_{2q}}=c_n(2p,2q)  \big(\big\|x\big\|_{L_{2q}}\big)^{\frac12}.  
\end{gather*}
This gives the estimate~(\ref{eq:specific-Cwikel-Deltagnu-2}). 

If $x$ is positive, then we may take $y=z=\sqrt{x}$. In this case~(\ref{eq:two-sided-curved-cwikel-splitting}) becomes 
\begin{equation*}
  \Delta_{g,\nu}^{-\frac{n}{4p}}\lambda_\nu(x) \Delta_{g,\nu}^{-\frac{n}{4p}} = \left(\lambda_\nu(\sqrt{x}) \Delta_{g,\nu}^{-\frac{n}{4p}}\right)^* 
 \left(\lambda_\nu(\sqrt{x}) \Delta_{g,\nu}^{-\frac{n}{4p}}\right),
\end{equation*}
and so we have 
\begin{equation*}
 \left\| \Delta_{g,\nu}^{-\frac{n}{4p}}\lambda_\nu(x) \Delta_{g,\nu}^{-\frac{n}{4p}}\right\|_{\sL_{p,\infty}^{(\nu)}} 
 = \bigg(\left\|\lambda_\nu(y) \Delta_{g,\nu}^{-\frac{n}{4p}}\right\|_{\sL_{2p,\infty}^{(\nu)}}\bigg)^2
 \end{equation*}
Combining this with the inequality~(\ref{eq:specific-Cwikel-Deltagnu}) for $z=\sqrt{x}$ gives the inequality~(\ref{eq:specific-Cwikel-Deltagnu-2}) without the $2^{1/p}$-factor. The proof is complete. 
\end{proof}

 \section{Dixmier Trace Formulas}\label{sec:L2-Connes-trace}
In this section, as applications of the Cwikel estimates of the previous section, we establish $L_p$-versions of the analogues for NC tori of  Connes' trace theorem and Connes' integration formulas established in~\cite{Po:JMP20}. 
This will show that a very wide class of operators built out of \psidos\ and $L_p$-elements are actually integrable in the sense of NCG.

\subsection{Connes' integration in noncommutative geometry} \label{sec:quantised_calculus}
One of the main goals of noncommutative geometry~\cite{Co:NCG} is to translate into the Hilbert space formalism of quantum mechanics the main tools of differential geometry.  In this framework the notion of integral corresponds to positive traces on the weak trace class $\sL_{1,\infty}$. Such traces are annihilated by finite rank operators (see, e.g., \cite{LSZ:Book}) and are always continuous (see, e.g.,~ \cite[Remark~2.3]{Po:JMP20}), and so they are annihilated by the finite-rank operator closure $(\sL_{1,\infty})_0$. 

There is a whole zoo of positive traces on $\sL_{1,\infty}$ (see, e.g., \cite{LSZ:Book, LSZ:Survey19} and the references therein). One important class of such traces is provided by Dixmier traces $\Tr_\omega:\sL_{1,\infty} \rightarrow \C$ (see~\cite{Di:CRAS66}; see also~\cite{Co:NCG, LSZ:Book, Po:Weyl}). We say that an operator $A\in \sL_{1,\infty}$ is \emph{measurable} if the value of $\Tr_\omega(A)$ is independent of the choice of the Dixmier trace. Equivalently (see~\cite{LSZ:Book, Po:Weyl}), the operator $A$ is measurable if and only if it satisfies the following Tauberian condition, 
\begin{equation}\label{eq:NCG-measurability}
 \bint A:=  \lim_{N\rightarrow \infty} \frac{1}{\log (N+1)}\sum_{j<N}\lambda_j(A) \quad \text{exists}. 
\end{equation}
Here $(\lambda_j(A))_{j\geq 0}$ is any eigenvalue sequence of $A$ such that 
\begin{equation*}
|\lambda_0(A)|\geq |\lambda_1(A)|\geq \cdots \geq  |\lambda_j(A)|\geq \cdots \geq 0. 
\end{equation*}
where each eigenvalue is repeating according to its (algebraic) multiplicity. The limit $\bint A$ is then called the \emph{NC integral} of $A$. 
 
 As it turns out there are numerous positive traces on $\sL_{1,\infty}$ that are not Dixmier traces (see, e.g.,~\cite{SSUZ:AIM15}). Thus, it stands for reason to consider a stronger notion of measurability. We say that an operator $A\in \sL_{1,\infty}$ is \emph{strongly measurable}  (or  \emph{positively measurable}) if all positive normalized traces take on the same value on $A$. Here a trace $\varphi$ on $ \sL_{1,\infty}$ is called normalized if 
 \begin{equation*}
 \big(A\geq 0 \ \text{and} \ \lambda_j(A)=(j+1)^{-1} \big)\ \Longrightarrow \ \varphi(A)=1. 
\end{equation*}

The Dixmier traces are positive normalized traces. Thus, if $A\in \sL_{1,\infty}$ is strongly measurable, then $A$ is measurable and, for any positive normalized trace $\varphi$ on  $\sL_{1,\infty}$, we have
\begin{equation*}
 \varphi(A)=\bint A = \lim_{N\rightarrow \infty} \frac{1}{\log (N+1)}\sum_{j<N}\lambda_j(A).
\end{equation*}
As mentioned above, any positive trace is continuous and is annihilated by $(\sL_{1,\infty})_0$. It follows that the strongly measurable operators form a closed subspace of $\sL_{1,\infty}$ containing $(\sL_{1,\infty})_0$. 

We refer to~\cite[Section~7]{SSUZ:AIM15} for a characterization of strongly measurable operators in terms of their eigenvalue sequences and for examples of measurable operators that are not strongly measurable. 

It can be shown that every continuous trace on $\sL_{1,\infty}$  is a linear combination of 4 positive traces (see~\cite[Corollary~2.2]{CMSZ:ETDS19}). Therefore, in the definition of strong measurability we may substitute continuous (normalized) traces for positive (normalized) traces without altering the class of strongly measurable  operators. This implies that strong measurability is insensitive to the choice of the inner product of the background Hilbert space $\sH$. 
 
Let $(M^n,g)$ be a closed Riemannian manifold, and  denote by $\Psi^m(M)$, $m\in \Z$, the space of (classical) \psidos\ of order $m$ on $M$. \emph{Connes' trace theorem}~\cite{Co:CMP88} asserts that every operator $P\in \Psi^{-n}(M)$ is measurable, and we have
\begin{equation}
 \bint P= \frac{1}{n} \Res(P), 
 \label{eq:Connes-trace-thm} 
\end{equation}
 where $\Res$ is the noncommutative residue trace of Guillemin~\cite{Gu:AIM85} and Wodzicki~\cite{Wo:NCR}. That is,
 \begin{equation*}
 \Res(P)= (2\pi)^{-n} \int_{S^*M} \sigma_{-n}(P)(x,\xi) dxd^{n-1}\xi, 
\end{equation*}
 where $S^*M$ is the unit cosphere bundle and $\sigma_{-n}(P)(x,\xi)$ is the principal symbol of $P$. It was further shown by Kalton-Lord-Potapov-Sukochev~\cite{KLPS:AIM13} that every operator in $\Psi^{-n}(M)$ is actually strongly measurable. These results can also be deduced from the Weyl's law for negative order \psidos\ of Birman-Solomyak~\cite{BS:VLU77, BS:VLU79, BS:SMJ79}  (see also Section~\ref{sec:Weyl} on this point). 
 
The r.h.s.~of~(\ref{eq:Connes-trace-thm}) makes sense for any integer order \psido, since the noncommutative residue is a trace on the algebra $\Psi^\Z(M):=\bigcup_{m\in \Z}\Psi^m(M)$ and is actually the unique trace up to constant multiple if $M$ is connected. Therefore, using the r.h.s.\ allows us to extend the definition of the NC integral to \emph{all} integer order \psidos, including \psidos\ that are not even bounded on $L_2(M,g)$.

Let $\Delta_g$ be the Laplacian of $(M^n,g)$. Specializing the above results to $P=f\Delta_g^{-n/2}$ and $P=\Delta_g^{-n/4}f\Delta_g^{-n/4}$ with $f\in C^\infty(M)$
shows that these operators are strongly measurable. Moreover, we get \emph{Connes' integration formula}, 
\begin{equation*}
 \bint \Delta_g^{-\frac{n}{4}}f\Delta_g^{-\frac{n}{4}} =   \bint f \Delta_g^{-\frac{n}{2}} = \hat{c}(n) \int_M f(x) \sqrt{g(x)}d^nx, 
\end{equation*}
where we have set $ \hat{c}(n):=\frac{1}{n} (2\pi)^{-n}|\bS^{n-1}|$. This formula shows that the NC integral recaptures the Riemannian volume.  

The above formula for $f\Delta_g^{-n/2}$ actually holds for all $f\in L_2(M)$ (see~\cite{KLPS:AIM13}).  The formula for $\Delta_g^{-n/4}f\Delta_g^{-n/4}$ further holds for any $f$ in the Orlicz space $\LLogL(M)$ (see~\cite{Ro:JST22, SZ:arXiv21}; see also~\cite{Po:MPAG22}). However, this result does not hold in general for functions in $L_1(M)$ (see~\cite{KLPS:AIM13}).
 
\subsection{Integration formulas on NC tori}
If  $P\in \Psi^{m}(\T^n_\theta)$, $m\in \Z$, has symbol $\rho(\xi)\sim \sum \rho_{m-j}(\xi)$, then  its \emph{noncommutative residue} is defined by
\begin{equation*}
 \Res(P)=\tau[c_P], \qquad \text{where}\ c_P:= \int_{\bS^{n-1}} \rho_{-n}(\xi) d^{n-1}\xi. 
\end{equation*}
Here the integral is meant as a Riemann (surface) integral with values in the locally convex space $C^\infty(\T^n_\theta)$. It can be shown that the noncommutative residue is a trace on the algebra $\Psi^\Z(\T^n_\theta):=\cup_{m\in \Z}\Psi^\Z(\T^n_\theta)$ and is the only one up to a constant multiple (see~\cite{Po:SIGMA20}; see also~\cite{LNP:TAMS16}). 

Recall that any \psido\ of order~$\leq -n$ is in the weak trace-class $\sL_{1,\infty}$. We have the following version for NC tori of Connes's trace theorem. 

\begin{proposition}[see~\cite{Po:JMP20}]\label{nct_trace_theorem}
 If $P \in \Psi^{-n}(\T^n_\theta)$,  then $P$ is strongly measurable, and we have
\begin{equation*}
 \bint P=\frac1{n} \Res(P). 
\end{equation*}
More generally, given any $x\in C(\T^n_\theta)$, the operator $\lambda(x)P$ is strongly measurable,  and we have
\begin{equation}\label{smooth_ctt}
    \bint \lambda(x)P = \frac{1}{n}\tau\big[x c_P\big].
\end{equation}
 \end{proposition}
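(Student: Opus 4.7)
My plan is to reduce the statement to the case of smooth $x$, identify the restriction of any positive normalized trace on $\sL_{1,\infty}$ to $\Psi^{-n}(\T^n_\theta)$ with a scalar multiple of the noncommutative residue via a uniqueness theorem, and then pin down the scalar on a single test operator. The first assertion is the $x=1$ case of the second, so I focus on $\lambda(x)P$.

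First I would establish $\lambda(x)P\in\sL_{1,\infty}$: Proposition~\ref{prop:PsiDOs.boundedness} places $P$ in $\sL_{1,\infty}$, and $\lambda(x)$ is bounded on $L_2(\T^n_\theta)$ for $x\in C(\T^n_\theta)$, so the ideal property gives the claim. Approximating $x$ in $C(\T^n_\theta)$ by $x_j\in C^\infty(\T^n_\theta)$ (e.g.\ Ces\`aro--Fej\'er means of its Fourier series), I have $\|\lambda(x_j)-\lambda(x)\|\to 0$, hence $\lambda(x_j)P\to \lambda(x)P$ in the $\sL_{1,\infty}$ quasi-norm. Since positive traces on $\sL_{1,\infty}$ are continuous with respect to that quasi-norm and $\tau[x_jc_P]\to\tau[xc_P]$, it suffices to prove the statement for smooth $x$. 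For such $x$, Example~\ref{ex:PsiDO.Do-PsiDO} and Corollary~\ref{cor:PsiDOs.composition-classical} give $\lambda(x)P\in\Psi^{-n}(\T^n_\theta)$ with principal symbol $x\rho_{-n}(\xi)$, so $c_{\lambda(x)P}=xc_P$; the problem reduces to showing that $\varphi(Q)=\tfrac1n\tau[c_Q]$ for every $Q\in\Psi^{-n}(\T^n_\theta)$ and every positive normalized trace $\varphi$ on $\sL_{1,\infty}$.

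To establish trace uniqueness, I fix such a $\varphi$ and note that operators in $\Psi^{-n-1}(\T^n_\theta)$ are trace class by Proposition~\ref{prop:PsiDOs.boundedness}, hence lie in $(\sL_{1,\infty})_0$, on which every positive trace vanishes. Thus $\varphi$ factors through $\Psi^{-n}/\Psi^{-n-1}$. For $A\in\Psi^m(\T^n_\theta)$ and $B\in\Psi^{-n-m}(\T^n_\theta)$ one has $[A,B]\in\Psi^{-n-1}$; after writing $A=\Lambda^m(\Lambda^{-m}A)$ to bring the computation inside $\sL_{1,\infty}\cdot\sL(L_2(\T^n_\theta))$ where cyclicity of $\varphi$ applies directly, I obtain $\varphi(AB)=\varphi(BA)$. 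This identifies $\varphi|_{\Psi^{-n}(\T^n_\theta)}$ with a trace on the filtered algebra $\Psi^{\Z}(\T^n_\theta)$ modulo smoothing operators, and by the uniqueness of the noncommutative residue as such a trace (proved for NC tori in \cite{Po:SIGMA20, LNP:TAMS16}), $\varphi(Q)=c_\varphi\,\tau[c_Q]$ for some constant $c_\varphi$ independent of $Q$. To compute $c_\varphi$, I test on $Q_0:=\Delta^{-n/2}(1-\Pi_0)\in\Psi^{-n}(\T^n_\theta)$, which acts diagonally on $\{U^k\}$ with eigenvalues $|k|^{-n}$ for $k\neq 0$ and has $c_{Q_0}=|\bS^{n-1}|$. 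The lattice-point asymptotic $\#\{k\in\Z^n\setminus 0:|k|\leq R\}=n^{-1}|\bS^{n-1}|R^n+O(R^{n-1})$ yields $\lambda_j(Q_0)\sim|\bS^{n-1}|/(nj)$, hence
\[
\lim_{N\to\infty}\frac{1}{\log(N+1)}\sum_{j<N}\lambda_j(Q_0)=\frac{|\bS^{n-1}|}{n}.
\]
The existence of this Ces\`aro--logarithmic limit of the eigenvalue sequence forces $Q_0$ to be strongly measurable with that value (see e.g.\ \cite[Ch.~9]{LSZ:Book}), so $\varphi(Q_0)=|\bS^{n-1}|/n$, $c_\varphi=1/n$, and combining all the above gives $\varphi(\lambda(x)P)=\tfrac1n\tau[xc_P]$ independently of $\varphi$.

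The hard part will be the trace-uniqueness step: showing that $\varphi|_{\Psi^{-n}(\T^n_\theta)}$ really descends to a trace on $\Psi^{\Z}(\T^n_\theta)/\Psi^{-\infty}(\T^n_\theta)$ of the kind to which the uniqueness theorem of \cite{Po:SIGMA20} applies. \psidos\ of positive order are unbounded, so the cyclicity of $\varphi$ on $\sL_{1,\infty}$ has to be bootstrapped through cutoffs like $\Lambda^{\pm m}$, and one must verify that the factorizations remain within the scope of cyclicity while staying clear of unbounded-operator pitfalls. Once this is in hand, the reduction to smooth $x$ and the model calculation on $\Delta^{-n/2}$ are essentially routine.
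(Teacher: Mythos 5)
Note first that the paper does not prove this proposition internally; it recalls it from \cite{Po:JMP20}, so there is no in-paper argument to compare against. Your high-level strategy (approximate, identify the restriction of $\varphi$ with a scalar multiple of $\Res$ via uniqueness, calibrate on a model operator) is a recognizable route to Connes' trace theorem, and the reduction to $x\in C^\infty(\T^n_\theta)$ and the identification $c_{\lambda(x)P}=xc_P$ are fine. But the two key steps have genuine gaps.

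\textbf{The residue-uniqueness step is not closed.} You observe that for $A\in\Psi^m$, $B\in\Psi^{-n-m}$ one has $[A,B]\in\Psi^{-n-1}$, and deduce $\varphi(AB)=\varphi(BA)$ — but this is vacuous, since $\varphi$ annihilates $\Psi^{-n-1}\subset \sL_1\subset(\sL_{1,\infty})_0$ and hence $\varphi([A,B])=0$ automatically, with no cyclicity needed and no new information extracted. This does \emph{not} ``identify $\varphi|_{\Psi^{-n}}$ with a trace on $\Psi^{\Z}(\T^n_\theta)$.'' The uniqueness theorem of \cite{Po:SIGMA20, LNP:TAMS16} governs linear functionals defined on all of $\Psi^{\Z}$ vanishing on all commutators; your $\rho=\varphi|_{\Psi^{-n}}$ is only defined on $\Psi^{-n}$, and the constraints you have derived on it are exactly ``vanishes on $\Psi^{-n-1}$,'' which holds for a vast space of functionals on $\Psi^{-n}/\Psi^{-n-1}$, not just multiples of $\Res$. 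To make this work one needs the sharper commutator-spanning statement: every $P\in\Psi^{-n}$ with $\Res(P)=0$ equals a finite sum $\sum[A_j,B_j]$ modulo $\Psi^{-n-1}$, with the orders chosen so that the commutators actually land in $\Psi^{-n}$ (which forces $\op{ord}A_j+\op{ord}B_j=-n+1$), together with a proof that $\varphi$ vanishes on each such commutator. That vanishing is precisely the hard point: for such $A_j,B_j$ the individual products $A_jB_j$, $B_jA_j$ have order $-n+1$ and are \emph{not} in $\sL_{1,\infty}$, so one cannot split $\varphi([A_j,B_j])$ as $\varphi(A_jB_j)-\varphi(B_jA_j)$ and invoke cyclicity, and your proposed $\Lambda^{\pm m}$-cutoff bootstrap does not resolve this. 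You flag this as ``the hard part,'' correctly, but it remains unproved and is exactly where the content of the theorem lies.

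\textbf{The calibration step uses the wrong Tauberian criterion.} You conclude that $Q_0=\Delta^{-n/2}(1-\Pi_0)$ is strongly measurable ``because the Cesàro--logarithmic limit exists.'' As the paper itself explains, that Tauberian condition characterizes \emph{measurability} (agreement of Dixmier traces), not \emph{strong measurability} (agreement of all positive normalized traces); see the discussion around~(\ref{eq:NCG-measurability}) and the reference to \cite[Section~7]{SSUZ:AIM15} for positive, measurable operators that are not strongly measurable. Since you need $\varphi(Q_0)=|\bS^{n-1}|/n$ for \emph{every} positive normalized trace $\varphi$, this inference is unjustified as written (and there is a mild circularity, since strong measurability of $Q_0\in\Psi^{-n}$ is a special case of the theorem). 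The claim can be rescued by a direct argument: in the $U^k$-basis $Q_0$ is diagonal with $\lambda_j(Q_0)=\frac{|\bS^{n-1}|}{n}(j+1)^{-1}+O(j^{-1-1/n})$, so $Q_0$ differs from a diagonal operator $D$ with $\lambda_j(D)=\frac{|\bS^{n-1}|}{n}(j+1)^{-1}$ by a trace-class operator; then $\varphi(Q_0)=\varphi(D)=\frac{|\bS^{n-1}|}{n}$ by normalization of $\varphi$ and annihilation of $\sL_1$. You should replace the appeal to the Cesàro--log criterion by this trace-class perturbation argument.
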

 
Recall that by the first part of Proposition~\ref{prop.specific-Cwikel-Prho} if $P\in \Psi^{-n}(\T^n_\theta)$ and $x\in L_2(\T^n_\theta)$, then $\lambda(x)P$ is in $\sL_{1,\infty}$. By the 2nd part of the same Proposition, if $P, Q\in \Psi^{-n/2}(\T^n_\theta)$ and $x\in L_p(\T^n_\theta)$, $p>1$, then $Q^*\lambda(x)P\in \sL_{1,\infty}$. 
 
 The following theorem extends Proposition~\ref{nct_trace_theorem} to $L_p$-spaces.  

\begin{theorem} \label{thm:Lp-Connes-trace} 
The following hold. 
\begin{enumerate}
 \item Let $P\in \Psi^{-n}(\T^n_\theta)$ and $x\in L_2(\T^n_\theta)$. Then $\lambda(x)P$ is strongly measurable, and we have
\begin{equation}\label{rough_ctt}
 \bint \lambda(x) P = \frac{1}{n}\tau\big[xc_P\big]. 
\end{equation}

\item Let $P,Q\in \Psi^{-n/2}(\T^n_\theta)$ and $x\in L_p(\T^n_\theta)$, $p>1$. Then $Q^*\lambda(x)P$ is 
strongly measurable, and we have
\begin{equation}\label{rough_ctt_2}
 \bint Q^*\lambda(x) P = \frac{1}{n}\tau\big[xc_{PQ^*}\big]. 
\end{equation}
\end{enumerate}
\end{theorem}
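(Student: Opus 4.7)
The plan is to extend the formulas from Proposition~\ref{nct_trace_theorem}, which cover smooth $x$, to the $L_p$ regime by a density argument, leveraging two separate continuities. On the operator side, the Cwikel-type estimates of Proposition~\ref{prop.specific-Cwikel-Prho} make the assignments $y\mapsto \lambda(y)P$ and $y\mapsto Q^*\lambda(y)P$ continuous maps into $\sL_{1,\infty}$ from $L_2(\T^n_\theta)$ and $L_p(\T^n_\theta)$ respectively (the relevant instances being order $-n$ with weak-Schatten index $1$ and $q=2$, and order $-n/2$ with index $1$ and $q=p>1$). On the symbol side, since $c_P,c_{PQ^*}\in C^\infty(\T^n_\theta)$, H\"older's inequality (Proposition~\ref{prop:Holder}) makes $y\mapsto \tau[y\,c_P]$ and $y\mapsto \tau[y\,c_{PQ^*}]$ continuous on the relevant $L_q$-space. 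The crucial stability ingredient, recalled in Subsection~\ref{sec:quantised_calculus}, is that the strongly measurable operators form a closed subspace of $\sL_{1,\infty}$, since every positive, hence continuous, trace on $\sL_{1,\infty}$ is norm continuous.

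For part~(1), I would approximate $x\in L_2(\T^n_\theta)$ by a sequence $x_k\in C^\infty(\T^n_\theta)$ in $L_2$-norm and conclude $\lambda(x_k)P\to \lambda(x)P$ in $\sL_{1,\infty}$ by the Cwikel bound. Proposition~\ref{nct_trace_theorem} provides $\varphi(\lambda(x_k)P)=\tfrac{1}{n}\tau[x_kc_P]$ for every positive normalized trace $\varphi$ on $\sL_{1,\infty}$; continuity of $\varphi$ on the left and H\"older on the right let me take $k\to\infty$ to obtain $\varphi(\lambda(x)P)=\tfrac{1}{n}\tau[xc_P]$, the value being independent of $\varphi$.

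Part~(2) requires an extra preliminary step: establishing the formula for smooth $x$. Since $P,Q\in\Psi^{-n/2}(\T^n_\theta)$, both $\lambda(x)P$ and $Q^*$ lie in $\sL_{2,\infty}$, so by H\"older for weak Schatten classes (\ref{eq:schatten.holder}) the operators $Q^*\lambda(x)P$ and $\lambda(x)PQ^*$ both lie in $\sL_{1,\infty}$. The cyclic-invariance property for continuous traces on $\sL_{1,\infty}$ (valid whenever both products lie in $\sL_{1,\infty}$) then yields $\varphi(Q^*\lambda(x)P)=\varphi(\lambda(x)PQ^*)$ for each positive normalized trace $\varphi$. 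The operator $PQ^*$ lies in $\Psi^{-n}(\T^n_\theta)$ with principal symbol $\rho_{-n/2}\sigma_{-n/2}^*$ by Corollary~\ref{cor:PsiDOs.composition-classical} and Proposition~\ref{prop:Adjoint}, so Proposition~\ref{nct_trace_theorem} applied to $\lambda(x)\cdot(PQ^*)$ evaluates this to $\tfrac{1}{n}\tau[xc_{PQ^*}]$. A density argument in $L_p(\T^n_\theta)$ identical to that of part~(1), but now using the two-sided Cwikel bound, extends the formula from $C^\infty(\T^n_\theta)$ to all $x\in L_p(\T^n_\theta)$ with $p>1$.

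The main obstacle I foresee is the cyclicity step in part~(2): the identity $\varphi(Q^*\lambda(x)P)=\varphi(\lambda(x)PQ^*)$ is not the ordinary trace property on $\sL_1$, but the more subtle statement that continuous traces on the quasi-Banach ideal $\sL_{1,\infty}$ commute on any pair $A,B$ with both $AB,BA\in\sL_{1,\infty}$. The role of the Cwikel estimates is precisely to guarantee this joint membership for the smooth approximants (via the factorization $A=Q^*$, $B=\lambda(x)P$ on one side and the $\Psi^{-n}$-trace theorem on the other), and then to control the convergence in the quasi-norm topology so that the limit formula descends to general $x\in L_p(\T^n_\theta)$.
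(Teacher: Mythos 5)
Your part~(1) argument matches the paper's: both approximate $x$ by smooth elements, use the one-sided Cwikel estimate of Proposition~\ref{prop.specific-Cwikel-Prho} to control convergence in $\sL_{1,\infty}$, invoke the closedness of the set of strongly measurable operators, and pass to the limit by continuity of $\bint$ on one side and H\"older on the other.

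For part~(2) you take a genuinely different route. The paper never invokes cyclicity of traces at the operator level; it observes that for smooth $x$ the operator $Q^*\lambda(x)P$ already lies in $\Psi^{-n}(\T^n_\theta)$, applies the residue-trace formula of Proposition~\ref{nct_trace_theorem} directly to this operator, and then pushes the entire cyclicity step down to the symbol level, where the identity $\tau\big[\sigma_{-n/2}(\xi)^*x\,\rho_{-n/2}(\xi)\big]=\tau\big[x\,\rho_{-n/2}(\xi)\sigma_{-n/2}(\xi)^*\big]$ is just the tracial property of the state $\tau$ on $C^\infty(\T^n_\theta)$ applied under the spherical integral. This keeps the argument purely algebraic. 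Your version instead relies on the commutation property $\varphi(AB)=\varphi(BA)$ for compact $A,B$ with $AB,BA\in\sL_{1,\infty}$, which is true but is a nontrivial theorem (it rests on the Lidskii-type characterization of traces on $\sL_{1,\infty}$ as spectral functionals, or equivalently on the description of the commutator subspace $[\sL_{1,\infty},\sL(\sH)]$); the paper you are writing into never states or cites this result. So your argument is correct as written only modulo an external reference for this cyclicity fact, and it uses strictly heavier machinery than what the paper's symbol-level computation requires. The rest of your part~(2) — factorizing $A=Q^*$, $B=\lambda(x)P$, identifying the principal symbol of $PQ^*$ via Corollary~\ref{cor:PsiDOs.composition-classical} and Proposition~\ref{prop:Adjoint}, then extending from $C^\infty(\T^n_\theta)$ to $L_p(\T^n_\theta)$ by the two-sided Cwikel estimate and density — is correct and parallels the paper's density step.
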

\begin{proof}
Let $P\in \Psi^{-n}(\T^n_\theta)$. Set $\Lambda(P)=\{\lambda(x)P; \ x\in C(\T^n_\theta)\}$ and $\Lambda_2(P)=\{\lambda(x)P; \ x\in L_p(\T^n_\theta)\}$. The first part of Proposition~\ref{prop.specific-Cwikel-Prho} and the density of  $C(\T^n_\theta)$ in $L_2(\T^n_\theta)$ ensure that $\Lambda_2(P)$ is a subspace of $\sL_{1,\infty}$ in which $\Lambda(P)$ is a dense subspace. Proposition~\ref{nct_trace_theorem} asserts that $\Lambda(P)$ is contained in the space of strongly measurable operators. As the latter is a closed subspace of $\sL_{1,\infty}$ it follows that every operator in $\Lambda_2(P)$ is strongly measurable. 

By Proposition~\ref{nct_trace_theorem} the formula~(\ref{rough_ctt}) holds for all $x\in C(\T^n_\theta)$. The r.h.s.~of~(\ref{rough_ctt}) is a continuous linear form on $L_2(\T^n_\theta)$. The l.h.s.~is a continuous linear form on $L_2(\T^n_\theta)$ as well thanks  to the 1st part of Proposition~\ref{nct_trace_theorem} and the continuity of the NC integral $\bint$ on the subspace (strongly) measurable operators. Combining this with the density of $C(\T^n_\theta)$ in $L_2(\T^n_\theta)$ we then deduce that the formula~(\ref{rough_ctt}) holds true for any $x\in L_2(\T^n_\theta)$. This proves the first part.

It remains to prove the 2nd part. Let $P,Q\in \Psi^{-n/2}(\T^n_\theta)$. Set 
$\Lambda^\infty(P,Q)=\{Q^*\lambda(x)P;\ x\in C^\infty(\T^n_\theta)\}$ and $\Lambda_p(P,Q)=\{Q^*\lambda(x)P;\ x\in L_p(\T^n_\theta)\}$, $p>1$. As above, the 2nd part of Proposition~\ref{prop.specific-Cwikel-Prho} and the density of $C^\infty(\T^n_\theta)$ in $L_p(\T^n_\theta)$ imply that  $\Lambda_p(P,Q)$ is a subspace of $\sL_{1,\infty}$ that contains $\Lambda^\infty(P,Q)$ as a dense subspace. Note that $\Lambda^\infty(P,Q)$ is a subspace of $\Psi^{-n}(\T^n_\theta)$, and so by Proposition~\ref{nct_trace_theorem} every operator in $\Lambda^\infty(P,Q)$ is strongly measurable. As above, the fact that the strongly measurable operators form a closed subspace of $\sL_{1,\infty}$ then ensures that every operator in $\Lambda_p(P,Q)$ is strongly measurable.  

If $x\in C^\infty(\T^n_\theta)$, then $Q^*\lambda(x)P$ is an operator in $\Psi^{-n}(\T^n_\theta)$, and so by~(\ref{smooth_ctt}) we have
\begin{equation*}
 \bint Q^*\lambda(x)P= \frac{1}{n} \tau\big[c_{Q^*xP}\big]. 
\end{equation*}
Let $\rho_{-n/2}(\xi)$ and $\sigma_{-n/2}(\xi)$ be the respective principal symbols of $P$ and $Q$. The principal symbols of $Q^*P$ and $Q^*xP$ are $\sigma_{-n/2}(\xi)^*\rho_{-n/2}(\xi)$ and $\sigma_{-n/2}(\xi)^*x\rho_{-n/2}(\xi)$, respectively. Thus, 
 \begin{align*}
  \tau\big[c_{Q^*xP}\big] & = \int_{\bS^{n-1}} \tau\big[\sigma_{-\frac{n}{2}}(\xi)^*x\rho_{-\frac{n}{2}}(\xi)\big]d\xi \\ 
  &= \int_{\bS^{n-1}} \tau\big[x\rho_{-\frac{n}{2}}(\xi)\sigma_{-\frac{n}{2}}(\xi)^*\big]d\xi\\
   & = \tau\bigg[x \int_{\bS^{n-1}}\rho_{-\frac{n}{2}}(\xi)\sigma_{-\frac{n}{2}}(\xi)^*d\xi\bigg]  =\tau\big[xc_{PQ^*}\big]. 
  \end{align*}
This proves the equality~(\ref{rough_ctt_2}) for $x\in C^\infty(\T^n_\theta)$. As with~(\ref{rough_ctt}), the r.h.s.~of~(\ref{rough_ctt_2}) is a continuous linear form on $L_p(\T^n_\theta)$. The 2nd part of Proposition~\ref{prop.specific-Cwikel-Prho} and the continuity of $\bint$ ensure that the l.h.s.~is a continuous linear form on $L_p(\T^n_\theta)$ as well. The density of $C^\infty(\T^n_\theta)$ in $L_p(\T^n_\theta)$ then implies that both sides agree for all $x\in L_p(\T^n_\theta)$. This gives the 2nd part. The proof is complete. 
\end{proof}

\begin{corollary}\label{cor:flat-integration-formula}
 The following hold. 
\begin{enumerate}
 \item If $x\in L_2(\T^n_\theta)$, then the operator $\lambda(x)\Delta^{-n}$ is strongly measurable, and we have
\begin{equation}\label{eq:flat-L2-integration-formula}
 \bint \lambda(x) \Delta^{-\frac{n}{2}} = c(n)\tau\big[x\big], \qquad c(n):=\frac{1}{n}\big|\bS^{n-1}\big|. 
\end{equation} 

\item If $x\in L_p(\T^n_\theta)$, $p>1$, then the operator $\Delta^{-n/4}\lambda(x)\Delta^{-n/4}$ is strongly measurable, and we have 
 \begin{equation}
 \bint  \Delta^{-\frac{n}{4}}\lambda(x) \Delta^{-\frac{n}{4}} = c(n)\tau\big[x\big]. 
 \label{eq:Flat-Lp-integration-formula}
\end{equation}
\end{enumerate}
\end{corollary}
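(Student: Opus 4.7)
The plan is to reduce both formulas to direct applications of Theorem~\ref{thm:Lp-Connes-trace} via Proposition~\ref{prop:powersLB}. Specialized to the flat metric $g_0=(\delta_{ij})$ and constant density $\nu=1$, Proposition~\ref{prop:powersLB} tells us that $\Delta^{-n/2}\in \Psi^{-n}(\T^n_\theta)$ with principal symbol $|\xi|^{-n}$, and that $\Delta^{-n/4}\in \Psi^{-n/2}(\T^n_\theta)$ with principal symbol $|\xi|^{-n/2}$. Here the negative powers of $\Delta$ are understood, as in Subsection~\ref{subsec:laplace.beltrami}, via Borel functional calculus, so that $\Delta^{-s}$ vanishes on the one-dimensional kernel $\C\cdot 1$.

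For Part~(1), I apply Theorem~\ref{thm:Lp-Connes-trace}(1) with $P=\Delta^{-n/2}$. The symbol integral is
\begin{equation*}
 c_P=\int_{\bS^{n-1}}|\xi|^{-n}\, d^{n-1}\xi = |\bS^{n-1}|,
\end{equation*}
so for every $x\in L_2(\T^n_\theta)$ the operator $\lambda(x)\Delta^{-n/2}$ is strongly measurable and
\begin{equation*}
 \bint \lambda(x)\Delta^{-n/2}=\frac{1}{n}\tau\big[x c_P\big]=\frac{|\bS^{n-1}|}{n}\tau[x]=c(n)\tau[x].
\end{equation*}
For Part~(2), I apply Theorem~\ref{thm:Lp-Connes-trace}(2) with $P=Q=\Delta^{-n/4}$. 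Since both are selfadjoint, $Q^*=Q$, and by Corollary~\ref{cor:PsiDOs.composition-classical} the composition $PQ^*=\Delta^{-n/2}$ has principal symbol $|\xi|^{-n/2}\cdot|\xi|^{-n/2}=|\xi|^{-n}$, whence $c_{PQ^*}=|\bS^{n-1}|$. Thus, for any $x\in L_p(\T^n_\theta)$ with $p>1$, the operator $\Delta^{-n/4}\lambda(x)\Delta^{-n/4}$ is strongly measurable and
\begin{equation*}
 \bint \Delta^{-n/4}\lambda(x)\Delta^{-n/4}=\frac{1}{n}\tau\big[x c_{PQ^*}\big]=c(n)\tau[x].
\end{equation*}

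Since all the substantive work has already been packaged into Proposition~\ref{prop:powersLB} and Theorem~\ref{thm:Lp-Connes-trace}, I expect no serious technical obstacle; the proof is essentially a bookkeeping exercise combining these two results with the elementary principal-symbol computations above. The only subtle point is the convention surrounding $\ker\Delta$: that $\Delta^{-s}$ is taken to vanish on $\C\cdot 1$ differs from a literal classical \psido\ presentation by a rank-one correction, but this discrepancy is already absorbed into the statement of Proposition~\ref{prop:powersLB} and, being of trace class, is invisible to the NC integral.
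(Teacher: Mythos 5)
Your proof is correct and follows essentially the same route as the paper: identify $\Delta^{-n/2}$ and $\Delta^{-n/4}$ as classical \psidos\ with the expected principal symbols, compute $c_{\Delta^{-n/2}}=|\bS^{n-1}|$, and apply both parts of Theorem~\ref{thm:Lp-Connes-trace}. The only cosmetic difference is that the paper also offers an alternative elementary justification of the \psido\ nature of $\Delta^{-n/2}$ via~(\ref{eq:toroidal.Prhou-equation}) rather than relying solely on Proposition~\ref{prop:powersLB}, while you spell out the principal-symbol computation for $PQ^*$ a bit more explicitly; neither changes the substance.
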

\begin{proof}
 We only need to compute $c_{\Delta^{-n/2}}$. Note that $\Delta^{-n/2}$ is an operator in $\Psi^{-n}(\T^n_\theta)$ whose principal symbol is $|\xi|^{-n}$. This can be seen as a special case of Proposition~\ref{prop:powersLB}. A simpler way is to observe that~(\ref{eq:toroidal.Prhou-equation}) implies that $\Delta^{-n/2}=P_{\rho}$, where $\rho(\xi)$ is any smooth function such that $\rho(\xi)=|\xi|^{-n}$ for $|\xi|\geq 1$ and $\rho(0)=0$. In any case, we get 
 \begin{equation}
 c_{\Delta^{-\frac{n}{2}}}= \int_{\bS^{n-1}}|\xi|^{-n}d^{n-1}\xi = \int_{\bS^{n-1}}d^{n-1}\xi =\big|\bS^{n-1}\big|. 
 \label{eq:density-flat-Laplacian}
\end{equation}
 Therefore, applying the first part of Theorem~\ref{thm:Lp-Connes-trace} to $P=\Delta^{-n/2}$ gives the first part. Applying the 2nd part of 
 Theorem~\ref{thm:Lp-Connes-trace} to $P=Q=\Delta^{-n/4}$ gives the 2nd part. The proof is complete. 
\end{proof}

\begin{remark}
In~\cite{MSZ:MA19} the integration formula~(\ref{eq:flat-L2-integration-formula}) is established for all $x\in C(\T^n_\theta)$ by using the $C^*$-algebraic approach to the principal symbol of~\cite{MSZ:MA19, SZ:JOT18}. Thus, Corollary~\ref{cor:flat-integration-formula} provides us with an $L_p$-version of that result.  
\end{remark}

\subsection{Curved Integration Formulas}  \label{sec:curved-integration}
Let $g=(g_{ij})$ be a Riemannian metric on $\T^n_\theta$ and let $\nu\in \GL_1^+(C^\infty(\T^n_\theta))$ be a smooth positive density. 

For ordinary Riemmanian manifolds  the Riemannian density $\nu(g)=\sqrt{g}$ naturally appears in Connes's integration formula~(\ref{eq:Intro.Integration-Formula}). As shown in~\cite{Po:JMP20} for curved NC tori what comes into play is  
\begin{equation}
 \tilde{\nu}(g):=  \frac{1}{|\bS^{n-1}|} \int_{\bS^{n-1}} |\xi|_g^{-n} d^{n-1}\xi,
 \label{eq:spectral-riemannian-density}
\end{equation}
where $|\xi|_g$ is defined in~(\ref{eq:Laplace.norm-g}). 
It follows from the inequalities~(\ref{eq:Laplacian.positivity-normg}) that this defines a positive invertible element of $C^\infty(\T^n_\theta)$, i.e., a smooth positive density (see~\cite{Po:JMP20}). We call $\tilde{\nu}(g)$ the \emph{spectral Riemannian density} of $g$. We refer to~\cite{Po:JMP20} for a more detailed account on spectral Riemannian densities.  

\begin{example}[see~\cite{Po:JMP20}] Assume that $g=(g_{ij})$ is self-compatible in the sense mentioned in Example~\ref{ex:Riem.self-compatible}. Then, we have 
 \begin{equation}
\label{eq:Curved.tnug-compatible}
 \tilde{\nu}(g)=\nu(g)=\sqrt{\det (g)},
\end{equation}
where $\det(g)$ is given by~(\ref{eq:det.Leibniz}). In particular, if $g_0=k^2g$, $k\in \GL_1^+(C^\infty(\T^n_\theta))$, is a conformally flat metric, then $\tilde{\nu}(g)=\nu(g)=k^n$. 
\end{example}

\begin{example}
Suppose that $n = 2$,  and let $g$ be of the form
\begin{equation*}
g=
\begin{pmatrix}
a & 0 \\
0 & b
\end{pmatrix}, \qquad a,b\in \GL_1^+\left(C^\infty(\T^2_\theta)\right). 
\end{equation*}
We have
\begin{equation*}
 g= a^{\frac12} \hat{g} a^{\frac12} , \qquad \text{where}\ 
 \hat{g}=
\begin{pmatrix}
1 & 0 \\
0 & a^{-\frac12} b a^{-\frac12} 
\end{pmatrix}.
\end{equation*}
Note that $\hat{g}$ is a self-compatible Riemannian metric. Therefore, by~(\ref{eq:det.Leibniz}) and~(\ref{eq:Curved.tnug-compatible}) we have
\begin{equation*}
 \det(\hat{g})=a^{-\frac12} b a^{-\frac12} \qquad \text{and} \qquad 
 \tilde{\nu}(\hat{g})=\sqrt{\det(\hat{g})}=\big( a^{-\frac12} b a^{-\frac12}\big)^{\frac12}. 
\end{equation*}
Furthermore, as $g^{-1}=a^{-\frac12} \hat{g}^{-1} a^{-\frac12}$, we have
\begin{equation*}
 |\xi|_g^{2}= \sum g^{ij}\xi_i\xi_j = \sum a^{-\frac12} \hat{g}^{ij} a^{-\frac12}\xi_i\xi_j=a^{-\frac12}|\xi|_{\hat{g}}^2 a^{-\frac12}. 
\end{equation*}
Therefore, by the very definition~(\ref{eq:spectral-riemannian-density}) of $\tilde{\nu}(g)$ we get
\begin{equation*}
 \tilde{\nu}(g) =\frac{1}{2\pi} \int_{\bS^1} |\xi|_g^{-2} d\xi = \frac{1}{2\pi} \int_{\bS^1} a^{\frac12}|\xi|_{\hat{g}}^{-2} a^{\frac12}d\xi =a^{\frac12} \tilde{\nu}(\hat{g})a^{\frac12}= a^{\frac12}\big( a^{-\frac12} b a^{-\frac12}\big)^{\frac12}a^{\frac12}.  
\end{equation*}
This should be compared with $\nu(g)$, which in this case is $\exp(\log(a)+\log(b)).$
In particular, if $[a,b]=0$ we recover the formula $\tilde{\nu}(g)=\sqrt{ab}$ predicted by~(\ref{eq:Curved.tnug-compatible}). Note also that $a^{\frac12}\big( a^{-\frac12} b a^{-\frac12}\big)^{\frac12}a^{\frac12}$ is the geometric mean of $a$ and $b$ in the sense of Pusz-Woronowicz~\cite{PW:RMP75}.
\end{example}

The integration formula for curved NC tori of~\cite{Po:JMP20} then can be stated as follows.  

\begin{proposition}[\cite{Po:JMP20}] For every $x\in C(\T^n_\theta)$, the operator $ \lambda_{\nu}(x)\Delta_{g,\nu}^{-\frac{n}2}$ is strongly measurable, and we have
\begin{equation}
 \bint \lambda_{\nu}(x)\Delta_{g,\nu}^{-\frac{n}2} = c(n) \tau\big[x \tilde{\nu}(g)\big], \qquad c(n):=\frac{1}{n}|\bS^{n-1}|.  
 \label{eq:curved-integration-C}
\end{equation}
 \end{proposition}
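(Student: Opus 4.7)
The strategy is to reduce the integration formula to the flat case treated in Proposition~\ref{nct_trace_theorem} (or equivalently the first part of Theorem~\ref{thm:Lp-Connes-trace}) by exploiting Proposition~\ref{prop:powersLB} to identify $\Delta_{g,\nu}^{-n/2}$ with a classical \psido, together with the identity $\lambda_\nu(x)=\lambda(\sigma_\nu(x))$ coming from~\eqref{eq:Riem.lambdanu}.

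First, by Proposition~\ref{prop:powersLB} applied with $s=-n/2$, the power $\Delta_{g,\nu}^{-n/2}$ belongs to $\Psi^{-n}(\T^n_\theta)$ with principal symbol $\nu^{-1/2}|\xi|_g^{-n}\nu^{1/2}=\sigma_\nu(|\xi|_g^{-n})$. Integration over the unit sphere then gives
\begin{equation*}
 c_{\Delta_{g,\nu}^{-n/2}}=\int_{\bS^{n-1}}\sigma_\nu\bigl(|\xi|_g^{-n}\bigr)\,d^{n-1}\xi=|\bS^{n-1}|\,\sigma_\nu\bigl(\tilde{\nu}(g)\bigr),
\end{equation*}
where I commuted $\sigma_\nu$ past the (Riemann) integral, since $\sigma_\nu$ is continuous on $C^\infty(\T^n_\theta)$.

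Next, for $x\in C(\T^n_\theta)$ one has $\sigma_\nu(x)=\nu^{-1/2}x\nu^{1/2}\in C(\T^n_\theta)$, and~\eqref{eq:Riem.lambdanu} gives the identity $\lambda_\nu(x)=\lambda(\sigma_\nu(x))$ as operators on $C^\infty(\T^n_\theta)$; hence
\begin{equation*}
 \lambda_\nu(x)\Delta_{g,\nu}^{-n/2}=\lambda\bigl(\sigma_\nu(x)\bigr)\Delta_{g,\nu}^{-n/2}
\end{equation*}
as an operator on $C^\infty(\T^n_\theta)$. Viewed as a bounded operator on $L_2(\T^n_\theta)$, the right-hand side is of the form covered by Proposition~\ref{nct_trace_theorem}, and so it is strongly measurable with
\begin{equation*}
 \bint \lambda\bigl(\sigma_\nu(x)\bigr)\Delta_{g,\nu}^{-n/2}=\frac{1}{n}\tau\bigl[\sigma_\nu(x)c_{\Delta_{g,\nu}^{-n/2}}\bigr]=c(n)\tau\bigl[\sigma_\nu(x)\sigma_\nu(\tilde{\nu}(g))\bigr]=c(n)\tau\bigl[x\tilde{\nu}(g)\bigr],
\end{equation*}
where in the last two equalities I use the multiplicativity of $\sigma_\nu$ together with $\tau\circ\sigma_\nu=\tau$, which is immediate from the trace property applied to $\nu^{-1/2}(x\tilde{\nu}(g))\nu^{1/2}$.

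Finally, the conclusion must be transferred from $L_2(\T^n_\theta)$ to the Hilbert space $L_2(\T^n_\theta;\nu)$ appearing in the statement. The two inner products are equivalent (Subsection~\ref{subsec:smooth.densities}), and equivalence of inner products preserves compactness and the spectrum, hence the eigenvalue sequence used to define strong measurability and the NC integral in~\eqref{eq:NCG-measurability}; this is precisely the insensitivity noted in Section~\ref{sec:quantised_calculus}. The operator $\lambda_\nu(x)\Delta_{g,\nu}^{-n/2}$ therefore remains strongly measurable on $L_2(\T^n_\theta;\nu)$ with the same NC integral, giving~\eqref{eq:curved-integration-C}. The only non-formal ingredient is the symbol identification in the first step (already encapsulated in Proposition~\ref{prop:powersLB}); the remaining steps are bookkeeping with $\sigma_\nu$ and trace-invariance, so no substantial obstacle is anticipated.
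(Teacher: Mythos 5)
Your proof is correct and is essentially the specialization to $x\in C(\T^n_\theta)$ of the paper's own proof of its $L_p$-generalization, Theorem~\ref{thm:Lp-curved-formula}: both use Proposition~\ref{prop:powersLB} to place $\Delta_{g,\nu}^{-n/2}$ in $\Psi^{-n}(\T^n_\theta)$ with principal symbol $\sigma_\nu(|\xi|_g^{-n})$, the identity $\lambda_\nu(x)=\lambda(\sigma_\nu(x))$, the flat-case trace theorem Proposition~\ref{nct_trace_theorem}, the computation $c_{\Delta_{g,\nu}^{-n/2}}=|\bS^{n-1}|\,\sigma_\nu(\tilde{\nu}(g))$, the trace invariance $\tau\circ\sigma_\nu=\tau$, and the inner-product insensitivity of strong measurability and the NC integral. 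Note the paper itself does not reprove this proposition (it cites~\cite{Po:JMP20}), but your argument matches the method it deploys for Theorem~\ref{thm:Lp-curved-formula}.
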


\begin{remark}
 The formula~(\ref{eq:curved-integration-C}) is proved in~\cite{Po:JMP20} in the case $\nu=\nu(g)$. However, the result hold \emph{verbatim} for any smooth positive density $\nu\in \GL_1^+(C^\infty(\T^n_\theta))$ (\emph{cf}.~\cite[Remark~10.7]{Po:JMP20}). 
\end{remark}

\begin{remark}
 For the flat metric $g=g_0$ and the density $\nu=1$, the above integration formula is proved in~\cite{MSZ:MA19} by using the $C^*$-algebraic approach to the principal symbol of~\cite{MSZ:MA19, SZ:JOT18}.
\end{remark}

\begin{example}
 If $g=(g_{ij})$ is self-compatible, then $\tilde{\nu}(g)=\nu(g)=\sqrt{\det(g)}$, where $\det(g)$ is given by~(\ref{eq:det.Leibniz}). Thus, in this case the formula~(\ref{eq:curved-integration-C}) becomes, 
 \begin{equation}
 \bint \lambda_{\nu}(x)\Delta_{g,\nu}^{-\frac{n}2} = c(n) \tau\big[x \sqrt{\det(g)}\big], \qquad x\in C(\T^n_\theta). 
\end{equation}
\end{example}

Recall that if $x\in L_2(\T^n_\theta)$, then  by the first part of Theorem~\ref{thm:curved-Cwikel} the operator $\lambda_\nu(x)\Delta_{g,\nu}^{-n/2}$ is in the weak trace class $\sL_{1,\infty}^{(\nu)}$. By the 2nd part of the same proposition if $x\in L_p(\T^n_\theta)$, $p>1$, then the operator 
$\Delta_{g,\nu}^{-n/4}\lambda_\nu(x)\Delta_{g,\nu}^{-n/4}$ is also in $\sL_{1,\infty}^{(\nu)}.$

\begin{theorem}\label{thm:Lp-curved-formula} 
The following hold. 
\begin{enumerate}
 \item If $x\in L_2(\T^n)$, then $\lambda_\nu(x) \Delta_{g,\nu}^{-\frac{n}{2}}$ is strongly measurable, and we have 
\begin{equation}\label{curved_rough_ctt_1}
 \bint \lambda_\nu(x) \Delta_{g,\nu}^{-\frac{n}2} = c(n) \tau\big[x\tilde{\nu}(g)\big], \qquad c(n):=\frac1{n}|\bS^{n-1}|. 
\end{equation} 

\item If $x\in L_{p}(\T^n)$, $p>1$, then $\Delta_{g,\nu}^{-n/4}\lambda_\nu(x) \Delta_{g,\nu}^{-n/4}$ is strongly measurable, and we have 
\begin{equation}
 \bint  \Delta_{g,\nu}^{-\frac{n}4} \lambda_\nu(x) \Delta_{g,\nu}^{-\frac{n}4} = c(n) \tau\big[x\tilde{\nu}(g)\big].
 \label{eq:L1+-curved-formula} 
\end{equation} 
\end{enumerate}
 \end{theorem}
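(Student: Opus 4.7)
The plan is to mirror the strategy used for Theorem~\ref{thm:Lp-Connes-trace}: I would combine the curved Cwikel estimates of Theorem~\ref{thm:curved-Cwikel} with the existing integration formula~(\ref{eq:curved-integration-C}) for smooth elements, and use two soft facts recalled in Subsection~\ref{sec:quantised_calculus}, namely that the strongly measurable operators form a closed subspace of $\sL_{1,\infty}^{(\nu)}$ on which the NC integral $\bint$ is continuous, and that strong measurability is insensitive to the choice of inner product. The passage between $\sL_{1,\infty}^{(\nu)}$ and $\sL_{1,\infty}$ is harmless since, as noted in Subsection~\ref{subsec:Cwikel-curved}, the two coincide as quasi-Banach ideals with equivalent quasi-norms.

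For part~(1), the first estimate of Theorem~\ref{thm:curved-Cwikel} provides continuity of the map $x\mapsto \lambda_\nu(x)\Delta_{g,\nu}^{-n/2}$ from $L_2(\T^n_\theta)$ into $\sL_{1,\infty}^{(\nu)}$, and the formula~(\ref{eq:curved-integration-C}) of~\cite{Po:JMP20} tells us that on the dense subspace $C(\T^n_\theta)\subset L_2(\T^n_\theta)$ this operator is strongly measurable and satisfies~(\ref{curved_rough_ctt_1}). Closedness of the strongly measurable operators then promotes strong measurability to all $x\in L_2(\T^n_\theta)$, while the right-hand side of~(\ref{curved_rough_ctt_1}) is continuous in $x\in L_2(\T^n_\theta)$ (using $\tilde{\nu}(g)\in L_\infty(\T^n_\theta)$ and H\"older) and the left-hand side is continuous by continuity of $\bint$ on the strongly measurable operators. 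Density of $C(\T^n_\theta)$ in $L_2(\T^n_\theta)$ closes the argument.

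For part~(2), I would first establish the formula for $x\in C^\infty(\T^n_\theta)$ by a commutator trick and then extend to $L_p(\T^n_\theta)$ exactly as in part~(1). For $x\in C^\infty(\T^n_\theta)$ the operator $\lambda_\nu(x)=\lambda(\sigma_\nu(x))$ is a \psido\ of order $0$ (Example~\ref{ex:PsiDO.Do-PsiDO}; note that $\sigma_\nu$ preserves $C^\infty(\T^n_\theta)$ since $\nu\in \GL_1^+(C^\infty(\T^n_\theta))$), and by Proposition~\ref{prop:powersLB} the operator $\Delta_{g,\nu}^{-n/4}$ is a \psido\ of order $-n/2$. Therefore
\begin{equation*}
 \Delta_{g,\nu}^{-\frac{n}{4}}\lambda_\nu(x)\Delta_{g,\nu}^{-\frac{n}{4}} - \lambda_\nu(x)\Delta_{g,\nu}^{-\frac{n}{2}} = \bigl[\Delta_{g,\nu}^{-\frac{n}{4}},\lambda_\nu(x)\bigr]\Delta_{g,\nu}^{-\frac{n}{4}}
\end{equation*}
is a \psido\ of order $\leq -n-1$, hence trace-class by Proposition~\ref{prop:PsiDOs.boundedness} and annihilated by $\bint$. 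This reduces the $C^\infty$ case of~(\ref{eq:L1+-curved-formula}) to part~(1). The second estimate of Theorem~\ref{thm:curved-Cwikel} then provides continuity of $x\mapsto \Delta_{g,\nu}^{-n/4}\lambda_\nu(x)\Delta_{g,\nu}^{-n/4}$ from $L_p(\T^n_\theta)$ into $\sL_{1,\infty}^{(\nu)}$ for $p>1$; combined with the density of $C^\infty(\T^n_\theta)$ in $L_p(\T^n_\theta)$, the closed-subspace/continuity argument of part~(1) delivers the full statement.

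I do not anticipate a serious obstacle: once Theorem~\ref{thm:curved-Cwikel} and the integration formula~(\ref{eq:curved-integration-C}) are in hand, the argument is essentially bookkeeping. The one point requiring some care is the order count for the commutator in part~(2), which relies on the classical symbolic calculus of Section~\ref{sec:psdo_applications} (Corollary~\ref{cor:PsiDOs.composition-classical}) and the fact that $\lambda_\nu(x)$ genuinely has classical-symbol order $0$ for $x\in C^\infty(\T^n_\theta)$, so that the composition with $\Delta_{g,\nu}^{-n/4}$ lands in $\Psi^{-n-1}(\T^n_\theta)$ as needed.
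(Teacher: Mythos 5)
Part~(1) of your argument is sound; it takes essentially the same density route as the paper (the paper passes through Theorem~\ref{thm:Lp-Connes-trace}, you invoke the formula~(\ref{eq:curved-integration-C}) directly, and both are fine).

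Part~(2), however, contains a genuine gap in the order count for the commutator. In the noncommutative setting the commutator of two \psidos\ does \emph{not} in general drop by one order. By Corollary~\ref{cor:PsiDOs.composition-classical}, $[\Delta_{g,\nu}^{-n/4},\lambda_\nu(x)]$ has principal symbol
\begin{equation*}
 \nu^{-\frac12}|\xi|_g^{-\frac{n}{2}}\nu^{\frac12}\cdot \sigma_\nu(x)\;-\;\sigma_\nu(x)\cdot \nu^{-\frac12}|\xi|_g^{-\frac{n}{2}}\nu^{\frac12}
 \;=\;\nu^{-\frac12}\bigl[|\xi|_g^{-\frac{n}{2}},\,x\bigr]\nu^{\frac12},
\end{equation*}
and this does not vanish unless $x$ commutes with the metric coefficients $g^{ij}$ (e.g., $g$ flat or conformally flat with $x$ commuting with the conformal factor). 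So $[\Delta_{g,\nu}^{-n/4},\lambda_\nu(x)]$ is generically of order $-n/2$, not $-n/2-1$, and the correction term $[\Delta_{g,\nu}^{-n/4},\lambda_\nu(x)]\Delta_{g,\nu}^{-n/4}$ lies in $\Psi^{-n}(\T^n_\theta)$, not $\Psi^{-n-1}(\T^n_\theta)$. It is only in the weak trace class, and the claim that it is trace-class, hence annihilated by $\bint$, does not follow.

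The conclusion you want is nevertheless true, but for a different reason: observe that
\begin{equation*}
 \Delta_{g,\nu}^{-\frac{n}{4}}\lambda_\nu(x)\Delta_{g,\nu}^{-\frac{n}{4}} - \lambda_\nu(x)\Delta_{g,\nu}^{-\frac{n}{2}}
 = \bigl[\Delta_{g,\nu}^{-\frac{n}{4}},\;\lambda_\nu(x)\Delta_{g,\nu}^{-\frac{n}{4}}\bigr]
\end{equation*}
is a commutator of two operators in $\Psi^{-n/2}(\T^n_\theta)$, hence a commutator in $\Psi^{-n}(\T^n_\theta)$. By Proposition~\ref{nct_trace_theorem}, $\bint$ on $\Psi^{-n}(\T^n_\theta)$ is $\tfrac1n\Res(\cdot)$, and the noncommutative residue is a trace on $\Psi^{\Z}(\T^n_\theta)$, so it annihilates this commutator. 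That closes the $C^\infty$ case, after which your density argument runs as planned. The paper itself avoids the issue entirely by applying the second part of Theorem~\ref{thm:Lp-Connes-trace} with $P=Q=\Delta_{g,\nu}^{-n/4}$; there the formula $\bint Q^*\lambda(x)P=\tfrac1n\tau[xc_{PQ^*}]$ already absorbs the noncommutativity of the principal symbols via cyclicity of $\tau$, which is the streamlined version of the repair above.
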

\begin{proof}
As mentioned in Section \ref{sec:quantised_calculus}, strong measurability and the NC integral do not depend on the choice of the inner product. As $L_2(\T^n_\theta)$ and $L_2(\T^n_\theta;\nu)$ are the same Hilbert space with equivalent inner products, we may regard  $\lambda_\nu(x)\Delta_{g,\nu}^{-\frac{n}2}$ and $\Delta_{g,\nu}^{-n/4}\lambda_\nu(x)\Delta_{g,\nu}^{-n/4}$ as operators in $L_2(\T^n_\theta)$. 

Let $x\in L_2(\T^n_\theta)$. By~(\ref{eq:lambda-to-lambda-nu-onesided}) we have $\lambda_\nu(x)\Delta_{g,\nu}^{-n/2}=\lambda(\nu^{-1/2}x\nu^{1/2})\Delta_{g,\nu}^{-n/2}$. Proposition~\ref{prop:powersLB} asserts that 
$\Delta_{g,\nu}^{-n/2}$ is an operator in $\Psi^{-n}(\T_\theta^n)$. Therefore, by Proposition~\ref{thm:Lp-Connes-trace} the operator $\lambda_\nu(x)\Delta_{g,\nu}^{-n/2}$ is strongly measurable, and we have
\begin{equation}\label{eq:initial-rough-ctt-computation}
 \bint \lambda_\nu(x) \Delta_{g,\nu}^{-\frac{n}2} = \bint \lambda\big(\nu^{-\frac12}x\nu^{\frac12}\big) \Delta_{g,\nu}^{-\frac{n}2} = 
 \frac{1}{n} \tau\left[(\nu^{-\frac12}x\nu^{\frac12})c_{\Delta_{g,\nu}^{-\frac{n}2}}\right]. 
\end{equation}

It remains to compute $c_{\Delta_{g,\nu}^{-n/2}}$. We know by Proposition~\ref{prop:powersLB} that $ \Delta_{g,\nu}^{-n/2}$ has principal symbol $\nu^{-1/2}|\xi|_g\nu^{1/2}$. Thus,
\begin{equation*}
 c_{\Delta_{g,\nu}^{-\frac{n}2}}= \int_{\bS^{n-1}}\nu^{-\frac12}|\xi|_g\nu^{\frac12}d^{n-1}\xi = \nu^{-\frac12} \bigg(\int_{\bS^{n-1}}|\xi|_gd^{n-1}\xi\bigg)\nu^{\frac12}=|\bS^{n-1}| \nu^{-\frac12}\tilde{\nu}(g)\nu^{\frac12}.  
\end{equation*}
Therefore, if we set $c(n)=n^{-1}|\bS^{n-1}|$, then we get
\begin{equation}\label{eq:trace_tau_computation}
 \frac{1}{n}\tau\left[(\nu^{-\frac12}x\nu^{\frac12})c_{\Delta_{g,\nu}^{-\frac{n}2}}\right]
 =c(n) \tau\left[\big(\nu^{-\frac12}x\nu^{\frac12}\big)\big(\nu^{-1/2}\tilde{\nu}(g)\nu^{1/2}\big)\right] = 
  c(n) \tau\big[x\tilde{\nu}(g)\big]. 
\end{equation}
Combining this with~(\ref{eq:initial-rough-ctt-computation}) gives~(\ref{curved_rough_ctt_1}).  

Let $x\in L_p(\T^n_\theta)$, $p>1$. By~(\ref{eq:lambda-to-lambda-nu-twosided}) we have 
$\Delta_{g,\nu}^{-\frac{n}4}\lambda_\nu(x)\Delta_{g,\nu}^{-\frac{n}4}= \Delta_{g,\nu}^{-\frac{n}4}\lambda(\nu^{-1/2}x\nu^{1/2})\Delta_{g,\nu}^{-n/4}$. We know by Proposition~\ref{prop:powersLB} that $\Delta_{g,\nu}^{-n/4}\in \Psi^{-n/2}(\T_\theta^n)$. Proposition~\ref{thm:Lp-Connes-trace} then ensures that $\Delta_{g,\nu}^{-\frac{n}4}\lambda_\nu(x)\Delta_{g,\nu}^{-\frac{n}4}$ is a strongly measurable operator, and we have
\begin{equation}\label{eq:curved_two_sided_rough_ctt_final_computation}
 \bint\Delta_{g,\nu}^{-\frac{n}4}\lambda_\nu(x)\Delta_{g,\nu}^{-\frac{n}4} = \bint   \Delta_{g,\nu}^{-\frac{n}4}\lambda\big(\nu^{-\frac12}x\nu^{\frac12}\big) \Delta_{g,\nu}^{-\frac{n}4} = 
 \frac{1}{n} \tau\left[(\nu^{-\frac12}x\nu^{\frac12})c_{\Delta_{g,\nu}^{-\frac{n}2}}\right]. 
\end{equation}
Note~(\ref{eq:trace_tau_computation}) holds for any $x\in L_1(\T^n_\theta)$, and hence it holds for any $x\in L_p(\T^n_\theta)$, $p>1$. Combining it with~(\ref{eq:curved_two_sided_rough_ctt_final_computation}) immediately gives~(\ref{eq:L1+-curved-formula}).  The proof is complete. 
\end{proof}

\section{CLR and Lieb-Thirring Inequalities on Curved NC Tori }\label{CLR_section}
In this section, we obtain versions for curved NC tori of the celebrated Cwikel-Lieb-Rozenblum (CLR) inequality~\cite{Cw:AM77, Li:BAMS76, Li:1980, Roz:1972, Roz:1976} and Lieb-Thirring (LT) inequalities~\cite{LT:PRL75, LT:SMP76} for number and $\gamma$-moments of negative eigenvalues of fractional Schr\"odinger operators associated with powers of  Laplace-Beltrami operators and $L_p$-potentials. As in the Euclidean setting the Lieb-Thirring inequality for $\gamma=1$ is shown to be dual to a Sobolev inequality. 

The inequalities of this section extends to the curved setting the inequalities~\cite{MP:JMP22} for Schr\"odinger operators associated with  powers of the ordinary Laplacian $\Delta$. For ordinary (flat) tori $\T^n$ Lieb-Thirringand Sobolev  inequalities were obtained by Ilyin~\cite{Il:JST12} for $n=2$ and by Ilyin-Laptev~\cite{IL:SM16} for $n\leq 19$ (see also \cite{IL:StPMJ20, ILZ:MN19, ILZ:JFA20}). 

\subsection{CLR inequality for curved NC tori (1st version)} 
Suppose that $p>1$ and $q=\max(p,1)$, or $p=1<q$, and let $V^*=V\in L_q(\T^n_\theta)$. Lemma~\ref{lem:Boundedness.sandwich1-curved} ensures that $\lambda_\nu(V)$ induces a bounded operator $\lambda_\nu(V):W_2^{-n/2p}(\T^n_\theta;g,\nu)\rightarrow W_2^{n/2p}(\T^n_\theta;g,\nu)$. Thus, we get a quadratic form on $W_2^{n/2p}(\T^n_\theta;g,\nu)$ given by

\begin{equation*}
 Q_{\lambda_\nu(V)}(u,v)=\acou{\lambda_\nu(V)u}{v},  \qquad u,v\in W_2^{\frac{n}{2p}}(\T^n_\theta). 
\end{equation*}
Note that by~(\ref{eq:Integration.lambdanu-dual}) we have
\begin{equation}\label{eq:Q_lambda_def}
  Q_{\lambda_\nu(V)}(u,v)= (2\pi)^n\tau\big[v^*\nu^{1/2}V\nu^{1/2}u\big],  \qquad u,v\in W_2^{\frac{n}{2p}}(\T^n_\theta).  
\end{equation}
In particular, $Q_{\lambda_\nu(V)}$ is symmetric and $Q_V\geq 0$ if $V\geq 0$.
More generally, we have
\begin{equation}
 V_1\leq V_2 \Longrightarrow \lambda_\nu(V_1)\leq \lambda_\nu(V_2). 
 \label{eq:CLR.monotonicity-lambda}
\end{equation}

Our main focus is on the fractional Schr\"odinger operators, 
\begin{equation*}
 H_{V}^{(p)}:= \Delta_{g,\nu}^{\frac{n}{2p}} +\lambda_\nu(V), \qquad p>0.  
\end{equation*}
Note that $Q_{\lambda_\nu(V)}$ has same domain as the quadratic form of the operator $H_0^{(p)}=\Delta_{g,\nu}^{n/2p}$, i.e., 
\begin{equation*}
 Q_{H_0^{(p)}}(u,v)=\bigscal{\Delta_{g,\nu}^{\frac{n}{4p}}u}{\Delta_{g,\nu}^{\frac{n}{4p}}v}= \bigacou{\Delta_{g,\nu}^{\frac{n}{2p}}u}{v}, \qquad u,v\in W_2^{\frac{n}{2p}}(\T^n_\theta;g,\nu),  
\end{equation*}
where we regard $\Delta^{n/2p}$ as an operator from $W_2^{-n/2p}(\T^n_\theta;g,\nu)$ to $W_2^{n/2p}(\T^n_\theta;g,\nu)$. In particular, $H_{V}^{(p)}$ makes sense as an operator from  $W_2^{-n/2p}(\T^n_\theta;g,\nu)$ to $W_2^{n/2p}(\T^n_\theta;g,\nu)$. 

We have 
\begin{equation*}
 \big(1+H_0^{(p)}\big)^{-\frac12} \lambda_\nu(V)  \big(1+H_0^{(p)}\big)^{-\frac12} 
 = A\Delta_{g,\nu}^{-\frac{n}{4p}}\lambda_\nu(V)\Delta_{g,\nu}^{-\frac{n}{4p}}A, 
\end{equation*}
where $A:=(1+H_0^{(p)})^{-\frac12}\Delta_{g,\nu}^{n/4p}= (\Delta_{g,\nu}^{-n/2p}+1)^{-1/2}$ is a bounded operator on $L_2(\T^n_\theta;\nu)$. Therefore, it follows from Theorem~\ref{thm:curved-Cwikel} that $(1+H_0^{(p)})^{-\frac12}\lambda_\nu(V) (1+H_0^{(p)})^{-\frac12}$ is in the weak Schatten class $\sL_{p,\infty}^{(\nu)}$, and hence is compact. That is, the symmetric quadratic form $Q_{\lambda_\nu(V)}$ is $H^{(p)}_0$-form compact, and so it is $H^{(p)}_0$-form bounded with zero $H$-bound (see~\cite[\S7.8]{Si:AMS15}). Thus, by the KLMN theorem (see, e.g., \cite{RS2:1975, Sc:Springer12}) the restriction of $H_V^{(p)}$ to $W_2^{n/p}(\T^n_\theta;g,\nu)$ is a bounded from below selfadjoint operator on $L_2(\T^n_\theta;\nu)$ whose quadratic form is precisely $Q_{H_0^{(p)}}+Q_{\lambda_\nu(V)}$. 

As $H_0^{(p)}$ has compact resolvent, $H^{(p)}_V$ has compact resolvent as well, and hence it has pure discrete spectrum (see, e.g., \cite[Theorem 7.8.4]{Si:AMS15}). We denote by $-\lambda_j^{-}(H^{(p)}_V)$ the \emph{negative} eigenvalues of $H_V^{(p)}$ in such a way that 
\begin{equation*}
\lambda_0^{-}\big(H_V^{(p)}\big)\geq \lambda_1^{-}\big(H_V^{(p)}\big)\geq\cdots>0 ,
\end{equation*}
where each eigenvalue is repeated according to multiplicity. We then introduce the counting function, 
\begin{equation}
 N^{-}\big(H_V^{(p)};\lambda\big) : = \#\left\{j; \ \lambda_j^{-}\big(H_V^{(p)}\big)< \lambda\right\}, 
 \qquad \lambda\geq 0. 
\label{eq:CLR.counting}
\end{equation}
For $\lambda=0$ we set $N^{-}(H_V^{(p)})=N^{-}(H_V^{(p)};0)$. Moreover, by Glazman's lemma (see, e.g., \cite[Theorem~10.2.3]{BS:Book}) we have 
\begin{equation}\label{eq:Glazman_reformulation}
  N^{-}(A;\lambda) =  \max\left\{ \dim F; \ F\in \sF^-\big(H_V^{(p)};\lambda\big)\right\}, 
\end{equation}
where $\sF^-(H_V^{(p)};\lambda)$ is the family of subspaces $F\subset W_2^{n/p}(\T^n_\theta;g,\nu)$ such that $Q_{H_V^{(p)}}(u,u)<-\lambda \|u\|^{2}_{L_2(\T^n_\theta;\nu)}$ for all $u\in F\setminus 0$. Combining this with~(\ref{eq:CLR.monotonicity-lambda}) yields the monotonicity principle, 
\begin{equation}
 V_1\leq V_2 \Longrightarrow  N^{-}\big(H_{V_2}^{(p)};\lambda\big) \leq N^{-}\big(H_{V_1}^{(p)};\lambda\big) \quad \forall \lambda\geq 0. 
 \label{eq:CLR.monotonicity-N}
\end{equation}

In what follows, we let $V_{\pm}=\frac12(|V|\pm V)$ be the positive and negative parts of $V$, so that $V=V_+-V_{-}$

\begin{theorem}[CLR Inequality for Curved NC Tori; 1st Version]\label{thm:CLR.CLR-NCtori}
Suppose that $q=\max(p,1)$ if $p\neq 1$, or $p=1<q$. If $V=V^*\in L_q(\T^n_\theta)$, then 
\begin{equation}\label{eq:CLR.CLR-inequality}
 N^{-}\left(H_V^{(p)}\right)-1 \leq C_{g,\nu}(2p,2q)^{2p} \big(\big\|V_{-}\big\|_{L_q}\big)^p ,
\end{equation}
where $C_{g,\nu}(2p,2q)$ is the best constant in the inequality~(\ref{eq:specific-Cwikel-Deltagnu}) for the pair $(2p,2q)$. 
\end{theorem}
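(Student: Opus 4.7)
The strategy is to apply the Birman--Schwinger principle together with the second part of the curved Cwikel estimate (Theorem~\ref{thm:curved-Cwikel}). Special care must be devoted to the one-dimensional kernel $\C\cdot 1$ of $\Delta_{g,\nu}$: it is precisely this kernel that accounts for the additive $-1$ on the left-hand side of~(\ref{eq:CLR.CLR-inequality}). By the monotonicity principle~(\ref{eq:CLR.monotonicity-N}) applied to $V\geq -V_{-}$, it is enough to prove the bound when $V=-V_{-}\leq 0$. To dispose of the kernel, I would use Glazman's lemma~(\ref{eq:Glazman_reformulation}): choosing a subspace $F\subset W_2^{n/p}(\T^n_\theta;g,\nu)$ of dimension $N^{-}(H_V^{(p)})$ on which the quadratic form of $H_V^{(p)}$ is negative, the subspace $F':=F\cap\dot W_2^{n/p}(\T^n_\theta;g,\nu)$, obtained as the kernel of the linear functional $u\mapsto \tau[u\nu]$ restricted to $F$, has codimension at most one in $F$. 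On $F'$ the quadratic forms of $H_V^{(p)}$ and of its compression $\dot H_V^{(p)}$ to $\dot L_2(\T^n_\theta;\nu)$ agree, so $\dim F'\leq N^{-}(\dot H_V^{(p)})$ and consequently
\[
N^{-}\big(H_V^{(p)}\big)-1 \;\leq\; N^{-}\big(\dot H_{-V_{-}}^{(p)}\big).
\]

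On the invariant subspace $\dot L_2(\T^n_\theta;\nu)$ the unperturbed operator $\dot \Delta_{g,\nu}^{n/2p}$ is strictly positive with bounded inverse, so the Birman--Schwinger principle applies directly and yields
\[
N^{-}\big(\dot H_{-V_{-}}^{(p)}\big) \;=\; N(\dot K;1), \qquad \dot K := \dot\Delta_{g,\nu}^{-\frac{n}{4p}}\,\dot\lambda_\nu(V_{-})\,\dot\Delta_{g,\nu}^{-\frac{n}{4p}},
\]
where $N(\dot K;1)$ denotes the number of eigenvalues of the compact positive operator $\dot K$ strictly greater than $1$. I would then invoke the elementary bound $N(T;1)\le \|T\|_{\sL_{p,\infty}}^{p}$, valid for any $T\geq 0$ in $\sL_{p,\infty}$: if $\mu_j(T)>1$ then $j+1\leq (j+1)\mu_j(T)^p\leq \|T\|_{\sL_{p,\infty}}^p$, so only finitely many $j$ satisfy $\mu_j(T)>1$ and their count is at most $\|T\|_{\sL_{p,\infty}}^p$.

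Finally, since $\Delta_{g,\nu}^{-n/4p}$ annihilates $\C\cdot 1$ by Borel functional calculus, $\dot K$ is simply the restriction to $\dot L_2(\T^n_\theta;\nu)$ of $K:=\Delta_{g,\nu}^{-n/4p}\lambda_\nu(V_{-})\Delta_{g,\nu}^{-n/4p}$, and therefore $\|\dot K\|_{\sL_{p,\infty}^{(\nu)}}\leq \|K\|_{\sL_{p,\infty}^{(\nu)}}$. The positivity of $V_{-}$ allows me to invoke the sharper, $2^{1/p}$-free version of~(\ref{eq:specific-Cwikel-Deltagnu-2}), yielding $\|K\|_{\sL_{p,\infty}^{(\nu)}}\leq C_{g,\nu}(2p,2q)^{2}\|V_{-}\|_{L_q}$; raising to the $p$th power and chaining the estimates produces the claimed bound. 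The only non-routine ingredient is justifying the Birman--Schwinger identity for a potential $V_{-}$ that is only in $L_q$ and need not be bounded; this hinges on the fact that $\dot K$ is compact (indeed in $\sL_{p,\infty}$) by Theorem~\ref{thm:curved-Cwikel}, together with the standard form-compact version of the principle, and can alternatively be obtained by approximating $V_{-}$ by bounded potentials and passing to the limit.
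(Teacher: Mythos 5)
Your proof is correct, but it takes a genuinely different route from the paper's. The paper proves the bound directly, applying the Birman--Schwinger principle in the specific form of Theorem~7.9 of \cite{MP:JMP22}, which is tailored to an unperturbed operator with a nontrivial kernel and yields
\[
 N^{-}\big(H_{-V_{-}}^{(p)}\big) - N^{+}\big(\Pi_0^{(\nu)}V_{-}\Pi_0^{(\nu)}\big)\leq
 \left\| \Delta_{g,\nu}^{-\frac{n}{4p}}\lambda_\nu(V_{-}) \Delta_{g,\nu}^{-\frac{n}{4p}}\right\|_{\sL_{p,\infty}^{(\nu)}}^p;
\]
the $-1$ then arises simply because $\Pi_0^{(\nu)}$ has rank~$1$, so $N^{+}(\Pi_0^{(\nu)}V_{-}\Pi_0^{(\nu)})\leq 1$. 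You instead dispose of the kernel at the variational-principle level: using Glazman's lemma and a codimension-one argument you establish $N^{-}(H_V^{(p)})\leq N^{-}(\dot H_V^{(p)})+1$, then apply the standard (invertible-$H_0$) Birman--Schwinger principle on $\dot L_2(\T^n_\theta;\nu)$. This is in effect proving the second CLR inequality (Theorem~\ref{thm:CLR.CLR-NCtori2}) first and deducing the first version from it --- which is precisely the alternative route the paper mentions in the remark following Theorem~\ref{thm:CLR.CLR-NCtori2}, citing the analogous argument of \cite[Remark~8.13]{MP:JMP22}. Your approach is more self-contained (it uses only the classical Birman--Schwinger principle plus Glazman's lemma) at the cost of the extra codimension step, whereas the paper's approach is shorter once one accepts the pre-packaged version of the principle for operators with kernel from the prequel. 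The elementary bound $N(T;1)\leq\|T\|_{\sL_{p,\infty}}^p$, the positivity-based sharpening of the Cwikel estimate, and the passage from $\dot K$ to $K$ are all handled correctly.
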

\begin{proof}
 The proof goes along merely the same lines as that of the proof of~\cite[Theorem~8.1]{MP:JMP22}. As $V=V_+-V_{-}\geq -V_{-}$, the monotonicity principle~(\ref{eq:CLR.monotonicity-N}) implies that
\begin{equation}\label{eq:CLR.proof.monotonicity}
 N^{-}\big(H_{V}^{(p)}\big) \leq N^{-}\big(H_{-V_{-}}^{(p)}\big). 
\end{equation}
Let $\Pi_0^{(\nu)}$ the orthogonal projection onto $\ker \Delta_{g,\nu}=\C\cdot 1$ in $L_2(\T^n_\theta;\nu)$. That is, 
\begin{equation*}
 \Pi_0^{(\nu)}=\scal{u}{1}_\nu1 =\tau(u\nu)1, \qquad u\in L_2(\T^n_\theta;\nu). 
\end{equation*}
As $-V_{-}\leq 0$ the Birman-Schwinger principle in the version provided by~\cite[Theorem~7.9]{MP:JMP22} gives
\begin{equation*}
 N^{-}\big(H_{-V_{-}}^{(p)};\lambda\big) - N^{+}\big(\Pi_0^{(\nu)}V_{-}\Pi_0^{(\nu)}\big)\leq  
 \left(\left\| \Delta_{g,\nu}^{-\frac{n}{4p}}\lambda_\nu(V) \Delta_{g,\nu}^{-\frac{n}{4p}}\right\|_{\sL_{p,\infty}^{(\nu)}}\right)^p,  
\end{equation*}
where $N^{+}(\Pi_0^{(\nu)}V_{-}\Pi_0^{(\nu)})$ is the number of positive eigenvalues of $\Pi_0^{(\nu)}V_{-}\Pi_0^{(\nu)}$. As 
$\op{rk}(\Pi_0^{(\nu)}V_{-}\Pi_0^{(\nu)})\leq \op{rk}(\Pi_0^{(\nu)})=1$, we have $N^{+}(\Pi_0^{(\nu)}V_{-}\Pi_0^{(\nu)})\leq 1$. Thus, 
\begin{equation*}
 N^{-}\big(H_{V}^{(p)}\big) -1 \leq  N^{-}\big(H_{-V_{-}}^{(p)};\lambda\big) - N^{+}\big(\Pi_0^{(\nu)}V_{-}\Pi_0^{(\nu)}\big)\leq  
 \left(\left\| \Delta_{g,\nu}^{-\frac{n}{4p}}\lambda_\nu(V_{-}) \Delta_{g,\nu}^{-\frac{n}{4p}}\right\|_{\sL_{p,\infty}^{(\nu)}}\right)^p. 
\end{equation*}
 Combining this with the Cwikel-type estimate~(\ref{eq:specific-Cwikel-Deltagnu-2}) immediately gives the result. 
\end{proof}

\begin{remark}
 We have $Q_{H^{(p)}_V}(1,1)=\acou{1}{\lambda_\nu(V)1}=\hat{\tau}_\nu(V)$. Thus, if $\hat{\tau}_\nu(V)=(2\pi)^n\tau[V\nu]<0$, then the variational principle~(\ref{eq:Glazman_reformulation}) implies that, in this case, $N^{-}(H_V^{(p)})$ is always~$\geq 1$. 
\end{remark}

\subsection{CLR inequality on curved NC tori (2nd version)} 
As $\Delta_{g,\nu}$ has a non-trivial nullspace it is natural to restrict ourself to the orthogonal complement of $\ker \Delta_{g,\nu}=\C\cdot 1$, i.e., the subspace, 
\begin{equation*}
  \dot{L}_2\big(\T^n_\theta;\nu\big) := \left\{u \in L_2\big(\T^n_\theta;\nu\big);\ \scal{u}{1}_\nu=\tau[u\nu]=1\right\}.
\end{equation*}
Thus, we may regard $\dot{L}_2(\T^n_\theta;\nu)$ as the space of elements of $L_2(\T^n_\theta;\nu)$ that have zero mean value with respect to $\nu$. For $s\geq 0$, we also set
\begin{equation*}
  \dot{W}^s_2\big(\T^n_\theta;g,\nu\big) := W^s_2\big(\T^n_\theta;g,\nu\big)\cap \dot{L}_2\big(\T^n_\theta;\nu\big).
\end{equation*}
In addition, we denote by $\dot{W}^{-s}_2(\T^n_\theta;g,\nu)$ the anti-linear dual of $\dot{W}^{s}_2(\T^n_\theta;g,\nu)$. 

Let $\dot{\Delta}_{\nu,g}$ be the compression of $\Delta_{g,\nu}$ to $\dot{L}_2(\T^n_\theta)$. Its domain is  
$\dot{W}^{s}_2(\T^n_\theta;g,\nu)$, and it has same spectrum and eigenspaces as $\Delta_{g,\nu}$ except for the eigenvalue $\lambda=0$. 

Suppose that $p\neq 1$ and $q=\max(p,1)$, or $p=1<q$, and let $V=V^*\in L_q(\T^n_\theta)$. Let $\dot{\lambda}_\nu(V):\dot{W}^{-n/2p}(\T^n_\theta;g,\nu) \rightarrow \dot{W}^{n/2p}(\T^n_\theta;g,\nu)$ be the compression of $\lambda_\nu(V)$ to $\dot{L}_2(\T^n_\theta;\nu)$. More precisely, we mean the operator given by
\begin{equation*}
 \bigacou{\dot{\lambda}_\nu(V)u}{v}=Q_{\lambda_\nu(V)}(u,v)=\acou{\lambda_\nu(V)u}{v}, \qquad u,v\in  \dot{W}^{\frac{n}{2p}}_2\big(\T^n_\theta;g,\nu\big). 
\end{equation*}
That is, the quadratic form $Q_{\dot{\lambda}_\nu(V)}$ is the restriction of $Q_{\lambda_\nu(V)}$ to $\dot{W}_2^{n/2p}(\T^n_\theta;g,\nu)$. In particular, the monotonicity principle~(\ref{eq:CLR.monotonicity-N}) holds \emph{verbatim} for the operators $\dot{\lambda}_\nu(V)$. 

With respect to the orthogonal splitting $L^2(\T^n_\theta;\nu)=\dot{L}^2(\T^n_\theta;\nu)\oplus \C\cdot1$ we have
\begin{equation}\label{eq:LT.dotDeltaV}
\Delta_{g,\nu}^{-\frac{n}{4p}}  \lambda_\nu(V)\Delta_{g,\nu}^{-\frac{n}{4p}} = 
\begin{pmatrix}
 \dot{\Delta}_{g,\nu}^{-\frac{n}{4p}}  \dot{\lambda}_\nu(V) \dot{\Delta}_{g,\nu}^{-\frac{n}{4p}} & 0\\ 0 &0
\end{pmatrix}. 
\end{equation}
In particular, we see that $ \dot{\Delta}_{g,\nu}^{-{n}/{4p}}\dot{\lambda}_\nu(V) \dot{\Delta}_{g,\nu}^{-{n}/{4p}}$ is in the weak Schatten class $\sL_{p,\infty}(\dot{L}_{2}(\T^n_\theta;\nu))$ and has same  $\sL_{p,\infty}$-quasi-norm as $\Delta_{g,\nu}^{-{n}/{4p}}  \lambda_\nu(V)\Delta_{g,\nu}^{-{n}/{4p}}$. 
Thus, in the same way as with $H^{(p)}_{V}$ the operator $\dot{H}_{V}^{(p)}:=\dot{\Delta}_{g,\nu}^{n/2p}+\dot{\lambda}_{\nu}(V)$ restricts on 
$\dot{W}_2^{n/p}(\T^n_\theta;g,\nu)$ to a selfadjoint operator on $L_2(\T^n_\theta;\nu)$ which is bounded from below and has compact resolvent. We define its counting function as in~(\ref{eq:CLR.counting}). The monotonicity principle~(\ref{eq:CLR.monotonicity-N}) holds \emph{verbatim} for the operators $\dot{H}_V^{(p)}$. 

\begin{theorem}[CLR Inequality on Curved NC Tori; 2nd Version]\label{thm:CLR.CLR-NCtori2}
Suppose that $p\neq 1 $ and $q = \max\{p,1\}$, or $p=1<q$.  If $V=V^* \in L_q(\T^n_\theta)$, then 
\begin{equation}
 N^{-}\big(\dot{H}_{V}^{(p)}\big) \leq C_{g,\nu}(2p,2q)^{2p} \big(\big\|V_{-}\big\|_{L_q}\big)^p, 
 \label{eq:CLR.2} 
\end{equation}
where $C_{g,\nu}(2p,2q)$ is the best constant in the inequality~(\ref{eq:specific-Cwikel-Deltagnu}) for the pair $(2p,2q)$.
\end{theorem}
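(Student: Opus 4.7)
The plan is to mirror the proof of Theorem~\ref{thm:CLR.CLR-NCtori} while exploiting the fact that $\dot{\Delta}_{g,\nu}$ has trivial kernel on $\dot{L}_2(\T^n_\theta;\nu)$, which eliminates the ``$-1$'' correction term. As a first step, the monotonicity principle (which transfers \emph{verbatim} to the dotted operators) combined with $V\geq -V_{-}$ reduces the problem to bounding $N^{-}(\dot{H}_{-V_{-}}^{(p)})$, since
\begin{equation*}
N^{-}\bigl(\dot{H}_V^{(p)}\bigr) \leq N^{-}\bigl(\dot{H}_{-V_{-}}^{(p)}\bigr).
\end{equation*}

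Next, I would invoke the Birman-Schwinger principle of~\cite[Theorem~7.9]{MP:JMP22}, applied to the nonnegative operator $\dot{\Delta}_{g,\nu}^{n/2p}$ on $\dot{L}_2(\T^n_\theta;\nu)$ with perturbation $-\dot{\lambda}_\nu(V_{-})$. Since $\ker \dot{\Delta}_{g,\nu}=\{0\}$, the orthogonal projection onto the kernel vanishes, so the correction term $N^{+}(\Pi_0^{(\nu)}V_{-}\Pi_0^{(\nu)})$ that appeared in the proof of Theorem~\ref{thm:CLR.CLR-NCtori} simply disappears. This yields
\begin{equation*}
N^{-}\bigl(\dot{H}_{-V_{-}}^{(p)}\bigr) \leq \Bigl\|\dot{\Delta}_{g,\nu}^{-\frac{n}{4p}}\dot{\lambda}_\nu(V_{-})\dot{\Delta}_{g,\nu}^{-\frac{n}{4p}}\Bigr\|_{\sL_{p,\infty}(\dot{L}_2(\T^n_\theta;\nu))}^p.
\end{equation*}

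Finally, the block-diagonal identity~(\ref{eq:LT.dotDeltaV}) shows that the compressed operator has the same $\sL_{p,\infty}$-quasi-norm as its uncompressed counterpart $\Delta_{g,\nu}^{-n/4p}\lambda_\nu(V_{-})\Delta_{g,\nu}^{-n/4p}$ acting on $L_2(\T^n_\theta;\nu)$. Since $V_{-}\geq 0$, the positive-element form of the curved Cwikel estimate~(\ref{eq:specific-Cwikel-Deltagnu-2}) bounds this quasi-norm by $C_{g,\nu}(2p,2q)^2\|V_{-}\|_{L_q}$ \emph{without} the $2^{1/p}$-factor; raising to the $p$-th power gives the claimed inequality~(\ref{eq:CLR.2}). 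The only point to verify is that the hypotheses of~\cite[Theorem~7.9]{MP:JMP22} transfer to the restricted Hilbert space $\dot{L}_2(\T^n_\theta;\nu)$, which is essentially bookkeeping: $\dot{\lambda}_\nu(V_{-})$ is still form-compact with respect to $\dot{H}_0^{(p)}$ by the same Cwikel argument used in the unrestricted case, so the proof sketched above should go through without any genuine new difficulty.
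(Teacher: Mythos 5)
Your proof is correct and follows essentially the same route as the paper: monotonicity reduces to $-V_{-}$, the Birman--Schwinger principle on $\dot{L}_2(\T^n_\theta;\nu)$ (where the kernel is trivial so no correction term appears) gives the bound in terms of the $\sL_{p,\infty}$-quasi-norm, the block-diagonal identity~(\ref{eq:LT.dotDeltaV}) transfers this to the uncompressed operator, and the positive-element version of~(\ref{eq:specific-Cwikel-Deltagnu-2}) finishes. The paper invokes the abstract Birman--Schwinger principle of~\cite{BS:AMST89} directly rather than \cite[Theorem~7.9]{MP:JMP22} with a vanishing projection, but this is the same mathematical content expressed with a different citation.
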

\begin{proof}
 The proof follows along similar lines as that of Theorem~\ref{thm:CLR.CLR-NCtori}. In the same way as in~(\ref{eq:CLR.proof.monotonicity}) we have $N^{-}(\dot{H}_V^{(p)})\leq N^{-}(\dot{H}_{-V_{-}}^{(p)})$. As $-V_{-}\leq 0$ and $0$ is not in the spectrum of $\dot{H}_0^{(p)}=\dot{\Delta}^{n/2p}_{g,\nu}$, the abstract Birman-Schwinger principle in the version of~\cite{BS:AMST89} (see also~\cite{MP:JMP22, Si:AMS15}) implies that
\begin{equation*}
 N^{-}\big(\dot{H}_{V}^{(p)}\big) \leq   \left(\left\| \dot{\Delta}_{g,\nu}^{-\frac{n}{4p}}\dot{\lambda}_\nu(V_{-}) \dot{\Delta}_{g,\nu}^{-\frac{n}{4p}}\right\|_{\sL_{p,\infty}(\dot{L}(\T^n_\theta;\nu))}\right)^p
 = \left(\left\| \Delta_{g,\nu}^{-\frac{n}{4p}}\lambda_\nu(V_{-}) \Delta_{g,\nu}^{-\frac{n}{4p}}\right\|_{\sL_{p,\infty}^{(\nu)}}\right)^p. 
\end{equation*}
Combining this with the Cwikel-type estimate~(\ref{eq:specific-Cwikel-Deltagnu-2}) immediately gives the result. 
\end{proof}

\begin{remark}
 We can recover from~(\ref{eq:CLR.2}) the other CLR inequality~(\ref{eq:CLR.CLR-inequality}), since, in the same way as in~\cite[Remark~8.13]{MP:JMP22}, it can be shown that $N^{-}(H_V^{(p)})\leq N^{-}(\dot{H}_V^{(p)})+1$. 
\end{remark}

\subsection{Lieb-Thirring inequalities}
The Lieb-Thirring inequalities extend the CLR inequality~(\ref{eq:CLR.2}) to $\gamma$-moments, i.e.,  
\begin{equation*}
\sum_{j\geq 0} \lambda_j^{-}\big(\dot{H}_V^{(p)}\big)^{\gamma}, \qquad \gamma\geq 0. 
\end{equation*}
In particular, for $\gamma=0$ we recover the number of negative eigenvalues $N^{-}(\dot{H}_V^{(p)})$. 

\begin{theorem}[Lieb-Thirring Inequalities on Curved NC Tori]\label{thm:LT-inequality}
Let $\gamma>0$ and $p>1$. If $V=V^*\in L_{p+\gamma}(\T^n_\theta)$, then
 \begin{equation}\label{eq:LT-Inequality}
\sum_{j\geq 0} \lambda_j^{-}\big(\dot{H}_V^{(p)}\big)^{\gamma}\leq L_{g,\nu}(p,\gamma) \tau\big[(V_{-})^{p+\gamma}\big],  
\end{equation}
where the best constant $L_{g,\nu}(p,\gamma)$ satisfies
\begin{equation*}
 L_{g,\nu}(p,\gamma) \leq \gamma \frac{\Gamma(p+1)\Gamma(\gamma)}{\Gamma(p+\gamma+1)}C_{g,\nu}(2p,2p)^{2p}.
\end{equation*}
Here $C_{g,\nu}(2p,2p)$ is the best constant for the inequality~(\ref{eq:specific-Cwikel-Deltagnu}) for the pair $(2p,2p)$. 
\end{theorem}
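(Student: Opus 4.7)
The plan is to apply the classical Aizenman--Lieb trick, which deduces the Lieb--Thirring inequality from the CLR inequality (Theorem~\ref{thm:CLR.CLR-NCtori2}) by integrating over a spectral shift parameter. The starting point is the layer-cake identity $\lambda^{\gamma}=\gamma\int_0^\infty t^{\gamma-1}\mathbf{1}_{t<\lambda}\,dt$, which, summed over the negative eigenvalues, gives
\begin{equation*}
 \sum_{j\geq 0} \lambda_j^-\big(\dot{H}_V^{(p)}\big)^\gamma
 = \gamma\int_0^\infty N^-\big(\dot{H}_V^{(p)};t\big)\, t^{\gamma-1}\,dt.
\end{equation*}
The interchange of sum and integral is justified by Tonelli.

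Next I would reduce the counting function $N^-(\dot{H}_V^{(p)};t)$ to the CLR setting via a spectral shift. On the dotted space $\dot{L}_2(\T^n_\theta;\nu)$, the operator $\dot{\lambda}_\nu(1)$ is the identity, so for every $t\geq 0$ one has
\begin{equation*}
 \dot{H}_V^{(p)} + t = \dot{\Delta}_{g,\nu}^{n/2p} + \dot{\lambda}_\nu(V+t) = \dot{H}_{V+t}^{(p)},
\end{equation*}
whence $N^-(\dot{H}_V^{(p)};t)=N^-(\dot{H}_{V+t}^{(p)})$. Applying Theorem~\ref{thm:CLR.CLR-NCtori2} with $q=p$ (allowed since $p>1$, and noting $V+t\in L_p(\T^n_\theta)$ because $V\in L_{p+\gamma}\subset L_p$ via H\"older), I get
\begin{equation*}
 N^-\big(\dot{H}_V^{(p)};t\big) \leq C_{g,\nu}(2p,2p)^{2p}\,\tau\big[((V+t)_-)^p\big].
\end{equation*}

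The crucial observation is the functional-calculus identity $(V+t)_- = (V_- - t)_+$, which reduces to the real-variable equality $(x+t)_- = (x_- - t)_+$ for $t\geq 0$, $x\in\R$ (immediately checked by splitting into the cases $x\geq 0$ and $x<0$). This avoids any appeal to operator monotonicity of the positive part. Combining everything,
\begin{equation*}
 \sum_{j\geq 0}\lambda_j^-\big(\dot{H}_V^{(p)}\big)^\gamma
 \leq \gamma\, C_{g,\nu}(2p,2p)^{2p} \int_0^\infty \tau\big[((V_- - t)_+)^p\big]\, t^{\gamma-1}\,dt.
\end{equation*}
Writing the inner trace through the spectral resolution of $V_-\geq 0$ and applying Fubini reduces the $t$-integral to the Beta function,
\begin{equation*}
 \int_0^\infty (\lambda-t)_+^p\, t^{\gamma-1}\,dt = B(p+1,\gamma)\,\lambda^{p+\gamma}
 = \frac{\Gamma(p+1)\Gamma(\gamma)}{\Gamma(p+\gamma+1)}\lambda^{p+\gamma},
\end{equation*}
for each $\lambda\geq 0$, and integrating against $d\tau(E_\lambda^{V_-})$ yields $\frac{\Gamma(p+1)\Gamma(\gamma)}{\Gamma(p+\gamma+1)}\tau[V_-^{p+\gamma}]$. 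This gives precisely the claimed bound on $L_{g,\nu}(p,\gamma)$.

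No step is genuinely difficult here: the hypothesis $p>1$ is exactly what is needed so that CLR applies with $q=p$, the Aizenman--Lieb identity is elementary, and the shift trick works because restricting to $\dot L_2$ kills the harmless constant mode. The only moderately subtle point is the identity $(V+t)_- = (V_--t)_+$, but as noted this is immediate from the scalar case once $V_+,V_-$ are recognized as commuting functions of $V$ via the spectral theorem.
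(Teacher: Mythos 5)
Your proof is correct and follows essentially the same route as the paper: the Aizenman--Lieb trick of integrating the CLR bound of Theorem~\ref{thm:CLR.CLR-NCtori2} over the spectral shift parameter. The only cosmetic difference is that the paper defers the final trace-integral computation to the proof of Theorem~8.14 of~\cite{MP:JMP22}, whereas you spell it out via the pointwise identity $(V+t)_{-}=(V_{-}-t)_{+}$, the spectral resolution of $V_{-}$, and the Beta integral.
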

\begin{proof}
The proof goes along merely the same lines as that of the proof of~\cite[Theorem~8.14]{MP:JMP22}. We only highlight the main steps. We have the standard formula, 
\begin{equation*}
\sum_{j\geq 0} \lambda_j^{-}\big(\dot{H}_V^{(p)}\big)^{\gamma}
  = \gamma \int_0^\infty t^{\gamma-1}N^{-}\big(\dot{H}_V^{(p)};t\big)dt. 
\end{equation*}
For $t\geq 0$, the CLR inequality~(\ref{thm:CLR.CLR-NCtori2}) gives
\begin{equation*}
 N^{-}\big(\dot{H}_V^{(p)};t\big) = N^{-}\big(\dot{H}_{V+t}^{(p)}\big)\leq C_{g,\nu}(2p,2p)^{2p}\tau\big[(V+t)_{-}^p\big]. 
\end{equation*}
Thus, 
\begin{equation*}
  \sum_{j\geq 0} \lambda_j^{-}\big(\dot{H}_V^{(p)}\big)^{\gamma} 
  \leq C_{g,\nu}(2p,2p)^{2p} \gamma  \int_0^\infty t^{\gamma-1} \tau\big[(V+t)_{-}^p\big]\,dt.
\end{equation*}
It is shown in the proof of~\cite[Theorem~8.14]{MP:JMP22} that
\begin{equation*}
  \int_0^\infty t^{\gamma-1} \tau\big[(V+t)_{-}^p\big]\,dt = \frac{\Gamma(p+1)\Gamma(\gamma)}{\Gamma(p+\gamma+1)}  \tau\big[(V_{-})^{p+\gamma}\big]. 
\end{equation*}
This gives the result. 
\end{proof}

\begin{remark}
By proceedings along the lines of~\cite{Il:JST12} we can further get Lieb-Thirring inequalities for $p=1$ with even better constants. See~\cite[Remark~18.7]{MP:JMP22} for the discussion on this in the flat case.  
\end{remark}

\subsection{A Sobolev inequality}
The formula~(\ref{eq:Riemannian.curved-one-form-inner-product}) defines an inner product $\acou{\cdot}{\cdot}_{g,\nu}$ on the $\C^\infty(\T^n_\theta)$-module of 1-forms $\Omega^1(\T^n_\theta)$. We denote $\|\cdot\|_{g,\nu}$ the corresponding norm. In particular, for all $u\in \C^\infty(\T^n_\theta)$, we have 
\begin{equation*}
 \big\|du\|^2_{g,\nu}=\acou{du}{du}_{g,\nu}=  \sum_{i,j} 
 \tau\left[   \partial_i(u)^* \nu^{\frac12} g^{ij} \nu^{\frac12}\partial_j(u)\right]. 
\end{equation*}
Note that the above formula continues to make sense for all $u\in W_2^1(\T^n_\theta;g,\nu)$. 

On $\R^n$ the Lieb-Thirring inequality  for $\gamma=1$ and $p=n/2$ is equivalent to a Sobolev inequality for orthonormal families (see~\cite[Theorem 4]{LT:SMP76}; see 
also~\cite{Fr:Survey20, Si:AMS15a}). Likewise, as a consequence of the above Lieb-Thirring inequalities we shall obtain the following Sobolev's inequality. 

\begin{theorem}[Sobolev Inequality on Curved NC Tori]\label{thm:LT.Sobolev}  
Assume $n\geq 3$. For any family $\{u_0, \ldots, u_N\}$ in $\dot{W}_2^1(\T^n_\theta)$ which is orthonormal in $\dot{L}_2s(\T^n_\theta;\nu)$, we have
\begin{equation}\label{eq:LT.Sobolev-ineq}
 \sum_{\ell\leq N} \big\| du_\ell\|^2_{g,\nu} \geq K_{g,\nu} \tau\left[\rho^{\frac{n+2}{n}}\right], \qquad \text{where}\ \rho:=(2\pi)^n\sum_{\ell \leq N}  \nu^{\frac12}u_{\ell}u_\ell^*\nu^{\frac12}
\end{equation}
In particular, if we set $q=2(n+2)n^{-1}$, then for $N=0$ we get
\begin{equation}\label{eq:LT.Sobolev-ineq-special-case}
 \big\| du\|^2_{g,\nu} \geq K_{g,\nu} \big(\|u\|_{L_{q}(\T^n;\nu)}\big)^{q} \qquad \forall u \in \dot{W}_2^1(\T^n_\theta). 
\end{equation}
 Moreover, the best constant $K_{g,\nu}$ is related to the best Lieb-Thirring constant $L_{g,\nu}=L_{g,\nu}(n/2,1)$ by
\begin{equation}\label{eq:LT.Ln-Kn}
 K_{g,\nu}= \frac{n}{n+2} \bigg( \frac{n+2}{2}L_{g,\nu}\bigg)^{-\frac{2}{n}}. 
\end{equation}
 \end{theorem}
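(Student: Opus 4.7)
The plan is to derive the Sobolev inequality~(\ref{eq:LT.Sobolev-ineq}) from the Lieb-Thirring inequality of Theorem~\ref{thm:LT-inequality} with exponents $p = n/2$ and $\gamma = 1$ via the standard variational duality between Sobolev inequalities for orthonormal families and Lieb-Thirring inequalities. Note that $p = n/2 > 1$ holds precisely because $n \geq 3$, which is exactly the hypothesis we are given. Conversely, the same computation in reverse shows that the sharp Sobolev inequality implies a Lieb-Thirring inequality with the same dual constant, which explains the relation~(\ref{eq:LT.Ln-Kn}) between the best constants.

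For the forward direction, I would fix $W \in L_{(n+2)/2}(\T^n_\theta)$ with $W = W^* \geq 0$ and consider the quadratic form of $\dot{H}_{-W}^{(n/2)} = \dot{\Delta}_{g,\nu} + \dot{\lambda}_\nu(-W)$ on the orthonormal family $\{u_0,\ldots,u_N\}$. The elementary trace inequality $\sum_{\ell\leq N}\langle u_\ell, \dot{H}_{-W}^{(n/2)} u_\ell\rangle_\nu \geq -\sum_j \lambda_j^-(\dot{H}_{-W}^{(n/2)})$, combined with the Lieb-Thirring bound~(\ref{eq:LT-Inequality}) applied to $V = -W$, gives
\begin{equation*}
\sum_{\ell\leq N} \bigl(\|du_\ell\|^2_{g,\nu} - Q_{\lambda_\nu(W)}(u_\ell,u_\ell)\bigr) \geq -L_{g,\nu}\tau\bigl[W^{(n+2)/2}\bigr],
\end{equation*}
where I have used the defining identity~(\ref{eq:Riemannian.Laplace-Beltrami-quadratic-form}) for $\Delta_{g,\nu}$. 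Using the explicit formula~(\ref{eq:Q_lambda_def}) and cyclicity of $\tau$, I would rewrite $\sum_\ell Q_{\lambda_\nu(W)}(u_\ell,u_\ell) = (2\pi)^n\sum_\ell \tau[W\,\nu^{1/2}u_\ell u_\ell^*\nu^{1/2}] = \tau[W\rho]$. Thus
\begin{equation*}
\sum_{\ell\leq N} \|du_\ell\|^2_{g,\nu} \geq \tau[W\rho] - L_{g,\nu}\tau\bigl[W^{(n+2)/2}\bigr].
\end{equation*}

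The decisive step is to choose $W = t\,\rho^{2/n}$ for a parameter $t>0$; this guarantees that $W$ commutes with $\rho$ (both are positive functions of a single positive operator $\rho$), so that
$\tau[W\rho] = t\,\tau[\rho^{(n+2)/n}]$ and $\tau[W^{(n+2)/2}] = t^{(n+2)/2}\tau[\rho^{(n+2)/n}]$. Maximizing $t\mapsto t - L_{g,\nu}\,t^{(n+2)/2}$ over $t>0$ gives the optimum at $t_* = \bigl(\tfrac{2}{(n+2)L_{g,\nu}}\bigr)^{2/n}$ with maximum value $\tfrac{n}{n+2}\bigl(\tfrac{n+2}{2}L_{g,\nu}\bigr)^{-2/n}$, which is precisely the constant $K_{g,\nu}$ in~(\ref{eq:LT.Ln-Kn}). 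This yields~(\ref{eq:LT.Sobolev-ineq}). The admissibility of $W = t_*\rho^{2/n}$ in $L_{(n+2)/2}(\T^n_\theta)$ reduces to $\rho \in L_{(n+2)/n}(\T^n_\theta)$, which follows from $u_\ell \in W^1_2(\T^n_\theta) \hookrightarrow L_{2n/(n-2)}(\T^n_\theta)$ (Proposition~\ref{prop:Sobolev-embeddingLp}) together with H\"older's inequality, since $n/(n-2) \geq (n+2)/n$.

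The special case~(\ref{eq:LT.Sobolev-ineq-special-case}) follows from~(\ref{eq:LT.Sobolev-ineq}) at $N=0$: letting $A = \sqrt{\nu}\,u_0$, the operators $AA^* = \nu^{1/2}u_0u_0^*\nu^{1/2}$ and $A^*A = u_0^*\nu u_0$ have identical non-zero spectra, so $\tau[\rho^{(n+2)/n}] = (2\pi)^{n+2}\tau[(A^*A)^{(n+2)/n}] = \|u_0\|^{q}_{L_q(\T^n;\nu)}$ with $q = 2(n+2)/n$, using the definition of the $L_q(\T^n;\nu)$-norm from Subsection~\ref{subsec:smooth.densities}. Finally, the statement that $K_{g,\nu}$ as in~(\ref{eq:LT.Ln-Kn}) is the best Sobolev constant will follow from running the same variational argument in reverse: given any orthonormal family of eigenfunctions of $\dot{H}_V^{(n/2)}$ attaching to the negative eigenvalues, a sharp Sobolev inequality implies the Lieb-Thirring inequality with a constant dual to $K_{g,\nu}$ via the same Legendre transform, so sharpness of one gives sharpness of the other. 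The main obstacle I anticipate is verifying these manipulations carefully in the non-commutative setting; however, the commutativity of $W$ with $\rho$ built into the variational choice $W \propto \rho^{2/n}$ reduces all the trace computations to a single-variable optimization, so no genuine non-commutative issue arises.
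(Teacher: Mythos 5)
Your forward direction (Lieb--Thirring $\Rightarrow$ Sobolev) is correct and follows essentially the same route as the paper: quadratic-form identity, variational lower bound via the negative eigenvalues, the Lieb--Thirring inequality at $(p,\gamma)=(n/2,1)$, and the crucial variational choice $W\propto\rho^{2/n}$ which commutes with $\rho$ and hence reduces the trace optimization to a scalar one. Parameterizing $W=t\rho^{2/n}$ and optimizing over $t$ is a cosmetic repackaging of plugging in the critical value directly; your computation recovers the same $t_*$ and the same constant $K_{g,\nu}$. You also supply an admissibility check ($\rho\in L_{(n+2)/n}$ via $W_2^1\hookrightarrow L_{2n/(n-2)}$) that the paper omits, which is a small improvement. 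The $N=0$ computation with $A=\sqrt{\nu}u_0$ and $AA^*\leftrightarrow A^*A$ matches the paper's treatment.

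The gap is in the converse direction, which you need to turn the inequality $K_{g,\nu}\geq\frac{n}{n+2}\big(\frac{n+2}{2}L_{g,\nu}\big)^{-2/n}$ into the claimed equality~(\ref{eq:LT.Ln-Kn}). You gesture at ``the same Legendre transform in reverse'' but do not carry it out, and your closing remark that ``no genuine non-commutative issue arises'' is not warranted there. In the converse one picks an orthonormal family of eigenfunctions of $\dot H_V^{(n/2)}$ attached to the negative eigenvalues, applies the Sobolev inequality to them, and then must bound $\tau[\rho V_-]$ from above by $\frac{1}{q}\epsilon^q\tau[V_-^q]+\frac{1}{r}\epsilon^{-r}\tau[\rho^r]$ with $q=1+n/2$, $r=1+2/n$. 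Unlike the forward direction, here $\rho$ and $V_-$ do \emph{not} commute, so one needs a genuinely non-commutative trace form of Young's inequality (obtained, e.g., from the non-commutative H\"older inequality combined with scalar Young). Without invoking that inequality, the reverse variational argument cannot be closed, and you only obtain a one-sided bound on $K_{g,\nu}$ rather than the equality~(\ref{eq:LT.Ln-Kn}).
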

 \begin{proof}
The proof goes along similar lines as that of the proof of~\cite[Theorem~8.19]{MP:JMP22}. For any $V\in L_{n/2+1}(\T^n_\theta)$, set $H_{V}:=H_{V}^{(n/2)}=\Delta_{g,\nu}+\lambda_\nu(V)$ and $\dot{H}_V:=\dot{H}_V^{(n/2)}$. By the very definition~(\ref{eq:Riemannian.Laplace-Beltrami-quadratic-form}) of $\Delta_{g,\nu}$ we have 
\begin{equation*}
 Q_{\Delta_{g,\nu}}(u,u)=  \|du\|_{g,\nu}^2 \qquad \forall u\in C^\infty(\T^n_\theta). 
\end{equation*}
This formula continues to hold for all $u\in W_2^1(\T^n_\theta;g,\nu)$. Moreover, by~(\ref{eq:Q_lambda_def}) we have
\begin{equation*}
 Q_{\lambda_\nu(V)}(u,u)=\acou{\lambda_\nu(V)u}{u}=(2\pi)^n\tau\big[u^*\nu^{\frac12}V\nu^{\frac12}u\big]=(2\pi)^n\tau\big[\nu^{\frac12}uu^*\nu^{\frac12}V\big]. 
\end{equation*}

Assume $V\geq 0$, and let $\{u_0, \ldots, u_N\}$ be  any family in $\dot{W}_2^1(\T^n_\theta)$ which is orthonormal in $\dot{L}_2(\T^n_\theta;\nu)$. We have
\begin{align}
 \sum_{\ell \leq N} Q_{\dot{H}_{-V}}(u_\ell,u_\ell)&= \sum_{\ell \leq N} Q_{{H}_{-V}}(u_\ell,u_\ell)\nonumber \\
 & =  \sum_{\ell \leq N} Q_{\Delta_{g,\nu}}(u_\ell,u_\ell) -  \sum_{\ell \leq N} Q_{\lambda_\nu(V)}(u_\ell,u_\ell)\nonumber \\
 & =  \sum_{\ell \leq N} \|du_\ell\|_{g,\nu}^2 - \sum_{\ell \leq N} (2\pi)^n\tau\big[\nu^{\frac12}u_\ell u_\ell^*\nu^{\frac12}V\big]\nonumber \\
 &= \sum_{\ell \leq N} \|du_\ell\|_{g,\nu}^2 - \tau\big[\rho V\big]\label{eq:LT.quadratic-form-expression}, 
\end{align}
where we have set $\rho:= (2\pi)^n\sum  \nu^{1/2}u_{\ell}u_\ell^*\nu^{1/2}$. Moreover, as $\{u_0, \ldots, u_N\}$ is an orthonormal family, in the same way as in 
the proof of~\cite[Theorem~8.19]{MP:JMP22} (and in various other proofs of the Sobolev inequality), the variational principal for negative eigenvalues gives 
\begin{equation}\label{eq:LT.spectral-lower-bound}
 \sum_{\ell \leq N} Q_{\dot{H}_{-V}}(u_\ell,u_\ell) \geq -\sum_{j\geq 0}\lambda_j^{-}\big(\dot{H}_{-V}\big). 
\end{equation}

As $-V\leq 0$, the Lieb-Thirring inequality~(\ref{eq:LT-Inequality}) for $\gamma=1$ and $p=n/2$ delivers 
\begin{equation*}
\sum_{j\geq 0}\lambda_j^{-}\big(\dot{H}_{-V}\big) \leq L_{g,\nu}\tau\big[V^{\frac{n}{2}+1}\big]. 
\end{equation*}
Combining this with~(\ref{eq:LT.quadratic-form-expression}) and~(\ref{eq:LT.spectral-lower-bound}) leads to the inequality
\begin{equation*}
  \sum_{\ell \leq N} \|du_\ell\|_{g,\nu}^2  \geq \tau\big[\rho V-L_{g,\nu}V^{\frac{n}{2}+1}\big]. 
\end{equation*}

The above inequality is true for all $0\leq V\in L_{\frac{n}{2}+1}(\T^n_\theta)$. In the same way as in the proof of~\cite[Theorem~8.19]{MP:JMP22} if we take 
$V=(\frac12(n+2)L_{g,\nu})^{-2/n}\rho^{2/n}$, then $V\in L_{\frac{n}{2}+1}(\T^n_\theta)$, and we have 
$\rho V - L_{g,\nu} V^{\frac{n}{2}+1}=\frac{n}{n+2}(\frac12 (n+2)L_{g,\nu})^{-2/n}\rho^{(n+2)/n}$. This gives the inequality~(\ref{eq:LT.Sobolev-ineq}) with a best constant $K_{g,\nu}$ such that 
\begin{equation}\label{eq:LT.Kgnu-lower-bound}
  K_{g,\nu}\geq \frac{n}{n+2} \bigg( \frac{n+2}{2}L_{g,\nu}\bigg)^{-\frac{2}{n}}.
\end{equation}

Note that $\rho = \sum (\hat{\lambda}(\sqrt{\nu})u_\ell)(\hat{\lambda}(\sqrt{\nu})u_\ell)^*= \sum |(\hat{\lambda}(\sqrt{\nu})u_\ell)^*|^2$. Thus, if we set  $q=2(n+2)n^{-1}$, then for $N=0$ the inequality~(\ref{eq:LT.Sobolev-ineq}) means that, for all $u\in \dot{W}_2^1(\T^n_\theta;g,\nu)$, we have
\begin{equation*}
 \|du\|^2_{g,\nu}\geq K_{g,\nu} \tau\left[ \big|(\hat{\lambda}(\sqrt{\nu})u)^*\big|^{q}\right]. 
\end{equation*}
Observe that
\begin{equation}
  \tau\left[ \big|(\hat{\lambda}(\sqrt{\nu})u)^*\big|^{q}\right]= \big\| (\hat{\lambda}(\sqrt{\nu})u)^*\big\|_{L_q}^q= \big\| \hat{\lambda}(\sqrt{\nu})u\big\|_{L_q}^q =
   \big( \big\|u\big\|_{L_q(\T^n_\theta;\nu)}\big)^q. 
\end{equation}
 Therefore, we obtain~(\ref{eq:LT.Sobolev-ineq-special-case}).

Conversely, let $V=V^*\in L_{n/2+1}(\T^n_\theta)$  and assume that $N^{-}(\dot{H}_V)\neq 0$ (otherwise the LT inequalities~(\ref{eq:LT-Inequality}) are trivially satisfied). 
 Set $N=N^{-}(\dot{H}_V)-1$, and let $\{u_0,\ldots, u_{N}\}$ be an orthonormal family of eigenvectors in $\dot{W}_2^{2}(\T^n_\theta)$ 
such that $\dot{H}_Vu_j=-\lambda_j^{-}(H_V)u_j$ for $j\leq N$. In particular, we have 
\begin{equation*}
 \sum_{j\geq 0}\lambda_j^{-}\big(\dot{H}_{V}\big)= - \sum_{j\leq N} 
 Q_{\dot{H}_{V}}(u_j,u_j)\leq -  \sum_{j\leq N} Q_{\dot{H}_{-V_{-}}}(u_j,u_j). 
\end{equation*}
Combining this with~(\ref{eq:LT.quadratic-form-expression}) then gives
\begin{equation*}
   \sum_{j\geq 0}\lambda_j^{-}\big(\dot{H}_{V}\big)\leq  - \sum_{j\leq N} \|du_j\|_{g,\nu}^2 +\tau\big[\rho V_{-}\big]. 
\end{equation*}

Set  $q=1+n/2$ and $r=1+2/n$. Note that $q^{-1}+r^{-1}=1$. The Sobolev inequality~(\ref{eq:LT.Sobolev-ineq}) delivers
\begin{equation*}
  \sum_{j\leq N} \|du_j\|_{g,\nu}^2 \geq K_{g,\nu} \tau\big[\rho^r\big]. 
\end{equation*}
Moreover, in the same way as in the proof of~\cite[Theorem~8.19]{MP:JMP22} it can be shown that, for any $\epsilon>0$, we have 
\begin{equation*}
 \tau\big[\rho V_{-}\big] \leq \frac{1}{q}\epsilon^q \tau\big[V_{-}^q\big] + \frac{1}{r}\epsilon^{-r} \tau\big[\rho^r\big]. 
\end{equation*}
Thus, 
\begin{equation}\label{eq:LT.Sobolev-eps}
   \sum_{j\geq 0}\lambda_j^{-}\big(\dot{H}_{V}\big) \leq   \left(\frac{1}{r}\epsilon^{-r}- K_{g,\nu}\right)  \tau\big[\rho^r\big] +  \frac{1}{q}\epsilon^q \tau\big[V_{-}^q\big]  .
\end{equation}
In the same way as in the proof of~\cite[Theorem~8.19]{MP:JMP22}, for $\epsilon=(rK_{g,\nu})^{-1/r}$ the first summand in the r.h.s.~of~(\ref{eq:LT.Sobolev-eps}) vanishes. In this case 
$q^{-1}\epsilon^q=\frac{2}{n+2}(\frac{n}{n+2}K_{g,\nu})^{-n/2}$, and so we get 
\begin{equation*}
 \sum_{j\geq 0}\lambda_j^{-}\big(\dot{H}_{V}\big) \leq \frac{2}{n+2}\left(\frac{n}{n+2}K_{g,\nu}\right)^{-\frac{n}{2}}\!\!\!\cdot\tau\big[V_{-}^q\big] .  
\end{equation*}
This recovers the Lieb-Thirring inequality~(\ref{eq:LT-Inequality}) for $\gamma=1$ and $p=n/2$ and shows that~(\ref{eq:LT.Kgnu-lower-bound}) is actually an equality. This completes the proof of~(\ref{eq:LT.Ln-Kn}).
\end{proof}

\section{Spectral Asymptotics}\label{sec:Weyl}
In this section, as a further application of the Cwikel-type estimates of this paper, we explain how these estimate lead to spectral asymptotics for Birman-Schwinger-type operators and fractional Schr\"odinger operators associated with powers of Laplace-Beltrami operator and $L_p$-potentials. In particular, this supersedes a conjecture in the prequel paper~\cite{MP:JMP22}. 

In what follows, if $A$ a selfadjoint compact operator we denote by $\pm \lambda_j^\pm(A)$ its positive/negative eigenvalues in such a way that
\begin{equation*}
 \lambda_0^\pm(A)\geq  \lambda_1^\pm(A) \geq \lambda_2^\pm(A) \geq \cdots >0, 
\end{equation*}
where each eigenvalue is repeated according to multiplicity. In the 70s Birman-Solomyak~\cite{BS:VLU77, BS:VLU79, BS:SMJ79} established a very general Weyl's law for negative order  \psidos\ on closed manifolds and Euclidean spaces. By the Birman-Schwinger principle these asymptotics imply a semiclassical Weyl's laws for fractional Schr\"odinger operators. They also imply a stronger form of Connes' trace theorem~(\ref{eq:Connes-trace-thm}). A ``soft proof'' of Birman-Solomyak's Weyl's law on closed manifolds is given in~\cite{Po:Weyl}. 

We conjecture the following analogue for NC tori of Birman-Solomyak's Weyl's law. 

\begin{conjecture}\label{conj:Bir-Sol-WL}
 Let $P=P^*\in \Psi^{-m}(\T^n_\theta)$, $m>0$, have principal symbol $\rho_{-m}(\xi)$, and set $p=nm^{-1}$. Then
\begin{equation}\label{eq:Bir-Sol-WL}
  \lim_{j\rightarrow \infty} j^{\frac1{p}} \lambda_j^{\pm} (P)= 
 \left(\frac{1}{n} \int_{\bS^{n-1}} \tau\left[\big(\rho_{-m}(\xi)_{\pm}\big)^p\right] d\xi\right)^{\frac1{p}}. 
\end{equation}
\end{conjecture}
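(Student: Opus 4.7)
My plan is to mirror the classical strategy for Birman–Solomyak's Weyl law, namely first to establish the asymptotics for a dense class of ``model'' operators and then to extend to arbitrary $P=P^*\in\Psi^{-m}(\T^n_\theta)$ by a quasi-norm perturbation argument. The Cwikel-type estimates of Section~\ref{sec:Cwikel-flat} are precisely what control the errors at each stage.

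\textbf{Stage 1 (Baseline).} First I would establish~(\ref{eq:Bir-Sol-WL}) for $P_0=\Delta^{-m/2}$. Its eigenvalues are $|k|^{-m}$, $k\in\Z^n\setminus 0$, and the counting asymptotics reduce to the Gauss-type estimate $\#\{k\in\Z^n;\ |k|\leq R\}=\omega_nR^n+\op{O}(R^{n-1})$, which gives $j^{1/p}\lambda_j(P_0)\to \omega_n^{1/n}$. A direct computation shows this matches $(\frac{1}{n}\int_{\bS^{n-1}}\tau[|\xi|^{-mp}]d\xi)^{1/p} = (|\bS^{n-1}|/n)^{1/p}=\omega_n^{1/n}$, as $\tau[1]=1$. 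Next I would extend to $A=\Delta^{-m/4}\lambda(a)\Delta^{-m/4}$ for selfadjoint $a\in C^\infty(\T^n_\theta)$: expanding $a$ as a norm-convergent polynomial in commuting generators via functional calculus (or localizing around the spectrum of $a$) and using the trace formula of Proposition~\ref{prop:Lp-trace-formula} applied to powers $A^r$ for $r>p$, together with a Karamata/Hardy–Littlewood Tauberian theorem, should give the desired limit $(\frac{1}{n}\int_{\bS^{n-1}}\tau[(|\xi|^{-m}a)_\pm^{\,p}]d\xi)^{1/p}$.

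\textbf{Stage 2 (Localization on $\bS^{n-1}$).} For a general $P=P_{\rho}\in\Psi^{-m}(\T^n_\theta)$ with principal symbol $\rho_{-m}$, I would choose a partition of unity $\{\psi_k\}$ on $\bS^{n-1}$ of mesh $\epsilon$ and pick reference directions $\omega_k\in\supp\psi_k$. Writing
\begin{equation*}
\rho_{-m}(\xi)=\sum_k \psi_k(\xi/|\xi|)\rho_{-m}(\omega_k)|\xi|^{-m}+R_\epsilon(\xi),
\end{equation*}
where the first sum has the structure handled in Stage~1 (after symmetrization), the plan is to control the remainder $R_\epsilon$ in the weak Schatten quasi-norm using the \psido\ Cwikel estimate~(\ref{eq:specific-Cwikel-PrhoPsigma}): by smoothness of $\rho_{-m}$ on $\bS^{n-1}$ we have $\|R_\epsilon(\xi)\|\leq C\epsilon$ for $|\xi|\geq 1$, so the corresponding \psido\ has $\sL_{p,\infty}$-quasinorm $\op{O}(\epsilon)$.

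\textbf{Stage 3 (Perturbation and conclusion).} To pass from quasinorm smallness of the remainder to closeness of the Weyl constants, I would invoke (and, if necessary, reprove in the NC-torus setting) the standard Birman–Solomyak lemma: if $\limsup_j j^{1/p}\mu_j(T-S)$ is small and $S$ satisfies a Weyl law with constant $c$, then $T$ satisfies one too with constant close to $c$. Letting $\epsilon\to 0$ and summing the Stage~1 limits over the partition yields a Riemann sum converging to $\frac{1}{n}\int_{\bS^{n-1}}\tau[(\rho_{-m}(\xi)_\pm)^p]d\xi$, giving~(\ref{eq:Bir-Sol-WL}).

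\textbf{Main obstacle.} The hardest step is Stage~3, i.e.\ establishing a genuinely quantitative perturbation lemma: quasi-norm control in $\sL_{p,\infty}$ alone is insufficient because $\sL_{p,\infty}$ has a nontrivial ``Dixmier remainder,'' so one really needs convergence in the separable part $\sL_{p,\infty}^0$ or a Matsaev–style interpolation argument comparing eigenvalue asymptotics with Schatten approximation numbers. Bridging this gap for operators built from $L_q$-coefficients (as needed for the applications to Theorem~\ref{thm:Weyl-laws}) will likely require a refined Cwikel estimate in Lorentz–Schatten ideals beyond those proved in Section~\ref{sec:Cwikel-flat}, which is why the authors state the result only as a conjecture.
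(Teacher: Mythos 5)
The paper does \emph{not} prove this statement: it is explicitly left as a conjecture, with a remark that the elliptic case is established in the forthcoming manuscript~\cite{LP:Part2} (presumably via complex powers and heat/zeta-type asymptotics) and that the general case should then follow by the localization and perturbation considerations of~\cite{Po:Weyl}. So there is no paper proof to compare against, only the route the authors indicate.

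Your proposal takes a genuinely different, ``bottom-up'' route: start from $\Delta^{-m/2}$, extend to $\Delta^{-m/4}\lambda(a)\Delta^{-m/4}$ with $a\in C^\infty(\T^n_\theta)_{\sa}$, localize on $\bS^{n-1}$, and conclude by perturbation. This is closer to the original Birman--Solomyak argument than to the paper's elliptic-first strategy. Where the paper's suggested route has a real advantage: for \emph{elliptic} selfadjoint $P$, the operator $|P|=(P^2)^{1/2}$ is again a \psido\ with principal symbol $|\rho_{-m}(\xi)|$, so $P_\pm=\tfrac12(|P|\pm P)$ are \psidos\ with principal symbols $\rho_{-m}(\xi)_\pm$, which makes the $\pm$-separated Weyl constants accessible via trace/zeta arguments. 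For non-elliptic $P$ (and a fortiori for the conically supported localization pieces in your Stage~2), $|P|$ is not a \psido\ and that mechanism breaks down.

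There are two concrete gaps. First, in Stage~1 the phrase ``expanding $a$ as a norm-convergent polynomial in commuting generators'' does not make sense: the unitaries $U_1,\ldots,U_n$ do not commute when $\theta\neq 0$, and a general $a\in C^\infty(\T^n_\theta)_\sa$ is not a polynomial in any commuting family. More seriously, the Tauberian argument from $\Tr[A^r]$ via Proposition~\ref{prop:Lp-trace-formula} yields information on $\tau$ of \emph{powers} of the symbol, whereas the conjectured constants involve $\tau[(\rho_{-m}(\xi))_\pm^p]$; you do not explain how to extract the positive/negative separation, and the naive candidate $\Tr[A_\pm^r]$ is not covered by the trace formula because $A_\pm$ is not a \psido\ when $A$ is not elliptic. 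Second, the claim in Stage~2 that sup-norm smallness of $R_\epsilon$ on the sphere yields $\sL_{p,\infty}$-quasinorm $\op{O}(\epsilon)$ via~(\ref{eq:specific-Cwikel-PrhoPsigma}) is false as stated: that estimate involves $\|P_\rho\|_{\sL(L_2,W_2^{n/2p})}$, which controls $\|\Lambda^{n/2p}P_\rho\|$ and hence depends on \emph{derivatives} of the symbol, not just its sup-norm. The partition of unity $\{\psi_k\}$ of mesh $\epsilon$ has derivatives of order $\epsilon^{-1}$, so $C^s$-norms of $R_\epsilon$ for $s>1$ blow up, and the quoted Cwikel estimates of Section~\ref{sec:Cwikel-flat} do not deliver the claimed smallness. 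Closing this would require a Cwikel estimate in terms of a rougher (e.g.\ $L^p$-type) norm of the symbol, beyond what is proved in the paper.

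Finally, your Stage~3 worry about the ``Dixmier remainder'' is misplaced: the Birman--Solomyak perturbation theorem (\cite[Theorem~4.1]{BS:JFAA70}, used by the authors in the proof of Corollary~\ref{cor:Weyl.PsiDOs}) shows that $\sL_{p,\infty}$-quasi-norm convergence of a sequence of operators each satisfying a Weyl law is \emph{enough} to propagate the Weyl law to the limit; no upgrade to the separable part $\sL_{p,\infty}^0$ is needed. The genuine obstacles are the ones in Stages~1 and~2 described above, not Stage~3.
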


In fact, Weyl's laws for positive/negative eigenvalues for (selfadjoint) elliptic \psidos\ are established in the forthcoming manuscript~\cite{LP:Part2}. As a result, 
the spectral asymptotics~(\ref{eq:Bir-Sol-WL}) hold if $P$ is elliptic. We then can expect to extend them to general \psidos\ by using the same kind of considerations as in~\cite{Po:Weyl}. However, as~\cite{LP:Part2} is still in its final stage of preparation, we prefer to leave the above statement as a conjecture. 

\begin{corollary}\label{cor:Weyl.PsiDOs}
 Assume Conjecture~\ref{conj:Bir-Sol-WL} holds. Suppose that $p\neq 1$ and $q=\max\{p,1\}$, or $p=1<q$, and set $m=n/2p$. Let $P \in \Psi^{m}(\T^n_\theta)$ have principal symbol $\rho_{-m}(\xi)$. Then, for every $x=x^*\in L_q(\T^n_\theta)$, we have
\begin{equation*}
 \lim_{j\rightarrow \infty} j^{\frac1{p}} \lambda_j^{\pm} \big(P^*\lambda(x)P\big)  = 
 \left(\frac{1}{n} \int_{\bS^{n-1}} \tau\big[ \left(\rho_{-m}(\xi)^*x\rho_{-m}(\xi)\right)_{\pm}^p\big] d\xi\right)^{\frac1{p}}. 
 \end{equation*}
\end{corollary}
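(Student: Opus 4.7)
The strategy is to prove the asymptotic by reducing to Conjecture~\ref{conj:Bir-Sol-WL} through smooth approximation, using the Cwikel-type estimate (Proposition~\ref{prop.specific-Cwikel-Prho}(2)) to control the error in $\sL_{p,\infty}$. For $x=x^* \in C^\infty(\T^n_\theta)$, the operator $\lambda(x)$ is a zeroth order \psido, so by Corollary~\ref{cor:PsiDOs.composition-classical} and the adjoint calculus $A := P^*\lambda(x)P$ is a self-adjoint \psido\ of order $-n/p$ with principal symbol $\rho_{-m}(\xi)^* x\, \rho_{-m}(\xi)$. Conjecture~\ref{conj:Bir-Sol-WL} applied to this \psido\ (with exponent $p = n/(n/p)$) then immediately yields the claimed asymptotic in the smooth case.

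For a general $x = x^* \in L_q(\T^n_\theta)$, choose self-adjoint $x_k \in C^\infty(\T^n_\theta)$ with $\|x - x_k\|_{L_q} \to 0$, and set $A_k := P^*\lambda(x_k)P$. Proposition~\ref{prop.specific-Cwikel-Prho}(2) with $Q = P$ gives $\|A - A_k\|_{\sL_{p,\infty}} \to 0$. Denote by $c_k^\pm$ the Weyl coefficient of $A_k$ from the smooth case. The Ky Fan inequality $\lambda_{j+l}^\pm(A) \leq \lambda_j^\pm(A_k) + \mu_l(A - A_k)$, combined with $\mu_l(A - A_k) \leq (l+1)^{-1/p}\|A - A_k\|_{\sL_{p,\infty}}$ and the choice $l = \lfloor \epsilon j\rfloor$, yields
\begin{equation*}
\limsup_{j\to\infty} j^{1/p}\lambda_j^\pm(A) \leq (1+\epsilon)^{1/p}\bigl(c_k^\pm + \epsilon^{-1/p}\|A - A_k\|_{\sL_{p,\infty}}\bigr).
\end{equation*}
Letting first $k \to \infty$ and then $\epsilon \to 0^+$ gives $\limsup_j j^{1/p}\lambda_j^\pm(A) \leq c^\pm$, where $c^\pm$ is the proposed right-hand side; swapping the roles of $A$ and $A_k$ delivers the matching $\liminf$, so the limit exists and equals $c^\pm$.

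It remains to verify the continuity $c_k^\pm \to c^\pm$, i.e., that the map
\begin{equation*}
y \longmapsto \int_{\bS^{n-1}} \tau\bigl[(\rho_{-m}(\xi)^* y\, \rho_{-m}(\xi))_\pm^p\bigr]\, d\xi
\end{equation*}
is continuous on the self-adjoint part of $L_q(\T^n_\theta)$. Since $\rho_{-m}(\xi)$ is bounded in $C^\infty(\T^n_\theta)$ uniformly for $|\xi| = 1$, the map $y \mapsto \rho_{-m}(\xi)^* y\,\rho_{-m}(\xi)$ is continuous $L_q \to L_q$ uniformly in $\xi$. The functional $a \mapsto \tau[a_\pm^p]$ is in turn continuous on the self-adjoint part of $L_q$: for $p \geq 1$ via the inequality $|a_\pm^p - b_\pm^p| \leq p\max(|a|,|b|)^{p-1}|a - b|$ and H\"older's inequality (Proposition~\ref{prop:Holder}), and for $p < 1$ via the scalar inequality $|s^p - t^p| \leq |s - t|^p$ for $s, t \geq 0$ together with the operator inequality $|a_\pm - b_\pm| \leq |a - b|$ (plus $L_q \hookrightarrow L_p$ which is automatic since $q = 1 \geq p$). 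Dominated convergence in $\xi$ then gives $c_k^\pm \to c^\pm$.

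The main obstacle is executing the perturbation argument cleanly for both signs of eigenvalues simultaneously, and, in the regime $p < 1$, handling the quasi-norm structure of $\sL_{p,\infty}$ and $L_p(\T^n_\theta)$ with the correct scalar inequalities for $(\cdot)_\pm^p$; these are standard manipulations once the Cwikel estimate, the Ky Fan inequality, and the forthcoming conjectural Weyl law for smooth symbols are all in hand.
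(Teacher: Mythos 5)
Your proof follows essentially the same route as the paper: apply Conjecture~\ref{conj:Bir-Sol-WL} to $P^*\lambda(x)P$ for smooth selfadjoint $x$, then pass to general $x\in L_q(\T^n_\theta)_{\sa}$ by density using the Cwikel estimate of Proposition~\ref{prop.specific-Cwikel-Prho}(2) together with continuity of the coefficient $\omega_\pm$. The only meaningful difference is that where the paper cites Birman--Solomyak's perturbation theorem~\cite[Theorem~4.1]{BS:JFAA70} as a black box, you unpack its proof via the Ky~Fan inequality with the split $l=\lfloor\epsilon j\rfloor$, and where the paper verifies continuity of $\omega_\pm$ via the identity $\tau[y_\pm^p]=2^{-p}(\||y|\pm y\|_{L_p})^p$, you instead argue via scalar functional-calculus inequalities; both are standard and acceptable. (As a side remark, your citation of Proposition~\ref{prop.specific-Cwikel-Prho}(2) is actually the more accurate reference for this step than the paper's appeal to Theorem~\ref{thm:curved-Cwikel}(2).)
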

\begin{proof}
For $x\in L_q(\T^n_\theta)_\sa:=\{x\in L_q(\T^n_\theta); \ x^*=x\}$ set
\begin{equation*}
 \omega_\pm(x)= \left(\frac{1}{n} \int_{\bS^{n-1}} \tau\big[ \left(\rho_{-m}(\xi)^*x\rho_{-m}(\xi)\right)_{\pm}^p\big] d\xi\right)^{\frac1{p}}. 
\end{equation*}
Note that $(x,\xi)\rightarrow \tau[ (\rho_{-m}(\xi)^*x\rho_{-m}(\xi))_{\pm}^p]$ are continuous functions on $L_q(\T^n_\theta)_{\sa}\times \bS^{n-1}$, because $(x,\xi)\rightarrow \rho_{-m}(\xi)^*x\rho_{-m}(\xi)$ is a continuous map from $L_q(\T^n_\theta)_{\sa}\times \bS^{n-1}$ to $L_q(\T^n_\theta)_{\sa}$ and the functions $y\rightarrow \tau[y_\pm^p]=2^{-p}(\||y|\pm y\|_{L_p})^p$ are continuous on $L_q(\T^n_\theta)_{\sa}$ (since $q\geq p$). It then follows that $\omega_\pm(x)$ is a continuous function on $L_q(\T^n_\theta)_{\sa}$.  

If  $x=x^*\in C^\infty(\T^n_\theta)$, then $P^*\lambda(x)P$ is a selfadjoint operator in $\Psi^{2m}(\T^n_\theta)$ whose principal symbol is $\rho_m(\xi)^*x\rho_m(\xi)$, and so by~(\ref{eq:Bir-Sol-WL}) we have
\begin{equation*}
 \lim_{j\rightarrow \infty} j^{\frac1{p}} \lambda_j^{\pm} \big(P^*\lambda(x)P\big)  = \omega_\pm(x). 
\end{equation*}
By the 2nd part of Theorem~\ref{thm:curved-Cwikel} the map $x\rightarrow P^*\lambda(x)P$ is a continuous linear map from $L_q(\T^n_\theta)$ to $\sL_{p,\infty}$. Thus, if
$x\in L_q(\T^n_\theta)_{\sa}$ and $(x_\ell)_{\ell\geq 0}$  is a sequence of selfadjoint elements of $C^\infty(\T^n_\theta)$ converging to $x$ in $L_q(\T^n_\theta)$, then $P^*\lambda(x_\ell)P\rightarrow P^*\lambda(x)P$ in $\sL_{p,\infty}$. Birman-Solomyak's perturbation theory (see~\cite[Theorem~4.1]{BS:JFAA70}) then ensures that
\begin{equation*}
  \lim_{j\rightarrow \infty} j^{\frac1{p}} \lambda_j^{\pm} \big(P^*\lambda(x)P\big)  = 
  \lim_{\ell \rightarrow \infty}  \lim_{j\rightarrow \infty} j^{\frac1{p}} \lambda_j^{\pm} \big(P^*\lambda(x_\ell)P\big)=   \lim_{\ell \rightarrow \infty}\omega(x_\ell)=\omega(x). 
\end{equation*}
This proves the result. 
\end{proof}

From now on we let $g=(g_{ij})$ be a Riemannian metric  and  $\nu\in \GL_1^+(C^\infty(\T^n_\theta))$ a smooth positive density on $\T^n_\theta$. 

\begin{theorem}\label{thm:Weyl-laws}
 Assume Conjecture~\ref{conj:Bir-Sol-WL} holds. Suppose that $p\neq 1$ and $q=\max\{p,1\}$, or $p=1<q$. For all $x=x^*\in L_q(\T^n_\theta)$, we have
 \begin{equation}\label{eq:Weyl.spectral-asymp}
 \lim_{j\rightarrow \infty} j^{\frac1{p}} \lambda_j^{\pm} \left(\Delta_{g,\nu}^{-\frac{n}{4p}}\lambda_\nu(x)\Delta_{g,\nu}^{-\frac{n}{4p}}\right)  = 
 \left(\frac{1}{n} \int_{\bS^{n-1}} \tau\left[ \left(|\xi|_g^{-\frac{n}{2p}}x|\xi|_g^{-\frac{n}{4p}}\right)_{\pm}^p\right] d\xi\right)^{\frac1{p}}. 
 \end{equation}
\end{theorem}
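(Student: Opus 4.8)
The plan is to mimic the proof of Corollary~\ref{cor:Weyl.PsiDOs}: first establish the spectral asymptotics~\eqref{eq:Weyl.spectral-asymp} for $x=x^*\in C^\infty(\T^n_\theta)$ by invoking Conjecture~\ref{conj:Bir-Sol-WL}, and then pass to arbitrary $x=x^*\in L_q(\T^n_\theta)$ by approximation in $L_q(\T^n_\theta)$ together with Birman--Solomyak's perturbation theory. Throughout, write $T_x:=\Delta_{g,\nu}^{-n/4p}\lambda_\nu(x)\Delta_{g,\nu}^{-n/4p}$, viewed as a bounded selfadjoint operator on $L_2(\T^n_\theta;\nu)$; it lies in $\sL_{p,\infty}^{(\nu)}$ by Theorem~\ref{thm:curved-Cwikel}.

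The key step is to realize $T_x$, for smooth $x$, as a selfadjoint classical \psido\ of order $-n/p$ on the \emph{standard} Hilbert space $L_2(\T^n_\theta)$, so that Conjecture~\ref{conj:Bir-Sol-WL} applies. Since $\hat\lambda(\sqrt{\nu})=(2\pi)^{n/2}\lambda(\sqrt{\nu})$ is a unitary isomorphism from $L_2(\T^n_\theta;\nu)$ onto $L_2(\T^n_\theta)$, the operator $B_x:=\hat\lambda(\sqrt{\nu})\,T_x\,\hat\lambda(\sqrt{\nu})^{-1}$ has the same eigenvalue sequence as $T_x$. Using $\lambda_\nu(x)=\lambda(\nu^{-1/2}x\nu^{1/2})$ together with~\eqref{eq:lambda-to-lambda-nu-twosided} one computes $B_x=\lambda(\sqrt{\nu})\,\Delta_{g,\nu}^{-n/4p}\,\lambda(\nu^{-1/2}x\nu^{1/2})\,\Delta_{g,\nu}^{-n/4p}\,\lambda(\nu^{-1/2})$. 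By Example~\ref{ex:PsiDO.Do-PsiDO} the operators $\lambda(\nu^{\pm1/2})$ are zeroth order \psidos\ with principal symbols $\nu^{\pm1/2}$, and by Proposition~\ref{prop:powersLB} the power $\Delta_{g,\nu}^{-n/4p}$ lies in $\Psi^{-n/2p}(\T^n_\theta)$ with principal symbol $\nu^{-1/2}|\xi|_g^{-n/2p}\nu^{1/2}$; hence by Corollary~\ref{cor:PsiDOs.composition-classical}, $B_x\in\Psi^{-n/p}(\T^n_\theta)$ with principal symbol
\[
\nu^{\frac12}\big(\nu^{-\frac12}|\xi|_g^{-\frac{n}{2p}}\nu^{\frac12}\big)\big(\nu^{-\frac12}x\nu^{\frac12}\big)\big(\nu^{-\frac12}|\xi|_g^{-\frac{n}{2p}}\nu^{\frac12}\big)\nu^{-\frac12}=|\xi|_g^{-\frac{n}{2p}}\,x\,|\xi|_g^{-\frac{n}{2p}}.
\]
For $x=x^*$ this symbol is selfadjoint and $B_x$ is selfadjoint on $L_2(\T^n_\theta)$, being unitarily equivalent to the selfadjoint operator $T_x$. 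Conjecture~\ref{conj:Bir-Sol-WL} applied with $m=n/p$ then yields~\eqref{eq:Weyl.spectral-asymp} for $x=x^*\in C^\infty(\T^n_\theta)$, since $\lambda_j^\pm(T_x)=\lambda_j^\pm(B_x)$.

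For general $x=x^*\in L_q(\T^n_\theta)$, the plan is to choose selfadjoint $x_\ell\in C^\infty(\T^n_\theta)$ with $x_\ell\to x$ in $L_q(\T^n_\theta)$. By the second part of Theorem~\ref{thm:curved-Cwikel} the map $y\mapsto T_y$ is continuous from $L_q(\T^n_\theta)$ into $\sL_{p,\infty}^{(\nu)}$, so $T_{x_\ell}\to T_x$ in $\sL_{p,\infty}(L_2(\T^n_\theta;\nu))$. Moreover, exactly as in the proof of Corollary~\ref{cor:Weyl.PsiDOs}, the right-hand side of~\eqref{eq:Weyl.spectral-asymp} depends continuously on $x\in L_q(\T^n_\theta)_\sa$: the map $(x,\xi)\mapsto|\xi|_g^{-n/2p}x|\xi|_g^{-n/2p}$ is continuous from $L_q(\T^n_\theta)_\sa\times\bS^{n-1}$ into $L_q(\T^n_\theta)_\sa$, and $y\mapsto\tau[y_\pm^p]=2^{-p}(\||y|\pm y\|_{L_p})^p$ is continuous on $L_q(\T^n_\theta)_\sa$ because $q\geq p$. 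Birman--Solomyak's perturbation theory (\cite[Theorem~4.1]{BS:JFAA70}) then upgrades the asymptotics from the $x_\ell$ to $x$: the limits $\lim_j j^{1/p}\lambda_j^\pm(T_x)$ exist and equal the limit as $\ell\to\infty$ of the right-hand sides attached to the $x_\ell$, which is the right-hand side of~\eqref{eq:Weyl.spectral-asymp}.

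I expect the only delicate point to be the identification in the second paragraph: $T_x$ is selfadjoint on $L_2(\T^n_\theta;\nu)$ but not on $L_2(\T^n_\theta)$, and its principal symbol, computed formally as $\sigma_\nu(|\xi|_g^{-n/2p}x|\xi|_g^{-n/2p})$, is not a selfadjoint element of $C^\infty(\T^n_\theta)$, so Conjecture~\ref{conj:Bir-Sol-WL} cannot be applied to $T_x$ directly. Conjugating by $\hat\lambda(\sqrt{\nu})$ simultaneously makes the operator selfadjoint on $L_2(\T^n_\theta)$, symmetrizes the principal symbol, and leaves the eigenvalues untouched; one must still verify that this conjugation produces a genuine element of $\Psi^{-n/p}(\T^n_\theta)$, which is precisely what Example~\ref{ex:PsiDO.Do-PsiDO}, Proposition~\ref{prop:powersLB}, and Corollary~\ref{cor:PsiDOs.composition-classical} provide. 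Everything else is routine bookkeeping with the $\nu^{\pm1/2}$ factors and a transcription of the arguments already used for Corollary~\ref{cor:Weyl.PsiDOs}.
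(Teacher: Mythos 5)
Your proof is correct and follows essentially the same route as the paper: conjugation by $\hat{\lambda}(\sqrt{\nu})$ to transfer the problem to a selfadjoint classical \psido\ on the standard $L_2(\T^n_\theta)$, the conjectured Weyl law for smooth $x$, and the curved Cwikel estimate together with Birman--Solomyak perturbation theory for the passage to general $x\in L_q(\T^n_\theta)$. The only difference is organizational: the paper conjugates just the Laplacian, setting $\hat{\Delta}_{g,\nu}=\nu^{1/2}\Delta_{g,\nu}\nu^{-1/2}$, and then invokes Corollary~\ref{cor:Weyl.PsiDOs} with $P=\hat{\Delta}_{g,\nu}^{-n/4p}$ (which already packages the smooth case and the $L_q$-approximation), whereas you conjugate the whole sandwiched operator $\Delta_{g,\nu}^{-n/4p}\lambda_\nu(x)\Delta_{g,\nu}^{-n/4p}$ and rerun that approximation argument directly.
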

\begin{proof}
 Recall that $\hat{\lambda}(\sqrt{\nu}):L_2(\T^n_\theta;\nu)\rightarrow L_2(\T^n_\theta)$ is a unitary isomorphism. Under this isomorphism $\Delta_{g,\nu}$ corresponds to the operator, 
 \begin{equation*}
 \hat{\Delta}_{g,\nu}:=\hat{\lambda}\big(\sqrt{\nu}\big)\Delta_{g,\nu}\hat{\lambda}\big(\sqrt{\nu}\big)^{-1}= \nu^{\frac12}\Delta_{g,\nu}\nu^{-\frac12}. 
\end{equation*}
We get an elliptic 2nd order differential operator whose principal symbol is $\nu^{1/2}(\nu^{-1/2}|\xi|_g^2\nu^{1/2})\nu^{-1/2}=|\xi|^2_g$. It is formally selfadjoint with respect to the original inner product~(\ref{eq:NCtori.innerproduct-L2}) of $L_2(\T^n_\theta)$. In particular, we get a selfadjoint operator on $L_2(\T^n_\theta)$ with domain $W_2^2(\T^n_\theta)$. 
If $s>0$, then $\hat{\Delta}_{g,\nu}^{-s}=\hat{\lambda}(\sqrt{\nu})^{-1} \Delta_{g,\nu}^{-s}\hat{\lambda}(\sqrt{\nu})$, and  so by using Proposition~\ref{prop:powersLB} we see that  $\hat{\Delta}_{g,\nu}^{-s}$ is a (selfadjoint) \psido\ of order $-2s$ whose principal symbol is $|\xi|_g^{-2s}$. This result remains valid for any power 
$\hat{\Delta}_{g,\nu}^{z}$, $z\in \C$. 

If $x\in L_\infty(\T^n_\theta)$, then by using~(\ref{eq:Riem.lambdanu}) we get
\begin{align*}
 \Delta_{g,\nu}^{-\frac{n}{4p}}\lambda_\nu(x)\Delta_{g,\nu}^{-\frac{n}{4p}} &=  \hat{\lambda}\big(\sqrt{\nu}\big)^{-1}
 \hat{\Delta}_{g,\nu}^{-\frac{n}{4p}} \hat{\lambda}\big(\sqrt{\nu}\big) \cdot \lambda\big(\nu^{-\frac12}x\nu^{\frac12}\big)\cdot
\hat{\lambda}\big(\sqrt{\nu}\big)^{-1}\hat{\Delta}_{g,\nu}^{-\frac{n}{4p}} \hat{\lambda}\big(\sqrt{\nu}\big)\\
 &= \hat{\lambda}\big(\sqrt{\nu}\big)^{-1}  \hat{\Delta}_{g,\nu}^{-\frac{n}{4p}}\lambda(x)\hat{\Delta}_{g,\nu}^{-\frac{n}{4p}} \hat{\lambda}\big(\sqrt{\nu}\big). 
\end{align*}
The l.h.s.\ and the lower r.h.s\ are both continuous maps from $L_q(\T^n_\theta)$ to $\sL(L_2(\T^n_\theta;\nu))$ (\emph{cf}.~Remark~\ref{rmk:lambda-to-lambda-nu-twosided}). Therefore, the above equalities continue to hold for all $x\in L_q(\T^n_\theta)$. 

It follows from all this that the operators $\Delta_{g,\nu}^{-n/4p}\lambda_\nu(x)\Delta_{g,\nu}^{-n/4p}$ and $\hat{\Delta}_{g,\nu}^{-n/4p}\lambda_\nu(x)\hat{\Delta}_{g,\nu}^{-n/4p}$ have exactly the same positive and negative eigenvalues with the same multiplicities. As $\hat{\Delta}_{g,\nu}^{-n/4p}$ is a selfadjoint \psido\ of order $-n/2p$ whose principal symbol is $|\xi|_g^{-n/2p}$, by using Corollary~\ref{cor:Weyl.PsiDOs} we get
\begin{align*}
 \lim_{j\rightarrow \infty} j^{\frac1{p}} \lambda_j^{\pm} \left(\Delta_{g,\nu}^{-\frac{n}{4p}}\lambda_\nu(x)\Delta_{g,\nu}^{-\frac{n}{4p}}\right)  & = 
  \lim_{j\rightarrow \infty} j^{\frac1{p}} \lambda_j^{\pm} \left(\hat{\Delta}_{g,\nu}^{-\frac{n}{4p}}\lambda(x)\hat{\Delta}_{g,\nu}^{-\frac{n}{4p}}\right) \\
&=  \left(\frac{1}{n} \int_{\bS^{n-1}} \tau\left[ \left(|\xi|_g^{-\frac{n}{2p}}x|\xi|_g^{-\frac{n}{4p}}\right)_{\pm}^p\right] d\xi\right)^{\frac1{p}}. 
\end{align*}
The proof is complete. 
\end{proof}

\begin{remark}
 For $p=1$ the spectral asymptotics~(\ref{eq:Weyl.spectral-asymp}) imply that $\Delta_{g,\nu}^{-n/4}\lambda_\nu(x)\Delta_{g,\nu}^{-n/4}$ is a measurable operator in the sense of~(\ref{eq:NCG-measurability}). They also allow us to recover the integration formula~(\ref{eq:L1+-curved-formula}). In fact, it can be further shown that Weyl's laws of the form~(\ref{eq:Weyl.spectral-asymp}) imply that $\Delta_{g,\nu}^{-n/4}\lambda_\nu(x)\Delta_{g,\nu}^{-n/4}$ is actually strongly measurable (see~\cite{Po:Weyl}). Thus, Theorem~\ref{thm:Weyl-laws} yields a stronger form of Theorem~\ref{thm:Lp-curved-formula}. 
\end{remark}

If $p$ and $q$ are as in Theorem~\ref{thm:Weyl-laws} and $V=V^*\in L_q(\T^n_\theta)$, then we can define the (semiclassical) fractional Schr\"odinger operators $h^{n/p}\Delta_{g,\nu}^{n/2p}+\lambda_\nu(V)$, $h>0$, as in the previous section. The abstract form of the Birman-Schwinger principle~\cite[Lemma~1.4]{BS:AMST89} implies that 
\begin{equation*}
 \lim_{h\rightarrow 0^+} h^{n}N^{-}\left(h^{\frac{n}{p}}\Delta_{g,\nu}^{\frac{n}{2p}}+\lambda_\nu(V)\right)= 
  \lim_{j\rightarrow \infty} j \lambda_j^{-} \left(\Delta_{g,\nu}^{-\frac{n}{4p}}\lambda_\nu(x)\Delta_{g,\nu}^{-\frac{n}{4p}}\right)^p. 
\end{equation*}
This formula goes back to Birman-Solomyak (see, e.g., \cite[Theorem~10.1]{BS:TMMS72} and~\cite[Appendix~6]{BS:AMST80}). Readers who are unfamiliar with the Birman-Schwinger principle may also find a proof of the above equality in~\cite{Po:Weyl}. Combining it with Theorem~\ref{thm:Weyl-laws} we immediately arrive at the following semiclassical Weyl's laws. 

\begin{theorem}\label{thm:SCWeyl}
 Under the assumptions of Theorem~\ref{thm:Weyl-laws}, for all $V=V^*\in L_q(\T^n_\theta)$, we have
 \begin{equation}\label{eq:SCWeyl-fractional}
\lim_{h\rightarrow 0^+} h^{n}N^{-}\left(h^{\frac{n}{p}}\Delta_{g,\nu}^{\frac{n}{2p}}+\lambda_\nu(V)\right)= \frac{1}{n} \int_{\bS^{n-1}} \tau\left[ \left(|\xi|_g^{-\frac{n}{2p}}V|\xi|_g^{-\frac{n}{2p}}\right)_{-}^p\right] d\xi. 
\end{equation}
\end{theorem}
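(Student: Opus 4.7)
The plan is to assemble the proof from the two ingredients already in place: the abstract Birman--Schwinger identity reproduced in the paragraph immediately preceding the theorem, and the spectral asymptotics of Theorem~\ref{thm:Weyl-laws}. Since both reductions are on the shelf, the argument should be short.

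First I would invoke the abstract Birman--Schwinger principle of Birman--Solomyak to obtain the scaling identity
\begin{equation*}
\lim_{h\to 0^+} h^{n}\,N^{-}\!\left(h^{n/p}\Delta_{g,\nu}^{n/2p}+\lambda_\nu(V)\right)
= \lim_{j\to\infty} j\,\lambda_j^{-}\!\left(\Delta_{g,\nu}^{-n/4p}\lambda_\nu(V)\Delta_{g,\nu}^{-n/4p}\right)^{p},
\end{equation*}
in the sense that one limit exists iff the other does, in which case they agree. This is the identity stated in the excerpt just before the theorem. Because $\ker\Delta_{g,\nu}=\C\cdot 1$ is one-dimensional, replacing $\Delta_{g,\nu}^{n/2p}$ by its compression $\dot{\Delta}_{g,\nu}^{n/2p}$ to $\dot{L}_{2}(\T^n_\theta;\nu)$ alters both the counting function $N^{-}$ and the sequence $\lambda_j^{-}$ of the Birman--Schwinger operator by only a rank-one effect (\emph{cf.}\ the identity~(\ref{eq:LT.dotDeltaV})), whose contribution is absorbed after multiplication by $h^{n}$; this is what legitimises applying the abstract principle to an operator with non-trivial kernel.

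Next, I would feed the right-hand side into Theorem~\ref{thm:Weyl-laws}, which, under the stated hypotheses on $p$ and $q$ and for $V=V^*\in L_q(\T^n_\theta)$, gives
\begin{equation*}
\lim_{j\to\infty} j^{1/p}\,\lambda_j^{-}\!\left(\Delta_{g,\nu}^{-n/4p}\lambda_\nu(V)\Delta_{g,\nu}^{-n/4p}\right)
= \bigg(\frac{1}{n}\int_{\bS^{n-1}}\tau\!\left[\big(|\xi|_g^{-n/2p}V|\xi|_g^{-n/2p}\big)_{-}^{p}\right]d\xi\bigg)^{1/p}.
\end{equation*}
Raising this asymptotic to the $p$-th power and combining with the Birman--Schwinger identity of the previous paragraph immediately yields~(\ref{eq:SCWeyl-fractional}).

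The main obstacle, should one wish to write the argument in fully self-contained form rather than invoking a black-box reference, is verifying the hypotheses of the abstract Birman--Schwinger principle for our setup: form-relative compactness of $\lambda_\nu(V)$ with respect to $\Delta_{g,\nu}^{n/2p}$, identification of the correct form domain, and continuity of the $h\mapsto h^{-n/p}\lambda_\nu(V)$ perturbation as the coupling becomes large. Form-relative compactness and the form-domain identification were already established in the construction of $H_V^{(p)}$ in Section~\ref{CLR_section} as consequences of the Cwikel estimate of Theorem~\ref{thm:curved-Cwikel}(2), and the rest is the routine scaling/BS reduction discussed, for instance, in~\cite{Po:Weyl}. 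Everything else is mechanical.
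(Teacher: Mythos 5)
Your proposal is correct and follows exactly the route the paper intends: the paper places the scaling identity
\begin{equation*}
\lim_{h\to 0^+}h^n N^{-}\!\left(h^{n/p}\Delta_{g,\nu}^{n/2p}+\lambda_\nu(V)\right)
=\lim_{j\to\infty}j\,\lambda_j^{-}\!\left(\Delta_{g,\nu}^{-n/4p}\lambda_\nu(V)\Delta_{g,\nu}^{-n/4p}\right)^p
\end{equation*}
in the paragraph immediately preceding the theorem (attributed to the abstract Birman--Schwinger principle of Birman--Solomyak) and then says ``Combining it with Theorem~\ref{thm:Weyl-laws} we immediately arrive at'' the stated semiclassical Weyl's law, which is precisely your argument. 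Your additional remarks about the rank-one kernel of $\Delta_{g,\nu}$ and the form-compactness hypotheses are accurate and consistent with the setup established in Section~\ref{CLR_section} and with the block decomposition in~(\ref{eq:LT.dotDeltaV}); they spell out details the paper leaves implicit but do not change the argument.
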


In the case $\nu=1$ and $g$ is the flat Euclidean metric $g_0=(\delta_{ij})$, the Laplace-Beltrami operator $\Delta_{g,\nu}$ is just the ordinary Laplacian $\Delta=-(\partial_1^2+\cdots + \partial_n^2)$. In this case, Theorem~\ref{thm:SCWeyl} specializes to the following result. 

\begin{corollary}
 Under the assumptions of Theorem~\ref{thm:Weyl-laws}, for all $V=V^*\in L_q(\T^n_\theta)$, we have
 \begin{equation}\label{eq:SCWeyl-fractional-flat}
\lim_{h\rightarrow 0^+} h^{n}N^{-}\left(h^{\frac{n}{p}}\Delta^{\frac{n}{2p}}+\lambda(V)\right)= c(n)\tau\big[(V_{-})^p  \big], \qquad c(n):=\frac1{n}|\bS^{n-1}|. 
\end{equation}
\end{corollary}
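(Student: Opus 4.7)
The plan is to specialize Theorem~\ref{thm:SCWeyl} to the case $\nu=1$ and $g=g_0=(\delta_{ij})$ and verify that its right-hand side reduces to the claimed expression.

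First I would record the elementary identifications in this special case. With $\nu = 1$, the inner automorphism $\sigma_\nu$ in~(\ref{eq:modular.automorphism.def}) is the identity, so that $\lambda_\nu = \lambda$ by~(\ref{eq:Riem.lambdanu}); moreover, $L_2(\T^n_\theta;\nu)$ coincides with $L_2(\T^n_\theta)$ as inner product spaces. For the flat metric $g_0$ the dual metric matrix is just $(\delta^{ij})$, and so the formula~(\ref{eq:Riemannian.Laplace-Beltrami}) reduces to $\Delta_{g_0,1} = -(\partial_1^2 + \cdots + \partial_n^2) = \Delta$. In particular, the semiclassical operators in Theorem~\ref{thm:SCWeyl} and in~(\ref{eq:SCWeyl-fractional-flat}) agree, and so the left-hand sides of the two formulas are equal.

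Next I would simplify the right-hand side of~(\ref{eq:SCWeyl-fractional}). With $g=g_0$ the symbol~(\ref{eq:Laplace.norm-g}) becomes $|\xi|_{g_0} = (\sum \xi_i^2)^{1/2} = |\xi|$, which is a scalar (an element of $\R$, not of $C^\infty(\T^n_\theta)$). Therefore $|\xi|^{-n/2p}$ commutes with $V$ and with its positive/negative parts, and for $\xi \neq 0$ we have
\begin{equation*}
 \left(|\xi|^{-\frac{n}{2p}} V |\xi|^{-\frac{n}{2p}}\right)_{-}^{p} = \left(|\xi|^{-\frac{n}{p}} V\right)_{-}^{p} = |\xi|^{-n} (V_{-})^{p}.
\end{equation*}
Restricting to the unit sphere $\bS^{n-1}$, where $|\xi| = 1$, the integrand is simply $\tau[(V_-)^p]$, independent of $\xi$. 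Thus
\begin{equation*}
 \frac{1}{n}\int_{\bS^{n-1}} \tau\left[\left(|\xi|^{-\frac{n}{2p}} V |\xi|^{-\frac{n}{2p}}\right)_{-}^{p}\right] d\xi = \frac{1}{n}|\bS^{n-1}| \, \tau[(V_-)^p] = c(n)\, \tau[(V_-)^p].
\end{equation*}
Combining this with the identification of the left-hand sides yields~(\ref{eq:SCWeyl-fractional-flat}).

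There is no real obstacle here: the corollary is a direct specialization of Theorem~\ref{thm:SCWeyl}, and the only computation required is the commutativity fact that $|\xi|_{g_0}$ is a scalar rather than a genuine NC element. The conclusion is thus essentially immediate once the specialization has been made.
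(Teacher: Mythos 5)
Your proof is correct and takes essentially the same route as the paper's: specialize Theorem~\ref{thm:SCWeyl} to $\nu=1$, $g=g_0$ (so $\lambda_\nu=\lambda$ and $\Delta_{g_0,1}=\Delta$), note that $|\xi|_{g_0}=|\xi|$ is a scalar equal to $1$ on $\bS^{n-1}$, and conclude that the integrand is constantly $\tau[(V_-)^p]$. The paper's proof simply invokes $|\xi|=1$ on the sphere directly without the intermediate scaling computation, but the two arguments are the same in substance.
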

\begin{proof}
 We have $|\xi|_{g_0}=|\xi|$, and so for $g=g_0$ the r.h.s.~of~(\ref{eq:SCWeyl-fractional}) is equal to
 \begin{equation*}
  \frac{1}{n} \int_{\bS^{n-1}} \tau\left[ \left(|\xi|^{-\frac{n}{2p}}V|\xi|^{-\frac{n}{2p}}\right)_{-}^p\right] d\xi=  \frac{1}{n} \int_{\bS^{n-1}} \tau\left[ \left(V\right)_{-}^p\right] d\xi
  = \frac{1}{n}\tau\big[(V_{-})^p\big]|\bS^{n-1}|. 
\end{equation*}
This gives the result. 
\end{proof}

\begin{remark}
The semiclassical Weyl's law~(\ref{eq:SCWeyl-fractional-flat}) was conjectured in~\cite{MP:JMP22}.  We refer to~\cite{MSZ:arXiv21} for an alternative proof for $p=n/2$ and $n\geq 3$. Note that the approach of~\cite{MSZ:arXiv21} does not extend to non-flat Riemannian metrics or to the dimension $n=2$  (see Remark~\ref{rmk:MSZ} below). 
\end{remark}

It also worth looking at the special case of conformally flat metrics. 

\begin{corollary}
 Suppose that $g=k^2g_0$, $k\in \GL_1^+(C^\infty(\T^n_\theta))$.  Under the assumptions of Theorem~\ref{thm:Weyl-laws}, for all $V=V^*\in L_q(\T^n_\theta)$, we have
 \begin{equation}\label{eq:SCWeyl-conf-flat}
\lim_{h\rightarrow 0^+} h^{n}N^{-}\left(h^{\frac{n}{p}}\Delta_g^{\frac{n}{2p}}+\lambda_{\nu(g)}(V)\right)= c(n) \tau\left[ \left(k^{\frac{n}{2p}}Vk^{\frac{n}{2p}}\right)_{-}^p\right].
\end{equation}
where $c(n)$ is as in~(\ref{eq:SCWeyl-fractional-flat}). 
\end{corollary}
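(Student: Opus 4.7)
The plan is to derive this corollary directly from Theorem~\ref{thm:SCWeyl} applied to the data $g=k^{2}g_{0}$ and $\nu=\nu(g)=k^{n}$ (\emph{cf.} Example~\ref{ex:Riemannian.conf-flat}). Thus the only real work is to unravel the right-hand side of~(\ref{eq:SCWeyl-fractional}) for this particular conformally flat metric and to check that it reduces to the expression in~(\ref{eq:SCWeyl-conf-flat}).

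First I would compute $|\xi|_{g}$ for $g=k^{2}g_{0}$. Since $g_{ij}=k^{2}\delta_{ij}$, the dual metric is $g^{ij}=k^{-2}\delta_{ij}$, and $k^{-2}$ commutes with the real scalars $\xi_{i}$. Hence
\begin{equation*}
|\xi|_{g}^{2}=\sum_{i,j}\xi_{i}g^{ij}\xi_{j}=|\xi|^{2}k^{-2},\qquad \text{so}\quad |\xi|_{g}=|\xi|\,k^{-1}\in \GL_{1}^{+}\big(C^{\infty}(\T^n_\theta)\big)\ \text{for}\ \xi\neq 0.
\end{equation*}
Taking the power $-n/2p$ via holomorphic functional calculus yields $|\xi|_{g}^{-n/2p}=|\xi|^{-n/2p}k^{n/2p}$. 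Substituting this into the integrand appearing in Theorem~\ref{thm:SCWeyl} gives
\begin{equation*}
|\xi|_{g}^{-\frac{n}{2p}}V|\xi|_{g}^{-\frac{n}{2p}}=|\xi|^{-\frac{n}{p}}\,k^{\frac{n}{2p}}Vk^{\frac{n}{2p}}.
\end{equation*}

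Second, I would use the fact that for a selfadjoint element $A$ and a positive real scalar $c>0$, we have $(cA)_{-}=cA_{-}$ (since $|cA|=c|A|$), hence $(cA)_{-}^{p}=c^{p}A_{-}^{p}$. Applying this with $c=|\xi|^{-n/p}$ and $A=k^{n/2p}Vk^{n/2p}$ (which is selfadjoint) yields
\begin{equation*}
\Big(|\xi|_{g}^{-\frac{n}{2p}}V|\xi|_{g}^{-\frac{n}{2p}}\Big)_{-}^{p}=|\xi|^{-n}\bigl(k^{\frac{n}{2p}}Vk^{\frac{n}{2p}}\bigr)_{-}^{p}.
\end{equation*}
Since $|\xi|=1$ on $\bS^{n-1}$, the factor $|\xi|^{-n}$ is identically $1$ on the sphere, and the integral collapses to $|\bS^{n-1}|\cdot\tau[(k^{n/2p}Vk^{n/2p})_{-}^{p}]$. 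Dividing by $n$ and recalling $c(n)=n^{-1}|\bS^{n-1}|$ gives exactly the right-hand side of~(\ref{eq:SCWeyl-conf-flat}). Combining with Theorem~\ref{thm:SCWeyl} completes the proof.

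There is no real obstacle here, since the entire argument is a purely algebraic manipulation of the formula furnished by Theorem~\ref{thm:SCWeyl}; the only subtle point is the commutation of $k^{-1}$ with the real numbers $\xi_{i}$, which makes the scalar $|\xi|$ separate cleanly from the noncommutative factor $k^{-1}$. This same separation is what allows the scalar factor $|\xi|^{-n/p}$ to be pulled out of the negative-part operation before integrating.
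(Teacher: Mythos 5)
Your proposal is correct and follows essentially the same route as the paper: apply Theorem~\ref{thm:SCWeyl} with $\nu=\nu(g)=k^n$, use $|\xi|_g=|\xi|k^{-1}$ to rewrite the integrand, and reduce the sphere integral to $|\bS^{n-1}|\,\tau\big[(k^{n/2p}Vk^{n/2p})_-^p\big]$. The only cosmetic difference is that you first pull out the scalar factor $|\xi|^{-n/p}$ via the identity $(cA)_-^p=c^pA_-^p$ and then set $|\xi|=1$, whereas the paper directly substitutes $|\xi|=1$ on $\bS^{n-1}$ before taking negative parts; both are the same computation.
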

\begin{proof}
As $g=k^2g_0$ we have $|\xi|_g=|\xi|_{g_0}k^{-1}=|\xi|k^{-1}$, and so in this case the r.h.s.~of~(\ref{eq:SCWeyl-fractional}) is
 \begin{align*}
  \frac{1}{n} \int_{\bS^{n-1}} \tau\left[ \left((k^{-1}|\xi|)^{-\frac{n}{2p}}V(k^{-1}|\xi|)^{-\frac{n}{2p}}\right)_{-}^p\right] d\xi &=  \frac{1}{n} \int_{\bS^{n-1}} 
  \tau\left[ \left(k^{\frac{n}{2p}}Vk^{\frac{n}{2p}}\right)_{-}^p\right] d\xi\\ 
  &= \frac{1}{n} \tau\left[ \left(k^{\frac{n}{2p}}Vk^{\frac{n}{2p}}\right)_{-}^p\right] |\bS^{n-1}|. 
\end{align*}
The result then follows from Theorem~\ref{thm:SCWeyl}. 
\end{proof}

\begin{remark}\label{rmk:MSZ}
 For $p=n/2$ the asymptotic~(\ref{eq:SCWeyl-conf-flat}) gives
\begin{equation*}
\lim_{h\rightarrow 0^+} h^{n}N^{-}\left(h^{2}\Delta_g+\lambda_{\nu(g)}(V)\right)= c(n) \tau\left[ (kVk)_{-}^{\frac{n}{2}}\right].
\end{equation*}
As $\tilde{\nu}(g)=\nu(g)=k^n$, the integration formula~(\ref{eq:L1+-curved-formula}) gives 
\begin{equation*}
\bint (V_{-})^{\frac{n}{2}}\Delta_g^{-n/2}=c(n)\tau\left[(V_{-})^{\frac{n}{2}}k^{n}\right].  
\end{equation*}
In general, $\tau[(V_{-})^{n/2}k^{n}] \neq \tau[ (kVk)_{-}^{n/2}]$, and so we have
\begin{equation*}
\lim_{h\rightarrow 0^+} h^{n}N^{-}\left(h^{2}\Delta_g+\lambda_{\nu(g)}(V)\right)\neq \bint (V_{-})^{\frac{n}{2}}\Delta_g^{-\frac{n}{2}}. 
\end{equation*}
This shows that the semiclassical Weyl's law for spectral triples established in~\cite{MSZ:arXiv21} need not hold for non-flat Riemannian metrics on $\T^n_\theta$.  
\end{remark}


\begin{thebibliography}{99}
\bibitem{Ba:CRAS88} Baaj, S.: \emph{Calcul pseudo-diff\'erentiel et produits crois\'es de $C^*$-alg\'ebres. I, II}. 
C.\ R.\ Acad.\ Sc.\ Paris, s\'er.~I, \textbf{307} (1988), 581--586, 663--666.

\bibitem{BES:JMP94} Bellissard, J.; van Elst, A.; Schulz-Baldes, H.: 
\emph{The noncommutative geometry of the quantum Hall effect}. 
J.\ Math.\ Phys.\ \textbf{35} (1994), 5373--5451.

\bibitem{BS:JFAA70} Birman, M.Sh.; Solomyak, M.Z.: \emph{The leading term of the spectral asymptotics for ``non-smooth'' elliptic problems}.  Funct.\ Anal.\ Appl.\ \textbf{4} (1970), 265--275.

\bibitem{BS:TMMS72} Birman, M.Sh.; Solomyak, M.Z.: \emph{Spectral asymptotics of
nonsmooth elliptic operators. I}.  Trudy Moskov.\ Mat.\ Obshch.\
\textbf{27} (1972), 3--52 (Russian). 
Trans.\ Moscow Math.\ Soc.\ \textbf{27} (1972), 1--52 (1975) (English).  

\bibitem{BS:VLU77} Birman, M.Sh.; Solomyak, M.Z.:  \emph{Asymptotic behavior of the spectrum of pseudodifferential operators with anisotropically homogeneous symbols}. (Russian) Vestnik Leningrad.\ Univ.\ \textbf{13} (1977), no.\ 3, 13--21. (English) Vestn.\ Leningr.\ Univ.\, Math.\ \textbf{10} (1982), 237--247.

\bibitem{BS:VLU79} Birman, M.S.; Solomyak, M.Z.: \emph{Asymptotic behavior of the spectrum of pseudodifferential operators with anisotropically homogeneous symbols. II}.  Vestnik Leningrad.\ Univ.\ Mat.\ Mekh.\ Astronom.\ 13, no.\ 3 (1979), 5--10 (Russian). 

\bibitem{BS:SMJ79} Birman, M.S., Solomyak, M.Z.:  \emph{Asymptotics of the spectrum of variational problems on solutions of elliptic equations}. 
Sib.\ Math J.\ \textbf{20} (1979), 1--15. 

\bibitem{BS:AMST80} Birman, M.Sh., Solomyak, M.Z.:
 \emph{Quantitative analysis in Sobolev imbedding theorems and applications to spectral theory}.   Izdanie Inst.\ Mat.\ Akad.\ Nauk Ukrain.\ SSR, Kiev, 1974 (Russian). American Mathematical Society Translations, Series 2, 114. American Mathematical Society, Providence, R.I., 1980.
 
 \bibitem{BS:Book} Birman, M.Sh., Solomyak, M.Z.: \emph{Spectral theory of self-adjoint operators in Hilbert space}, D.\ Reidel, Boston, MA, 1987.

\bibitem{BS:AMST89} Birman, M.Sh.; Solomyak, M.Z.: 
\emph{Schr\"odinger operator. Estimates for number of bound states as function-theoretical problem.} 
Spectral theory of operators (Novgorod, 1989), 1--54. Amer.\ Math.\ Soc.\ Transl.\ Ser.\ 2, \textbf{150}, Amer.\ Math.\ Soc., Providence, RI, 1992.
 
 \bibitem{BCR:RMP16} Bourne, C.; Carey, A.L.; Rennie, A.: \emph{A non-commutative framework to topological insulators}. 
Rev.\ Math.\ Phys.\ \textbf{28} (2016), no.\ 2, 1650004, 51 pp..s 

\bibitem{Co:CRAS80} Connes, A.: \emph{C*-alg\`ebres et g\'eom\'etrie differentielle}.  C.\ R.\ Acad.\ Sc.\ Paris, s\'er.~A, \textbf{290} (1980), 599--604.

\bibitem{Co:AIM81} Connes, A.: \emph{An analogue of the Thom isomorphism for crossed products of a $C^*$-algebra by an action of $\R$}. Adv.\ Math.\ \textbf{39} (1981), 31--55.

\bibitem{Co:CMP88} Connes, A.:  \emph{The action functional in non-commutative geometry}. 
Comm.\ Math.\ Phys.\ \textbf{117} (1988), 673--683.

\bibitem{Co:NCG} Connes, A.: \emph{Noncommutative geometry}. Academic Press, Inc.,\ San Diego, CA, 1994.

 \bibitem{Co:Survey19} Connes, A.: \emph{Noncommutative geometry, the spectral standpoint}. 
New Spaces in Physics, Volume 2, pp.\ 23--84, Cambridge University Press. 

\bibitem{CMSZ:ETDS19} Connes, A.;  McDonald, E.; Sukochev, F.;  Zanin, D.: \emph{Conformal trace theorem for Julia sets of quadratic polynomials}. 
Ergodic Theory and Dynamical Systems \textbf{39} (2019), 2481--2506. 

\bibitem{CF:MJM19} Connes, A.; Fathizadeh, F.: \emph{The term $a_4$ in the heat kernel expansion of noncommutative tori}. 
M\"unster J.\ Math.\ \textbf{12} (2019), 239--410. 

\bibitem{CM:JAMS14} Connes, A.; Moscovici, H.: \emph{Modular curvature for noncommutative two-tori}. J.\ Amer.\ Math.\ Soc.\ \textbf{27} (2014), no.\ 3, 639--684.

\bibitem{CT:Baltimore11} Connes, A.; Tretkoff, P.: \emph{The Gauss-Bonnet theorem for the noncommutative two torus}. Noncommutative geometry, arithmetic, and related topics, pp.\ 141--158, Johns Hopkins Univ.\ Press, Baltimore, MD, 2011.

\bibitem{Cw:AM77} Cwikel, M.: \emph{Weak type estimates for singular values and the number of bound states of Schr\"odinger operators}. Ann.\ of Math.\ (2), 106(1):93--100, 1977.

\bibitem{DS:SIGMA15} Dabrowski, L.;  Sitarz, A.: \emph{An asymmetric noncommutative torus}. SIGMA
Symmetry Integrability Geom.\ Methods Appl.\ \textbf{11} (2015), Paper 075, 11pp.. 

\bibitem{Da:CMP83} Daubechies, I.: \emph{An uncertainty principle for fermions with generalized kinetic energy}. Comm.\ Math.\ Phys.\ \textbf{90} (1983), 511--520. 

\bibitem{Di:CRAS66} Dixmier, J.: \emph{Existence de traces non normales.} C.\ R.\ Acad.\ Sci.\ Paris\ S\'r.\ A-B \textbf{262} (1966), A1107--A1108. 


\bibitem{FK:PJM86} Fack, T.;  Kosaki, H.: \emph{Generalized $s$-numbers of $\tau$-measurable operators}. Pacific J.\ Math.\ \textbf{123} (1986) 269--300.

\bibitem{FK:JNCG12} Fathizadeh, F.; Khalkhali, M.: \emph{The Gauss-Bonnet theorem for noncommutative two tori with a general conformal structure}. J.\ Noncommut.\ Geom.\ \textbf{6} (2012), no.\ 3, 457--480.

\bibitem{FK:JNCG15} Fathizadeh, F.; Khalkhali, M.: \emph{Scalar curvature for noncommutative four-tori}. 
J.\ Noncommut.\ Geom.\ \textbf{9} (2015), no.\ 2, 473--503. 

\bibitem{FK:Survey19} Fathizadeh, F.; Khalkhali, M.: \emph{Curvature in noncommutative geometry}. \emph{Advances in Noncommutative Geometry}, Springer, 2019, pp.~321--420. 

\bibitem{FGK:JNCG19} Floricel, R.; Ghorbanpour, A.; Khalkhali, M.: \emph{The Ricci curvature in noncommutative geometry}. 
J.\ Noncommut.\ Geom.\ \textbf{13} (2019), 269--296. 

\bibitem{Fr:JST14}  Frank, R.: \emph{Cwikel's theorem and the CLR inequality}, J.\ Spectr.\ Theory \textbf{4} (2014) 1--21.

\bibitem{Fr:Survey20}  Frank, R.: \emph{The Lieb-Thirring inequalities: Recent results and open problems}. Preprint \texttt{arXiv:2007.09326}, 46 pp..
 
\bibitem{FLW:Book} Frank, R.; Laptev, A.; Weidl. T.: \emph{Lieb-Thirring inequalities}. To appear. 

\bibitem{GK:AMS69} Gohberg, I. C., Krein, M. G.: \emph{Introduction to the theory of linear nonselfadjoint operators}. Translations of Mathematical Monographs, Vol. 18 American Mathematical Society, Providence, R.I. 1969 xv+378 pp. 

\bibitem{GK:arXiv18} Ghorbanpour, A.; Khalkhali, M.: \emph{Spectral geometry of functional metrics on noncommutative tori}. Preprint \texttt{arXiv:1811.04004}, 46 pp..

\bibitem{Gu:AIM85} Guillemin, V.: \emph{A new proof of Weyl's formula on the asymptotic distribution of eigenvalues}. 
 Adv.\ Math.\ \textbf{55} (1985), no. 2, 131--160. 
 
\bibitem{HLP:IJM19a} Ha, H.; Lee, G.; Ponge, R.: \emph{Pseudodifferential calculus on noncommutative tori, I. Oscillating integrals}. Int.\ J.\ Math.\ \textbf{30} (2019), 1950033 (74 pages). 

\bibitem{HLP:IJM19b} Ha, H.; Lee, G.; Ponge, R.: \emph{Pseudodifferential calculus on noncommutative tori, II. Main properties}. Int.\ J.\ Math.\ \textbf{30} (2019), 1950034 (73 pages).  

\bibitem{HP:JGP20} Ha, H.; Ponge, R.: \emph{Laplace-Beltrami operators on noncommutative tori}. J.\ Geom.\ Phys.\ \textbf{150} (2020), 103594 (25 pages).

\bibitem{HKRV:arXiv18} Hundertmark, D.; Kunstmann, P.; Ried, T.; Vugalter, S.: 
 \emph{Cwikel's bound reloaded}. Preprint,  \texttt{arXiv:1809.05069}, 30 pp..
 
\bibitem{Il:JST12} Ilyin, A.: \emph{Lieb-Thirring inequalities on some manifolds.} J.\ Spectr.\ Theory \textbf{2} (2012), no.\ 1, 57--78. 

\bibitem{IL:SM16} Ilyin, A.A.; Laptev, A.A.: \emph{Lieb-Thirring inequalities on the torus}. Sb.\ Math.\ \textbf{207} (2016), 1410--1434. 

\bibitem{IL:StPMJ20} Ilyin, A.; Laptev, A.: \emph{Lieb-Thirring inequalities on the sphere.} St.\ Petersburg Math.\ J.\ \textbf{31} (2020), no. 3, 479--493 

\bibitem{ILZ:MN19} Ilyin, A.A.; Laptev, A.A.; Zelik, S.V.:  \emph{On the Lieb-Thirring constant on the torus}. Math.\ Notes \textbf{106} (6) (2019), 1019--1023.

\bibitem{ILZ:JFA20} Ilyin, A.A.; Laptev, A.A.; Zelik, S.V.:  \emph{Lieb-Thirring constant on the sphere and on the torus}. J.\ Funct.\ Anal.\ \textbf{279} (2020), 108784. 

\bibitem{IM:JGP18} Iochum B., Masson T.: \emph{Heat asymptotics for nonminimal Laplace type operators and application to noncommutative tori}. 
J.\ Geom.\ Phys.\ \textbf{129} (2018), 1--24.


\bibitem{KLPS:AIM13} Kalton, N.; Lord, S.; Potapov, D.; Sukochev, F.: \emph{Traces of compact operators and the noncommutative residue}. Adv.\ Math.\ \textbf{235} (2013), 1--55. 
 
\bibitem{Ku:TAMS58} Kunze, R.A.:  \emph{$L^p$-Fourier transforms on locally compact unimodular groups}. Trans.\ Amer.\ Math.\ Soc.\ \textbf{89} (1958), 519--540. 


 \bibitem{LP:JPDOA20} Lee, G.; Ponge, R.: \emph{Functional calculus for elliptic operators on noncommutative tori, I}.  
 J.\ Pseudo-Differ.\ Oper.\ Appl.\ \textbf{11} (2020), 935--1004. 

\bibitem{LP:Part2} Lee, G.; Ponge, R.: \emph{Functional calculus for elliptic operators on noncommutative tori, II}. In preparation.  


\bibitem{LM:GAFA16} Lesch, M.; Moscovici, H.: \emph{Modular curvature and Morita equivalence}. 
Geom.\ Funct.\ Anal.\ \textbf{26} (2016), no.\ 3, 818--873. 

\bibitem{LS:JAM97} Levin, D.; Solomyak, M.: \emph{The Rozenblum-Lieb-Cwikel inequality for Markov generators}. J.\ Anal.\ Math.\ \textbf{71} (1997), 173--193. 

\bibitem{LeSZ:2020} Levitina, G; Sukochev, F.; Zanin, D.: \emph{Cwikel estimates revisited}. Proc.\ London Math.\ Soc.\ \textbf{120} (2020), 265--304.

\bibitem{LNP:TAMS16} L\'evy, C.; Neira-Jim\'enez, C.; Paycha, S.:
\emph{The canonical trace and the noncommutative residue on the noncommutative torus}. Trans. Amer. Math. Soc. 368 (2016), no. 2, 1051--1095.

\bibitem{Li:BAMS76} Lieb, E. H.: \emph{Bounds on the eigenvalues of the Laplace and Schr\"odinger operators.} Bull.\ Amer.\ Math.\ Soc.\ \textbf{82} (1976), 751--752. 

\bibitem{Li:1980} Lieb, E. H.: \emph{The number of bound states of one-body Schr\"odinger operators and the Weyl problem}. Proceedings of Symposia in Pure Mathematics XXXVI. American Mathematical Society, Providence, RI, 1980. 241--252.

\bibitem{LS:Book} Lieb, E.; Seiringer, R.: \emph{The Stability of matter in quantum mechanics}. Cambridge University Press, Cambridge, 2010.

\bibitem{LT:PRL75} Lieb, E.H.; Thirring, W.E.: \emph{Bound on kinetic energy of fermions which proves stability of matter}. Phys.\ Rev.\ Lett.\ \textbf{35} (1975), 687--689.

\bibitem{LT:SMP76} Lieb, E.H.; Thirring, W.E.: \emph{Inequalities for the moments of the eigenvalues of the Schr\"odinger Hamiltonian and their relation to Sobolev inequalities}. In: Studies in Mathematical Physics, Princeton University Press, 1976, 269--303.

\bibitem{Liu:arXiv20}  Liu, Y.: \emph{General rearrangement lemma for heat trace asymptotic on noncommutative tori},
Preprint \texttt{arXiv:2004.05714}, 43 pp..

\bibitem{LPS:JFA10} Lord, S.; Potapov, D.; Sukochev, F.: \emph{Measures from Dixmier traces and zeta functions.} J.\ Funct.\ Anal.\ \textbf{259} (2010), no. 8, 1915--1949. 

\bibitem{LSZ:Book} Lord, S.; Sukochev, F.; Zanin, D.: \emph{Singular traces: Theory and applications}. de Gruyter Studies in Mathematics, 46, 2013.

\bibitem{LSZ:Survey19} Lord, S.; Sukochev, F.; Zanin, D.: \emph{Advances in Dixmier traces and  applications}. \emph{Advances in noncommutative geometry}. 491--583. Springer International Publishing, 2019.


\bibitem{MP:JMP22} McDonald, E; Ponge, R.: \emph{Cwikel estimates and negative eigenvalues of Schr\"odinger operators on noncommutative tori}. 
J.\ Math.\ Phys.\ \textbf{61} (2022), 043503 (37 pages). 

\bibitem{MSX:CMP19} McDonald, E; Sukochev, F.; Xiong, X.: \emph{Quantum differentiability on quantum tori}. Comm.\  Math.\ Phys.\ \textbf{371} (2019), 1231--1260.

\bibitem{MSZ:MA19} McDonald, E.; Sukochev, F.; Zanin, D.: \emph{A $C^{\ast}$-algebraic approach to the principal symbol II}. Math.\ Ann.\ \textbf{374} (2019), 273--322.

\bibitem{MSZ:arXiv21} McDonald, E.; Sukochev, F.; Zanin, D.: \emph{Semiclassical Weyl law and exact spectral asymptotics in noncommutative geometry}. 
Preprint \texttt{arXiv:2106.02235},  31 pp.. 

\bibitem{Ne:JFA74} Nelson, E.: \emph{Notes on non-commutative integration}. J.\ Funct.\ Anal.\ \textbf{15} (1974), 103--116.

\bibitem{Po:JMP20} Ponge, R.: \emph{Connes's trace Theorem for curved noncommutative tori. Application to scalar curvature}. J.\ Math.\ Phys.\ \textbf{61} (2020), 042301, 27 pp..  

\bibitem{Po:SIGMA20} Ponge, R.: \emph{Noncommutative residue and canonical trace on noncommutative tori. Uniqueness results}.  SIGMA Symmetry Integrability Geom.\ Methods Appl. \textbf{16} (2020), 061, 31 pp..   

\bibitem{Po:Weyl} Ponge, R.: \emph{Connes' integration and Weyl's laws}. Preprint, \texttt{arXiv:2107.01242}, 29pp..

\bibitem{Po:MPAG22} Ponge, R.: \emph{Weyl's laws and Connes' integration formulas for matrix-valued $\LLogL$-Orlicz potentials}. Math.\ Phys.\ Anal.\ Geom.\ \textbf{25}, 10 (2022), 33 pp.. 

\bibitem{PS:Springer16} Prodan, E.;  Schulz-Baldes, H.: 
\emph{Bulk and boundary invariants for complex topological insulators: from K-theory to physics}. 
Mathematical Physics Studies. Springer, 2016. 

\bibitem{PW:RMP75} Pusz, W., Woronowicz, S.L.: \emph{Functional calculus for sesquilinear forms and the purification map}. Rep.\ Math.\ Phys.\ \textbf{8} (1975), 159--170. 

\bibitem{RS2:1975} Reed, M.; Simon, B.: \emph{Methods of modern mathematical physics. II. Fourier Analysis, Selfadjointness.} Academic Press, Inc., New York, 1975. xv+361 pp..

\bibitem{Ri:CM90} Rieffel, M.A.: \emph{Noncommutative tori-A case study of non-commuative differentiable manifolds}. \emph{Geometric and topological invariants of elliptic operators} (Brunswick, ME, 1988), Contemp. Math.\ 105, Amer.\ Math.\ Soc.; Providence, RI, 1990, pp.\ 191--211.

\bibitem{Ro:SIGMA13} Rosenberg, J.: \emph{Levi-Civita's theorem for noncommutative tori}. SIGMA Symmetry Integrability Geom.\ Methods Appl.\ \textbf{9} (2013), Paper 071, 9 pp..

\bibitem{Roz:1972} Rozenbljum, G.\ V.: \emph{Distribution of the discrete spectrum of singular differential operators.} 
Dokl. Akad. Nauk SSSR \textbf{202} (1972), 1012--1015. English transl.\ Soviet Math.\ Dokl.\ \textbf{13} (1972), 245--249.

\bibitem{Roz:1976} Rozenbljum, G.\ V.: \emph{Distribution of the discrete spectrum of singular differential operators.} 
Izv.\ Vys\u{s}.\ U\u{c}ebn.\ Zaved.\ Matematika 1976, no.\ 1(164), 75--86. English transl.\ Soviet Math.\ (Iz.\ VUZ) \textbf{20} (1976), 63--71. 

\bibitem{Ro:JST22} Rozenblum, G.: \emph{Eigenvalues of singular measures and Connes noncommutative integration}. 
J.\ Spectr.\ Theory \textbf{12} (2022), 259--300.

\bibitem{RS:SPMJ98} Rozenblyum, G.; Solomyak, M. \emph{The Cwikel-Lieb-Rozenblyum estimator for generators of positive semigroups and semigroups dominated by positive semigroups}. St.\ Petersburg Math.\ J.\ \textbf{9} (1998), no.\ 6 1195--1211. 

\bibitem{Sc:Springer12} Schm\"udgen, K.: \emph{Unbounded self-adjoint operators on Hilbert space.} Graduate Texts in Mathematics, 265. Springer, Dordrecht, 2012. xx+432 pp.

 \bibitem{Se:AM53} Segal, I.E.:  \emph{A noncommutative extension of abstract integration}. Ann.\ Math\. (2) \textbf{57} (1953), 401--457. 

\bibitem{SSUZ:AIM15} Semenov, E.; Sukochev, F.; Usachev, A.; Zanin, D.: \emph{Banach limits and traces on $\mathcal{L}_{1,\infty}$}. Adv.\ Math.\ {\bf 285} (2015), 568--628. 

\bibitem{Si:TAMS76} Simon, B.:\emph{Analysis with weak trace ideals and the number of bound states of Schr\"odinger operators}. Trans.\ Amer.\ Math.\ Soc.\ \textbf{224} (1976), no. 2, 367--380. 

\bibitem{Si:AMS05} Simon, B.:\emph{Trace ideals and their applications.} Second edition. Mathematical Surveys and Monographs, 120. American Mathematical Society, Providence, RI, 2005.

\bibitem{Si:AMS15a} Simon, B: \emph{A Comprehensive Course in Analysis, Part 3: Harmonic Analysis}. American Mathematical Society, Providence, RI, 2015.

\bibitem{Si:AMS15} Simon, B.: \emph{Operator Theory:  A Comprehensive Course in Analysis, Part 4}. American Mathematical Society, Providence, RI, 2015. 

\bibitem{So:PLMS95} Solomyak M.; \emph{Spectral  problems  related  to  the  critical  exponent  in  the  Sobolev  embedding theorem}. Proc.\ London\ Math.\ Soc. (3) \textbf{71} (1995), no.\ 1, 53--75.

\bibitem{Sp:Padova92} Spera, M.: \emph{Sobolev theory for noncommutative tori}. Rend.\ Sem.\ Mat.\ Univ.\ Padova \textbf{86} (1992), 143--156.

\bibitem{SZ:JOT18} Sukochev, F.;  Zanin, D.: \emph{A $C^*$-algebraic approach to the principal symbol}. J.\ Oper.\ Theory \textbf{80} (2018), 481--522.  

\bibitem{SZ:arXiv19}  Sukochev, F.; Zanin, D.: \emph{Local invariants of non-commutative tori}.  Preprint \texttt{arXiv:1910.00758}, 33 pp.. 

\bibitem{SZ:arXiv20} Sukochev, F.; Zanin, D.: \emph{Cwikel-Solomyak estimates on tori and Euclidean spaces}. Preprint \texttt{arXiv:2008.04494}, 31 pp.. 

\bibitem{SZ:PAMS21} Sukochev, F.; Zanin, D.: \emph{Optimal constants in non-commutative H\"older inequality for quasi-norms}. Proc.\ Amer.\ Math.\ Soc.\ \textbf{149} (2021), no.\ 9, 3813--3817.

\bibitem{SZ:arXiv21} Sukochev, F.; Zanin, D.: \emph{Connes integration formula without singular traces}. Preprint \texttt{arXiv:2103.08817}, 19 pp.. 

 \bibitem{Ta:JPCS18} Tao, J.: \emph{The theory of pseudo-differential operators on the noncommutative $n$-torus}. J.\ Phys.: Conf.\ Ser.\ \textbf{965} 012042 (2018), 1--12.

\bibitem{XXY:MAMS18} Xiong, X.; Xu, Q;   Yin, Z.: \emph{Sobolev, {B}esov and {T}riebel-{L}izorkin spaces on quantum tori}. Mem.\ Amer.\ Math.\ Soc.\ \textbf{252} (2018), no.\ 1203, vi+118 pp..

\bibitem{We:CMP96} Weidl, T.: \emph{On the Lieb-Thirring constants $L_{\gamma,1}$ for $\gamma \geq1/2$}. Comm.\ Math.\ Phys.\ \textbf{178} (1996), no.\ 1, 135--146

\bibitem{Wo:NCR} Wodzicki, M.: \emph{Noncommutative residue. I. Fundamentals}. $K$-theory, arithmetic and geometry (Moscow, 1984--1986),  pp.~320--399, 
 Lecture Notes in Math., Vol.\ 1289, Springer, Berlin, 1987.
  
\end{thebibliography}
\end{document}